\documentclass[11pt]{article}

\usepackage[pdfpagelabels]{hyperref} \hypersetup{colorlinks = true, linktoc = all, linkcolor = black, citecolor = black}
\usepackage[normalem]{ulem} \usepackage{algorithm}
\usepackage{algorithmicx} \usepackage{algpseudocode}
\usepackage{pgfplots} \usepackage[toc,page]{appendix}
\usepackage{upgreek} \usepackage{fullpage}
\usepackage[english]{babel} \usepackage[latin1]{inputenc}
\usepackage{amsmath}
\usepackage{amsfonts}
\usepackage{xcolor}
\usepackage{graphicx}
\usepackage{amsthm}
\usepackage{psfrag}
\usepackage{subfig}
\usepackage{amssymb}
\usepackage{tikz}
\usepackage{ifthen} \usepackage{xifthen} \usepackage{dsfont}
\usepackage{bm} \usepackage{accents}
\usepackage{xparse} %
\usepackage{mathtools} %
\usetikzlibrary{arrows,calc} \usetikzlibrary{decorations,arrows}
\usetikzlibrary{decorations.pathreplacing}

\usepackage[textsize=tiny]{todonotes}
\setlength{\marginparwidth}{2cm}

\definecolor{jz}{rgb}{0.1,0.45,0.1}

\newcommand{\proofref}[1]{}
\newcommand{\D}{\mathrm{D}} \newcommand{\measi}{{\rho}}
\newcommand{\measii}{{\pi}}

\newcommand{\temp}[1]{}
\newcommand{\norm}[2][]{\| #2 \|_{#1}}

\newcommand{\hide}[1]{}

\newcommand{\normc}[2][]{\left\| #2 \right\|_{#1}}

\newcommand{\set}[2]{\{#1\,:\,#2\}} \newcommand{\setc}[2]{\left\{#1\,
    :\,#2\right\}} \usepackage{enumitem}

\newcommand{\dd}{\;\mathrm{d}}
\newcommand{\ddd}{\mathrm{d}}
\DeclareMathOperator{\divg}{div}
\DeclareMathOperator{\supp}{supp} 
\DeclareMathOperator*{\essinf}{ess\,inf} 
\DeclareMathOperator{\argmin}{arg \,min}
\DeclareMathOperator*{\esssup}{ess\,sup}

 \newcommand{\ii}{\mathrm{i}}

\newcommand{\dfn}{\vcentcolon=} \newcommand{\dfnn}{=\vcentcolon}

\newtheorem{theorem}{Theorem}[section]

\newtheorem{lemma}[theorem]{Lemma}

\newtheorem{proposition}[theorem]{Proposition}
\newtheorem{remark}[theorem]{Remark}

\newtheorem{assumption}[theorem]{Assumption}

\numberwithin{equation}{section}
\newcommand{\cA} {{\mathcal A}} \newcommand{\cB} {{\mathcal B}}
 \newcommand{\cE}{{\mathcal E}}

 \newcommand{\cN}{{\mathcal N}}

\newcommand{\C}{{\mathbb C}} \newcommand{\N}{{\mathbb N}}
\newcommand{\R}{{\mathbb R}} \newcommand{\bbP}{{\mathbb P}}
 
 \newcommand{\scr}[1]{{\mathfrak{#1}}}
 
\newcommand{\eps}{\varepsilon}

\newcommand{\bsone}{\boldsymbol 1}

\newcommand{\bsW}{{\boldsymbol W}}
 \newcommand{\bsb}{{\boldsymbol b}}

\newcommand{\bsx}{{\boldsymbol x}} \newcommand{\bst}{{\boldsymbol t}}
\newcommand{\bsy}{{\boldsymbol y}} \newcommand{\bsz}{{\boldsymbol z}}

\newcommand{\bszeta}{{\boldsymbol \zeta}}
\newcommand{\bsxi}{{\boldsymbol \xi}}

 \newcommand{\bsvarrho}{{\bm
    \varrho}} 
\newcommand{\bsnu}{{\bm \nu}} \newcommand{\bsmu}{{\bm \mu}}
\newcommand{\bseta}{{\bm \eta}} \newcommand{\bse}{{\bm e}}
 
\newcommand{\bsdelta}{{\bm \delta}}
\newcommand{\bsvarsigma}{{\bm \varsigma}}

\usepackage{authblk}

\begin{document}
\title{{Sparse approximation of triangular transports. Part I: the
    finite dimensional case}\thanks{This paper was written during the
    postdoctoral stay of JZ at MIT.  JZ acknowledges support by the
    Swiss National Science Foundation under Early Postdoc Mobility
    Fellowship 184530. YM and JZ acknowledge support from the United
    States Department of Energy, Office of Advanced Scientific
    Computing Research, AEOLUS Mathematical Multifaceted Integrated
    Capability Center.}  \thanks{The original manuscript
    \cite{2006.06994} has been split into two parts; the present paper
    is the first part and the second part is \cite{zm2}.
  }} \author[1]{Jakob Zech} \author[2]{Youssef
  Marzouk} \affil[1]{\footnotesize Heidelberg University, 69120
  Heidelberg, Germany,
  \href{mailto:jakob.zech@uni-heidelberg.de}{jakob.zech@uni-heidelberg.de}}
\affil[2]{\footnotesize Massachusetts Institute of Technology,
  Cambridge, MA 02139, USA,
  \href{mailto:ymarz@mit.edu}{ymarz@mit.edu}}
  
\maketitle

\abstract{ {For two probability measures $\measi$ and $\measii$
    with analytic densities on the $d$-dimensional cube $[-1,1]^d$, we} 
  investigate the approximation of the unique triangular monotone
  Knothe--Rosenblatt transport $T:[-1,1]^d\to [-1,1]^d$, such that the
  pushforward $T_\sharp\measi$ equals $\measii$. It is shown that for
  $d\in\N$ there exist approximations $\tilde T$ of $T$, based on
  either sparse polynomial expansions or {deep ReLU neural}
  networks, such that the distance between $\tilde T_\sharp\measi$ and
  $\measii$ decreases exponentially. More precisely, we {prove}
  error bounds of the type $\exp(-\beta N^{1/d})$ (or
  $\exp(-\beta N^{1/(d+1)})$ for neural networks), where $N$ refers to
  the dimension of the ansatz space (or the size of the network)
  containing $\tilde T$; the notion of distance comprises
  the Hellinger distance,
  {the total variation distance,
  the Wasserstein distance}
  and the Kullback--Leibler
  divergence. {Our} construction guarantees $\tilde T$ to be a
  monotone triangular bijective transport on the hypercube
  $[-1,1]^d$. Analogous results hold for the inverse transport
  $S=T^{-1}$. The proofs are constructive, and we give an explicit
  \textit{a priori} description of the ansatz space, which can be used
  for numerical implementations. %
}
\\

\noindent
{\bf Key words:} transport maps, domains of holomorphy, uncertainty quantification, sparse approximation, neural networks, sampling

\noindent
{\bf Subject classification:} 32D05, 41A10, 41A25, 41A46, 62D99, 65D15

\numberwithin{equation}{section}

\section{Introduction}
A long-standing challenge in applied mathematics and statistics
is to approximate integrals w.r.t.\ a probability measure $\measii$,
given only through its (possibly unnormalized) Lebesgue
density $f_{\measii}$, on a high-dimensional integration domain. Here
we consider probability measures on the bounded domain $[-1,1]^d$.
One of the main applications is in Bayesian
inference, %
{where parameters $\bsy\in [-1,1]^d$ are inferred} from
noisy and/or indirect data. In this case $\measii$ is interpreted as
the so-called posterior measure. It is obtained by Bayes' rule, %
{and} encompasses all information about the parameters given the
data.  A typical goal is to compute the expectation
$\int_{[-1,1]^d}g(\bsy)\dd\measii( \bsy)$ of some quantity of interest
$g:[-1,1]^d\to\R$ w.r.t.\ the
posterior. %

Various approaches to high-dimensional integration have been proposed
in the literature. One of the most common strategies consists of Monte
Carlo-type sampling, e.g., with Markov chain Monte Carlo 
(MCMC) methods \cite{10.5555/1051451}. Metropolis-Hastings
MCMC algorithms, for instance, are versatile and simple to
implement. Yet the mixing times of standard Metropolis algorithms can scale somewhat
poorly
with the dimension $d$. %
 (Here function-space MCMC algorithms \cite{cotter2013mcmc,DILI,rudolf2018generalization} 
and others \cite{Morzfeld,Tong}
represent notable exceptions, with dimension-independent mixing for
certain classes of target measures.)
In general, MCMC algorithms may suffer from slow convergence and
possibly long burn-in phases.  Furthermore, MCMC is intrinsically
serial, which can make sampling infeasible when each evaluation of
$f_\measii$ is costly. Variational inference, {e.g.,
  \cite{doi:10.1080/01621459.2017.1285773},} can improve on some of
these drawbacks. It replaces the task of sampling from $\measii$ by an
optimization problem. The idea is to minimize (for instance) the KL
divergence between $\measii$ and a second measure $\tilde\measii$ in
some given class of tractable measures, where `tractable' means that
independent and identically distributed (iid) samples from $\tilde\measii$ can easily be produced.

Transport based methods are one instance of this category: given an
easy-to-sample-from ``reference'' measure $\measi$, we look for an
approximation $\tilde T$ to the transport $T$ such that
$T_\sharp\measi=\measii$. Here $T_\sharp \measi$ denotes the
pushforward measure, i.e., $T_\sharp\measi(B)=\measi(T^{-1}(B))$ for
all measurable sets $B$.  Then
$\tilde\measii\dfn \tilde T_\sharp\measi$ is an approximation to the
``target'' $\measii=T_\sharp\measi$.  Unlike in optimal transportation
theory, e.g., \cite{MR2459454,santambrogio}, $T$ is not required to
minimize some cost.  This
allows %
imposing further structure on $T$ in order to simplify numerical
computations.  {In this paper we concentrate on the triangular
  Knothe--Rosenblatt (KR)
  rearrangement \cite{MR49525}, which has been
  found to be particularly useful in this context.  The reason for
  concentrating on the KR transport (rather than, for
  instance, optimal transport) is that it is widely used in practical
  algorithms \cite{MR2972870,
    spantini2018inference,jaini2019sum,wehenkel2019unconstrained},
  with the advantages of allowing easy inversion, efficient
  computation of the Jacobian determinant, and direct extraction of
  certain conditionals \cite{spantini2019coupling}; and from a
  mathematical standpoint, its explicit construction makes it amenable
  to a rigorous analysis.}
Given an approximation $\tilde T$ to (some) transport
$T$, for a random variable $X\sim \measi$ it holds that
$\tilde T(X)\sim \tilde\measii$. Thus iid samples from $\tilde\measii$
are obtained via $(\tilde T(X_i))_{i=1}^n$, where $(X_i)_{i=1}^n$ are
$\measi$-distributed iid.  This strategy, and various refinements and
variants, have been investigated theoretically and empirically, and
successfully applied in Bayesian inference; see, e.g.,
\cite{MR2972870,MR3821485,MR3810512,MR3800241,spantini2018inference,MR4065222}.

Normalizing flows, {now widely used} in the machine learning literature
\cite{pmlr-v37-rezende15,kobyzev2019normalizing,papamakarios2019normalizing}
for variational inference, generative modeling, and density
estimation, are another instance of this transport framework. In
particular, many so-called ``autoregressive'' flows, e.g.,
\cite{papamakarios2017masked,jaini2019sum}, employ specific {neural
  network} parameterizations of triangular
maps.
{A
  complete mathematical convergence analysis of these
  methods is not yet available in the literature.
}

Sampling methods can be contrasted with deterministic approaches,
where a quadrature rule is constructed that converges at a guaranteed
(deterministic) rate for all integrands in some function class.
Unlike sampling methods, deterministic quadratures can achieve
higher-order convergence. They even overcome the curse of
dimensionality, presuming certain smoothness properties of the
integrand. We refer to sparse-grid quadratures
\cite{CCS13,MR3056084,MR1669959,GOE16,MR3706335,ZS17} and quasi-Monte
Carlo quadrature \cite{MR3907407,MR3640631,MR3939383} as
examples. It is difficult to construct deterministic quadrature rules
for an arbitrary measure $\measii$, however, so typically they are only
available in specific cases such as for the Lebesgue or Gaussian
measure. Interpreting $\int_{[-1,1]^d}g(t)\dd\measii( t)$ as the
integral $\int_{[-1,1]^d}g(t)f_\measii(t)\dd t$ w.r.t.\ the Lebesgue
measure, such methods are still applicable. In Bayesian inference,
however, $\measii$ can be strongly concentrated, corresponding to
either small noise in the observations or a large data set. Then this
viewpoint may become problematic. For example, the error of Monte
Carlo quadrature depends on the variance of the integrand. The
variance of $gf_\measii$ (w.r.t.\ the Lebesgue measure) can be much
larger than the variance of $g$ (w.r.t.\ $\measii$) when $\measii$
is strongly concentrated, i.e., when $f_\measii$ is very ``peaky.'' This
problem was addressed in \cite{MR3580124} by combining an adaptive
sparse-grid quadrature with a linear transport map.  This approach
combines the advantage of high-order convergence with quadrature
points that are mostly within the area where $\measii$ is
concentrated.  Yet if $\measii$ is multimodal (i.e., concentrated in
several separated areas) or unimodal but strongly
non-Gaussian,
then the linearity of the transport precludes such a strategy from
being successful. {A similar statement can be made about the method
  analyzed in \cite{MR4125981}, where the (Gaussian) Laplace
  approximation is used in place of a strongly concentrated posterior
  measure. For such $\measii$,} the combination of nonlinear transport
maps with deterministic quadrature rules may lead to %
{significantly} improved algorithms.

{In a related spirit}, we also mention {interacting particle systems such as}
kernel-based Stein variational gradient descent (SVGD) and its variants, which have
recently emerged as a promising research direction
\cite{NIPS2016_6338,3327546.3327591}. Put simply, these methods try to
find $n$ points $(x_i)_{i=1}^n$ minimizing an approximation of the
KL divergence between the target $\measii$ and the uniform discrete
probability measure with support $(x_i)_{i=1}^n$. A discrete
convergence analysis is not yet available, but a connection between
the mean field limit and gradient flow has been established
\cite{NIPS2017_6904,MR3919409,1912.00894}.

In this paper, we analyze the approximation of the transport $T$
satisfying $T_\sharp\measi=\measii$ under the assumption that the
reference and target densities $f_\measi$ and $f_\measii$ are
analytic.
This assumption is quite strong, but
satisfied in many applications, including the main
  application we have in mind, which is
Bayesian inference in
{partial differential equations (PDE)}.
The reference $\measi$ can be chosen at
will, e.g., as a uniform measure so that $f_\measi$ is constant (thus
analytic) on $[-1,1]^d$. And here the target {density} $f_\measii$ is a posterior
density, stemming from a PDE-driven likelihood function.
{For certain} linear and nonlinear PDEs {(for example the
Navier--Stokes equations)}, it
can be shown under suitable conditions that the corresponding posterior density is indeed
an analytic function of the parameters; we refer for instance to
\cite{MR2903278,CCS13}. %

{
As outlined above, $T$ can be employed in the construction of either
sampling-based or deterministic quadrature methods. Understanding the
approximation properties of $T$ is the first step towards a rigorous
convergence analysis of such algorithms. In practice, once a suitable
ansatz space has been identified, a (usually non-convex) optimization
problem must be solved to find a suitable $\tilde T$ within the
ansatz space. While this optimization is beyond the scope of the
current paper, we intend to empirically analyze a numerical algorithm
based on the present results in a forthcoming publication.}

{ Throughout we consider transports on $[-1,1]^d$ with $d\in\N$.
  It is straightforward to generalize all the presented results to
  arbitrary Cartesian products
  $\bigtimes_{j=1}^d [a_j,b_j]\subset\R^d$ with
  $-\infty<a_j<b_j<\infty$ for all $j$, via an affine transformation
  of all occurring functions. Most (theoretical and empirical) earlier
  works on this topic have however assumed measures supported on all
  of $\R^d$.  A similar analysis in the unbounded case, as well as
  numerical experiments and the development and improvement of
  algorithms in this case, will be the topics of future research.}

\subsection{Contributions}
For $d\in\N$ and {under the assumption that the reference and
  target densities $f_\measi$, $f_\measii:[-1,1]^d\to (0,\infty)$ are
  analytic}, we prove that there exist sparse polynomial spaces of
dimension {$N\in\N$,} in which the best approximation of the
KR transport $T$ converges to $T$ at the exponential
rate $\exp(-\beta N^{1/d})$ for some $\beta>0$ {as $N\to\infty$};
see Thm.~\ref{thm:TdN}. %
{To guarantee that the approximation
  $\tilde T:[-1,1]^d\to [-1,1]^d$ is bijective, we propose to use an
  ansatz of rational functions, which ensures this property and
  retains the same convergence rate. In this case $N$ refers to the
  dimension of the polynomial space used in the denominator and
  numerator, i.e., again to the number of degrees of freedom; see
  Thm.~\ref{THM:POLYD}.} %
The argument is based on a result quantifying the regularity of $T$ in
terms of its {complex} domain of analyticity, which is given in
Thm.~\ref{THM:COR:DINN}.

{In Sec.~\ref{SEC:MEASURES} we investigate the implications of
  approximating the transport map
  for the corresponding pushforward measures.  We show that closeness
  of the transports in $W^{1,\infty}$ implies closeness of the
  pushforward measures in the Hellinger distance, the total variation
  distance and the KL
  divergence.
  A similar statement is true for the Wasserstein distance
  if the transports are close in $L^\infty$.
  Specifically,
  Prop.~\ref{PROP:MEASCONVD} %
  states the same $\exp(-\beta N^{1/d})$ error convergence as for the
  approximation of the transport is obtained for the distance between
  the pushforward $\tilde T_\sharp\measi$ and the target $\measii$.
}
We provide lower bounds on $\beta>0$, based on properties of
$f_\measi$ and $f_\measii$. Furthermore, given $\eps>0$, we provide
\textit{a priori} {ansatz spaces} guaranteeing the best approximation
in {this} ansatz space to be $O(\eps)$-close to $T$; %
see Thm.~\ref{THM:POLYD} and Sec.~\ref{SEC:EXAMPLE}. This allows to
improve upon existing numerical algorithms: previous approaches were
either based on heuristics or on adaptive (greedy) enrichment of the
ansatz space \cite{MR2972870}, neither of which can guarantee {asymptotic
  convergence or convergence rates in general. Moreover, greedy
  methods are inherently sequential (in contrast to \textit{a priori} approaches),
  which can slow down computations.}

Using known approximation properties of polynomials by rectified
linear unit (ReLU) neural networks (NNs), we also show that ReLU NNs can
approximate $T$ at the exponential rate {$\exp(-\beta N^{1/(1+d)})$.
In this case $N$ refers to the number of trainable parameters
(``weights and biases'') in the network;} see Thm.~\ref{THM:RELU}
{for the convergence of the transport map and
  Prop.~\ref{prop:relupushforward} for the convergence of the
  pushforward measure}. We point out that normalizing flows in machine
learning also try to approximate $T$ with a neural network; see, for
example,
\cite{pmlr-v37-rezende15,pmlr-v80-huang18d,1810.01367,2002.02798}.
{Recent theoretical works on the expressivity of neural network
  {representations of} transport maps include \cite{2004.08867,2006.11469,2012.02414}, which
  provide universal approximation results;
  moreover, \cite{pmlr-v108-kong20a} provides estimates
  on the required network depth. In the present work we do not merely
  show universality, i.e., approximability of $T$ by neural networks,
  but we even prove an exponential convergence \emph{rate}. Similar
  results have not yet been established to the best of our knowledge.}

\subsection{Main ideas}
Consider the case $d=1$. Let $\measii$ and $\measi$ be two probability
measures on $[-1,1]$ with strictly positive Lebesgue densities
$f_\measi$, $f_\measii:[-1,1]\to\set{x\in\R}{x>0}$.
Let
\begin{equation}\label{eq:T1d}
  F_\measi(x)\dfn \int_{-1}^x f_\measi(t)\dd t,\qquad
  F_\measii(x)\dfn \int_{-1}^x f_\measii(t)\dd t,
\end{equation}
be the corresponding cumulative distribution functions (CDFs), which are strictly monotonically increasing
and bijective from $[-1,1]\to [0,1]$.
For any
interval $[a,b]\subseteq [-1,1]$, it holds that
\begin{equation*}
  \measi(F_\measi^{-1}\circ F_\measii([a,b])) =
  \int_{F_\measi^{-1}(F_\measii(a))}^{F_\measi^{-1}(F_\measii(b))}f_\measi(t)\dd t
  =
  F_\measii(b)-F_\measii(a)=\measii([a,b]).
\end{equation*}
Hence $T\dfn F_{\measii}^{-1}\circ F_\measi$ is the unique monotone
transport satisfying $\measi\circ T^{-1}=\measii$, i.e.,
$T_\sharp\measi=\measii$. The formula
$T=F_{\measii}^{-1}\circ F_\measi$ implies that $T$ inherits the
smoothness of $F_{\measii}^{-1}$ and $F_\measi$. Thus it is at least
as smooth as $f_\measi$ and $f_\measii$ (more precisely, $f_\measi$,
$f_\measii\in C^k$ imply $T\in C^{k+1}$). We will see in
Prop.~\ref{prop:1d} that if $f_\measi$ and $f_\measii$ are analytic,
the domain of analyticity of $T$ is (under further conditions and in a
suitable sense) proportional to the minimum of the domain of
analyticity of $f_\measi$ and $f_\measii$. {By the ``domain of
  analyticity,'' we mean the domain of holomorphic extension to the
  complex numbers.}

Knowledge of the analyticity domain of $T$ allows to prove exponential
convergence of polynomial approximations: Assume for the moment that
$T:[-1,1]\to [-1,1]$ admits an analytic extension to the complex disc
with radius $r>1$ and center $0\in\C$. Then
$T(x)=\sum_{k\in\N}\frac{d^k}{dy^k} T(y)|_{y=0} \frac{x^k}{k!}$ for
$x\in [-1,1]$, and the $k$th Taylor coefficient can be bounded with
Cauchy's integral formula by $C r^{-k}$. This implies that the $n$th
Taylor polynomial uniformly approximates $T$ on $[-1,1]$ with error
$O(r^{-n})=O(\exp(-\log(r)n))$. Thus $r$ determines the rate of
exponential convergence.

The above construction of the transport can be generalized to
the KR transport
$T:[-1,1]^d\to [-1,1]^d$ for $d\in\N$.
We will determine an analyticity domain for each component $T_k$ of 
$T=(T_k)_{k=1}^d$: not in the shape of a polydisc, but rather as a
{pill-shaped set}
containing $[-1,1]^k$. The reason is that analyticity of $f_\measi$
and $f_\measii$ does not imply the existence of a polydisc, but
does imply the existence of {such pill-shaped domains}.
Instead of Taylor
expansions, one can then prove exponential convergence of Legendre
expansions. Rather than approximating $T$ with Legendre polynomials,
we introduce a correction guaranteeing our approximation
$\tilde T:[-1,1]^d\to [-1,1]^d$ to be bijective. This results in a
rational function $\tilde T$. Using existing theory for ReLU networks,
we also deduce a ReLU approximation result.

\subsection{Outline}
{
In Sec.~\ref{sec:notation} we introduce notation.
Sec.~\ref{SEC:T} recalls the construction of the
triangular KR transport $T$. %
In Sec.~\ref{SEC:ANT} we {investigate} the domain of
analyticity of $T$. {Sec.~\ref{SEC:APPROX} %
applies the results
of Sec.~\ref{SEC:ANT} to prove 
exponential convergence rates for the approximation of the transport
through sparse polynomial expansions.}
Subsequently, Sec.~\ref{sec:relu} discusses a deep neural network
approximation result for the transport.  We then use these results in
Sec.~\ref{SEC:MEASURES} to establish convergence rates for the
associated measures {(rather than the transport maps
  themselves)}. Finally, in Sec.~\ref{SEC:EXAMPLE} we present a
standard example in uncertainty quantification, and demonstrate how
our results may be used in inference algorithms.}

{For the convenience of the reader, in
  Sec.~\ref{sec:onedanalyticty} we discuss analyticity of the
  transport map in the one dimensional case separately from the general
  case $d\in\N$ (which builds on similar ideas but is significantly
  more technical), and provide most parts of the proof in the main
  text. In the remaining sections, all proofs and technical arguments
  are deferred to the appendix.}

\subsection{Notation}\label{sec:notation}

\subsubsection{{Sequences, multi-indices, and polynomials}}
Boldface characters denote
vectors, %
e.g., $\bsx=(x_i)_{i=1}^d$, $d\in\N$. %
For $j\le k\le d$, we denote slices by
$\bsx_{[k]}\dfn (x_i)_{i=1}^k$ and
$\bsx_{[j:k]}\dfn (x_i)_{i=j}^k$.

For a multi-index $\bsnu\in\N_0^d$, %
$\supp\bsnu\dfn \set{j}{\nu_j\neq 0}$,
and $|\bsnu|\dfn \sum_{j\in\supp\bsnu}\nu_j$,
where empty sums equal $0$ by convention. Additionally, empty products
equal $1$ by convention and
$\bsx^\bsnu\dfn\prod_{j\in\supp\bsnu} x_j^{\nu_j}$.  %
We write $\bseta\le\bsnu$ if $\eta_j\le\nu_j$ for all $j$, and
$\bseta<\bsnu$ if $\bseta\le\bsnu$ and there exists $j$ such that
$\eta_j<\nu_j$. A subset
$\Lambda\subseteq\N_0^d$ %
is called \emph{downward closed} if it is finite and satisfies
$\set{\bseta\in\N_0^d}{\bseta\le\bsnu}\subseteq \Lambda$
whenever $\bsnu\in\Lambda$.

For $n\in\N$, $\bbP_n\dfn {\rm span}\set{x^i}{i\in\{0,\dots,n\}}$,
where the span is understood over the field $\R$ (rather than
$\C$). Moreover, for $\Lambda\subseteq\N_0^d$ %
\begin{equation}\label{eq:PL}
  \bbP_\Lambda\dfn {\rm span}\set{\bsx^\bsnu}{\bsnu\in\Lambda},
\end{equation}
and a function $p\in\bbP_\Lambda$ maps from
$\C^d\to\C$. %
If $\Lambda=\emptyset$,
$\bbP_\emptyset\dfn \{0\}$, i.e., $\bbP_\emptyset$ only contains the constant
$0$ function.

\subsubsection{{Real and complex numbers}}
{Throughout, $\R^d$ is equipped with the Euclidean norm and
$\R^{d\times d}$ with the spectral norm.} We write
$\R_+\dfn \set{x\in\R}{x>0}$ and %
{denote the real and imaginary part of $z\in\C$
by $\Re(z)$, $\Im(z)$, respectively.}
For any
$\delta\in\R_+$ and $S\subseteq \C$
\begin{equation}\label{eq:cBdelta}
  \cB_\delta(S)\dfn \set{z\in\C}{\exists y\in S\text{ s.t.~}|z-y|<\delta},
\end{equation}
and thus $\cB_\delta(S)=\bigcup_{x\in S} \cB_\delta(x)$.  %
For $\bsdelta=(\delta_i)_{i=1}^d\subset \R_+$,
$\cB_{\bsdelta}(S)\dfn \bigtimes_{i=1}^d
\cB_{\delta_i}(S)\subseteq\C^d$.
If
we omit the argument $S$, then $S\dfn 0$, i.e.,
$\cB_\delta\dfn \cB_\delta(0)$.
\subsubsection{Measure spaces}
Throughout $[-1,1]^d$ is equipped with the Borel $\sigma$-algebra.
With $\lambda$ denoting the Lebesgue measure on $[-1,1]$, 
{$\mu\dfn \frac{\lambda}{2}$. By abuse of notation
also $\mu\dfn \otimes_{j=1}^k\frac{\lambda}{2}$, where $k\in\N$
will always be clear from context.}

If we write
``$f:[-1,1]^d\to\R_+$ is a
probability density,'' %
we mean that $f$ is measurable,
$f(\bsx)>0$ for all $\bsx\in [-1,1]^d$ and
$\int_{[-1,1]^d}f(\bsx)\dd\mu(\bsx)=1$, i.e., $f$ is a probability
density w.r.t.\ the measure $\mu$ on $[-1,1]^d$.  %

{
\subsubsection{Derivatives and function spaces}
For $f:[-1,1]^d\to\R$ (or $\C$) we denote by $\partial_kf(\bsx)\dfn \frac{\partial}{\partial x_k}f(\bsx)$ the (weak) partial derivative.
For $\bsnu\in\N_0^d$, we write instead
  $\partial^\bsnu_\bsx f(\bsx)\dfn \frac{\partial^{|\bsnu|}}{\partial_{x_1}^{\nu_1}\cdots \partial_{x_d}^{\nu_d}} f(\bsx)$.
  For $m\in\N_0$, the space $W^{m,\infty}([-1,1]^d)$ consists of all
  $f:[-1,1]^d\to\R$ with finite
  $\norm[W^{m,\infty}({[-1,1]^d})]{f}\dfn \sum_{j=0}^m \esssup_{\bsx\in [-1,1]^d} \norm[]{d^j f(\bsx)}$. 
  Here $d^0f(\bsx)=f(\bsx)$ and for $j\ge 1$,
  $d^j f(\bsx)\in \R^{d\times\cdots\times d}\simeq\R^{d^j}$ denotes
  the weak $j$th derivative, and $\norm{d^jf(\bsx)}$ denotes the norm
  on $\R^{d\times\cdots\times d}$ induced by the Euclidean norm. %
  For
  $j=1$ we simply write $df\dfn d^1f$. By abuse of notation, e.g., for
  $T=(T_k)_{k=1}^d:\R^d\to\R^d$ we also write
  $T\in W^{m,\infty}([-1,1]^d)$ meaning that
  $T_k\in W^{m,\infty}([-1,1]^d)$ for all $k\in\{1,\dots,d\}$, and in
  this case
  $\norm[{W^{m,\infty}([-1,1]^d)}]{T}\dfn\sum_{k=1}^d\norm[{W^{m,\infty}([-1,1]^d)}]{T_k}$. Similarly,
  for a measure $\nu$ on $[-1,1]^d$ and $p\in [1,\infty)$ we denote by
  $L^p([-1,1]^d,\nu)$ the usual $L^p$ space with norm
  $\norm[{L^p([-1,1]^d,\nu)}]{f}\dfn
  (\int_{[-1,1]^d}\norm{f(\bsx)}\dd\nu(\bsx))^{1/p}$, where either
  $f:[-1,1]^d\to\R$ or $f:[-1,1]^d\to\R^d$.}

\subsubsection{Transport maps}
Let $d\in\N$. A map $T:[-1,1]^d\to [-1,1]^d$ is called
\emph{triangular} if $T=(T_j)_{j=1}^d$ and each
$T_j:[-1,1]^j\to [-1,1]$ is a function of
$\bsx_{[j]}=(x_i)_{i=1}^j$. We say that $T$ is \emph{monotone} if
$x_j\mapsto T_j(\bsx_{[j-1]},x_j)$ is monotonically increasing
for every $\bsx_{[j-1]}\in [-1,1]^{j-1}$, $j\in\{1,\dots,d\}$. Note that
$x_j\mapsto T_j(\bsx_{[j-1]},x_j):[-1,1]\to [-1,1]$ being bijective %
for every $\bsx_{[j-1]}\in [-1,1]^{j-1}$, $j\in\{1,\dots,d\}$, implies
$T:[-1,1]^d\to [-1,1]^d$ to be bijective. {Similar to our notation
  for vectors, for the vector valued function $T=(T_j)_{j=1}^d$ we
  write  $T_{[k]}\dfn (T_j)_{j=1}^k$.  Note that for a triangular
  transport, it holds that $T_{[k]}:[-1,1]^k\to [-1,1]^k$.}

For a measurable bijection $T:[-1,1]^d\to [-1,1]^d$ and a
measure $\measi$ on $[-1,1]^d$,
the \emph{pushforward} $T_\sharp\measi$ and the \emph{pullback} $T^\sharp\measi$ {measures}
are defined as
\begin{equation*}
  T_\sharp\measi(A) = \measi(T^{-1}(A))\qquad\text{and}\qquad
  T^\sharp\measi(A) = \measi(T(A))
\end{equation*}
for all measurable $A\subseteq [-1,1]^d$.

The inverse transport $T^{-1}:[-1,1]^d\to [-1,1]^d$ is denoted by
$S$.
If $T:[-1,1]^d\to [-1,1]^d$ is a triangular monotone bijection, then
the same is true for $S:[-1,1]^d\to [-1,1]^d$: it holds that
$S_1(y_1)=T_1^{-1}(y_1)$ and
\begin{equation*}
  S_j(y_1,\dots,y_j) = T_j(S_1(y_1),\dots,S_{j-1}(y_1,\dots,y_{j-1}), \, \cdot \, )^{-1}(y_j).
\end{equation*}
Also note that $T_\sharp\measi=\measii$ is equivalent to
$S^\sharp\measi=\measii$.

\section{Knothe--Rosenblatt transport}\label{SEC:T}
\label{sec:knothe}
Let $d\in\N$.  Given a \emph{reference} probability measure $\measi$
and a \emph{target} probability measure $\measii$ on $[-1,1]^d$, under
certain conditions (e.g., as detailed below) 
 the KR transport is the (unique)
triangular monotone transport $T:[-1,1]^d\to [-1,1]^d$ such that
$T_\sharp\measi=\measii$.  We now recall the explicit construction of
$T$, as for instance presented in \cite{santambrogio}. Throughout it
is assumed that $\measii\ll\mu$ and $\measi\ll\mu$ have continuous and
positive densities, i.e.,
\begin{equation}\label{eq:densities}
f_\measii\dfn \frac{\ddd\measii}{\ddd\mu}\in C^0([-1,1]^d;\R_+)\qquad\text{and}\qquad
f_\measi\dfn\frac{\ddd\measi}{\ddd\mu}\in C^0([-1,1]^d;\R_+).
\end{equation}

For a continuous probability density $f:[-1,1]^d \to\C$, we denote
{$\hat f_0\dfn 1$} and
for
$\bsx\in [-1,1]^d$
\begin{equation}\label{eq:fk}
  \begin{aligned}
    \hat f_{k}(\bsx_{[k]})&\dfn \int_{[-1,1]^{d-k}}f(\bsx_{[k]},\bst) \dd\mu(\bst) &&\qquad k\in\{1,\dots,d\},\\
    f_{k}(\bsx_{[k]})&\dfn \frac{\hat f_{k}(\bsx_{[k]})}{\hat
      f_{k-1}(\bsx_{[k-1]})} &&\qquad k\in\{1,\dots,d\}.
  \end{aligned}
\end{equation}
{Thus $\hat f_{k}( \cdot )$ is the marginal density of $\bsx_{[k]}$ and} {$f_k(\bsx_{[k-1]},\cdot)$ is the marginal
density of $x_k$ conditioned on
$\bsx_{[k-1]}$.}
The corresponding {marginal conditional} %
CDFs
\begin{equation}\label{eq:Fk}
  \begin{aligned}
    F_{\measii;k}(\bsx_{[k]})&\dfn \int_{-1}^{x_k} f_{\measii;k}(\bsx_{[k-1]},t)\dd t,\\
    F_{\measi;k}(\bsx_{[k]})&\dfn \int_{-1}^{x_k}
    f_{\measi;k}(\bsx_{[k-1]},t)\dd t,
  \end{aligned}
\end{equation}
are well-defined for $\bsx\in [-1,1]^d$ and $k\in\{1,\dots,d\}$. They
are interpreted as functions of $x_k$ with $\bsx_{[k-1]}$ fixed; in
particular $F_{\measii;k}(\bsx_{[k-1]},\cdot)^{-1}$ denotes the
inverse of $x_k\mapsto F_{\measii;k}(\bsx_{[k]})$.

For $\bsx \in [-1,1]^d$, let
\begin{subequations}\label{eq:Tk}
  \begin{equation}\label{eq:T1}
    T_1(x_1)\dfn (F_{\measii;1})^{-1}\circ F_{\measi;1}(x_1),
  \end{equation}
  and inductively for $k\in\{2,\dots,d\}$ {with
    $T_{[k-1]}\dfn (T_j)_{j=1}^{k-1}:[-1,1]^{k-1}\to [-1,1]^{k-1}$}, let
  \begin{equation}%
    T_k(\bsx_{[k-1]},\cdot)\dfn F_{\measii;k}({T_{[k-1]}(\bsx_{[k-1]})},\cdot)^{-1}\circ F_{\measi;k}(\bsx_{[k-1]},\cdot).
  \end{equation}
\end{subequations}
Then
\begin{equation}\label{eq:knothe}
  T(\bsx)\dfn
  \begin{pmatrix}
    T_1(x_1)\\
    T_2(\bsx_{[2]})\\
    \vdots\\
    T_d(\bsx_{[d]})
  \end{pmatrix}
\end{equation}
yields the triangular KR transport
$T:[-1,1]^d\to [-1,1]^d$.  In the following we denote by
$dT:[-1,1]^d\to \R^{d\times d}$ the Jacobian matrix of $T$. The following theorem holds; see, e.g.,
\cite[Prop.~2.18]{santambrogio} for a proof.
\begin{theorem}\label{thm:knothed}
  Assume \eqref{eq:densities}. The KR transport $T$
  in \eqref{eq:knothe} satisfies $T_\sharp\measi=\measii$ and
  \begin{equation*}
    \det dT(\bsx)f_\measii(T(\bsx))=f_\measi(\bsx)\qquad\forall\bsx\in [-1,1]^d.
    \end{equation*}
\end{theorem}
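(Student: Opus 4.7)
My plan is to first establish the Jacobian identity $\det dT(\bsx) f_\measii(T(\bsx)) = f_\measi(\bsx)$ by direct computation, and then derive the pushforward statement $T_\sharp\measi = \measii$ as a consequence via the standard change-of-variables formula. The Jacobian identity is really the heart of the matter, and once it is in hand the pushforward property follows from $|\det dT^{-1}(\bsy)| = f_\measii(\bsy)/f_\measi(T^{-1}(\bsy))$ together with $T_\sharp\measi(A) = \int_{T^{-1}(A)} f_\measi \, \dd\mu = \int_A f_\measi(T^{-1}(\bsy))|\det dT^{-1}(\bsy)| \, \dd\mu(\bsy) = \measii(A)$ for every measurable $A$. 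Note that positivity of $f_{\measi;k}$ and $f_{\measii;k}$ guarantees each $F_{\measii;k}(\bsx_{[k-1]},\cdot)$ is a $C^1$-bijection from $[-1,1]$ to $[0,1]$, so every $T_k$ is well-defined and strictly increasing in $x_k$, which makes $T$ a $C^1$ bijection.

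The key computation is for $\det dT$. Because $T$ is triangular, $dT(\bsx)$ is lower-triangular and $\det dT(\bsx) = \prod_{k=1}^d \partial_{x_k} T_k(\bsx_{[k]})$. From the defining identity
\begin{equation*}
F_{\measii;k}\bigl(T_1(x_1),\dots,T_{k-1}(\bsx_{[k-1]}),T_k(\bsx_{[k]})\bigr) = F_{\measi;k}(\bsx_{[k-1]},x_k),
\end{equation*}
differentiating both sides with respect to $x_k$ (with $\bsx_{[k-1]}$ fixed, so the inner arguments $T_1,\dots,T_{k-1}$ do not depend on $x_k$) and using $\partial_{x_k} F_{\measi;k} = f_{\measi;k}$ and $\partial_{x_k} F_{\measii;k} = f_{\measii;k}$ yields
\begin{equation*}
\partial_{x_k} T_k(\bsx_{[k]}) = \frac{f_{\measi;k}(\bsx_{[k]})}{f_{\measii;k}(T_1(x_1),\dots,T_k(\bsx_{[k]}))}.
\end{equation*}

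The final step is a telescoping argument using the definitions in \eqref{eq:fk}. Writing $f_{\measi;k} = \hat f_{\measi;k}/\hat f_{\measi;k-1}$ and the analogous expression for $\measii$, the products collapse:
\begin{equation*}
\prod_{k=1}^d f_{\measi;k}(\bsx_{[k]}) = \prod_{k=1}^d \frac{\hat f_{\measi;k}(\bsx_{[k]})}{\hat f_{\measi;k-1}(\bsx_{[k-1]})} = \frac{\hat f_{\measi;d}(\bsx)}{\hat f_{\measi;0}} = f_\measi(\bsx),
\end{equation*}
since $\hat f_{\measi;d} = f_\measi$ and $\hat f_{\measi;0} = \int_{[-1,1]^d} f_\measi \, \dd\mu = 1$. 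The identical telescope applies to the denominators, each evaluated at $T_{[k]}(\bsx_{[k]})$, giving $\prod_{k=1}^d f_{\measii;k}(T_{[k]}(\bsx_{[k]})) = f_\measii(T(\bsx))$. Multiplying through yields $\det dT(\bsx) f_\measii(T(\bsx)) = f_\measi(\bsx)$, as required. The main obstacle is really just keeping the arguments of $f_{\measii;k}$ versus $f_{\measi;k}$ straight through the differentiation step, since they are evaluated at different points; once that bookkeeping is organized the rest is purely mechanical and the positivity hypothesis guarantees all quotients are well-defined.
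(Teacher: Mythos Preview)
Your proof is correct and is the standard direct argument. The paper itself does not prove this theorem; it simply cites \cite[Prop.~2.18]{santambrogio}. Your computation of $\partial_{x_k}T_k = f_{\measi;k}/f_{\measii;k}(T_1,\dots,T_k)$ in fact reappears verbatim later in the paper (Step~3 of the proof of Thm.~\ref{THM:DINN}, see \eqref{eq:Rk}), and the telescoping argument you use is exactly what drives the infinite-dimensional analogue in the proof of Thm.~\ref{THM:KNOTHEINF} (cp.~\eqref{eq:detfrac}). One small caveat: under the paper's standing hypothesis the densities are only $C^0$, so $T$ is not a priori $C^1$ in \emph{all} variables (only in the last variable of each component); $\det dT$ should therefore be read as the product $\prod_{k=1}^d\partial_{x_k}T_k$ of the diagonal entries, which is how the paper uses it throughout. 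This does not affect your argument, since the change-of-variables step only needs these diagonal derivatives together with the triangular-bijection structure.
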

{The regularity assumption \eqref{eq:densities} on the densities can be
relaxed in Thm.~\ref{thm:knothed}; see, e.g., \cite{bogachevtri}.}

In general, $T$ satisfying $T_\sharp\measi=\measii$ is not unique. To
keep the presentation succinct, henceforth we will simply refer to ``the
transport $T$,'' by which we always mean the unique
triangular KR transport in \eqref{eq:knothe}.

\section{Analyticity}\label{SEC:ANT}
The explicit formulas for $T$ given in Sec.~\ref{SEC:T} %
{imply that positive} analytic densities yield an analytic
transport. Analyzing the convergence of polynomial approximations to
$T$ requires knowledge of the domain of analyticity of $T$. {This is
investigated in the present section.}

\subsection{One dimensional case}\label{sec:onedanalyticty}
Let $d=1$. By \eqref{eq:T1},
$T:[-1,1]\to[-1,1]$ can be expressed through composition of the CDF of
$\measi$ and the inverse CDF of
$\measii$. As the inverse function theorem is usually stated without
giving details on the precise domain of extension of the inverse
function, we give a proof, {based on classical arguments,} in
Appendix \ref{sec:InvFunc}. This leads to the result in Lemma
\ref{lemma:ext}. Before stating it, we provide another short lemma
that will be used {multiple} times.

\begin{lemma}\label{lemma:lip}
  Let $\delta>0$ and let $K\subseteq\C$ be convex. Assume that $f\in
  C^1(\cB_\delta(K);\C)$ such that $\sup_{x\in \cB_\delta(K)}|f(x)|\le
  L$. Then $\sup_{x\in K}|f'(x)|\le \frac{L}{\delta}$ and
  $f:K\to\C$ is Lipschitz continuous with Lipschitz constant
  $\frac{L}{\delta}$.
\end{lemma}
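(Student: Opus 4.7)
The plan is to combine a Cauchy estimate for $f'$ on each point of $K$ with a straight-line integration argument using the convexity of $K$. Throughout, I interpret $f\in C^1(\cB_\delta(K);\C)$ in the complex-analytic sense, so $f$ is holomorphic on the open set $\cB_\delta(K)$; this is the natural reading given the context (domains of analyticity) and is what makes Cauchy's formula available.

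First I would establish the pointwise bound on $|f'|$. Fix $x\in K$. By the definition \eqref{eq:cBdelta} of $\cB_\delta(K)$, for every $r<\delta$ the closed disc $\bar\cB_r(x)$ lies in $\cB_\delta(x)\subseteq\cB_\delta(K)$, so $f$ is holomorphic on a neighborhood of $\bar\cB_r(x)$. Cauchy's integral formula for the derivative gives
\begin{equation*}
  f'(x) = \frac{1}{2\pi\ii}\oint_{|z-x|=r}\frac{f(z)}{(z-x)^2}\,\dd z,
\end{equation*}
hence $|f'(x)|\le L/r$. Letting $r\to\delta^-$ yields $|f'(x)|\le L/\delta$, and taking the supremum over $x\in K$ gives the first claim.

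Next I would upgrade this pointwise bound to Lipschitz continuity. For any $x,y\in K$, convexity of $K$ implies the segment $\set{x+t(y-x)}{t\in[0,1]}$ lies in $K\subseteq\cB_\delta(K)$. Along this segment $f$ is (complex) differentiable, so by the fundamental theorem of calculus applied to $t\mapsto f(x+t(y-x))$,
\begin{equation*}
  f(y)-f(x) = (y-x)\int_0^1 f'(x+t(y-x))\,\dd t,
\end{equation*}
which together with the pointwise estimate yields $|f(y)-f(x)|\le (L/\delta)|y-x|$.

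There is essentially no obstacle here; the only points requiring a bit of care are (i) the implicit reading of $C^1$ as meaning holomorphic, so that Cauchy's formula actually applies, and (ii) checking that the closed disc $\bar\cB_r(x)$ for $r<\delta$ is contained in $\cB_\delta(K)$, which is immediate from the definition. The passage to the limit $r\to\delta^-$ is what supplies the sharp constant $L/\delta$ rather than $L/r$ for a fixed $r<\delta$.
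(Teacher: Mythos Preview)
Your proof is correct and follows essentially the same approach as the paper: apply Cauchy's integral formula on a circle of radius $r<\delta$ around $x\in K$ to get $|f'(x)|\le L/r$, then let $r\to\delta$. You are simply more explicit than the paper about the Lipschitz step via integration along the segment (which the paper subsumes under ``implies the claim'').
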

\begin{proof}
  For any $x\in K$ and any $\eps\in
  (0,\delta)$, by Cauchy's integral formula
  \begin{equation*}
    |f'(x)|=\left|\frac{1}{2\measii\ii}\int_{|\zeta-x|=\delta-\eps} \frac{f(\zeta)}{(\zeta -x)^2}\dd\zeta \right| \le \frac{L}{\delta-\eps}.
  \end{equation*}
  Letting $\eps\to 0$ implies the claim.
\end{proof}

\begin{lemma}\label{lemma:ext}
  Let $\delta>0$, $x_0\in\C$ and let $f\in C^1(\cB_\delta (x_0);\C)$. Suppose that
  \begin{equation*}
    0< M\le |f(x)|\le L\qquad\forall x\in \cB_\delta(x_0).
  \end{equation*}
  Let $F:\cB_\delta (x_0)\to \C$ be an antiderivative of
  $f:\cB_\delta (x_0)\to\C$.

  With
  \begin{equation}\label{eq:rrs}
    \alpha=\alpha(M,L)\dfn \frac{M^2}{2M+4L}\qquad\text{and}\qquad
    \beta=\beta(M,L)\dfn \frac{\alpha}{M}=\frac{M}{2M+4L}
  \end{equation}
  there then exists a unique function
  $G:\cB_{\alpha\delta}(F(x_0))\to \cB_{\beta\delta}(x_0)$ such that
  $F(G(y))=y$ for all $y\in \cB_{\alpha\delta}(F(x_0))$. Moreover
  $G\in C^1(\cB_{\alpha\delta}(F(x_0));\C)$ with Lipschitz constant
  $\frac{1}{M}$.
\end{lemma}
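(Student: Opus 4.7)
The plan is to prove the lemma by a quantitative version of the complex inverse function theorem, set up as a Banach fixed point argument applied to a map whose fixed points are precisely the preimages of $y$ under $F$. Specifically, I would define
\begin{equation*}
  \Phi_y(x)\dfn x-\frac{F(x)-y}{f(x_0)},
\end{equation*}
so that $\Phi_y(x)=x$ iff $F(x)=y$ (using $|f(x_0)|\ge M>0$), and then exhibit a closed ball in which $\Phi_y$ is a contraction mapping the ball into itself.

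The key computation is the estimate $\Phi_y'(x)=(f(x_0)-f(x))/f(x_0)$. To control $|f(x_0)-f(x)|$ on a ball $\bar\cB_r(x_0)\subset\cB_\delta(x_0)$, I would apply Lemma \ref{lemma:lip} (or directly Cauchy's integral formula) to obtain $|f'(\zeta)|\le L/(\delta-r)$ for $\zeta\in\cB_r(x_0)$, and then integrate along the segment from $x_0$ to $x$. This yields $|\Phi_y'(x)|\le Lr/(M(\delta-r))$ on $\cB_r(x_0)$. The correct choice turns out to be $r\dfn 2\beta\delta=M\delta/(M+2L)$, which makes $\delta-r=2L\delta/(M+2L)$ and thus $|\Phi_y'(x)|\le 1/2$, giving a contraction constant $q=1/2$. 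For invariance of $\bar\cB_r(x_0)$, I would estimate
\begin{equation*}
  |\Phi_y(x)-x_0|\le|\Phi_y(x)-\Phi_y(x_0)|+|\Phi_y(x_0)-x_0|\le\tfrac{1}{2}|x-x_0|+\frac{|y-F(x_0)|}{M}<\tfrac{r}{2}+\beta\delta=r,
\end{equation*}
where the last equality is the precise arithmetic identity $\beta\delta=r/2$ that dictates the specific constants in \eqref{eq:rrs}. The Banach fixed point theorem then produces a unique $G(y)\in\bar\cB_r(x_0)$ with $F(G(y))=y$.

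Next I would establish that $G$ is holomorphic: since $F$ is holomorphic on $\cB_\delta(x_0)$ with nonvanishing derivative $f$, the classical complex inverse function theorem gives local holomorphy of $G$ in a neighborhood of every $y\in\cB_{\alpha\delta}(F(x_0))$, and the chain rule yields $G'(y)=1/f(G(y))$, whence $|G'(y)|\le 1/M$. Because $\cB_{\alpha\delta}(F(x_0))$ is convex, integrating $G'$ along the segment from $F(x_0)$ to $y$ gives the global Lipschitz bound $|G(y)-x_0|\le|y-F(x_0)|/M<\alpha\delta/M=\beta\delta$, which simultaneously certifies $G(y)\in\cB_{\beta\delta}(x_0)$ (upgrading the a priori bound from $r$ to $\beta\delta=r/2$) and furnishes the claimed Lipschitz constant $1/M$.

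Uniqueness of $G$ as a map into $\cB_{\beta\delta}(x_0)$ follows because any such map produces fixed points of $\Phi_y$ inside $\cB_{\beta\delta}(x_0)\subset\bar\cB_r(x_0)$, where Banach fixed point already gave uniqueness. The main obstacle is really bookkeeping: calibrating the radius $r$ so that the contraction estimate and the invariance estimate balance to produce exactly the constants $\alpha=M^2/(2M+4L)$ and $\beta=M/(2M+4L)$ in \eqref{eq:rrs}; everything else is a routine application of Cauchy estimates and the fixed point theorem.
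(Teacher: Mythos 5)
Your proposal is correct and is essentially the paper's own argument written out inline: the paper routes the contraction through Prop.~\ref{prop:impl} (built on Lemma~\ref{lemma:impl}) with auxiliary radius $\tilde\delta=\delta/(1+2L/M)$, which is precisely your $r=2\beta\delta=M\delta/(M+2L)$, and the Cauchy estimate, contraction constant $1/2$, the identity $G'=1/f\circ G$ giving the Lipschitz bound $1/M$, and the uniqueness argument all coincide with the paper's.
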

\begin{proof}
  We verify the conditions of Prop.~\ref{prop:impl} with
  $\tilde\delta\dfn \delta/(1+\frac{2L}{M})<\delta$.
  To obtain a bound on the Lipschitz constant of $F'=f$ on
  $\cB_{\tilde\delta}(x_0)$, it suffices to bound $F''=f'$ there. %
  Due to $\tilde\delta+\frac{\tilde\delta 2L}{M} =\delta$,
  for all $x\in \cB_{\tilde\delta}(x_0)$ we have by Lemma
  \ref{lemma:lip}
  \begin{equation*}
    |f'(x)|\le \frac{L}{\frac{2\tilde \delta L}{M}}=
    \frac{M}{2\tilde\delta}\le\frac{|f(x_0)|}{2\tilde\delta} =
    \frac{|F'(x_0)|}{2\tilde\delta}.
  \end{equation*}
  Since $F'(x_0)=f(x_0)\neq 0$, the conditions of
  Prop.~\ref{prop:impl} are satisfied, and $G$ is well defined and
  exists on $\cB_{\alpha\delta}(F(x_0))$, where
  $\alpha\delta=\frac{\delta M^2}{2M+4L}=\frac{\tilde\delta M}{2}\le
  \frac{\tilde\delta
  |F'(t_0)|}{2}$. Finally, due to $1=F(G(y))'=F'(G(y))G'(y)$, it holds
  $G'(y)=\frac{1}{f(G(y))}$ for all $y\in \cB_{\alpha\delta}(F(x_0))$, which
  shows that $G:\cB_{\alpha\delta}(F(x_0))\to\C$ has Lipschitz
  constant $\frac{1}{M}$. Hence
  $G:\cB_{\alpha\delta}(F(x_0))\to
  \cB_{\alpha\delta/M}(G(F(x_0)))=\cB_{\beta\delta}(x_0)$.  Uniqueness
  of $G:\cB_{\alpha\delta}(F(x_0))\to\cB_{\beta\delta}(x_0)$
  satisfying $F\circ G={\rm Id}$ on $\cB_{\alpha\delta}(F(x_0))$
  follows by Prop.~\ref{prop:impl} and the fact that
  $\beta\delta=\frac{\tilde\delta}{2}\le\tilde\delta$.
\end{proof}

For $x\in[-1,1]$ and a density $f:[-1,1]\to\R$ the CDF equals
$F(x)=\int_{-1}^xf(t)\dd\mu( t)$. By definition of $\mu$
\begin{equation}\label{eq:ext}
  F(x)=\int_{-1}^xf(t)\frac{\ddd t}{2}=\frac{x+1}{2}\int_0^1 f(-1+t(x+1))\dd
  t.
\end{equation}
In case $f$ allows an extension $f:\cB_\delta([-1,1])\to\C$, then
$F:\cB_\delta([-1,1])\to\C$ is also well-defined via
\eqref{eq:ext}. Without explicitly mentioning it, we always consider
$F$ to be naturally extended to complex values in this sense.

{The next result generalizes Lemma \ref{lemma:ext} from complex
  balls $\cB_\delta$ to the pill-shaped domains $\cB_\delta([-1,1])$
  defined in \eqref{eq:cBdelta}. The proof is given in Appendix
  \ref{app:Fext}.}

\begin{lemma}\label{LEMMA:FEXT}
  Let $\delta>0$ and let
  \begin{enumerate}[label=(\alph*)]
  \item\label{item:Gass1} $f:[-1,1]\to\R_+$ be a probability density
    such that $f\in C^1(\cB_\delta([-1,1]);\C)$,
  \item\label{item:Gass2} $M\le |f(x)|\le L$ for some $0<M\le L<\infty$ and all
    $x\in \cB_\delta([-1,1])$.
  \end{enumerate}
  Set $F(x)\dfn \int_{-1}^x f(t)\dd\mu( t)$ and let
  $\alpha=\alpha(M,L)$, $\beta=\beta(M,L)$ be as in \eqref{eq:rrs}.

  Then
  \begin{enumerate}
  \item\label{item:Fiso} $F:[-1,1]\to [0,1]$ is a
    $C^1$-diffeomorphism, and $F\in C^1(\cB_\delta([-1,1]);\C)$ with
    Lipschitz constant $L$,
  \item\label{item:Finvhol} there exists a unique
    $G:\cB_{\alpha\delta}([0,1])\to \cB_{\beta\delta}([-1,1])$ such
    that $F(G(y))=y$ for all $y\in\cB_{\alpha\delta}([0,1])$ and
    \begin{equation}\label{eq:uniqueness}
      G:\cB_{\alpha\delta}(F(x_0))\to \cB_{\beta\delta}(x_0)
    \end{equation}
    for all $x_0\in [-1,1]$. Moreover
    $G\in C^1(\cB_{\alpha\delta}([0,1]);\C)$ with Lipschitz constant
    $\frac{1}{M}$.
  \end{enumerate}
\end{lemma}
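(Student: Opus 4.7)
The argument splits naturally according to the two claims of the lemma.

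For part \ref{item:Fiso}, I would use the explicit representation \eqref{eq:ext} to extend $F$ to a holomorphic function on $\cB_\delta([-1,1])$; differentiating under the integral recovers $F'(x)=f(x)/2$ in the complex-differentiable sense (the factor $1/2$ is inherited from $\mu=\lambda/2$). Since $\cB_\delta([-1,1])$ is convex as the $\delta$-neighborhood of a convex set, the Lipschitz bound $L$ follows by integrating $F'$ along line segments and using $|f|\le L$. On the real interval, the diffeomorphism property is then immediate: $f>0$ yields strict monotonicity, non-vanishing of $F'$ gives $C^1$-invertibility, and the normalization $\int_{[-1,1]}f(t)\mu(\dd t)=1$ forces $F(-1)=0$ and $F(1)=1$.

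For part \ref{item:Finvhol}, my plan is to paste together local inverses. For each $x_0\in[-1,1]$ one has $\cB_\delta(x_0)\subseteq\cB_\delta([-1,1])$, so Lemma \ref{lemma:ext} (with constants adjusted to absorb the factor $1/2$ from $\mu$) produces a unique holomorphic local inverse $G_{x_0}:\cB_{\alpha\delta}(F(x_0))\to\cB_{\beta\delta}(x_0)$ with $F\circ G_{x_0}=\mathrm{Id}$ and Lipschitz constant $1/M$. Since $F([-1,1])=[0,1]$ by part \ref{item:Fiso}, the union $\bigcup_{x_0\in[-1,1]}\cB_{\alpha\delta}(F(x_0))$ equals $\cB_{\alpha\delta}([0,1])$, so these local inverses cover the required domain, and I would set $G(y):=G_{x_0}(y)$ for any admissible $x_0$. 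The $C^1$ regularity, the Lipschitz constant $1/M$, and the local containment $G(y)\in\cB_{\beta\delta}(x_0)$ would then all be inherited from the $G_{x_0}$.

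The main obstacle is showing that this definition is unambiguous: whenever $y\in\cB_{\alpha\delta}(F(x_0))\cap\cB_{\alpha\delta}(F(x_1))$, one must verify $G_{x_0}(y)=G_{x_1}(y)$. I plan to argue via the identity theorem. The lens-shaped complex intersection $\cB_{\alpha\delta}(F(x_0))\cap\cB_{\alpha\delta}(F(x_1))$ is connected and, when non-empty, its intersection with $[0,1]$ is a non-empty real open interval (the midpoint of $F(x_0)$ and $F(x_1)$ always lies in both balls and in $[0,1]$). Because $F$ is real on real arguments, the Schwarz reflection principle forces each $G_{x_i}$ to be real-valued on this real overlap; combined with injectivity of $F:[-1,1]\to[0,1]$ from part \ref{item:Fiso}, both $G_{x_0}$ and $G_{x_1}$ must coincide with the real inverse $F^{-1}$ there. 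Equality of two holomorphic functions on a set with accumulation points extends by the identity theorem to the entire connected complex intersection. Uniqueness of the global $G$ within the stated domain/codomain pair is then inherited from the uniqueness statement of Lemma \ref{lemma:ext} applied at each $x_0$.
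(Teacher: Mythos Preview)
Your proposal is correct and follows essentially the same approach as the paper: holomorphic extension of $F$ plus a Lipschitz bound for part \ref{item:Fiso}, and for part \ref{item:Finvhol} the construction of local inverses $G_{x_0}$ via Lemma \ref{lemma:ext}, pasted together by showing agreement with the real inverse $F^{-1}$ on the real overlap and invoking the identity theorem. The only cosmetic difference is that the paper obtains $G_{x_j}\equiv F^{-1}$ on $[0,1]\cap\cB_{\alpha\delta}(F(x_j))$ directly from continuity of $F^{-1}$ together with the uniqueness clause of Lemma \ref{lemma:ext}, whereas you reach the same conclusion via a Schwarz reflection argument; both are valid.
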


We arrive at a statement about the domain of analytic extension of the
one dimensional monotone transport
$T\dfn F_\measii^{-1}\circ F_\measi:[-1,1]\to [-1,1]$ as in
\eqref{eq:T1}.

\begin{proposition}\label{prop:1d}
  Let $\delta_\measi$, $\delta_\measii>0$, {$L_\measi<\infty$,}
  $0<M_\measii\le {L_\measii<\infty}$ and
  \begin{enumerate}[label=(\alph*)]
  \item for $*\in\{\measi,\measii\}$ let $f_*:[-1,1]\to\R_+$ be a probability density and
    $f_*\in C^1(\cB_{\delta_*}([-1,1]);\C)$,
  \item for $x\in \cB_{\delta_\measi}([-1,1])$, $t\in \cB_{\delta_\measii}([-1,1])$
    \begin{equation*}
      |f_\measi(x)|\le L_\measi,\qquad 0< M_\measii\le |f_\measii(t)|\le L_\measii.
    \end{equation*}
  \end{enumerate}

  Then with $r\dfn \min\{\delta_\measi, \frac{\delta_\measii
    M_\measii^2}{L_\measi(2M_\measii+4L_\measii)}\}$ and $q\dfn
  \frac{r L_\measi}{M_\measii}$ it holds $T\in C^1(\cB_r([-1,1]);
  \cB_{q}([-1,1]))$.
\end{proposition}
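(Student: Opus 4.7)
The plan is to decompose $T = F_\measii^{-1} \circ F_\measi$ and bound each factor using Lemma \ref{LEMMA:FEXT}, then chase through the Lipschitz estimates to get the claimed image inclusion. The two key ingredients are already essentially in hand: (a) an analytic, globally Lipschitz extension of $F_\measi$ on $\cB_{\delta_\measi}([-1,1])$ with constant $L_\measi$, obtained from item \ref{item:Fiso} of Lemma \ref{LEMMA:FEXT}; and (b) an analytic inverse $G_\measii$ of $F_\measii$ defined on $\cB_{\alpha_\measii\delta_\measii}([0,1])$, taking values in $\cB_{\beta_\measii\delta_\measii}([-1,1])$, with Lipschitz constant $1/M_\measii$, obtained from item \ref{item:Finvhol} applied with parameters $M_\measii$, $L_\measii$, $\delta_\measii$ and the associated $\alpha_\measii=M_\measii^2/(2M_\measii+4L_\measii)$.

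First I would verify that the composition is well-defined on $\cB_r([-1,1])$. Since $r\le \delta_\measi$, we have $\cB_r([-1,1])\subseteq \cB_{\delta_\measi}([-1,1])$, so $F_\measi$ is defined there. Pick any $x\in \cB_r([-1,1])$ and let $x_0\in [-1,1]$ satisfy $|x-x_0|<r$. By the Lipschitz bound on $F_\measi$ on the convex set $\cB_{\delta_\measi}([-1,1])$,
\[
|F_\measi(x)-F_\measi(x_0)|\le L_\measi r,
\]
and $F_\measi(x_0)\in [0,1]$. The definition of $r$ forces $L_\measi r\le \alpha_\measii\delta_\measii$, so $F_\measi(x)\in \cB_{\alpha_\measii\delta_\measii}([0,1])$, the domain on which $G_\measii$ is defined. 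Hence $T = G_\measii\circ F_\measi\in C^1(\cB_r([-1,1]);\C)$.

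Next I would prove the range bound $T(\cB_r([-1,1]))\subseteq \cB_q([-1,1])$. Since $\cB_r([-1,1])$ and $\cB_{\alpha_\measii\delta_\measii}([0,1])$ are both convex, composing the Lipschitz estimates gives
\[
|T(x)-T(\tilde x)|\le \tfrac{1}{M_\measii}|F_\measi(x)-F_\measi(\tilde x)|\le \tfrac{L_\measi}{M_\measii}|x-\tilde x|
\qquad\forall x,\tilde x\in \cB_r([-1,1]).
\]
For $x\in \cB_r([-1,1])$, choose $x_0\in [-1,1]$ with $|x-x_0|<r$; then $T(x_0) = F_\measii^{-1}(F_\measi(x_0))\in [-1,1]$ (the real restriction coincides with the bijection $[-1,1]\to[-1,1]$ by the uniqueness part \eqref{eq:uniqueness} of Lemma \ref{LEMMA:FEXT}), and the displayed inequality gives $|T(x)-T(x_0)|\le rL_\measi/M_\measii = q$. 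Therefore $T(x)\in \cB_q([-1,1])$.

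The only subtle step is verifying that $T(x_0)\in [-1,1]$ for real $x_0$, which requires that the complex extension $G_\measii$ agrees with the classical real inverse $F_\measii^{-1}:[0,1]\to[-1,1]$; this follows from the uniqueness clause \eqref{eq:uniqueness} in Lemma \ref{LEMMA:FEXT} together with the fact that the real inverse itself obviously satisfies $F_\measii(F_\measii^{-1}(y))=y$. Beyond this, the argument is bookkeeping: choose the right $\alpha_\measii$, check $L_\measi r\le \alpha_\measii\delta_\measii$ by definition of $r$, and multiply the two Lipschitz constants.
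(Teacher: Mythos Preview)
Your proof is correct and follows the same strategy as the paper: apply Lemma~\ref{LEMMA:FEXT}\ref{item:Fiso} to $F_\measi$ for the Lipschitz-$L_\measi$ extension, Lemma~\ref{LEMMA:FEXT}\ref{item:Finvhol} to $F_\measii$ for the inverse $G_\measii$ with Lipschitz constant $1/M_\measii$, check $L_\measi r\le\alpha_\measii\delta_\measii$ so the composition is well-defined, and multiply the Lipschitz constants for the range bound. The paper argues the well-definedness by a case split on which term realizes the minimum in $r$; your direct use of $r\le \delta_\measii M_\measii^2/(L_\measi(2M_\measii+4L_\measii))$ to get $L_\measi r\le\alpha_\measii\delta_\measii$ is slightly cleaner but not a genuinely different route.
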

\begin{proof}
  First, according to Lemma \ref{LEMMA:FEXT} \ref{item:Fiso},
  $F_\measi:[-1,1]\to [0,1]$ admits an extension
  \begin{subequations}\label{eq:FmeasiFmeasii-1}
  \begin{equation}
    F_\measi\in C^1(\cB_{\delta_\measi}([-1,1]); \cB_{L_\measi \delta_\measi}([0,1])),
  \end{equation}
  where we used that $F_\measi$ is Lipschitz continuous with Lipschitz
  constant $L_\measi$. Furthermore, Lemma \ref{LEMMA:FEXT}
  \ref{item:Finvhol} implies with
  $\eps\dfn \frac{\delta_\measii M_\measii^2}{2M_\measii+4L_\measii}$ that
  $F_\measii^{-1}:[0,1]\to [-1,1]$ admits an extension
  \begin{equation}
    F_\measii^{-1}\in C^1(\cB_{\eps}([0,1]);\cB_{\frac{\eps}{M_\measii}}([-1,1])),
  \end{equation}
  \end{subequations}
  where we used that $F_\measii^{-1}$ is Lipschitz continuous with
  Lipschitz constant $\frac{1}{M_\measii}$.

  {Assume first $r=\delta_\measi$, which implies}
  $L_\measi\delta_\measi\le\eps$. Then
  $F_\measii^{-1}\circ F_\measi\in
  C^1(\cB_{\delta_\measi}([-1,1]);\C)$ is well-defined {by
  \eqref{eq:FmeasiFmeasii-1}}. %
  In the second case where
  $r=\frac{\delta_\measii M_\measii^2}{L_\measi(2M_\measii+4L_\measii)}$, we have
  $\eps=L_\measi r$ and $r\le\delta_\measi$. Hence
  $F_\measi:\cB_r([-1,1])\to \cB_{L_\measi r}([-1,1])=\cB_{\eps}([-1,1])$ is
  well-defined. Thus $F_\measii^{-1}\circ F_\measi\in C^1(\cB_{r}([-1,1]);\C)$
  is well-defined. %
  In both cases, since $T=F_\measii^{-1}\circ F_\measi$ is Lipschitz
  continuous with Lipschitz constant $\frac{L_\measi}{M_\measii}$
  {(cp.~Lemma \ref{LEMMA:FEXT})}, $T$ maps to
  $\cB_{\frac{rL_\measi}{M_\measii}}([-1,1])$.
\end{proof}

{%
  The radius $r$ in Prop.~\ref{prop:1d} describing the analyticity
  domain of the transport behaves like
  $O(\min\{\delta_\measi,\delta_\measii\})$ as
  $\min\{\delta_\measi,\delta_\measii\}\to\infty$ (considering the
  $M_*$, $L_*$ constants fixed). %
  In this sense,
  the analyticity domain of $T$ is proportional to the minimum of the
  analyticity domains of the reference and target densities.}

\subsection{General case}
We now come to the main result of Sec.~\ref{SEC:ANT}, which is a
multidimensional version of Prop.~\ref{prop:1d}.
\proofref{The technical proof
along with several lemmata is deferred to Appendix \ref{app:thm:dinN}.}
More precisely, we give a
statement about the analyticity domain of
$(\partial_{k}T_k)_{k=1}^d$. The reason is that, from both a
theoretical and a practical viewpoint, it is convenient first to approximate
$\partial_{k}T_k:[-1,1]^k\to [0,1]$ and then to obtain an approximation
to $T_k$ by integrating over $x_k$. {We explain this in more detail
in Sec.~\ref{SEC:APPROX}, see \eqref{eq:tildeTkgeneric}.}

The following technical assumption gathers our requirements on the
reference $\measi$ and the target $\measii$.

\begin{assumption}\label{ass:finite}
  Let $0 < M \le L <\infty$, $C_6>0$, %
  $d\in\N$ and
  $\bsdelta\in (0,\infty)^d$. %
  For $*\in\{\measi,\measii\}$:
  \begin{enumerate}[label=(\alph*)]
  \item\label{item:cordinN:1}
    $f_*:[-1,1]^d\to\R_+$ is a probability density %
    and
    $f_{*}\in C^1(\cB_{\bsdelta}([-1,1]);\C)$,
  \item\label{item:cordinN:2} $M\le |f_{*}(\bsx)|\le L$ for
    $\bsx\in \cB_{\bsdelta}([-1,1])$,
  \item\label{item:cordinN:3}
    $\sup_{\bsy\in 
      \cB_{\bsdelta}}|f_{*}(\bsx+\bsy)-f_{*}(\bsx)| \le C_6$
    for $\bsx\in[-1,1]^d$,
  \item\label{item:cordinN:4} %
    $      \sup_{\bsy\in
        \cB_{\bsdelta_{[k]}}\times \{0\}^{d-k}}|f_{*}(\bsx+\bsy)-f_{*}(\bsx)|\le C_6 \delta_{k+1}
$
    for $\bsx\in[-1,1]^d$ and $k\in\{1,\dots,d-1\}$.
  \end{enumerate}
\end{assumption}

Assumptions \ref{item:cordinN:1} and \ref{item:cordinN:2} state that
$f_*$ is a {positive} analytic probability density on $[-1,1]^d$
that allows a complex differentiable extension to the set
$\cB_\bsdelta([-1,1])\subseteq\C^d$, cp.~\eqref{eq:cBdelta}.  Equation
\eqref{eq:Tk} shows that $T_{k+1}$ is obtained by a composition of
$F_{\measii;k+1}(T_{1},\dots,T_{k},\cdot)^{-1}$ (the inverse in the last
variable) and $F_{\measi;k+1}$. The smallness conditions
\ref{item:cordinN:3} and \ref{item:cordinN:4} can be interpreted as
follows: they will guarantee $F_{\measi;k+1}(\bsy)$ (for certain
complex $\bsy$) to belong to the domain where the complex extension of
$F_{\measii;k+1}(T_1,\dots,T_{k},\cdot)^{-1}$ is well-defined.

\begin{theorem}\label{THM:COR:DINN}
  Let $0<M\le L<\infty$, $d\in\N$ and $\bsdelta\in
  (0,\infty)^d$. There exist $C_6$, $C_7$, and $C_8>0$ depending
  on $M$ and $L$ (but not on $d$ or $\bsdelta$) such that if
  Assumption \ref{ass:finite} holds with $C_6$,
  then:

  Let $T=(T_k)_{k=1}^d$ be as in \eqref{eq:knothe}
  and $R_k\dfn \partial_{k}T_k$.
  With $\bszeta=(\zeta_k)_{k=1}^d$ where %
  \begin{equation}\label{eq:zetak2}
    \zeta_{k}\dfn C_7 \delta_{k} %
  \end{equation}
  it holds for all $k\in\{1,\dots,d\}$ that
  \begin{enumerate}
  \item\label{item:cordinN:a}
    $R_k\in C^1(\cB_{\bszeta_{[k]}}([-1,1]);\cB_{C_8}(1))$ and
    $\Re(R_k(\bsx))\ge \frac{1}{C_8}$ for all
    $\bsx\in \cB_{\bszeta_{[k]}}([-1,1])$,
  \item\label{item:cordinN:b} if $k\ge 2$,
    $R_k:\cB_{\bszeta_{[k-1]}}([-1,1])\times [-1,1]\to
    \cB_{\frac{C_8}{\max\{1,\delta_k\}}}(1)$.
  \end{enumerate}
\end{theorem}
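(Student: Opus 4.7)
My plan is to argue by induction on $k$, exploiting the explicit chain-rule identity obtained by differentiating $F_{\measii;k}(T_1(x_1),\dots,T_k(\bsx_{[k]})) = F_{\measi;k}(\bsx_{[k-1]},x_k)$ in $x_k$, namely
\[
R_k(\bsx_{[k]}) = \frac{f_{\measi;k}(\bsx_{[k-1]},x_k)}{f_{\measii;k}(T_1(x_1),\dots,T_{k-1}(\bsx_{[k-1]}),T_k(\bsx_{[k]}))}.
\]
The task at each stage is to extend $T_k$, and hence $R_k$, analytically to $\cB_{\bszeta_{[k]}}([-1,1])$ with the claimed two-sided bounds, feeding the output back into the next inductive step.

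Before the induction, I would establish that the marginal conditional densities $f_{*;k} = \hat f_{*;k}/\hat f_{*;k-1}$ admit analytic extensions to $\cB_{\bsdelta_{[k]}}([-1,1])$ with two-sided bounds comparable to $M$ and $L$, up to universal constants depending only on $M$ and $L$. Analyticity in $\bsx_{[k]}$ is preserved by the partial marginalization since $\hat f_{*;k}$ is an integral of $f_*(\bsx,\cdot)$ over the \emph{real} cube $[-1,1]^{d-k}$. Assumption~\ref{ass:finite}\,\ref{item:cordinN:3} then yields $|\hat f_{*;k}(\bsx) - \hat f_{*;k}(\Re(\bsx))| \le C_6$; combined with the real-valued two-sided bound $M\le\hat f_{*;k}\le L$, this controls $|\hat f_{*;k}|$ from above and below, provided $C_6$ is chosen small compared to $M$. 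Assumption~\ref{ass:finite}\,\ref{item:cordinN:4} plays the analogous role for the sharper slot-specific bound needed in part (b).

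The inductive step is the heart of the proof. The case $k=1$ is precisely Proposition~\ref{prop:1d}, applied to $f_{\measi;1}$ and $f_{\measii;1}$. For $k\ge 2$, assuming the statement up to $k-1$, I would apply Lemma~\ref{LEMMA:FEXT} to the one-variable function $x_k \mapsto F_{\measii;k}(T_1(x_1),\dots,T_{k-1}(\bsx_{[k-1]}),x_k)$ with parameters $\bsx_{[k-1]}\in \cB_{\bszeta_{[k-1]}}([-1,1])$. The inductive hypothesis guarantees that $(T_1(x_1),\dots,T_{k-1}(\bsx_{[k-1]}))$ stays in a fixed complex neighborhood of $[-1,1]^{k-1}$ on which the extended density $f_{\measii;k}$ still satisfies the two-sided bound from the previous paragraph; thus the local inverse of $F_{\measii;k}$ in the last variable exists on a disc of radius proportional to $\delta_k$. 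A Lipschitz estimate from Lemma~\ref{lemma:lip}, applied to $F_{\measi;k}(\bsx_{[k-1]},\cdot)$ on $\cB_{\zeta_k}([-1,1])$, verifies that its image lands in this disc once $C_7$ is chosen suitably small. Composing gives $T_k$, and substitution into the ratio formula for $R_k$ produces the claimed bound $R_k(\bsx)\in\cB_{C_8}(1)$ together with $\Re(R_k)\ge 1/C_8$. For part (b), restricting $x_k$ to real values lets one bypass the composition with $F_{\measi;k}$ in the last slot and instead use the refined variation estimate from Assumption~\ref{ass:finite}\,\ref{item:cordinN:4}, which contributes the improved factor $1/\max\{1,\delta_k\}$.

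The main obstacle is the careful bookkeeping of $C_6$, $C_7$, and $C_8$ so that they depend only on $M$ and $L$, not on $d$ or $\bsdelta$. Each application of Lemma~\ref{LEMMA:FEXT} contracts the available radius by the universal factor $M^2/(2M+4L)$, while passing from $T_{k-1}$ to $T_k$ multiplies density bounds by $L/M$. The constants must therefore be fixed in a definite order: first $C_6$ small enough that the extended marginal conditional densities stay within a factor two of their real values; then $C_7$ small enough that the target radius $\zeta_k = C_7\delta_k$ lies inside every radius produced after one step of Lemma~\ref{LEMMA:FEXT}; and finally $C_8$ large enough to absorb the resulting uniform bounds on $R_k$ and $1/\Re(R_k)$. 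Dimension-independence is achieved because each inductive step consumes only a fixed proportion of the radius $\delta_k$ dedicated to the $k$th coordinate, with no cumulative contraction over $k$.
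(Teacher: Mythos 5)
Your proposal follows the same strategy as the paper: induction on $k$, the chain-rule identity expressing $R_k$ as a ratio of conditional densities, the inverse-function machinery built on Lemma~\ref{LEMMA:FEXT}, and a careful ordering of the constants $C_6, C_7, C_8$. (The paper inserts an intermediate Theorem~\ref{THM:DINN} under the more flexible Assumption~\ref{ass:fpi} and derives Theorem~\ref{THM:COR:DINN} from it, but this is a layer of bookkeeping rather than a different idea.)

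Two points in your sketch need sharpening, however. First, invoking Lemma~\ref{LEMMA:FEXT} slice-by-slice, one fixed $\bsx_{[k-1]}$ at a time, produces a family of holomorphic inverses $y\mapsto G_{\bsx_{[k-1]}}(y)$, but by itself this does not yield joint holomorphy of $T_k$ in all $k$ variables; the paper closes this gap via Lemma~\ref{lemma:Gcontinuous} (continuous dependence of the local inverse on $F$) together with Prop.~\ref{prop:localhol} (local uniqueness and joint analyticity of the implicit function), assembled in Lemma~\ref{lemma:FGprop}. Some such continuity-plus-local-uniqueness argument is essential and your proposal does not supply it. Second, the factor $1/\max\{1,\delta_k\}$ in part~\ref{item:cordinN:b} does not come from Assumption~\ref{ass:finite}\,\ref{item:cordinN:4}: it stems from a Cauchy estimate on $\partial_{x_k}\hat f_{*;k}$ over the $\delta_k$-neighborhood in the last slot, combined with the normalization $\int_{-1}^1 f_{*;k}(\bsx_{[k-1]},t)\,\mu(\dd t)=1$ and the mean value theorem, which together force $f_{*;k}$ to be within $O(1/\delta_k)$ of $1$ (Lemma~\ref{lemma:fk}\,\ref{item:fk:1-fk}); the near-unit ratio then gives $R_k\in\cB_{O(1/\delta_k)}(1)$. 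Assumption~\ref{item:cordinN:4} plays a different role: it ensures that $F_{\measi;k}(\bsx)$ stays inside the domain of the complex extension of $F_{\measii;k}(T_1,\dots,T_{k-1},\cdot)^{-1}$ as $\bsx_{[k-1]}$ ranges over the complex polydisc, so that the composition defining $T_k$ is well posed.
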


{ Put simply, the first item of the theorem can be interpreted as
  follows: The function $\partial_{k}T_k$ allows in the $j$th variable
  an analytic extension to the set $\cB_{\zeta_j}([-1,1])$, where
  $\zeta_j$ is proportional %
  to $\delta_j$. The constant
  $\delta_j$ describes the domain of analytic extension of the
  densities $f_\measi$, $f_\measii$ in the $j$th variable. Thus the
  analyticity domain of each $\partial_{k} T_k$ is proportional to the
  (intersection of the) domains of analyticity of the
  densities. Additionally, the real part of $\partial_{k} T_k$ remains
  strictly positive on this extension to the complex domain. Note that
  $\partial_{k} T_k(\bsx)$ is necessarily positive for \emph{real}
  $\bsx\in [-1,1]^k$, since the transport is monotone.

  The second item of the theorem states that the $k$th variable $x_k$
  plays a special role for $T_k$: if we merely extend
  $\partial_{k}T_k$ in the first $k-1$ variables to the complex
  domain and let the $k$th argument $x_k$ belong to the real interval
  $[-1,1]$, then the values of this extension behave like
  $1+O(\frac{1}{\delta_k})$ and thus the extension becomes very close to the
  constant $1$ function as $\delta_k\to\infty$. In other words, if the
  densities $f_\measi$, $f_\measii$ allow a (uniformly bounded from
  above and below) analytic extension to a very large subset of the
  complex domain in the $k$th variable, then the $k$th component of
  the transport $T_k(\bsx_{[k]})$ will be close to
  $-1+\int_{-1}^{x_k}1\dd t=x_k$, i.e., to the identity in the $k$th
  variable.

  We also emphasize that we state the analyticity results here for
  $\partial_{k}T_k$ (in the form they will be needed below), but this immediately implies that $T_k$ allows an
  analytic extension to the same domain.  }

\begin{remark}
  Crucially, for any $k<d$ the left-hand side of the inequality in
  Assumption \ref{ass:finite} \ref{item:cordinN:4}
  depends on $(\delta_j)_{j=1}^k$, while the right-hand side %
  depends only on $\delta_{k+1}$ but not on $(\delta_j)_{j=1}^k$. This
  will allow us to suitably choose $\bsdelta$ when verifying this
  assumption (see the proof of Lemma
  \ref{LEMMA:ANALYTICITYASSUMPTION}).
\end{remark}

\begin{remark}
  The proof of Thm.~\ref{THM:COR:DINN} shows that there exists
  $C\in (0,1)$ independent of $M$ and $L$ such that we can choose
  $C_6=C\frac{\min\{M,1\}^5}{\max\{L,1\}^4}$,
  $C_7=C\frac{\min\{M,1\}^3}{\max\{L,1\}^3}$ and
  $C_8=C^{-1}(\frac{\max\{L,1\}^4}{\min\{M,1\}^4})$; see
  \eqref{eq:C6}, \eqref{eq:C7} and \eqref{eq:Rkext1}, \eqref{eq:C5}.
\end{remark}

To give an example for $\measi$, $\measii$ fitting our setting, we
show that Assumption \ref{ass:finite} holds (for \emph{some} sequence
$\bsdelta$) whenever the densities $f_\measi$, $f_\measii$ are
analytic.

\begin{lemma}\label{LEMMA:ANALYTICITYASSUMPTION}
  For $*\in\{\measi,\measii\}$ let $f_*:[-1,1]^d\to\R_+$ be a probability
  density, and assume that $f_*$ is analytic on an open
  set {in $\R^d$} containing $[-1,1]^d$.
  Then there exist $0<M\le L <\infty$ %
  and
  $\bsdelta\in (0,\infty)^d$ such that Assumption \ref{ass:finite}
  holds with $C_6(M,L)$ as in Thm.~\ref{THM:COR:DINN}.
\end{lemma}

\section{Polynomial based approximation}\label{SEC:APPROX}
Analytic functions on $[-1,1]^d\to\R$ allow for exponential
convergence when approximated by multivariate polynomial expansions.
We recall this in Sec.~\ref{sec:legendre} for truncated Legendre
expansions. These results are then applied to the KR
transport in Sec.~\ref{sec:pold}.

{
\subsection{Exponential convergence of Legendre
  expansions}\label{sec:legendre}
For $n\in\N_0$, let $L_n\in\bbP_n$ be the $n$th Legendre polynomial
normalized in $L^2([-1,1],\mu)$. For $\bsnu\in\N_0^d$ set
$L_\bsnu(\bsy)\dfn \prod_{j=1}^dL_{\nu_j}(y_j)$.
Then $(L_\bsnu)_{\bsnu\in\N_0^d}$ is an orthonormal basis of
$L^2([-1,1]^d,\mu)$. Thus, every $f\in L^2([-1,1]^d,\mu)$ admits the
multivariate Legendre expansion
$f(\bsy) = \sum_{\bsnu\in\N_0^d}l_\bsnu L_\bsnu(\bsy)$ with
coefficients
$l_\bsnu = \int_{[-1,1]^d}f(\bsy)L_\bsnu(\bsy)\dd\mu(\bsy)$. For
finite $\Lambda\subseteq \N_0^d$,
$\sum_{\bsnu\in\Lambda}l_\bsnu L_\bsnu(\bsy)$ is the orthogonal
projection of $f$ in the Hilbert space $L^2([-1,1]^d,\mu)$
onto its subspace
\begin{equation*}
  \bbP_\Lambda\dfn{\rm span}\set{\bsx^\bsnu}{\bsnu\in\Lambda}.
\end{equation*}

As is well-known, functions that are holomorphic on sets of the type
$\cB_\bsdelta([-1,1])$ have exponentially decaying Legendre
coefficients. We recall this in the next lemma, which is adapted to
the regularity we showed for the transport in Thm.~\ref{THM:COR:DINN}.

\begin{lemma}\label{LEMMA:LEGEST}
  Let $k\in\N$, $\bsdelta\in (0,\infty)^k$ and
  $f\in C^1(\cB_\bsdelta([-1,1]);\C)$. Set
  $w_\bsnu\dfn \prod_{j=1}^k(1+2\nu_j)^{3/2}$, $\bsvarrho=(1+\delta_j)_{j=1}^k$
  and
  $l_\bsnu\dfn \int_{[-1,1]^d} f(\bsy) L_\bsnu(\bsy)\dd\mu(\bsy)$
  for $\bsnu\in\N_0^k$.
  Then
  \begin{enumerate}
  \item\label{item:lemma:legest:1} for all $\bsnu\in\N_0^k$
    \begin{equation}\label{eq:lknubound}
    |l_\bsnu|
    \le \bsvarrho^{-\bsnu} w_\bsnu \norm[L^\infty(\cB_\bsdelta({[-1,1]}))]{f}
    \prod_{j\in\supp\bsnu}\frac{2\varrho_j}{\varrho_j-1},
  \end{equation}
\item\label{item:lemma:legest:2} for all $\bsnu\in\N_0^{k-1}\times\{0\}$
    \begin{equation}\label{eq:lknubound2}
    |l_\bsnu|
    \le \bsvarrho^{-\bsnu} w_\bsnu \norm[{L^\infty(\cB_{\bsdelta_{[k-1]}}([-1,1])\times [-1,1])}]{f}
    \prod_{j\in\supp\bsnu}\frac{2\varrho_j}{\varrho_j-1}.
  \end{equation}  
  \end{enumerate}
\end{lemma}

The previous lemma in combination with Thm.~\ref{THM:COR:DINN} yields
a bound on the Legendre coefficients
of the partial derivatives $\partial_k T_k-1$ of the $k$th component
of the transport. Specifically, for $\bsnu\in\N_0^k$
Thm.~\ref{THM:COR:DINN} \ref{item:cordinN:a} together with Lemma
\ref{LEMMA:LEGEST} \ref{item:lemma:legest:1} implies with
$t_1\dfn
\bsvarrho^{-\bsnu}\norm[L^\infty(\cB_\bsdelta({[-1,1]}))]{\partial_kT_k-1}$
and
$t_2\dfn w_\bsnu\prod_{j\in\supp\bsnu}\frac{\varrho_j}{\varrho_j-1}$
the bound $t_1t_2$ for the corresponding Legendre coefficient.  For a
multi-index $\bsnu\in\N_0^{k-1}\times\{0\}$, applying instead
Thm.~\ref{THM:COR:DINN} \ref{item:cordinN:b} together with Lemma
\ref{LEMMA:LEGEST} \ref{item:lemma:legest:2} yields the bound
$\tilde t_1 t_2$ where
$\tilde t_1\dfn
\bsvarrho^{-\bsnu}\norm[{L^\infty(\cB_{\bsdelta_{[k-1]}}({[-1,1]})\times[-1,1])}]{\partial_kT_k-1}$.
By Thm.~\ref{THM:COR:DINN}, the last norm is bounded by
$\frac{C_3}{\delta_k}$.
Hence, %
compared to the first estimate $t_1$, we gain the factor
$\frac{1}{\delta_k}$ by using the second estimate $\tilde t_1$
instead.\footnote{It is desirable to have sharp upper bounds
  on the Legendre coefficients in order to construct the most
  efficient ansatz spaces. However, solely using the first bound $t_1$
  in the analysis would not alter the type of exponential convergence
  shown in Prop.~\ref{PROP:KAPPROX}. Using the improved bound
  $\tilde t_1$ becomes crucial for the analysis of the
  high-dimensional case $d\gg 1$, which we discuss in \cite{zm2}.}

Taking the minimum of these estimates leads us to introduce for
$k\in\N$, $\bsnu\in\N_0^k$ and $\bsvarrho\in (1,\infty)^k$ the
quantity
\begin{equation}\label{eq:eta}
  \gamma(\bsvarrho,\bsnu)\dfn \varrho_k^{-\max\{1,\nu_k\}}\prod_{j=1}^{k-1}\varrho_j^{-\nu_j},
\end{equation}
and the set
\begin{equation}\label{eq:Leps}
  \Lambda_{k,\eps}\dfn \set{\bsnu\in\N_0^k}{\gamma(\bsvarrho,\bsnu)\ge\eps},
\end{equation}
corresponding to the largest values of $\gamma(\bsvarrho,\bsnu)$.

{ The structure of $\Lambda_{k,\eps}$ %
  is as follows: The larger $\varrho_j$, the smaller
  $\varrho_j^{-1}$. Thus the larger $\varrho_j$, the fewer
  multi-indices $\bsnu$ with $j\in\supp\bsnu$ belong to
  $\Lambda_{k,\eps}$. In this sense $\varrho_j$ measures the
  importance of the $j$th variable in the Legendre expansion of
  $\partial_kT_k$. The $k$th variable plays a special role, however:
  it is always among the most important variables, since for all
  $\bsnu\in\N_0^k$ it holds that
  $\gamma(\bsvarrho, \bsnu) \ge \gamma(\bsvarrho, \bse_k)$, where
  $\bse_k=(0,\dots,0,1)\in\N_0^k$. In other words, whenever $\eps>0$
  is so small that $\Lambda_{k,\eps}\neq\emptyset$, at least one
  $\bsnu$ with $\nu_k\neq 0$ belongs to $\Lambda_{k,\eps}$.  }

Having determined a set of multi-indices corresponding to the largest
upper bounds obtained for the Legendre coefficients, we arrive at the
next proposition. %
The assumptions on the function $f$ correspond to the regularity of
$\partial_{k}T_k$
shown in Thm.~\ref{THM:COR:DINN}. The proposition
states that such $f$ can be approximated with the error decreasing
as $O(-\beta N^{1/k})$ in terms of the dimension $N$ of the
polynomial space.

\begin{proposition}\label{PROP:KAPPROX}
  Let $k\in\N$, $\bsdelta\in (0,\infty)^k$ and $r>0$, such that
  $f\in C^1(\cB_{\bsdelta}([-1,1]);\cB_r)$ and
  $f:\cB_{\bsdelta_{[k-1]}}([-1,1])\times[-1,1]\to
  \cB_{\frac{r}{1+\delta_k}}$. With $\varrho_j\dfn 1+\delta_j$ set
  \begin{equation}\label{eq:beta0}
    \beta\dfn \left(k!\prod_{j=1}^k\log(\varrho_j)\right)^{\frac 1 k}.
  \end{equation}
  For $\bsnu\in\N_0^k$ set
  $l_\bsnu\dfn \int_{[-1,1]^k}f(\bsy)L_\bsnu(\bsy)\dd\mu(\bsy)$.
  
  Then for every $\tilde\beta<\beta$, %
  there exists $C=C(k,m,\tilde\beta,\bsdelta,r,\norm[L^\infty({\cB_\bsdelta([-1,1])})]{f})$ s.t.\ for
  every $\eps\in (0,\varrho_k^{-1})$ {the following} holds with $\Lambda_{k,\eps}$ as
  in \eqref{eq:Leps}:
  \begin{equation}\label{eq:kapprox}
    \normc[W^{m,\infty}({[-1,1]^k})]{f(\cdot)-\sum_{\bsnu\in\Lambda_{k,\eps}}l_\bsnu L_\bsnu(\cdot)}
      \le C \eps^{\frac{\tilde\beta}{\beta}}
      \le C \exp\left(-\tilde\beta |\Lambda_{k,\eps}|^{\frac 1 k}\right).
  \end{equation}
\end{proposition}

{In Prop.~\ref{PROP:KAPPROX}, $\tilde \beta$ can
  be chosen arbitrarily close to $\beta$.  However as $\tilde\beta$
  approaches $\beta$, the constant $C$ in \eqref{eq:polerr} will tend
  to $\infty$, cp.~\eqref{eq:Wminftyeps}.}

}%
\subsection{Polynomial and rational approximation}\label{sec:pold}

Combining Prop.~\ref{PROP:KAPPROX} with Thm.~\ref{THM:COR:DINN} we
obtain the following approximation result for the transport. It states
that $T:[-1,1]^d\to [-1,1]^d$ can be approximated by multivariate
polynomials, converging in $W^{m,\infty}([-1,1]^d)$ with the error
decreasing as $\exp(-\beta N_\eps^{1/d})$. Here $N_\eps$ is the
dimension of the (ansatz) space in which $T$ is approximated.

\begin{theorem}\label{thm:TdN}
  Let $m\in\N_0$. Let $f_\measi$, $f_\measii$ satisfy Assumption
  \ref{ass:finite} for some constants $0<M\le L<\infty$,
  $\bsdelta\in (0,\infty)^d$ and with $C_6=C_6(M,L)$ as in
  Thm.~\ref{THM:COR:DINN}. Let $C_7=C_7(M,L)$ be as in
  Thm.~\ref{THM:COR:DINN}. For $j\in\{1,\dots,d\}$
  set
  \begin{subequations}\label{eq:beta}
  \begin{equation}\label{eq:xidef}
    {\varrho_j}\dfn %
    1+C_7\delta_j.
  \end{equation}
    For $k\in\{1,\dots,d\}$ let
    $\Lambda_{k,\eps}$ be as in \eqref{eq:Leps} and define
  \begin{equation}\label{eq:betad}
    \beta\dfn\left((d-1)!\prod_{j=1}^d\log(\varrho_j) \right)^{\frac{1}{d}}.
  \end{equation}
  \end{subequations}  
  {For every $\tilde\beta<\beta$}
  there exists
  $C=C(\bsvarrho,m,d,\tilde\beta,f_\measi,f_\measii)>0$
  {such that for every $\eps\in (0,1)$ with}
  \begin{equation*}
    \tilde T_{k}\dfn\sum_{\bsnu\in\Lambda_{k,\eps}} l_{k,\bsnu} L_\bsnu
    \in\bbP_{\Lambda_{k,\eps}}\qquad\text{where}\qquad
    l_{k,\bsnu}\dfn \int_{[-1,1]^k} T_k(\bsy)L_\bsnu(\bsy)\dd\mu(\bsy),
  \end{equation*}
  {$\tilde T\dfn (\tilde T_k)_{k=1}^d$}
  and $N_\eps\dfn \sum_{k=1}^d|\Lambda_{k,\eps}|$, it holds 
  \begin{equation}\label{eq:polerr}
    \norm[{W^{m,\infty}([-1,1]^k)}]{{T- \tilde T}}\le C \exp(-{\tilde\beta} N_\eps^{1/d}).
  \end{equation}
\end{theorem}

\begin{remark}
  We set $\varrho_j=1+C_7\delta_j>1$ in Thm.~\ref{thm:TdN}, %
  where $\delta_j$ as in Assumption \ref{ass:finite} encodes the size
  of the analyticity domain of the densities $f_\measi$ and
  $f_\measii$ (in the $j$th variable). The constant $\beta$ in
  \eqref{eq:betad} is an increasing function of each $\varrho_j$.
  Loosely speaking, Thm.~\ref{thm:TdN} states that the larger the
  analyticity domain of the densities, the {faster the convergence}
  when approximating the corresponding transport $T$ with polynomials.
\end{remark}

We skip the proof of the above theorem and instead proceed with a
variation of this result. It states a convergence rate for an
approximation $\tilde T_{k}$ to $T_k$, which enjoys the property that
$\tilde T_{k}({\bsx_{[k-1]}},\cdot):[-1,1]\to [-1,1]$ is
monotonically increasing and bijective for every
${\bsx_{[k-1]}}\in [-1,1]^{k-1}$. Thus, contrary to $\tilde T$ in
Thm.~\ref{thm:TdN}, the $\tilde T$ in the next proposition is a bijection
from $[-1,1]^d\to [-1,1]^d$ by construction.

This is achieved as follows: Let $g:\R\to \set{x\in\R}{x\ge 0}$ be
analytic, such that $g(0)=1$ and
$h\dfn g^{-1}:(0,\infty)\to\R$ is also
analytic. We first approximate $h(\partial_{k}T_k)$ by some function
$p_k$ and then obtain
$-1+\int_{-1}^{x_k}g(p_k(\bsx_{[k-1]},t))\dd t$
as an approximation $\tilde T_k$ to $T_k$.  {This approach,
  similar to what is proposed in \cite{MR1616049}, and in the present context
  in \cite{MR3821485},} guarantees
$\partial_{k}\tilde T_k=g(p_k(\bsx_{[k]}))\ge 0$ and
$\tilde T_k(\bsx_{[k-1]},-1){=-1}$ so that $\tilde T_k{(\bsx)}$
is monotonically increasing in
$x_k$. %
In order to force $\tilde T_k(\bsx_{[k-1]},\cdot):[-1,1]\to [-1,1]$ to
be bijective we introduce a normalization which {leads} to
\begin{equation}\label{eq:tildeTkgeneric}
  \tilde T_k(\bsx)=-1+2\frac{\int_{-1}^{x_k}g(p_k(\bsx_{[k-1]},t))\dd t}{\int_{-1}^{1}g(p_k(\bsx_{[k-1]},t))\dd t}.
\end{equation}
The meaning of
$g(0)=1$ is that the trivial approximation $p_k\equiv 0$ then yields
$\tilde T_k(\bsx)=x_k$.

To avoid further technicalities, henceforth we choose
$g(x)=(x+1)^2$ (and thus $h(x)=\sqrt{x}-1$), but emphasize that our
analysis works just as well with any other positive analytic function
such that $g(0)=1$, e.g., $g(x)=\exp(x)$ and $h(x)=\log(x)$. The choice
$g(x)=(x+1)^2$ has the advantage that $g(p_k)$ is polynomial if $p_k$
is polynomial. This allows exact evaluation of the integrals in
\eqref{eq:tildeTkgeneric} without resorting to numerical quadrature,
and results in a \emph{rational} approximation $\tilde T_k$:

\proofref{The proof of Thm.~\ref{THM:POLYD} is given in Appendix
  \ref{app:polyd}.}

\begin{theorem}\label{THM:POLYD}
  Let $m\in\N_0$.  Let $f_\measi$, $f_\measii$ satisfy Assumption
  \ref{ass:finite} for some constants $0<M\le L<\infty$,
  $\bsdelta\in (0,\infty)^d$ and with $C_6=C_6(M,L)$ as in
  Thm.~\ref{THM:COR:DINN}.
  Let %
  $\varrho_j$, $\beta$ and $\Lambda_{k,\eps}$ be as in
  Thm.~\ref{thm:TdN}.

  {For every $\tilde\beta<\beta$} there exists
  $C=C(\bsxi,m,d,{\tilde\beta},f_\measi,f_\measii)>0$ and for every
  $\eps\in (0,1)$ there exist polynomials
  $p_{k,\eps}\in\bbP_{\Lambda_{k,\eps}}$, $k\in\{1,\dots,d\}$, such that with
  \begin{equation}\label{eq:ck}
    \begin{aligned}
    \tilde T_{k,\eps}(\bsx)&\dfn -1+\frac{2}{c_{k,\eps}(\bsx_{[k-1]})}\int_{-1}^{x_k}(1+p_{k,\eps}(\bsx_{[k-1]},t))^2 \dd t,\\
    c_{k,\eps}(\bsx_{[k-1]})&\dfn\int_{-1}^{1}(1+p_{k,\eps}(\bsx_{[k-1]},t))^2 \dd t,
    \end{aligned}
  \end{equation}
  the map $\tilde T_\eps \dfn (\tilde T_{k,\eps})_{k=1}^d:[-1,1]^d\to [-1,1]^d$ %
  is a monotone triangular bijection, and with
  \begin{equation}\label{eq:Neps}
    N_\eps\dfn \sum_{k=1}^d |\Lambda_{k,\eps}|
  \end{equation}
    it holds
  \begin{equation}\label{eq:errorrational}
    \norm[{W^{m,\infty}([-1,1]^k)}]{{T-\tilde T_\eps}}\le C \exp(-{\tilde\beta} N_\eps^{1/d}).
  \end{equation}
\end{theorem}

{We emphasize that our reason for using rational functions rather
  than polynomials in Thm.~\ref{THM:POLYD} is merely to guarantee that
  the resulting approximation $\tilde T:[-1,1]^d\to [-1,1]^d$ is a
  bijective and monotone map. We do \emph{not} employ specific
  properties of rational functions (as done for Pad\'e approximations)
  in order to improve the convergence order.}

\begin{remark}
  If $\Lambda_{k,\eps}=\emptyset$ then by convention
  $\bbP_{\Lambda_{k,\eps}}=\{0\}$; thus $p_{k,\eps}= 0$ and
  $\tilde T_{k,\eps}(\bsx)=x_k$.
\end{remark}

\begin{remark}\label{rmk:S}
  Let $S=T^{-1}$ so that $T_\sharp\measi=\measii$ is equivalent to
  $S^\sharp\measi=\measii$. It is often %
  simpler to first
  approximate $S$, and then compute $T$ by inverting $S$,
  {see \cite{MR3821485}}. Since the
  assumptions of Thm.~\ref{THM:POLYD} (and Thm.~\ref{THM:COR:DINN}) on
  the measures $\measi$ and $\measii$ are identical,
  Thm.~\ref{THM:POLYD} also yields an approximation result for the
  \emph{inverse transport map} $S$: {for all $\eps>0$} and with
  $\Lambda_{k,\eps}$ as in Thm.~\ref{THM:POLYD} there exist
  multivariate polynomials $p_k\in\bbP_{\Lambda_{k,\eps}}$ such that
  with
  \begin{equation}\label{eq:tSk}
    \begin{aligned}
    \tilde S_k(\bsx)&\dfn -1+\frac{2}{c_k(\bsx_{[k-1]})}\int_{-1}^{x_k}(1+p_k(\bsx_{[k-1]},t))^2 \dd t,\\
    c_k(\bsx_{[k-1]})&\dfn\int_{-1}^{1}(1+p_k(\bsx_{[k-1]},t))^2 \dd t,
    \end{aligned}
  \end{equation}
  it holds
  \begin{equation*}
    \norm[{W^{m,\infty}([-1,1]^d)}]{{S-\tilde S}}\le C \exp(-{\tilde\beta} N_\eps^{1/d}).
  \end{equation*}
\end{remark}

\section{{Deep neural network approximation}}\label{sec:relu}
Based on the seminal paper \cite{yarotsky}, it has recently been
observed that ReLU neural networks (NNs) are capable of approximating
analytic functions at an exponential convergence rate
\cite{MR3856963,OSZ19}, and slight improvements can be shown for
certain smoother activation functions, e.g., \cite{CiCP-27-379}. We
also refer to \cite{MR1230251} for much earlier results of this type
for different activation functions. {As a consequence}, our
analysis in Sec.~\ref{SEC:ANT} yields approximation results of the
transport by deep neural networks (DNNs). Below we present the
statement, which is based on \cite[Thm.~3.7]{OSZ19}.  \proofref{For
  the proof see Appendix \ref{app:relu}.}

To formulate the result, we recall the definition of a feedforward
ReLU NN. The (nonlinear) ReLU activation function is defined as
$\varphi(x)\dfn \max\{0,x\}$. We call a function $f:\R^d\to\R^d$ a
ReLU NN, if it can be written as
\begin{equation}\label{eq:nn}
  f(\bsx) = \bsW_{L} \varphi\Big(\bsW_{L-1}\varphi\Big(\cdots \varphi\Big(\bsW_0\bsx+\bsb_0 \Big)\Big)+\bsb_{L-1} \Big) + \bsb_{L},
\end{equation}
for certain \emph{weight matrices} $\bsW_j\in\R^{n_{j+1}\times n_j}$
and \emph{bias vectors} $\bsb_j\in\R^{n_{j+1}}$ where $n_0=n_{L+1}=d$.
For simplicity, we do not distinguish between the network (described
by $(\bsW_j,\bsb_j)_{j=0}^L$), and the function it expresses
(different networks can have the same output).  We then write
${\rm depth}(f)\dfn L$, ${\rm width}(f)\dfn \max_j n_j$ and
${\rm size}(f)\dfn \sum_{j=0}^{L+1}(|\bsW_j|_0+|\bsb_j|_0)$, where
{$|\bsW_j|_0=|\set{(k,l)}{(\bsW_j)_{kl}\neq 0}|$ and
  $|\bsb_j|_0=|\set{k}{(\bsb_j)_{k}\neq 0}|$.}  In other words, the
depth corresponds to the number of applications of the activation
function (the number of hidden layers) and the size equals the number
of nonzero weights and biases, {i.e., the number of trainable
  parameters in the network.}

\begin{theorem}\label{THM:RELU}
  Let $f_\measi$, $f_\measii$ be two positive and analytic probability
  densities on $[-1,1]^d$. Then there exists $\beta>0$ and for every
  $N\in\N$ there exists a ReLU NN
  $\Phi_N=(\Phi_{N,j})_{j=1}^d:\R^d\to \R^d$, such that
  $\Phi_N:[-1,1]^d\to [-1,1]^d$ is bijective, triangular and monotone,
  \begin{equation}\label{eq:thm:relu}
    \norm[{W^{1,\infty}([-1,1]^d)}]{T-\Phi_N}\le C \exp(-\beta N^{\frac{1}{d+1}}),
  \end{equation}
  ${\rm size}(\Phi_N) \le C N$ %
  and ${\rm depth}(\Phi_N)\le C\log(N)N^{1/2}$. %
  {Here $C$ is a constant} depending on $d$, $f_\measi$ and
  $f_\measii$ but independent of $N$.
\end{theorem}

{
\begin{remark}
  Compared to Thm.~\ref{thm:TdN} and Thm.~\ref{THM:POLYD}, for ReLU
  networks we obtain the slightly worse convergence rate
  $\exp(-\beta N^{1/(d+1)})$ instead of $\exp(-\beta N^{1/d})$. This
  stems from the fact that, for ReLU networks, the best known
  approximation results of analytic functions in $d$ dimensions
  converge at rate
  $\exp(-\beta N^{1/(d+1)})$; see \cite[Thm.~3.5]{OSZ19}.
\end{remark}
}

The proof of Thm.~\ref{THM:RELU} proceeds as follows: First, we apply
results from \cite{OSZ19} to obtain a neural network approximation
$\tilde \Phi_k$ to $T_k$. The constructed
$(\tilde \Phi_k)_{k=1}^d:[-1,1]^d\to\R^d$ is a triangular map that is
close to $T$ in the norm of $W^{1,\infty}([-1,1]^d)$. However, it is
not necessarily a monotone bijective {self-mapping of}
$[-1,1]^d$. {To correct the construction, we  use the following
  lemma:

\begin{lemma}\label{LEMMA:RELU}
  Let $f:[-1,1]^{k-1}\to\R$ be a ReLU NN. Then there exists a ReLU NN
  $g_f:[-1,1]^{k}\to\R$ such that $|g_f(\bsy,t)|\le |f(\bsy)|$ for all
  $(\bsy,t)\in[-1,1]^{k-1}\times [-1,1]$,
  \begin{equation*}
    g_f(\bsy,t)=\begin{cases}
      f(\bsy) &t=1\\
      0 &t=-1,
    \end{cases}
  \end{equation*}
  and ${\rm depth}(g_f)\le 1+{\rm depth}(f)$ and
  ${\rm size}(g_f)\le C (1+{\rm size}(f))$ with $C$ independent of $f$
  and $g_f$. Moreover, in the sense of weak derivatives
  $|\nabla_\bsy g_f(\bsy,t)|\le |\nabla f(\bsy)|$ and
  $|\frac{d}{dt} g_f(\bsy,t)|\le \esssup_{\bsy\in[-1,1]^{k-1}}|f(\bsy)|$,
  i.e., these inequalities hold a.e.\ in $[-1,1]^{k-1}\times [-1,1]$.
\end{lemma}

With $\tilde \Phi_k:[-1,1]^k\to\R$ approximating the $k$th component
$T_k:[-1,1]^k\to [-1,1]$, it is then easy to check that with
$f_1(\bsx_{[k-1]})\dfn 1-\tilde\Phi_k(\bsx_{[k-1]})$ and $f_{-1}(\bsx_{[k-1]})\dfn -1-\tilde\Phi_k(\bsx_{[k-1]},1)$ for $\bsx\in[-1,1]$, the NN
\begin{equation*}
  \Phi_k(\bsx)\dfn \tilde \Phi_k(\bsx)+g_{f_1}(\bsx_{[k-1]},x_k)+g_{f_{-1}}(\bsx_{[k-1]},-x_k)
\end{equation*}
satisfies $\Phi_k(\bsx_{[k-1]},1)=1$ and $\Phi_k(\bsx_{[k-1]},-1)=-1$.
Since the introduced correction terms $g_{f_1}$ and $g_{f_{-1}}$ have
size and depth bounds of the same order as $\tilde \Phi_k$, they will
not worsen the resulting convergence rates. The details are provided
in Appendix \ref{app:relu}.}

In the previous theorem we consider a ``sparsely-connected'' network
$\Phi$, meaning that certain weights and biases {are, by choice of
  the network architecture, set \textit{a priori} to zero}. This reduces the
overall size of $\Phi$.  We note that this also yields a
convergence rate for \emph{fully connected} networks: Consider all
networks of width $O(N)$ and depth $O(\log(N) N^{1/2})$. %
The size of a network within this architecture is bounded by
$O(N^2\log(N) N^{1/2})=O(N^{5/2}\log(N))$, since the number of
elements of the weight matrix $\bsW_j$ between two consecutive layers
is $n_jn_{j+1}\le N^2$. The network $\Phi$ from Thm.~\ref{THM:RELU}
belongs to this class, and thus the best approximation among networks
with this architecture achieves {at least} the exponential convergence
$\exp(-\beta N^{1/(d+1)})$. In terms of the number of trainable
parameters $M=O(N^{5/2}\log(N))$, this convergence rate is, up to
logarithmic terms, %
$\exp(-\beta M^{2/(5d+5)})$.

{
\begin{remark}
  The constant $C$ in \eqref{eq:errorrational} and the (possibly
  different) constant $C$ in \eqref{eq:thm:relu} typically depend
  exponentially on the dimension $d$: This dependence is true for
  polynomial approximation results of analytic functions in $d$
  dimensions, which is why it will hold for $C$ in
  \eqref{eq:errorrational} in general.  Since the proof in
  \cite{OSZ19}, upon which our analysis is based, uses polynomial
  approximations, the same can be expected for the constant in
  \eqref{eq:thm:relu}. In \cite{zm2} we will discuss the
  approximation of $T$ by rational functions in the high-dimensional
  case. There we give sufficient assumptions on the reference and
  target to guarantee algebraic convergence of the error, with all
  constants being controlled independent of the dimension.
\end{remark}}

{\begin{remark}
Normalizing flows approximate a transport map $T$ using a variety of neural network constructions, typically by composing a series of nonlinear bijective transformations; each individual transformation employs neural networks in its parameterization, embedded into a specific functional form (possibly augmented with constraints) that ensures bijectivity \cite{kobyzev2019normalizing,papamakarios2019normalizing}.
``Residual'' normalizing flows \cite{pmlr-v37-rezende15,berg2018sylvester} compose maps that are not in general triangular, but ``autoregressive'' flows \cite{pmlr-v80-huang18d,jaini2019sum,wehenkel2019unconstrained} use monotone triangular maps as their essential building block. 
Many practical implementations of autoregressive flows, however, limit
the class of triangular maps that can be expressed. Thus, they cannot
seek to directly approximate the KR transport in a
single step; rather, they compose multiple such triangular maps,
interleaved with permutations \cite{2006.11469}. In principle, though,
a direct approximation of the KR map is sufficient,
and our results {could be a starting point for}
constructive and quantitative guidance on
the parameterization and expressivity of autoregressive flows in this setting. 
Our result is also close in style to \cite{2004.08867}, which shows low order convergence rates for neural network approximations of transport maps for certain classes of target densities, by writing the map as a gradient of a potential function given by a neural network. This construction, which employs semi-discrete optimal transport, is not in general triangular and does not necessarily coincide with common normalizing flow architectures.
\end{remark}}

\section{{Convergence of pushforward measures}}\label{SEC:MEASURES}
{Let again $T_\sharp\measi=\measii$.  In 
  Sec.~\ref{SEC:APPROX} we have shown that the approximation
  $\tilde T$ to $T$ obtained in Thm.~\ref{THM:POLYD} and
  Thm.~\ref{THM:RELU} converges to $T$ in the
  $W^{m,\infty}([-1,1]^d)$ norm for suitable $m\in\N$. In the present
  section, we show that these results imply corresponding error bounds
  for the approximate pushforward measure, i.e., bounds for
  \begin{equation}\label{eq:distpushfwd}
    {\rm dist}(\measii,\tilde T_\sharp\measi) =
    {\rm dist}(T_\sharp\measi,\tilde T_\sharp\measi)
  \end{equation}
  with ``${\rm dist}$'' referring to the Hellinger distance, the total
  variation distance, the Wasserstein distance or the KL divergence.
  Specifically, we will see that smallness of
  $\norm[W^{1,\infty}]{T-\tilde T}$ (or
  $\norm[L^{\infty}]{T-\tilde T}$ in case of the Wasserstein distance)
  implies smallness of \eqref{eq:distpushfwd}.  }

{As mentioned before, when casting the approximation of the transport
as an optimization problem, it is often more convenient to first
approximate the inverse transport $S=T^{-1}$ by some $\tilde S$, and then to invert $\tilde S$ to
obtain an approximation $\tilde T = \tilde S^{-1}$ of $T$; see \cite{MR3821485} and also, e.g.,
the method in \cite{MR4065222}. In this case we usually have an upper
bound on $\norm[]{S-\tilde S}$ rather than $\norm{T-\tilde T}$ in a
suitable norm; cp.~Rmk.~\ref{rmk:S}.
However, a bound of the type
$\norm[W^{m,\infty}]{T-\tilde T}<\varepsilon$ implies
$\norm[W^{m,\infty}]{S-\tilde S}=O(\varepsilon)$ for $m\in\{0,1\}$ as
the next lemma shows. Since closeness in $L^\infty$ or $W^{1,\infty}$
is all we require for the results of this section, the following
analysis covers either situation.

\begin{lemma}\label{LEMMA:WINFTY}
  Let $T:[-1,1]^d\to [-1,1]^d$ and $\tilde T:[-1,1]^d\to [-1,1]^d$ be
  bijective. Denote $S=T^{-1}$ and $\tilde S=\tilde T^{-1}$.
  Suppose that $S$ has Lipschitz constant $L_S$.
  Then
  \begin{enumerate}
  \item it holds
    \begin{equation*}
      \norm[{L^\infty([-1,1]^d)}]{S-\tilde S}\le L_S \norm[{L^\infty([-1,1]^d)}]{T-\tilde T},
    \end{equation*}
  \item if $S$, $T$, $\tilde S$, $\tilde T\in W^{1,\infty}([-1,1]^d)$
    and $dT:[-1,1]^d\to\R^{d\times d}$ has Lipschitz constant $L_{dT}$,
    then
    \begin{equation*}
      \norm[{L^\infty([-1,1]^d)}]{dS-d\tilde S}\le (1+L_SL_{dT})
      \norm[{L^\infty([-1,1]^d)}]{dS}
      \norm[{L^\infty([-1,1]^d)}]{d\tilde S}
      \norm[{W^{1,\infty}([-1,1]^d)}]{T-\tilde T}.
    \end{equation*}
  \end{enumerate}
\end{lemma}
}%

\subsection{Distances}\label{sec:distances}
{For two probability measures $\measi\ll\mu$ and $\measii\ll\mu$ on
  $[-1,1]^d$ equipped with the Borel sigma-algebra, we consider the
  following distances:}
\begin{subequations}\label{eq:distances}
  \begin{itemize}
  \item Hellinger distance
    \begin{equation}
      {\rm H}(\measi,\measii) \dfn \left(\frac{1}{2}\int_{[-1,1]^d} \left(\sqrt{\frac{\ddd\measi}{\ddd\mu}(\bsx)}-\sqrt{\frac{\ddd\measii}{\ddd\mu}(\bsx)}\right)^2 \dd\mu(\bsx)\right)^{1/2},
    \end{equation}
  \item total variation distance
    \begin{equation}
      {\rm TV}(\measi,\measii) \dfn \sup_{A\in \cA}|\measi(A)-\measii(A)|=\frac{1}{2}\int_{[-1,1]^d} \Big|\frac{\ddd\measi}{\ddd\mu}(\bsx)-\frac{\ddd\measii}{\ddd\mu}(\bsx)\Big|\dd\mu(\bsx),
    \end{equation}
  \item Kullback--Leibler (KL) divergence
    \begin{equation}
      {\rm KL}(\measi \| \measii) \dfn \begin{cases}\int_{[-1,1]^d} \log\left(\frac{\ddd\measi}{\ddd\measii}(\bsx)\right) \dd\measi(\bsx) &\text{if }\measi\ll\measii\\
        \infty&\text{otherwise},
      \end{cases}
    \end{equation}
  \item {for $p\in [1,\infty)$, the $p$-Wasserstein
      distance %
    }
    \begin{equation}\label{eq:W1}
      {W_p}(\measi,\measii)\dfn
      \inf_{\nu\in\Gamma(\measi,\measii)}\left(\int_{[-1,1]^d\times[-1,1]^d} \norm{\bsx-\bsy}^p\dd\nu(\bsx,\bsy)\right)^{1/p},
    \end{equation}
    where $\Gamma(\measi,\measii)$ denotes the set of all
    measures on $[-1,1]^d\times[-1,1]^d$ with marginals $\measi$,
    $\measii$.
  \end{itemize}
\end{subequations}

Contrary to the Hellinger, total variation, and Wasserstein distances,
the KL divergence is not symmetric; however
${\rm KL}(\measi \|\measii)>0$ iff $\measi\neq\measii$.

\begin{remark}\label{rmk:hellinger}
  As is well known, the Hellinger distance %
  {provides an upper bound of the difference of integrals w.r.t.\
    two} different measures. Assume that
  $g\in L^2([-1,1]^d,\measi) \cap L^2([-1,1]^d,\measii)$. Then
  \begin{align*}
    &\left|\int_{[-1,1]^d} g(\bsx)\dd\measi( \bsx)
      -\int_{[-1,1]^d} g(\bsx)\dd\measii( \bsx)\right|
      =\left|\int_{[-1,1]^d} g(\bsx)\left(\frac{\ddd\measi}{\ddd\mu}(\bsx)-\frac{\ddd\measii}{\ddd\mu}(\bsx)\right)\dd\mu(\bsx)\right|\nonumber\\
    &\qquad\qquad\qquad\qquad\le \normc[L^2({[-1,1]^d},\mu)]{\sqrt{\frac{\ddd\measi}{\ddd\mu}}-\sqrt{\frac{\ddd\measii}{\ddd\mu}}}\normc[L^2({[-1,1]^d},\mu)]{g\left(\sqrt{\frac{\ddd\measi}{\ddd\mu}}+\sqrt{\frac{\ddd\measii}{\ddd\mu}}\right)}\nonumber\\
    &\qquad\qquad\qquad\qquad\le \sqrt{2}{\rm H}(\measi,\measii) \left (\norm[{L^2([-1,1]^d,\measi)}]{g}+\norm[{L^2([-1,1]^d,\measii)}]{g} \right ).
  \end{align*}
\end{remark}

\subsection{Error bounds}
{Throughout this subsection $p\in [1,\infty)$ is arbitrary but
  fixed. Under suitable assumptions it was shown in \cite[Theorem
  2]{MR4120535}, that
  \begin{equation}\label{eq:sagiv}
    W_p(T_\sharp\measi,\tilde T_\sharp\measii)\le \norm[L^\infty({[-1,1]^d})]{T-\tilde T}.
  \end{equation}
  Thus, Thm.~\ref{THM:POLYD} and Thm.~\ref{THM:RELU} readily yield
  bounds on $W_p(\tilde T_\sharp \measi,\measii)$ for the approximate
  polynomial, rational, and NN transport maps from
  Sec.~\ref{sec:pold} and Sec.~\ref{sec:relu}.}

For the other three distances/divergences in \eqref{eq:distances}, to
obtain a bound on ${\rm dist}(\tilde T_\sharp\measi,T_\sharp \measi)$, we
will upper bound the difference between the densities of those
measures. {Since the density of $\tilde T_\sharp\measi$ is given by
$f_\measi(\tilde S(\bsx))\det d\tilde S(\bsx)$, where
$\tilde S=\tilde T^{-1}$, we need to upper bound
$|f_\measi(S(\bsx))\det dS(\bsx)
-f_\measi(\tilde S(\bsx))\det d\tilde S(\bsx)|$, where $S=T^{-1}$.
This will be done in the proof of the following theorem.
To state the result, for a triangular map $S\in
C^1([-1,1]^d;[-1,1]^d)$ we first define
\begin{equation*}%
  S_{\rm min}\dfn \min_{j=1,\dots,d}\min_{\bsx\in [-1,1]^j}\partial_{j}S_j(\bsx).
\end{equation*}

\begin{theorem}\label{THM:HTVKL}
  Let $T$, $\tilde T:[-1,1]^d\to [-1,1]^d$ be bijective, monotone, and
  triangular. Define $S\dfn T^{-1}$ and $\tilde S\dfn \tilde T^{-1}$
  and assume that $T$, $S\in W^{2,\infty}([-1,1]^d)$ and $\tilde T$,
  $\tilde S\in W^{1,\infty}([-1,1]^d)$. %
  Moreover let $\measi$ be a
  probability measure on $[-1,1]^d$ such that
  $f_\measi\dfn \frac{\ddd\measi}{\ddd\mu}:[-1,1]^d\to\R_+$ is
  strictly positive and Lipschitz continuous.  Suppose that $\tau_0>0$
  is such that $\norm[{L^\infty([-1,1]^d)}]{d\tilde S}<\frac{1}{\tau_0}$
  and $\tilde S_{\rm min}\ge \tau_0$.

  Then there exists $C$ depending on $\tau_0$ but not on $\tilde T$
  such that for
  ${\rm dist}\in{\{{\rm H},{\rm TV}\}}$
  \begin{equation}\label{eq:measdiffd}
    {\rm dist}(T_\sharp\measi,\tilde T_\sharp\measi)\le C \norm[{W^{1,\infty}([-1,1]^d)}]{T-\tilde T}
  \end{equation}
  and
  \begin{equation}\label{eq:measdiffdKL}
    {\rm KL}(T_\sharp\measi,\tilde T_\sharp\measi)\le C \left(
      \norm[{W^{1,\infty}([-1,1]^d)}]{T-\tilde T}+\norm[{W^{1,\infty}([-1,1]^d)}]{T-\tilde T}^2\right).
  \end{equation}  
\end{theorem}

\smallskip

Together with Thm.~\ref{THM:POLYD}, we can now show
exponential convergence of the pushforward measure {in the case of
  analytic densities}.
For $\eps>0$ denote by
  $\tilde T_\eps = (\tilde T_{\eps,k})_{k=1}^d$ the approximation to
  $T$ from Thm.~\ref{THM:POLYD}. Moreover, recall that $N_\eps$ in
  \eqref{eq:Neps} denotes the number of degrees of freedom of this
  approximation (the number of coefficients of this rational
  function). As shown in Lemma \ref{LEMMA:ANALYTICITYASSUMPTION}, the
  exponential convergence shown in the next proposition holds in
  particular for positive and analytic reference and target densities
  $f_\measi$, $f_\measii$.}

\begin{proposition}\label{PROP:MEASCONVD}
  {Consider the setting of Thm.~\ref{THM:POLYD}; in particular let
    $f_\measi$, $f_\measii$ satisfy Assumption \ref{ass:finite}, and
    let $\beta>0$ be as in \eqref{eq:betad}.

    Then for every $\tilde \beta<\beta$ there exists $C>0$ such that
    for every $\eps\in (0,1)$ and
    ${\rm dist}\in{\{{\rm H},{\rm TV},{\rm KL},W_p\}}$ with
    $\tilde T_\eps$ as in \eqref{eq:ck} and $N_\eps$ as in
    \eqref{eq:Neps} it holds
  \begin{equation*}
    {\rm dist}((\tilde T_\eps)_\sharp \measi,\measii) \le C \exp(-{\tilde\beta} N_\eps^{1/d}).
  \end{equation*}} %
\end{proposition}

{Similarly, we get a bound for the pushforward under the
  NN transport from Thm.~\ref{THM:RELU}.

\begin{proposition}\label{prop:relupushforward}
  Let $f_\measi$ and $f_\measii$ be two positive and analytic
  probability densities on $[-1,1]^d$.  Then for every
  ${\rm dist}\in\{{\rm H},{\rm TV},{\rm KL},W_p\}$, there exist
  constants $\beta>0$ and $C>0$, and for every $N\in\N$ there exists a
  ReLU neural network $\Phi_N:[-1,1]^d\to [-1,1]^d$ such that
  \begin{equation*}
    {\rm dist}((\Phi_N)_\sharp \measi,\measii) \le \exp(-\beta N^{1/(d+1)}),
  \end{equation*}
  and ${\rm size}(\Phi_N)\le CN$ and
  ${\rm depth}(\Phi_N)\le C\log(N)N^{-1/2}$.
\end{proposition}

The proof is completely analogous to Prop.~\ref{PROP:MEASCONVD} (but
using Thm.~\ref{THM:RELU} instead of Thm.~\ref{THM:POLYD} to
approximate the transport $T$ with the NN $\Phi_N$) which is why we do
not give it in the appendix.}

\section{%
  {Application to inverse problems in UQ}
}\label{SEC:EXAMPLE}
To give an application and explain in more detail the practical value
of our results, we briefly discuss a standard inverse problem in
uncertainty quantification.

\subsection{Setting}\label{sec:setting}
Let $n\in\N$ and let $\D\subseteq \R^n$ be a bounded Lipschitz domain.
For a diffusion coefficient $a\in L^\infty(\D;\R)$ such that
$\essinf_{x\in\D}a(x)>0$, and a forcing term $f\in H^{-1}(\D)$,
the PDE
\begin{equation}\label{eq:pde}
  -\divg(a\nabla \scr{u})=f,\qquad \scr{u}|_{\partial\D}=0,
\end{equation}
has a unique weak solution in $H_0^1(\D)$. We denote it by
$\scr{u}(a)$, and call $\scr{u}:a\mapsto \scr{u}(a)$ the \emph{forward
  operator}.

Let $A:H_0^1(\D)\to\R^m$ be a linear \emph{observation operator} for
some
$m\in\N$. 
The inverse problem consists of
recovering the diffusion coefficient $a\in L^\infty(\D)$,
given %
the noisy observation
\begin{equation}\label{eq:meas}
  {\bsvarsigma = A(\scr{u}(a))+\bseta\in\R^m}
\end{equation}
with the additive observation noise {$\bseta\sim\cN(0,\Sigma)$,
for a symmetric positive definite covariance matrix $\Sigma\in\R^{m\times m}$.}

\subsection{Prior and posterior}\label{sec:priorposterior}
In uncertainty quantification and statistical inverse problems, the
diffusion coefficient $a\in L^\infty(\D)$ is modelled as a random
variable ({independent of the observation noise $\bseta$}) distributed according to some known prior distribution;
see, e.g., \cite{MR2102218}.  Bayes' theorem provides a formula for
the distribution of the diffusion coefficient conditioned on the
observations. This conditional is called the \emph{posterior} and
interpreted as the solution to the inverse problem.

To construct a prior, let $(\psi_j)_{j=1}^d\subset L^\infty(\D)$ and
set
\begin{equation}\label{eq:ay}
  a(\bsy)=a(\bsy,x)\dfn 1+\sum_{j=1}^dy_j\psi_j(x)\qquad x\in\D,
\end{equation}
where $y_j\in [-1,1]$. We consider the uniform measure on $[-1,1]^d$
as the prior: Every realization $(y_j)_{j=1}^d\in [-1,1]^d$
corresponds to a diffusion coefficient $a(\bsy)\in L^\infty(\D)$, and
equivalently the pushforward of the uniform measure on $[-1,1]^d$
under $a:[-1,1]^d\to L^\infty(\D)$ can be interpreted as a prior on
$L^\infty(\D)$.
Throughout we assume
$\essinf_{x\in\D}a(\bsy,x)>0$ for all $\bsy\in [-1,1]^d$ and write
$u(\bsy)\dfn\scr{u}(a(\bsy))$ for the solution of \eqref{eq:pde}.

Given $m$ measurements
$(\varsigma_i)_{i=1}^m$ as in \eqref{eq:meas}, the %
posterior measure $\measii$ on $[-1,1]^d$ is the distribution of
$\bsy|\bsvarsigma$. Since %
{$\bseta\sim\cN(0,\Sigma)$,} %
the likelihood (the density of
$\bsvarsigma|\bsy$) equals
\begin{equation*}
  {\ell(\bsvarsigma|\bsy)}
  = \frac{1}{{\sqrt{(2\pi)^m\det(\Sigma)}}} \exp\left(
    {\frac{-(A(u(\bsy))-\bsvarsigma)^\top\Sigma^{-1}(A(u(\bsy))-\bsvarsigma)}{2}}
  \right)\qquad\bsvarsigma=(\varsigma_j)_{j=1}^m\in \R^m.
\end{equation*}
By Bayes' theorem the posterior %
density $f_\measii$, corresponding to the distribution of
$\bsy|\bsvarsigma$, is proportional to the density of $\bsy$ times the
density of $\bsvarsigma|\bsy$.
Since the (uniform) prior has constant density $1$,
\begin{equation}\label{eq:postdens}
  f_\measii(\bsy) = \frac{1}{Z}{\ell(\bsvarsigma|\bsy)}
  \qquad\text{where}\qquad
  Z\dfn \int_{[-1,1]^d} {\ell(\bsvarsigma|\bsy)}\dd\mu(\bsy).
\end{equation}
The normalizing constant $Z$ is in practice unknown. For more
details see, e.g., \cite{MR3839555}.

In order to compute expectations w.r.t.\ the posterior $\measii$, we
want to determine a transport map $T:[-1,1]^d \to [-1,1]^d$ such that
$T_\sharp\mu=\measii$: then if $X_i\in[-1,1]^d$, $i=1,\dots,N$, are
iid uniformly distributed on $[-1,1]^d$, $T(X_i)$, $i=1,\dots,N$, are
iid with distribution $\pi$. This allows us to approximate integrals
$\int_{[-1,1]^d}g(\bsy)\dd\measii(\bsy)$ via Monte Carlo sampling as
$\frac{1}{N}\sum_{i=1}^N g(T(X_i))$.

\subsection{Determining $\Lambda_{k,\eps}$}
Choose as the reference measure the uniform measure $\measi=\mu$ on
$[-1,1]^d$ and let the target measure $\measii$ be the posterior with
density $f_\measii$ as in \eqref{eq:postdens}. To apply
Thm.~\ref{THM:POLYD}, we first need to determine
$\bsdelta\in (0,\infty)^d$, such that
$f_\measii\in C^1(\cB_{\bsdelta}([-1,1]);\C)$. Since $\exp:\C\to\C$ is
an entire function, by \eqref{eq:postdens}, in case
$u\in C^1(\cB_{\bsdelta}([-1,1]);\C)$
we have
$f_\measii\in C^1(\cB_{\bsdelta}([-1,1]);\C)$. One can show that the
forward operator $\scr{u}$ is complex differentiable from
$\set{L^\infty(\D;\C)}{\essinf_{x\in\D}\Re(a(x))>0}$ to
$H_0^1(\D;\C)$; see \cite[Example 1.2.39]{JZdiss}. Hence
$u(\bsy)=\scr{u}(\sum_{j=1}^dy_j\psi_j)$ indeed is complex differentiable,
e.g., for all $\bsy\in\C^d$ such that
\begin{equation*}
  1-\sum_{j=1}^d|\Re(y_j)|\norm[L^\infty(\D)]{\psi_j}>0.
\end{equation*}
Complex differentiability implies analyticity, and therefore $u(\bsy)$ is
analytic on $\cB_\bsdelta([-1,1];\C)$ with $\delta_j$ proportional to
$\norm[L^\infty(\D)]{\psi_j}^{-1}$:

\begin{lemma}\label{LEMMA:EXAMPLE}
  There exists $\tau=\tau(\scr{u},{\Sigma},d)>0$ and an increasing
  sequence $(\kappa_j)_{j=1}^d\subset (0,1)$ such that with
  $\delta_j\dfn \kappa_j+\frac{\tau}{\norm[L^\infty(\D)]{\psi_j}}$,
  $f_\measii$ in \eqref{eq:postdens} satisfies Assumption
  \ref{ass:finite}.
\end{lemma}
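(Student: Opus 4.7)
The plan is to verify each of the four clauses of Assumption \ref{ass:finite} for the posterior density $f_\measii$ given by \eqref{eq:postdens}, by choosing the increasing sequence $(\kappa_j)_{j=1}^d \subset (0,1)$ and the scalar $\tau > 0$ carefully.

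\textbf{Analyticity of $f_\measii$ and verification of (a), (b).} I would start from the fact, cited just above the lemma, that $\scr{u}$ is complex differentiable on $\{a\in L^\infty(\D;\C):\essinf_{x\in\D}\Re(a(x))>0\}$. Composing with the linear $A$ and the entire exponential makes $f_\measii$ complex differentiable (hence in $C^1$) on any polybox $\cB_{\bsdelta}([-1,1])$ on which $\Re(a(\bsy,x))$ is bounded away from zero uniformly in $x\in\D$ and $\bsy$. Let $\nu\dfn\essinf_{\bsy\in[-1,1]^d}\essinf_{x\in\D}a(\bsy,x)>0$, which is strictly positive by the standing assumption of Sec.~\ref{sec:priorposterior} and compactness of $[-1,1]^d$. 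For $\bsy\in\cB_\bsdelta([-1,1])$ one has $\Re(a(\bsy,x))\ge \nu-\sum_{j=1}^d\delta_j\|\psi_j\|_{L^\infty(\D)}$, so the ansatz $\delta_j=\kappa_j+\tau/\|\psi_j\|_{L^\infty(\D)}$ turns the deficit into $\sum_j\kappa_j\|\psi_j\|_{L^\infty(\D)}+d\tau$. Forcing this below $\nu/2$ secures clause (a); clause (b) then follows from the uniform boundedness of $u(\bsy)$ on $\cB_\bsdelta([-1,1])$, positivity of the normalization constant $Z$, and continuity of the exponential, yielding constants $0<M\le L<\infty$ that in turn determine $C_6=C_6(M,L)$ as in Thm.~\ref{THM:COR:DINN}.

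\textbf{Reducing (c), (d) to a linear recursion.} I would apply Lemma \ref{lemma:lip} on a polybox one size larger than $\cB_\bsdelta([-1,1])$ (a cost absorbed by halving $\kappa_j$ and $\tau$ beforehand). Since $\partial_{y_j}a=\psi_j$ enters the chain rule linearly, this yields $\|\partial_{y_j}f_\measii\|_{L^\infty(\cB_{\bsdelta}([-1,1]))}\le K\|\psi_j\|_{L^\infty(\D)}$ with $K=K(\scr{u},\sigma,d,A,L,\nu)$. Straight-line integration along perturbations in the first $k$ coordinates then gives
\begin{equation*}
|f_\measii(\bsx+\bsz)-f_\measii(\bsx)|\le K\sum_{j=1}^{k}\delta_j\|\psi_j\|_{L^\infty(\D)}=K\sum_{j=1}^{k}\bigl(\kappa_j\|\psi_j\|_{L^\infty(\D)}+\tau\bigr)
\end{equation*}
for every $\bsx\in[-1,1]^d$ and $\bsz\in\cB_{\bsdelta_{[k]}}\times\{0\}^{d-k}$. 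Clause (d) now reduces to the single inequality
\begin{equation*}
\kappa_{k+1}\ge \frac{K}{C_6}\sum_{j=1}^{k}\bigl(\kappa_j\|\psi_j\|_{L^\infty(\D)}+\tau\bigr),
\end{equation*}
while clause (c) is its $k=d$ analogue with right-hand side $C_6$ instead of $C_6\delta_{k+1}$.

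\textbf{Inductive construction and main obstacle.} For a single scale $c\in(0,1)$ to be fixed last, set $\tau\dfn c$, $\kappa_1\dfn c$, and recursively
\begin{equation*}
\kappa_{k+1}\dfn \kappa_k+\tfrac{c}{d}+\tfrac{K}{C_6}\sum_{j=1}^{k}\bigl(\kappa_j\|\psi_j\|_{L^\infty(\D)}+\tau\bigr)\qquad(k=1,\dots,d-1).
\end{equation*}
The additive gap $c/d>0$ forces strict monotonicity of $(\kappa_j)$, and the remainder of the recursion enforces clause (d) with room to spare. Because $d$ and $(\|\psi_j\|_{L^\infty(\D)})_{j=1}^d$ are fixed, iterating gives $\kappa_d\le P(c)$ for a fixed polynomial $P$ with $P(0)=0$. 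Hence one can choose $c=c(\scr{u},\sigma,d,(\psi_j)_{j=1}^d)$ so small that simultaneously $\kappa_d<1$, the analyticity budget $\sum_j\kappa_j\|\psi_j\|_{L^\infty(\D)}+d\tau\le\nu/2$ of Step 1 is met, and clause (c) at $k=d$ holds. The main obstacle is precisely this triple balancing act: the term $\tau/\|\psi_j\|_{L^\infty(\D)}$ in $\delta_j$ has to stay sizeable whenever $\|\psi_j\|_{L^\infty(\D)}$ is small (to keep the analyticity radius nontrivial in that coordinate), while the cascade imposed by (d) compels the universal component $\kappa_j$ both to increase strictly in $j$ and to remain $<1$. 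Finite-dimensionality ($d\in\N$) is what makes this bookkeeping terminate with all three requirements simultaneously met.
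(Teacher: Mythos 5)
Your proof is essentially correct, and it is a genuinely different route from the paper's. The crux of the lemma is producing the increasing sequence $(\kappa_j)$ and $\tau>0$ so that the "cascade'' in Assumption \ref{ass:finite}\ref{item:cordinN:4} terminates with all $\kappa_j<1$, and you and the paper construct this differently. The paper isolates the combinatorial content in Lemma \ref{LEMMA:TAU}, where $(\kappa_j)$ and $\tau$ are built \emph{backwards} as a geometric progression $\kappa_j=(\tilde\gamma/4)^{d+1-j}$, $\tau=(\tilde\gamma/4)^{d+1}$: the tail sum of a geometric series is dominated by the next term, which directly gives $\sum_{j\le k}b_j\delta_j\le\gamma\delta_{k+1}$, and the analytic verification of Assumption \ref{ass:finite} is then carried out separately with a fixed budget $\gamma=\min\{r,C_6/m\}$. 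You instead run a \emph{forward} recursion, defining $\kappa_{k+1}$ from $\kappa_1,\dots,\kappa_k$ with the constraint built in as the driving term plus an additive gap $c/d$ for strict monotonicity, and then exploit that $\kappa_d$ is an explicit (linear, zero-constant-term) function of the scale $c$ to close the budget constraints by shrinking $c$. Both are correct for $d\in\N$. The paper's backward construction has the advantage of extending almost verbatim to $d=\infty$ (Lemma \ref{lemma:kappajinfty} in the appendix, used for Thm.~\ref{THM:TINF}), whereas your forward cascade intrinsically relies on finite $d$ — $\kappa_j$ grows with $j$ and could not remain bounded for infinitely many indices — which is consistent with your own closing remark. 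The paper's version is also more modular, as Lemma \ref{LEMMA:TAU} is decoupled from the analytic estimates.

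Two smaller points. First, your invocation of Lemma \ref{lemma:lip} to get $\|\partial_{y_j}f_\measii\|_{L^\infty}\le K\|\psi_j\|_{L^\infty(\D)}$ is not quite the right tool: Lemma \ref{lemma:lip} would give a Cauchy-estimate bound of the form $L/\delta_j$, not the factor $\|\psi_j\|$. The bound you actually want follows directly from the chain rule $\partial_{y_j}f_\measii=\scr{f}_\measii'(\cdot)[\psi_j]$ together with Lipschitz continuity of $\scr{f}_\measii$ on the fixed $r$-neighborhood $S$ of the parameter set — which is exactly how the paper proceeds (with $|f_\measii(\bsy+\bsz)-f_\measii(\bsy)|\le m\|\sum_jz_j\psi_j\|_{L^\infty(\D)}\le m\sum_j\delta_jb_j$). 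The halving of $\kappa_j,\tau$ to "make room'' is unnecessary once you argue at the level of $\scr{f}_\measii$ on $S$. Second, be slightly careful that the constants $M,L$ (and hence $C_6$ and the Lipschitz constant $m$ or your $K$) must be defined on a domain fixed \emph{before} $\bsdelta$ is chosen (the paper's $S$, or your $\nu/2$-slab); your phrasing "uniform boundedness of $u(\bsy)$ on $\cB_\bsdelta([-1,1])$'' hints at a circular dependence, but your Step 1 constraint $\sum_j\kappa_jb_j+d\tau\le\nu/2$ makes $\cB_\bsdelta([-1,1])$ a subset of that fixed slab, so the argument does close.
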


Let ${\varrho_j}=1+C_7\delta_j$ be as in Thm.~\ref{THM:POLYD} (i.e., as in
\eqref{eq:xidef}), where $C_7$ is as in Thm.~\ref{THM:COR:DINN}. With
$\kappa_j\in (0,1]$, $\tau>0$ as in Lemma \ref{LEMMA:EXAMPLE}
\begin{equation*}
  {\varrho_j} = 1+C_7\delta_j
  = 1 + C_7\left(\kappa_j+\frac{\tau}{\norm[L^\infty(\D)]{\psi_j}}\right).
\end{equation*}
In particular ${\varrho_j}\ge 1+C_7\tau\norm[L^\infty(\D)]{\psi_j}^{-1}$. In
practice we do not know $\tau$ and $C_7$ (although pessimistic
estimates could be obtained from the proofs), and we simply set
${\varrho_j}\dfn 1+\norm[L^\infty(\D)]{\psi_j}^{-1}$. Thm.~\ref{thm:TdN}
(and Thm.~\ref{THM:POLYD}) then suggest the choice
(cp.~\eqref{eq:eta}, \eqref{eq:Leps})
\begin{equation}\label{eq:Lepsconc}
  \Lambda_{k,\eps}\dfn \setc{\bsnu\in\N_0^k}{(1+\norm[L^\infty(\D)]{\psi_k}^{-1})^{-\max\{1,\nu_k\}}\prod_{j=1}^{k-1}(1+\norm[L^\infty(\D)]{\psi_j}^{-1})^{-\nu_j}\ge \eps}
\end{equation}
to construct a sparse polynomial ansatz space $\bbP_{\Lambda_{k,\eps}}$
in which to approximate $T_k$ (or
$\sqrt{\partial_{k}T_k}-1$), %
$k\in\{1,\dots,d\}$. Here $\eps>0$ is a
thresholding parameter, and as $\eps\to 0$ the ansatz spaces become
arbitrarily large.
We interpret \eqref{eq:Lepsconc} as follows: the smaller
$\norm[L^\infty(\D)]{\psi_j}$, the less important variable $j$ is in
the approximation of $T_k$ if $j<k$. The $k$th variable plays a
special role for $T_k$ however, and is always among the most
important in the approximation of $T_k$.

{
\subsection{Performing inference}

Given data $\bsvarsigma\in\R^m$ as in \eqref{eq:meas}, we can now
describe a high-level algorithm to perform inference on the model
problem in Sec.~\ref{sec:setting}--\ref{sec:priorposterior}:
\begin{enumerate}
\item {\bf Determine ansatz space:} Fix $\eps>0$ and determine $\Lambda_{k,\eps}$ in
  \eqref{eq:Lepsconc} for $k=1,\dots,d$.
\item {\bf Find transport map:} Use as a target $\pi$ the posterior
  with density in \eqref{eq:postdens}, and solve the optimization
  problem
  \begin{equation}\label{eq:optimization}
    \argmin_{\tilde T\text{ as in \eqref{eq:ck} with $p_k\in\bbP_{\Lambda_{k,\eps}}$}}{\rm dist}(\tilde T_\sharp\measi,\measii).
  \end{equation}
\item {\bf Estimate parameter:} Estimate the unknown parameter
  $\bsy\in [-1,1]^d$ via its conditional mean (CM), i.e., compute the
  expectation under the posterior $\pi\simeq\tilde T_\sharp\measi$
  \begin{equation}\label{eq:conditionalmean}
    \int_{[-1,1]^d} \bsy \dd\measii(\bsy) \simeq \int_{[-1,1]^d} \tilde T(\bsx)\dd\measi(\bsx)\dfnn \tilde \bsy.
  \end{equation}
  An estimate of the unknown diffusion coefficient
  $a\in L^\infty(\D)$ in \eqref{eq:ay} is obtained via
  $1+\sum_{j=1}^d \tilde y_j\psi_j$.
\end{enumerate}
We next provide more details for each of those steps.

\subsubsection{Determining the ansatz space}
An efficient algorithm (of linear complexity) to determine multi-index
sets $\Lambda_{k,\eps}$ of the type \eqref{eq:Lepsconc} is given in
\cite{MR2566594} or \cite[Sec.~3.1.3]{JZdiss}. We emphasize that, in
general, it is not an easy task to come up with suitable ansatz
spaces. In the current setting, our explicit knowledge of the prior
measure and its possibly anisotropic structure, and the analyticity of
the forward operator, allow us---using the analysis of the
previous sections---to determine \textit{a priori} ansatz spaces yielding proven
exponential convergence for the best approximating transport map
within this space. By ``anisotropic,'' we mean that certain variables
$y_j$ may contribute less to the posterior than others due to
$\norm{\psi_j}$ being small in \eqref{eq:ay}; in this case our
construction \eqref{eq:Lepsconc} results in fewer degrees of freedom
spent on such variables, thus increasing the efficiency of the
algorithm. This is to be contrasted with the use of generic ansatz
spaces, which do not leverage such knowledge, as for example proposed
in \cite{MR3821485}. The explicit construction of these spaces is one
of the main contributions of this work.

\subsubsection{Finding the transport map}
Again let $\measii$ with density $f_\measii$ be the posterior in
\eqref{eq:postdens}. The ``${\rm dist}$'' function in
\eqref{eq:optimization} is often chosen to be the KL divergence. In
this case the optimization problem \eqref{eq:optimization} can
equivalently be written as
\begin{equation}\label{eq:opt2}
  \argmin_{\tilde T\text{ as in \eqref{eq:ck} with $p_k\in\bbP_{\Lambda_{k,\eps}}$}}
  \int_{[-1,1]^d}\left(\log(f_\measi(\tilde T(\bsx))) -\log(\ell(\bsvarsigma|\tilde T(\bsy))) \right) \dd\measi(\bsx).
\end{equation}
To minimize this term, the normalizing constant $Z$ in
\eqref{eq:postdens}, which is in general unavailable, need not be
known; see, e.g., \cite[Sec.~3.2]{MR3821485} for more
details. Moreover, since the reference $\measi$ is a tractable
measure, the integral in \eqref{eq:opt2} can be approximated. The
simplest way is by Monte Carlo sampling (from $\measi$),
but also higher-order methods like quasi-Monte Carlo
\cite{DLS16_1336} or sparse-grid quadrature, e.g.,
\cite{MR1669959,ZS17}, could be used, though they have not yet been rigorously
investigated in this context. Solving the optimization problem
\eqref{eq:opt2} is in general hard, as the objective is non-convex and
defined on a high-dimensional space. Practical implementations have employed quasi-Newton or Newton-CG methods \cite{MR2972870,brennan2020greedy,transportmapslibrary}, with continuation heuristics to address non-convexity. Coming up with good optimization algorithms for this objective is out of the scope of the present paper
but would be an interesting topic for future research.

Instead of minimizing over rational transports from \eqref{eq:ck} as
in \eqref{eq:optimization} and \eqref{eq:opt2}, alternatively we could
minimize over the NN transports from Thm.~\ref{THM:RELU}. Since the
proof of Thm.~\ref{THM:RELU} is constructive, we could in principle
give an explicit architecture achieving the rate in
\eqref{eq:thm:relu}. We refrain from doing so, since an implementation
of this (sparse) architecture may be cumbersome in practice, and not
necessarily yield significantly better global minima than a fully
connected network of similar size (which also allows for exponential
convergence; see the discussion after Thm.~\ref{THM:RELU}).

Other objectives and approaches are possible as well. For instance, if
the reference $\measi$ is chosen as the uniform measure, then
$\tilde T_\sharp\rho$ has density $\det d\tilde S$ with
$\tilde S=\tilde T^{-1}$. In this case we may minimize the Hellinger
distance
\begin{equation*}
  \argmin_{\tilde T}{\rm H}(\tilde T_\sharp\measi,\measii)
  =\argmin_{\tilde S}\frac{1}{\sqrt{2}}\norm[{L^2([-1,1],\mu)}]{\sqrt{f_\measii}-\sqrt{\det d\tilde S}}.
\end{equation*}
To make this optimization problem independent of the normalizing
constant---and, moreover, convex---one can proceed as follows:
Substitute $\tilde g \dfn \sqrt{\det d \tilde S}$, and find
\begin{equation*}
  \argmin_{\tilde g}\norm[{L^2([-1,1],\mu)}]{\sqrt{f_\measii}-\tilde g}.
\end{equation*}
Observe that
by using, for example, a polynomial ansatz
$\tilde g(\bsy)=\sum_{\bsnu\in\Lambda}c_\bsnu \bsy^\bsnu$, this yields
a (convex) linear least-squares problem for the expansion
coefficients $(c_\bsnu)_{\bsnu\in\Lambda}$. Moreover, after
normalizing
\begin{equation*}
  g(\bsy)\dfn \frac{\tilde g(\bsy)}{\norm[{L^2([-1,1]^d,\mu)}]{\tilde g}},
\end{equation*}
which guarantees $g^2$ to be a probability density, this method does
not depend on the unknown normalizing constant of the target
$f_\measii$.  Using the explicit formulas for the (inverse)
KR transport in Sec.~\ref{SEC:T} we get
\begin{equation*}
  \tilde S_j(\bsx_{[j]}) = \frac{\int_{-1}^{x_j}\int_{[-1,1]^{j-d}} g^2(\bsx_{[j-1]},t,\bsy_{[j+1:d]})\dd\mu(\bsy_{[j+1:d]})\dd\mu(t)}{\int_{[-1,1]^{d-j}}g^2(\bsx_{[j-1]},\bsy_{[j:d]}\dd\mu(\bsy_{[j:d]})}.
\end{equation*}
If $\tilde g$ is a multivariate polynomial, these integrals can be
evaluated analytically (without the use of numerical quadrature),
which makes the method computationally feasible.  A similar algorithm
based on tensor-train approximations of the density has recently been
proposed in \cite{MR4065222}.

\subsubsection{Estimating the parameter}
{In the Bayesian setting, a natural point estimate of the parameter $\bsy$ is the CM estimator in \eqref{eq:conditionalmean}.
An alternative is the MAP estimator} (a point maximizing the posterior density), but the CM has the advantage of satisfying several useful stability properties. For example, in
contrast to the MAP, under suitable assumptions the CM depends
continuously on the data $\bsvarsigma$, e.g.,
\cite{stuartacta,MR3640629,MR4075344}. Moreover, minimizing objectives
like the KL divergence or the Hellinger distance will guarantee
convergence of the CM as the number of degrees of freedom in the
approximation of $\tilde T$ tends to infinity.  It will not guarantee
convergence of a MAP point, however, since $\tilde T_\sharp\measi$ need
not converge pointwise to $\measii$ if $\tilde T$ minimizes the
KL divergence as in \eqref{eq:opt2}. To see convergence of the CM,
recall that the Hellinger distance is bounded according to
${\rm H}(\tilde T_\sharp\measi,\measii)^2\le \frac{1}{2}{\rm KL}(\tilde T_\sharp\measi,\measii)$; 
see \cite{GS2002}. Moreover, by
Rmk.~\ref{rmk:hellinger}
\begin{equation*}
  \normc{\int_{[-1,1]^d}\tilde T(\bsx)\dd\measi(\bsx)-\int_{[-1,1]^d}\bsy \dd\measii(\bsy)}\le C \,  {\rm H}(\tilde T_\sharp\measi,\measii).
\end{equation*} %
By Prop.~\ref{PROP:MEASCONVD}, for $\tilde T$ solving the optimization
problem \eqref{eq:opt2} (i.e., minimizing
${\rm KL}(\tilde T_\sharp\measi,\measii)$), we thus have
\begin{equation*}
  \normc{\int_{[-1,1]^d}\tilde T(\bsx)\dd\measi(\bsx)-\int_{[-1,1]^d}\bsy \dd\measii(\bsy)}\le C \exp\left(-\frac{\tilde \beta}{2} N^{\frac{1}{d}} \right)
\end{equation*} %
with $\tilde \beta<\beta$ as in \eqref{eq:beta}, i.e., our
approximation to the actual CM
$\int_{[-1,1]^d}\bsy \dd\measii(\bsy)\in\R^d$
 converges exponentially
in terms of the number of degrees of freedom $N$ used in \eqref{eq:ck}
to approximate $T$. If instead of \eqref{eq:ck} we use a NN, by
Prop.~\ref{prop:relupushforward} we obtain a similar bound in terms of
the trainable parameters $N$ of the network, but for a different
constant $\tilde \beta$, and with $d$ replaced by $d+1$.}

{Given an approximation $\tilde T$ to the transport $T$, any other posterior expectation $$\int_{[-1,1]^d}g(\bsy)\dd\measii(\bsy) = \int_{[-1,1]^d} g(T(\bsx)) \dd \measi(\bsx)$$ can be also approximated by substituting $\tilde T$ for $T$ above; the CM simply corresponds to setting $g$ to be the identity function. Choosing a polynomial $g$, for example, enables computation of the covariance or higher moments of the posterior, as a way of characterizing uncertainty in the parameter.}

\begin{remark}
  The forward operator being defined by the diffusion equation
  \eqref{eq:pde} is not {essential to} the discussion {in
    Sec.~\ref{SEC:EXAMPLE}}. Other models such as the 
  Navier-Stokes equations allow similar arguments
  \cite{CSZ18}. More generally, the proof of Lemma
  \ref{LEMMA:EXAMPLE} merely requires the existence of a complex
  differentiable {function $\scr{u}$ (between two complex Banach
    spaces)} such that $u(\bsy)=\scr{u}(1+\sum_{j=1}^dy_j\psi_j)$, and
  $\scr{u}$ does not even need to stem from a PDE.
\end{remark}

  \section{Conclusions}
  In this paper we proved several results for the approximation of the
  KR transport $T:[-1,1]^d\to [-1,1]^d$. The central
  requirement was that the reference and target densities are both
  analytic.  Based on this, we first conducted a careful analysis of
  the regularity of $T$ by investigating its domain of analytic
  extension. This implied exponential convergence for sparse
  polynomial and ReLU neural network approximations. We gave an ansatz
  for the computation of the approximate transport, which necessitates
  that it is a bijective map on the hypercube $[-1,1]^d$. This led to
  a statement for rational approximations of $T$. {Moreover, we
    discussed how our results can be used in the
    development of inference algorithms.}

  {Most of these results are generalized and extended in
    \cite{zm2}, where we establish dimension-robust convergence in the
    high- or infinite-dimensional case $d\gg 1$ or $d=\infty$.}
  For future research, we intend to use our proposed ansatz, including
  the \textit{a priori}-determined sparse polynomial spaces, to
  construct {and analyze in more detail concrete inference
    algorithms as outlined in Sec.~\ref{SEC:EXAMPLE}}. The present
  regularity and approximation results for $T$ provide crucial tools
  to rigorously prove convergence and convergence rates for such
  methods. Additionally, we will investigate similar results on
  unbounded domains, e.g., $\R^d$.

\appendix
\section{Inverse function theorem}\label{sec:InvFunc}
In the following, if $O\subseteq\C^n$ is a set, by $f\in C^1(O;\C)$ we
mean that $f\in C^0(O;\C)$ and for every open $S\subseteq O$ it holds
$f\in C^1(S;\C)$.

\begin{lemma}\label{lemma:impl}
  Let $n\in\N$, $\delta>0$, $\kappa\in (0,1)$ and let
  $O\subseteq\C^n$. Assume that $x_0\in O$, $t_0\in \C$ and
  $f\in C^1(O\times \cB_\delta(t_0);\C)$ satisfy
  \begin{enumerate}[label=(\alph*)]
  \item\label{item:impl1} $f(x_0,t_0)=0$ and $f_t(x_0,t_0)\neq 0$,
  \item\label{item:impl2}
    $|1-\frac{f_t(x,t)}{f_t(x_0,t_0)}|\le \kappa$ for all
    $(x,t)\in O\times \cB_\delta(t_0)$,
  \item\label{item:impl3}
    $|\frac{f(x,t_0)}{f_t(x_0,t_0)}|\le\delta(1-\kappa)$ for all
    $x\in O$.
  \end{enumerate}

  Then there exists a unique function $t:O\to \cB_\delta(t_0)$ such
  that $f(x,t(x))=0$ for all $x\in O$. Moreover $t\in C^1(O;\C)$.
\end{lemma}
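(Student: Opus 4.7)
The plan is to solve $f(x,t)=0$ in $t$ for each fixed $x\in O$ via Banach's fixed point theorem applied to a Newton-like contraction, and then obtain the regularity of $x\mapsto t(x)$ by exhibiting it as a uniform limit of holomorphic functions.

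\emph{Step 1: contraction map.} For each $x\in O$, define
\begin{equation*}
  \Phi_x:\cB_\delta(t_0)\to\C,\qquad
  \Phi_x(t)\dfn t-\frac{f(x,t)}{f_t(x_0,t_0)}.
\end{equation*}
Since $f_t(x_0,t_0)\neq 0$ by \ref{item:impl1}, the equation $\Phi_x(t)=t$ is equivalent to $f(x,t)=0$. Differentiating in $t$ gives $\Phi_x'(t)=1-f_t(x,t)/f_t(x_0,t_0)$, so assumption \ref{item:impl2} yields $|\Phi_x'(t)|\le\kappa$ throughout $\cB_\delta(t_0)$. Since $\cB_\delta(t_0)$ is convex, $\Phi_x$ is $\kappa$-Lipschitz on $\cB_\delta(t_0)$.

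\emph{Step 2: self-mapping.} Assumption \ref{item:impl3} gives $|\Phi_x(t_0)-t_0|=|f(x,t_0)/f_t(x_0,t_0)|\le\delta(1-\kappa)$. Hence for any $t\in\cB_\delta(t_0)$,
\begin{equation*}
  |\Phi_x(t)-t_0|\le |\Phi_x(t)-\Phi_x(t_0)|+|\Phi_x(t_0)-t_0|
  \le \kappa|t-t_0|+\delta(1-\kappa)<\delta,
\end{equation*}
so $\Phi_x$ maps $\cB_\delta(t_0)$ into itself. By the Banach fixed point theorem (applied on the complete metric space $\bar\cB_{\delta(1-\kappa)+\kappa\delta'}(t_0)$ for any $\delta'<\delta$, or directly by noting that the iterates converge), there is a unique fixed point $t(x)\in\cB_\delta(t_0)$ with $f(x,t(x))=0$. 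Uniqueness on $\cB_\delta(t_0)$ follows because any two fixed points of the $\kappa$-contraction must coincide.

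\emph{Step 3: regularity.} Define inductively $\tau_0(x)\equiv t_0$ and $\tau_{n+1}(x)\dfn \Phi_x(\tau_n(x))$. Each $\tau_n:O\to\cB_\delta(t_0)$ is holomorphic on any open subset of $O$ by induction, since $f\in C^1(O\times\cB_\delta(t_0);\C)$ is jointly holomorphic. The contraction estimate combined with $|\tau_1-\tau_0|\le \delta(1-\kappa)$ gives
\begin{equation*}
  \sup_{x\in O}|\tau_{n+1}(x)-\tau_n(x)|\le \kappa^n\delta(1-\kappa),
\end{equation*}
so $(\tau_n)_{n\in\N}$ converges uniformly on $O$ to a function which must equal $t(\cdot)$ by uniqueness of the fixed point. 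By the uniform-limit theorem of Weierstrass, $t\in C^1(O;\C)$.

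\emph{Main obstacle.} The only subtlety is bookkeeping: we must verify that the iteration remains strictly inside the open ball $\cB_\delta(t_0)$ (which is handled by the strict inequality in Step 2), and ensuring that $f_t(x,t(x))\neq 0$ so that $t(x)$ is genuinely a simple root (immediate from \ref{item:impl2} since $|f_t/f_t(x_0,t_0)|\ge 1-\kappa>0$). Other than this, the argument is the standard Banach-fixed-point derivation of the holomorphic implicit function theorem with the quantitative constants made explicit.
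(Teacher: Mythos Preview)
Your proof is correct and follows essentially the same route as the paper: the same Newton-type map $t\mapsto t-f(x,t)/f_t(x_0,t_0)$, the same contraction and self-mapping estimates from \ref{item:impl2}--\ref{item:impl3}, and holomorphy of $t(\cdot)$ via uniform convergence of the iterates. The only cosmetic difference is that the paper applies the Banach fixed point theorem once in the function space $C^0(O;\C)$ (yielding existence and continuity simultaneously), whereas you apply it pointwise for each $x$ and then invoke uniform convergence separately; both arrive at the same place.
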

\begin{proof}
  Define $S(x,t)\dfn t-\frac{f(x,t)}{f_t(x_0,t_0)}$, i.e.,
  $S:O\times \cB_\delta(t_0)\to\C$. Then
  $S_t(x,t) = 1 - \frac{f_t(x,t)}{f_t(x_0,t_0)}$, and by
  \ref{item:impl2} we have for all $(x,t)\in O\times \cB_\delta(t_0)$
  \begin{equation}\label{eq:contr}
    |S_t(x,t)|\le \kappa<1.
  \end{equation}
  Moreover, \ref{item:impl3} and \eqref{eq:contr} imply for all
  $(x,t)\in O\times \cB_\delta(t_0)$
  \begin{equation}\label{eq:self}
    |S(x,t)-t_0|\le |S(x,t_0)-t_0|+|S(x,t)-S(x,t_0)|
    \le \delta(1-\kappa)+\delta \sup_{(x,t)\in O\times \cB_\delta(t_0)}|S_t(x,t)|
    \le \delta.
  \end{equation}

  Now define the Banach space $X\dfn C^0(O;\C)$ with norm
  $\norm[X]{f}\dfn \sup_{t\in O}|f(t)|$, and consider the closed
  subset $A\dfn \set{f\in X}{\norm[X]{f-t_0}\le\delta}\subset X$
  (here, by abuse of notation, $t_0:O\to\C$ is interpreted as the
  constant function in $X$). By \eqref{eq:self},
  $t(\cdot)\mapsto S(\cdot,t(\cdot))$ maps $A$ to itself, and by
  \eqref{eq:contr} the map is a contraction there, so that it has a
  unique fixed point by the Banach fixed point theorem.  We have shown
  the existence of $t\in C^0(O;\cB_\delta(t_0))$ satisfying
  $S(x,t(x))\equiv t(x)$, which is equivalent to $f(x,t(x))\equiv 0$.
  It remains to show that $t\in C^1(O;\cB_{\delta}(t_0))$. Letting
  $t_0:O\to\C$ again be the constant function and
  $t_k(x)\dfn S(x,t_{k-1}(x))$ for every $k\ge 2$, it holds $t_k\to t$
  in $X$, i.e., $(t_k)_{k\in\N}$ converges uniformly. Since
  $t_0\in C^1(O;\C)$, we inductively obtain $t_k\in C^1(O;\C)$ for all
  $k\in\N$.
  Since $X$ is a \emph{complex} Banach space, as a uniform limit of
  differentiable (analytic) functions it holds
  $\lim_{k\to\infty}t_k=t\in C^1(O;\C)$, see for instance
  \cite[Sec.~3.1]{herve89}.
  
  Finally, to see that for each $x\in O$ there exists only one
  $s\in \cB_\delta(t_0)$ such that $f(x,s)=0$ (namely $s=t(x)$), one
  can argue similar as above and apply the Banach fixed point theorem
  to the map $s\mapsto S(x,s)$ for $x\in O$ fixed and
  $s\in \cB_\delta(t_0)$.
\end{proof}

From the previous lemma, we deduce two types of inverse function
theorems in Prop.~\ref{prop:impl} and Prop.~\ref{prop:localhol}.

\begin{proposition}\label{prop:impl}
  Let $t_0\in\C$ and $\delta>0$ be such that
  \begin{enumerate}[label=(\alph*)]
  \item\label{item:prop:impl1} $F\in C^1(\cB_\delta(t_0);\C)$ and
    $F'(t_0)\neq 0$,
  \item\label{item:prop:impl2} $F':\cB_\delta(t_0)\to\C$ is Lipschitz
    continuous with Lipschitz constant
    $L\dfn \frac{|F'(t_0)|}{2\delta}$.
  \end{enumerate}
  
  Then with $r\dfn \delta\frac{|F'(t_0)|}{2}$ there exists a unique
  $G:\cB_r(F(t_0))\to \cB_\delta(t_0)$ such that $F(G(x))=x$ for all
  $x\in \cB_r(F(t_0))$. Moreover $G\in C^1(\cB_r({F(t_0)});\C)$.
\end{proposition}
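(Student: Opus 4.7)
The plan is to derive this inverse function theorem directly from Lemma \ref{lemma:impl} by viewing the inverse of $F$ as the implicit solution $t = t(x)$ of the equation $F(t) - x = 0$.

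Concretely, I set $n=1$, $O \dfn \cB_r(F(t_0)) \subseteq \C$ with $r = \delta|F'(t_0)|/2$, take $x_0 \dfn F(t_0)$, and define
\begin{equation*}
f(x,t) \dfn F(t) - x \qquad \text{on } O \times \cB_\delta(t_0).
\end{equation*}
Since $F \in C^1(\cB_\delta(t_0);\C)$ and the $x$-dependence is affine, $f \in C^1(O \times \cB_\delta(t_0); \C)$ with $f_t(x,t) = F'(t)$.

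The core of the proof is to check the three hypotheses of Lemma \ref{lemma:impl} with $\kappa \dfn 1/2$. Condition \ref{item:impl1} is immediate: $f(x_0,t_0) = F(t_0) - F(t_0) = 0$ and $f_t(x_0,t_0) = F'(t_0) \neq 0$ by assumption \ref{item:prop:impl1}. For \ref{item:impl2}, the Lipschitz bound on $F'$ with constant $L = |F'(t_0)|/(2\delta)$ (from \ref{item:prop:impl2}) gives, for every $(x,t) \in O \times \cB_\delta(t_0)$,
\begin{equation*}
\left|1 - \frac{f_t(x,t)}{f_t(x_0,t_0)}\right| = \frac{|F'(t_0) - F'(t)|}{|F'(t_0)|} \le \frac{L \, |t-t_0|}{|F'(t_0)|} \le \frac{L \delta}{|F'(t_0)|} = \frac{1}{2} = \kappa.
\end{equation*}
For \ref{item:impl3}, since $x \in O = \cB_r(F(t_0))$,
\begin{equation*}
\left|\frac{f(x,t_0)}{f_t(x_0,t_0)}\right| = \frac{|F(t_0) - x|}{|F'(t_0)|} \le \frac{r}{|F'(t_0)|} = \frac{\delta}{2} = \delta(1-\kappa).
\end{equation*}

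Applying Lemma \ref{lemma:impl} then delivers a unique $G \in C^1(O; \cB_\delta(t_0))$ with $f(x,G(x)) \equiv 0$, i.e., $F(G(x)) = x$ for all $x \in \cB_r(F(t_0))$, which is exactly the claim. There is no real obstacle here; the only care needed is the bookkeeping that ties the Lipschitz constant $L = |F'(t_0)|/(2\delta)$ in \ref{item:prop:impl2} and the radius $r = \delta|F'(t_0)|/2$ together precisely so that $\kappa = 1/2$ makes both \ref{item:impl2} and \ref{item:impl3} hold with equality in the worst case.
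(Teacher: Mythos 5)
Your proof is correct and follows essentially the same route as the paper's: both define $f(x,t)=F(t)-x$ on $O\times\cB_\delta(t_0)$ with $O=\cB_r(F(t_0))$, verify the three hypotheses of Lemma~\ref{lemma:impl} with $\kappa=1/2$ using the given Lipschitz constant $L=|F'(t_0)|/(2\delta)$ and the radius $r=\delta|F'(t_0)|/2$, and then invoke the lemma. The bookkeeping and the role of $\kappa=1/2$ match exactly.
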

\begin{proof}
  Let $O\dfn \cB_{r}(F(t_0))$. Define $x_0\dfn F(t_0)$ as well as
  $f(x,t)\dfn F(t)-x$. Then $f(x_0,t_0)=0$,
  $f_t(x_0,t_0)=F'(t_0)\neq 0$, showing Lemma \ref{lemma:impl}
  \ref{item:impl1}. Furthermore, for $t\in \cB_{\delta}(t_0)$ %
  and $x\in O=\cB_r(x_0)$, due to the Lipschitz continuity of $F'$ and
  $L=\frac{|F'(t_0)|}{2\delta}$
  \begin{equation*}
    \left|1-\frac{f_t(x,t)}{f_t(x_0,t_0)}\right|=\left|1-\frac{F'(t)}{F'(t_0)}\right|=
    \left|\frac{F'(t_0)-F'(t)}{F'(t_0)}\right|\le
    \frac{L|t-t_0|}{|F'(t_0)|}\le \frac{L\delta}{|F'(t_0)|}=\frac{1}{2},
  \end{equation*}
  which shows Lemma \ref{lemma:impl} \ref{item:impl2} with
  $\kappa=\frac{1}{2}$. Finally, Lemma \ref{lemma:impl}
  \ref{item:impl3} (with $\kappa=\frac{1}{2}$) follows from the fact
  that for $x\in O=\cB_r(F(t_0))$
  \begin{equation*}
    \left|\frac{f(x,t_0)}{f_t(x_0,t_0)}\right|=\frac{|F(t_0)-x|}{|F'(t_0)|}
    \le \frac{r}{|F'(t_0)|}
    =\frac{\delta}{2}.
  \end{equation*}
  Hence the statement follows by Lemma \ref{lemma:impl}.
\end{proof}

The next lemma shows that $G$ in Prop.~\ref{prop:impl} depends
continuously on $F$.

\begin{lemma}\label{lemma:Gcontinuous}
  Let $t_0\in\C$, $\delta>0$ be such that both
  $F\in C^1(\cB_\delta(t_0))$ and $\tilde F\in C^1(\cB_\delta(t_0))$
  satisfy Prop.~\ref{prop:impl} \ref{item:prop:impl1},
  \ref{item:prop:impl2}.  Denote the functions from
  Prop.~\ref{prop:impl} by $G$, $\tilde G$ respectively.  With
  $r=\delta\frac{|F'(t_0)|}{2}$,
  $\tilde r=\delta\frac{|\tilde F'(t_0)|}{2}$
  \begin{equation*}
    \sup_{
      x\in \cB_r(F(t_0))\cap \cB_{\tilde r}(\tilde F(t_0))}|G(x)-\tilde G(x)|\le %
    \frac{2\norm[L^\infty(\cB_\delta(t_0))]{F-\tilde F}}{\max\{|F'(t_0)|,|\tilde F'(t_0)|\}}.
  \end{equation*}
\end{lemma}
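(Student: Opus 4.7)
The plan is to exploit the identity $F(G(x)) = x = \tilde F(\tilde G(x))$ for any $x$ in the intersection $\cB_r(F(t_0)) \cap \cB_{\tilde r}(\tilde F(t_0))$. Rearranging gives
\begin{equation*}
F(G(x)) - F(\tilde G(x)) = \tilde F(\tilde G(x)) - F(\tilde G(x)),
\end{equation*}
where both $G(x)$ and $\tilde G(x)$ lie in $\cB_\delta(t_0)$ by Prop.~\ref{prop:impl}. The right-hand side is trivially bounded in modulus by $\norm[L^\infty(\cB_\delta(t_0))]{F - \tilde F}$, so the entire task reduces to producing a lower bound for the left-hand side that is linear in $|G(x) - \tilde G(x)|$.

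For the lower bound I would write, using the fundamental theorem of calculus along the line segment from $\tilde G(x)$ to $G(x)$ (which lies in the convex set $\cB_\delta(t_0)$),
\begin{equation*}
F(G(x)) - F(\tilde G(x)) = (G(x) - \tilde G(x)) \int_0^1 F'\bigl(\tilde G(x) + s(G(x) - \tilde G(x))\bigr)\,\mathrm{d}s.
\end{equation*}
The key obstacle is that, unlike in the real-valued setting, one cannot invoke the mean value theorem and one must bound from below the modulus of a complex integral. The trick is that the Lipschitz hypothesis \ref{item:prop:impl2} with constant $L = |F'(t_0)|/(2\delta)$ forces $|F'(s) - F'(t_0)| \le L\delta = |F'(t_0)|/2$ everywhere in $\cB_\delta(t_0)$, so the integrand lies in the closed disc of radius $|F'(t_0)|/2$ around $F'(t_0)$. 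This disc is convex, so the integral (a convex combination of values of $F'$) lies in it too, and consequently has modulus at least $|F'(t_0)|/2$.

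Combining the two bounds yields $|G(x) - \tilde G(x)| \le \frac{2}{|F'(t_0)|} \norm[L^\infty(\cB_\delta(t_0))]{F - \tilde F}$. Swapping the roles of $F$ and $\tilde F$ in the whole argument produces the analogous estimate with $|\tilde F'(t_0)|$ in the denominator, and taking the better of the two gives the claimed bound with $\max\{|F'(t_0)|, |\tilde F'(t_0)|\}$. No other subtleties arise; the entire proof is a short computation once the lower bound on $|\int_0^1 F'(\cdots)\,\mathrm{d}s|$ is established.
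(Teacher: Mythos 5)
Your proof is correct and follows essentially the same route as the paper's: rearrange via $F(G(x)) - F(\tilde G(x)) = \tilde F(\tilde G(x)) - F(\tilde G(x))$, bound the right side by $\norm[L^\infty(\cB_\delta(t_0))]{F-\tilde F}$, and lower-bound the left side through the fundamental-theorem integral using the Lipschitz hypothesis on $F'$ to get $\left|\int_0^1 F'(\cdot)\,\dd\zeta\right| \ge |F'(t_0)|/2$. Your convex-disc phrasing of that lower bound is an equivalent way of saying what the paper derives by the triangle inequality, and you also make explicit the symmetrization needed to obtain the $\max$, which the paper leaves implicit.
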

\begin{proof}
  Let $s$, $t\in \cB_\delta(t_0)$. Then
  \begin{equation*}
    |F(s)-F(t)| = |s-t|\left|\int_{0}^1F'((1-\zeta)s+\zeta t)\dd \zeta\right|.
  \end{equation*}
  For any $\zeta\in [0,1]$ it holds
  $(1-\zeta)s+\zeta t\in\cB_\delta(t_0)$ and therefore
  $|(1-\zeta)s+\zeta t-t_0|\le\delta$.  Thus, using that
  $\frac{|F'(t_0)|}{2\delta}$ is a Lipschitz constant of $F'$,
  \begin{align*}
    \left|\int_{0}^1F'((1-\zeta)s+\zeta t)\dd \zeta\right|
    &= \left|\int_{0}^1F'(t_0)+(F'((1-\zeta)s+\zeta t)-F'(t_0))\dd
      \zeta\right|\nonumber\\
    &\ge |F'(t_0)|-\frac{|F'(t_0)|}{2\delta}\delta
      \ge \frac{|F'(t_0)|}{2}.
  \end{align*}
  We get for $x\in\cB_r(F(t_0))\cap \cB_{\tilde r}(\tilde F(t_0))$
  (applying this inequality with $s=G(x)$ and $t=\tilde G(x)$)
  \begin{align*}
    |G(x)-\tilde G(x)|&\le \frac{2}{|F'(t_0)|}
                        |F(G(x))- F(\tilde G(x))|\nonumber\\
                      &=\frac{2}{|F'(t_0)|} |\tilde F(\tilde G(x))- F(\tilde G(x))|\nonumber\\
                      &\le \frac{2\norm[L^\infty(\cB_\delta(t_0))]{F-\tilde F}}{|F'(t_0)|}.\qedhere
  \end{align*}
\end{proof}

\begin{proposition}\label{prop:localhol}
  Let %
  $k\in\N$, let $S\subseteq \C^{k+1}$ be open and $F\in C^1(S;\C)$.
  Assume that $x_0\in \C^k$ and $t_0\in\C$ are such that
  $F_t(x_0,t_0)\neq 0$.

  Then there exists $\delta>0$, a neighbourhood $O\subseteq \C^{k+1}$
  of $(x_0,F(x_0,t_0))$ and a unique function
  $G:O \to \cB_\delta(t_0)$ such that $F(x,G(x,y))=y$ for all
  $(x,y)\in O$. Moreover $G\in C^1(O;\C)$.
  
\end{proposition}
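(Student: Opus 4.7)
The plan is to reduce Prop.~\ref{prop:localhol} to an application of Lemma~\ref{lemma:impl}, treating both $x\in\C^k$ and $y\in\C$ as parameters. Set $y_0\dfn F(x_0,t_0)$ and define
\begin{equation*}
  f\colon O'\times \cB_\delta(t_0)\to \C,\qquad f((x,y),t)\dfn F(x,t)-y,
\end{equation*}
where $O'\subseteq\C^{k+1}$ is a neighbourhood of $(x_0,y_0)$ and $\delta>0$ are to be chosen. Then $f((x_0,y_0),t_0)=0$ and $f_t((x,y),t)=F_t(x,t)$; in particular $f_t((x_0,y_0),t_0)=F_t(x_0,t_0)\neq 0$, giving condition \ref{item:impl1} of Lemma~\ref{lemma:impl}. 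The resulting function $t\colon O'\to\cB_\delta(t_0)$ is exactly the sought $G$, and the identity $f((x,y),G(x,y))=0$ is $F(x,G(x,y))=y$.

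I would then fix $\kappa\dfn\tfrac12$ and use the continuity of $F_t$ at $(x_0,t_0)$ (here the assumption $F\in C^1(S;\C)$ is essential) to pick $\delta>0$ small enough and a neighbourhood $U_1\subseteq\C^k$ of $x_0$ with $U_1\times\cB_\delta(t_0)\subseteq S$ and
\begin{equation*}
  \left|1-\frac{F_t(x,t)}{F_t(x_0,t_0)}\right|\le\kappa\qquad\forall (x,t)\in U_1\times\cB_\delta(t_0),
\end{equation*}
establishing \ref{item:impl2}. For condition \ref{item:impl3}, I need $|F(x,t_0)-y|\le\delta(1-\kappa)|F_t(x_0,t_0)|$ uniformly on $O'$. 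Using continuity of $x\mapsto F(x,t_0)$ at $x_0$, I would shrink to a neighbourhood $U_2\subseteq U_1$ on which $|F(x,t_0)-y_0|\le \tfrac{\delta(1-\kappa)|F_t(x_0,t_0)|}{2}$, and then define
\begin{equation*}
  O'\dfn U_2\times\cB_{\delta(1-\kappa)|F_t(x_0,t_0)|/2}(y_0).
\end{equation*}
A single triangle-inequality step then yields \ref{item:impl3}, and Lemma~\ref{lemma:impl} produces the unique $G\in C^1(O';\cB_\delta(t_0))$. The final $O$ asserted in the proposition can be taken to be $O'$.

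There is no serious conceptual obstacle here: all the real analytic content is packaged into Lemma~\ref{lemma:impl}, and the only substantive work is bookkeeping to ensure that the three contraction-style hypotheses hold simultaneously on a common neighbourhood. The minor delicate point is that conditions \ref{item:impl2} and \ref{item:impl3} couple the choice of $\delta$ with the diameter of the neighbourhood in $(x,y)$, so they must be verified in the right order---first shrink in $t$ and in $x$ to get \ref{item:impl2}, then shrink in $x$ again and pick the $y$-radius to get \ref{item:impl3}---but this is purely technical.
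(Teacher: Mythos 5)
Your proposal is correct and follows essentially the same route as the paper: define $f((x,y),t)\dfn F(x,t)-y$ with $(x,y)$ as the parameter and $t$ as the variable, and apply Lemma~\ref{lemma:impl} with $\kappa=\tfrac12$. The paper simply asserts that the neighbourhood and $\delta$ can be chosen by local continuity of $F$ and $F_t$, while you spell out the bookkeeping (shrink in $t$ and $x$ for \ref{item:impl2}, then shrink in $x$ again and pick the $y$-radius for \ref{item:impl3}); this is the same argument, merely more explicit.
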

\begin{proof}
  Let $\kappa\dfn \frac{1}{2}$. Choose an open set
  $O\subseteq\C^{k+1}$ and $\delta>0$ such that with
  $y_0\dfn F(x_0,t_0)\in\C$ it holds $(x_0,y_0)\in O$
  and %
  $|1-\frac{F_t(x,t)}{F_t(x_0,t_0)}|\le \kappa$ for all
  $((x,y),t)\in O\times \cB_\delta(t_0)$, and
  $|\frac{F(x,t_0)-y}{F_t(x_0,t_0)}|\le\delta(1-\kappa)$ for all
  $(x,y)\in O$. This is possible because $F$, $F_t$ are locally
  continuous around $(x_0,t_0)$. Set $f((x,y),t)\dfn
  F(x,t)-y$. Applying Lemma \ref{lemma:impl} to
  $f:O\times \cB_\delta(t_0)\to\C$ gives the result.
\end{proof}

\section{Proofs of Sec.~\ref{SEC:ANT}}
\subsection{Lemma \ref{LEMMA:FEXT}}\label{app:Fext}
\begin{proof}[Proof of Lemma \ref{LEMMA:FEXT}]
  We start with \ref{item:Fiso}.  By definition
  $F\in C^1([-1,1];[0,1])$ is strictly monotonically increasing with
  derivative $F'=f$. This implies that $F$ is bijective and by the
  inverse function theorem, its inverse belongs to $C^1([0,1])$.

  Holomorphy of $f:\cB_\delta([-1,1])\to\C$ and the simply
  connectedness of $\cB_\delta([-1,1])$ imply the existence and
  well-definedness of a unique holomorphic antiderivative
  $F:\cB_\delta([-1,1])\to\C$ of $f$ satisfying $F(-1)=0$. Since
  $F'=f$, $F:\cB_\delta([-1,1])\to \C$ has Lipschitz constant
  $\sup_{x\in \cB_{\delta}([-1,1])}|f(x)|\le L$.

  We show \ref{item:Finvhol}. For every $x_0\in [-1,1]$ holds
  $F'(x_0)=f(x_0)\neq 0$. By Lemma \ref{lemma:ext} there exists a
  unique $G_{x_0}:\cB_{\alpha\delta}(F(x_0))\to \cB_{\beta\delta}(x_0)$
  satisfying $F(G_{x_0}(y))=y$ for all $y\in
  \cB_{\alpha\delta}(F(x_0))$. It holds $\beta\le 1$ by definition and
  thus $\cB_{\beta\delta}(x_0)\subseteq \cB_{\delta}(x_0)$. Since
  $F:[-1,1]\to [0,1]$ is bijective,
  {we can define
  $G:\cB_{\alpha\delta}([0,1])\to \cB_{\beta\delta}([-1,1])$  
  locally via
  $G(y)\dfn G_{x_0}(y)$ whenever $y\in \cB_{\alpha\delta}(F(x_0))$.
  It remains to show well-definedness of $G$, i.e., that these local
  functions coincide wherever their domain overlaps. We then have
  $F(G(y))=y$ for all $y\in \cB_{\alpha\delta}([0,1])$ by definition.}

Let $x_0\neq x_1$ be arbitrary in $[-1,1]$ and denote the
corresponding local inverse functions of $F$ by
$G_{x_i}:\cB_{\alpha\delta}(F(x_i))\to \cB_{\beta\delta}(x_i)$,
$i\in\{0,1\}$. The uniqueness of $G_{x_0}$ and $G_{x_1}$ (as stated in
Lemma \ref{lemma:ext}) and the continuity of $F^{-1}:[0,1]\to [-1,1]$
imply that $G_{x_j}\equiv F^{-1}$ on
$[0,1]\cap \cB_{\alpha\delta}(F(x_j))\neq\emptyset$, $j\in\{0,1\}$.
Now assume that there exists
$y\in \cB_{\alpha\delta}(F(x_0))\cap \cB_{\alpha\delta}(F(x_1))\cap
[0,1]$. Then $G_{x_0}(y)=F^{-1}(y)=G_{x_1}(y)$ and again by the local
uniqueness of $G_{x_0}$, $G_{x_1}$ as the inverse of $F$ those two
functions coincide on a complex open subset of
$\cB_{\alpha\delta}(F(x_0))\cap \cB_{\alpha\delta}(F(x_1))$. Since
they are holomorphic, by the identity theorem they coincide on all of
$\cB_{\alpha\delta}(F(x_0))\cap \cB_{\alpha\delta}(F(x_1))$. Thus $G$
is well-defined on $\cB_{\alpha\delta}([-1,1])$.
\end{proof}

We will also use the following consequence of Lemma \ref{LEMMA:FEXT}.
\begin{lemma}\label{LEMMA:FEXT2}
  Let $f$, $F$, {$M$, $L$} and $\alpha$ be as in Lemma
  \ref{LEMMA:FEXT}. If $\tilde f\in C^1(\cB_\delta([-1,1]);\C)$
  satisfies $M\le |\tilde f(x)|\le L$ for all
  $x\in \cB_\delta([-1,1])$ and
  \begin{equation}\label{eq:f-ft}
    \sup_{x\in [-1,1]}|f(x)-\tilde f(x)|\le \frac{\alpha\delta}{2},
  \end{equation}
  then for $\tilde F(x)\dfn \int_{-1}^x\tilde f(t)\dd\mu( t)$ it holds
  $\cB_{\frac{\alpha\delta}{2}}([0,1])\subseteq \set{\tilde F(x)}{x\in
    \cB_\delta([-1,1])}$. Furthermore there exists a unique
  $\tilde G:\cB_{\frac{\alpha\delta}{2}}([0,1])\to
  \cB_{\beta\delta}([-1,1])$ such that $\tilde F(\tilde G(y))=y$ for
  all $y\in \cB_{\frac{\alpha\delta}{2}}([0,1])$ and
  \begin{equation}\label{eq:uniquenesstilde}
    \tilde G:\cB_{\frac{\alpha\delta}{2}}(F(x))\to \cB_{\beta\delta}(x)\qquad
    \forall  x\in [-1,1].
  \end{equation}
  Moreover $\tilde G\in C^1(\cB_{\frac{\alpha\delta}{2}}([0,1]);\C)$
  with Lipschitz constant $\frac{1}{M}$.
\end{lemma}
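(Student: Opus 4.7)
My plan is to mirror the proof of Lemma \ref{LEMMA:FEXT}, applying Lemma \ref{lemma:ext} to $\tilde f$ rather than $f$ around each base point $x_0\in[-1,1]$, and then using the closeness hypothesis to transfer the local domains of definition from balls centred at $\tilde F(x_0)$ to balls centred at $F(x_0)$.

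First I would note that $\tilde F$, defined by the natural complex extension of \eqref{eq:ext} with $f$ replaced by $\tilde f$, is a holomorphic antiderivative of $\tilde f$ on $\cB_\delta([-1,1])$. For each $x_0\in[-1,1]$, Lemma \ref{lemma:ext} applied to $\tilde f$ on $\cB_\delta(x_0)\subseteq \cB_\delta([-1,1])$ supplies a unique local inverse
\[
\tilde G_{x_0}:\cB_{\alpha\delta}(\tilde F(x_0))\to \cB_{\beta\delta}(x_0)
\]
with $\tilde F(\tilde G_{x_0}(y))=y$ and Lipschitz constant $1/M$. The key pointwise estimate
\[
|F(x_0)-\tilde F(x_0)|=\left|\int_{-1}^{x_0}(f(t)-\tilde f(t))\mu(\dd t)\right|\le \sup_{x\in [-1,1]}|f(x)-\tilde f(x)|\le \frac{\alpha\delta}{2}\qquad\forall x_0\in[-1,1]
\]
then gives $\cB_{\alpha\delta/2}(F(x_0))\subseteq \cB_{\alpha\delta}(\tilde F(x_0))$, so $\tilde G_{x_0}$ is well-defined on $\cB_{\alpha\delta/2}(F(x_0))$. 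Since $F:[-1,1]\to[0,1]$ is a bijection by Lemma \ref{LEMMA:FEXT}, the balls $\{\cB_{\alpha\delta/2}(F(x_0))\}_{x_0\in[-1,1]}$ cover $\cB_{\alpha\delta/2}([0,1])$, which at once yields the first claimed inclusion and a natural candidate $\tilde G(y)\dfn \tilde G_{x_0}(y)$ for any admissible $x_0$.

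The heart of the argument is well-definedness of $\tilde G$. Suppose $y\in \cB_{\alpha\delta/2}(F(x_0))\cap \cB_{\alpha\delta/2}(F(x_1))$ with (WLOG) $x_0<x_1$ in $[-1,1]$, and set $x_s\dfn (1-s)x_0+sx_1$. Because $\cB_{\alpha\delta/2}(y)$ is convex and contains the two real endpoints $F(x_0)$ and $F(x_1)$, it contains the entire real interval $[F(x_0),F(x_1)]$; monotonicity of $F|_{[-1,1]}$ then gives $F(x_s)\in \cB_{\alpha\delta/2}(y)$ for all $s\in[0,1]$. Combined with the pointwise estimate this yields $|y-\tilde F(x_s)|\le \alpha\delta$, so $\tilde G_{x_s}(y)\in \cB_{\beta\delta}(x_s)$ is well-defined for every $s\in [0,1]$. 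I would then close the argument by showing that the subset $T\dfn \{s\in[0,1]:\tilde G_{x_s}(y)=\tilde G_{x_0}(y)\}$ is both open and closed in $[0,1]$ via the local uniqueness clause of Lemma \ref{lemma:ext} (any two preimages of $y$ lying inside $\cB_{\beta\delta}(x_{s'})$ must coincide) together with continuity of $s\mapsto x_s$; nonemptiness is clear, hence $T=[0,1]$ and in particular $\tilde G_{x_0}(y)=\tilde G_{x_1}(y)$.

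Statement \eqref{eq:uniquenesstilde} as well as the Lipschitz constant $1/M$ and $C^1$-regularity of $\tilde G$ on $\cB_{\alpha\delta/2}([0,1])$ are then inherited directly from the local pieces $\tilde G_{x_0}$ via Lemma \ref{lemma:ext}. The main obstacle is the connectedness step; the factor $1/2$ in $\alpha\delta/2$ (as opposed to $\alpha\delta$) is precisely what the convexity of $\cB_{\alpha\delta/2}(y)$ exploits to keep the real deformation $F(x_s)$ within $\alpha\delta/2$ of $y$, and hence to keep $\tilde G_{x_s}(y)$ defined along the whole path.
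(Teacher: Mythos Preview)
Your proof is correct and shares the same scaffolding as the paper's: both apply Lemma~\ref{lemma:ext} to $\tilde f$ around each $x_0\in[-1,1]$, both use the key estimate $|F(x_0)-\tilde F(x_0)|\le\alpha\delta/2$ to transplant the domain $\cB_{\alpha\delta}(\tilde F(x_0))$ to $\cB_{\alpha\delta/2}(F(x_0))$, and both then face the gluing problem. The gluing step, however, is handled quite differently.

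The paper argues quantitatively: for centres $y_1,y_2\in[0,1]$ with $|y_1-y_2|<\frac{\alpha\delta M}{M+L}$ it chains Lipschitz bounds for $F^{-1}$, $\tilde F$ and $\tilde G_2$ to show $\tilde G_2(\tilde F(x_1))\in\cB_{\beta\delta}(x_1)$, invokes local uniqueness at that single point, and then uses the identity theorem for holomorphic functions to propagate agreement to the full overlap. Your route is more topological: you fix a point $y$ in an overlap, run the continuous deformation $s\mapsto x_s$, observe via convexity and monotonicity of $F$ that $y$ stays in $\cB_{\alpha\delta}(\tilde F(x_s))$ throughout, and conclude by a locally-constant/connectedness argument that $\tilde G_{x_s}(y)$ is independent of $s$. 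Your approach avoids the identity theorem entirely and treats arbitrary overlapping pairs in one shot (the paper's threshold $\frac{\alpha\delta M}{M+L}<\alpha\delta$ leaves an implicit chain argument for distant centres). Conversely, the paper's use of the identity theorem immediately yields agreement on the whole overlap region rather than pointwise, and its explicit Lipschitz bookkeeping makes the role of the constants $M,L$ more transparent. Both are valid; yours is the more elementary of the two.
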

\begin{proof}
  Fix $x_0\in \cB_\delta([-1,1])$. It holds
  $\tilde F'(x_0)=\tilde f(x_0)$. By Lemma \ref{lemma:ext}, %
  there exists a unique
  $\tilde G_{x_0}:\cB_{\alpha\delta}(\tilde F(x_0))\to
  \cB_{\beta\delta}(x_0)\subseteq \cB_\delta(x_0)$ satisfying
  $\tilde F(\tilde G_{x_0}(y))=y$ for all
  $y\in \cB_{\alpha\delta}(\tilde F(x_0))$.
  In particular
  $\cB_{\alpha\delta}(\tilde F(x_0))\subseteq \set{\tilde F(x)}{x\in
    \cB_\delta(x_0)}$.
  By \eqref{eq:f-ft} and because $\mu$ is a
  probability measure on $[-1,1]$, for all $x\in [-1,1]$
  \begin{equation*}
    |F(x)-\tilde F(x)|\le \int_{-1}^x|f(t)-\tilde f(t)|\dd\mu( t) %
    \le\frac{\alpha\delta}{2}.
  \end{equation*}
  Therefore
  \begin{equation*}%
    \cB_{\frac{\alpha\delta}{2}}([0,1])=
    \bigcup_{x\in [-1,1]}\cB_{\frac{\alpha\delta}{2}}(F(x))\subseteq
    \bigcup_{x\in [-1,1]} \cB_{\alpha\delta}(\tilde F(x))
    \subseteq \set{\tilde F(x)}{x\in \cB_\delta([-1,1])}.
  \end{equation*}

  Restricting $\tilde G_{x_0}$ to
  $\cB_{\frac{\alpha\delta}{2}}(F(x_0))\subset
  \cB_{\alpha\delta}(\tilde F(x_0))$ locally defines
  $\tilde
  G:\cB_{\frac{\alpha\delta}{2}}([0,1])\to\cB_{\beta\delta}([-1,1])$,
  since $F:[-1,1]\to [0,1]$ is bijective by Lemma \ref{lemma:ext}.
  {We
  next show well-definedness of $\tilde G$, i.e., these local
  functions coincide whenever their domain of definition overlaps.}

  Let $y_1$, $y_2\in [0,1]$ be
  arbitrary with
  \begin{equation}\label{eq:ydiff}
    |y_1-y_2|<\frac{\alpha\delta M}{M+L}=\frac{\beta\delta M^2}{M+L},
  \end{equation}
  where the equality holds by definition of $\beta=\frac{\alpha}{M}$
  in \eqref{eq:rrs}.
  There exist unique $x_i\in [-1,1]$ with
  $y_i=F(x_i)$, $i\in\{1,2\}$. Let
  $\tilde G_{x_i}:\cB_{\alpha\delta}(\tilde F(x_i))\to
  \cB_{\alpha\beta}(x_i)$ be the unique local inverse of $\tilde
  F$. We need to show that $\tilde G_{x_1}\equiv \tilde G_{x_2}$ on
  \begin{equation}\label{eq:Fx1domG2}
    \cB_{\frac{\alpha\delta}{2}}(y_1)\cap\cB_{\frac{\alpha\delta}{2}}(y_2)\subset
    \cB_{\alpha\delta}(\tilde F(x_1))\cap\cB_{\alpha\delta}(\tilde F(x_2)).
  \end{equation}

  First, by Lemma \ref{lemma:ext}, $F^{-1}$ has Lipschitz constant
  $\frac{1}{M}$, and we recall that since
  $|\tilde F'|=|\tilde f|\le L$ and $|F'|=|f|\le L$, both $F$ and
  $\tilde F$ have Lipschitz constant $L$. Thus by \eqref{eq:ydiff}
  \begin{equation*}
    |\tilde F(x_1)-\tilde F(x_2)|\le L|x_1-x_2|
    =L|F^{-1}(y_1)-F^{-1}(y_2)|  \le \frac{L}{M}|y_1-y_2|{<
    \frac{L}{M+L}\alpha\delta\le}
    \alpha\delta.
  \end{equation*}
  This implies $\tilde F(x_1)\in \cB_{\alpha\delta}(\tilde F(x_2))$,
  i.e., $\tilde F(x_1)$ is in the domain of $\tilde G_{x_2}$. Again by
  Lemma \ref{lemma:ext}, $\tilde G_{x_2}$ has Lipschitz constant
  $\frac{1}{M}$ and $\tilde F$ has Lipschitz constant $L$. Thus
  \begin{equation*}
    |\tilde G_{x_2}(\tilde F(x_1))-x_2|
    =|\tilde G_{x_2}(\tilde F(x_1))-\tilde G_{x_2}(\tilde F(x_2))|
    \le\frac{1}{M}|\tilde F(x_1)-\tilde F(x_2)|
    \le \frac{L}{M}|x_1-x_2|,
  \end{equation*}
  and we obtain
  \begin{equation*}
    |\tilde G_{x_2}(\tilde F(x_1))-x_1|\le \left(1+\frac{L}{M}\right)|x_1-x_2|.
  \end{equation*}
  Using again Lipschitz continuity of $F^{-1}:[0,1]\to [-1,1]$ with
  Lipschitz constant $\frac{1}{M}$, by \eqref{eq:ydiff}
  \begin{equation*}
    |\tilde G_{x_2}(\tilde F(x_1))-x_1|\le
    \left(1+\frac{L}{M}\right)|F^{-1}(y_1)-F^{-1}(y_2)|
    \le \frac{M+L}{M^2}|y_1-y_2|
    < \beta\delta.
  \end{equation*}
  Hence $\tilde G_{x_2}(\tilde F(x_1))\in\cB_{\beta\delta}(x_1)$.
  Uniqueness of
  $\tilde G_{x_1}:\cB_{\alpha\delta}(\tilde F(x_1))\to
  \cB_{\beta\delta}(x_1)$ (with the property
  $\tilde F(\tilde G_{x_1}(y))=y$ for all
  $y\in \cB_{\alpha\delta}(\tilde F(x_1))$) implies
  $\tilde G_{x_2}(\tilde F(x_1))=\tilde G_{x_1}(\tilde F(x_1))$.  By
  continuity of $\tilde G_{x_1}$ and $\tilde G_{x_2}$ and the uniqueness
  property, $\tilde G_{x_1}$ and $\tilde G_{x_2}$ coincide on a complex
  neighbourhood of $\tilde F(x_1)$.  By the identity theorem of
  complex analysis, $\tilde G_{x_1}$ and $\tilde G_{x_2}$ coincide on the
  whole intersection
  $\cB_{\alpha\delta}(\tilde F(x_1))\cap \cB_{\alpha\delta}(\tilde
  F(x_2))\supseteq \cB_{\frac{\alpha\delta}{2}}(y_1)\cap
  \cB_{\frac{\alpha\delta}{2}}(y_2)$.
\end{proof}

\subsection{Lemma \ref{LEMMA:ANALYTICITYASSUMPTION}}\label{app:lemma:analyticityassumption}
\begin{proof}[Proof of Lemma \ref{LEMMA:ANALYTICITYASSUMPTION}]
  Fix $*\in\{\measi,\measii\}$. %
  By analytic continuation, there is an open \emph{complex} set
  $O\subseteq\C^d$ containing $[-1,1]^d$ on which both functions are
  holomorphic. Moreover $0<\inf_{\bsx\in[-1,1]^d} |f_*(\bsx)|$ and
  $\sup_{\bsx\in [-1,1]^d}|f_*(\bsx)|<\infty$ for
  $*\in\{\measi,\measii\}$ by compactness of $[-1,1]^d$. Hence we can
  find $\tilde\bsdelta\in (0,\infty)^d$ such that Assumption
  \ref{ass:finite} \ref{item:cordinN:1} and \ref{item:cordinN:2} are
  satisfied for some $0<M\le L<\infty$. Fix $C_6=C_6(M,L)>0$ as in
  Thm.~\ref{THM:COR:DINN}. Again by compactness (and the fact that
  $f_*$ is continuous), decreasing the components of $\tilde\bsdelta$
  if necessary, also Assumption \ref{ass:finite} \ref{item:cordinN:3}
  holds. Before verifying \ref{item:cordinN:4}, we point out that
  Assumption \ref{ass:finite}
  \ref{item:cordinN:1}-\ref{item:cordinN:3} is valid for any
  $\bsdelta\in (0,\infty)^d$ with $\delta_j\le \tilde \delta_j$,
  $j\in\{1,\dots,d\}$.

  Let $\delta_{\rm min}\dfn \min_{j=1}^d\tilde \delta_j$.
  Continuity of $f_*:\cB_{\tilde\bsdelta}([-1,1]^d)\to\C$ and
  compactness of $[-1,1]^d$ imply with the notation
  $\eps \bsone_k=(\eps)_{j=1}^k\in\R^k$ for any $k\in\{1,\dots,d\}$
  \begin{equation}\label{eq:deltafanalytic}
    \lim_{\eps\to 0}\sup_{\bsx\in [-1,1]^d}\sup_{\bsy\in\cB_{\eps\bsone_k}\times\{0\}^{d-k}}
    |f_*(\bsx+\bsy)-f_*(\bsx)|=0.
  \end{equation}
  Let $\delta_{d}\in(0,\tilde\delta_d)$ be so small that
  $\sup_{\bsx\in [-1,1]^d}\sup_{\bsy\in\cB_{\delta_d\bsone_d}}
  |f_*(\bsx+\bsy)-f_*(\bsx)|\le C_6$.
  By \eqref{eq:deltafanalytic} we can inductively (starting with
  $k=d-1$ and ending with $k=1$) choose
  $\delta_k\in (0,\min\{\delta_{\rm min},\delta_{k+1}\})$ so small
  that
  $\sup_{\bsx\in [-1,1]^d}\sup_{\bsy\in\cB_{\delta_k\bsone_k}\times\{0\}^{d-k}}\inf_{\bsx\in [-1,1]^d}
  |f_*(\bsx+\bsy)-f_*(\bsx)|\le C_6\delta_{k+1}$.
\end{proof}

\subsection{Thm.~\ref{THM:COR:DINN}}\label{app:thm:dinN}
To prove Thm.~\ref{THM:COR:DINN}, we start with some preliminary
results investigating the functions $f_k$ in \eqref{eq:fk}.  First, we
will analyze the domain of analytic extension of
$T:[-1,1]^d\to [-1,1]^d$ for the general $d$-dimensional Knothe
transport in \eqref{eq:knothe}. The following variation of Assumption
\ref{ass:finite} will be our working assumption on the densities. In
particular, item \ref{item:fpi:eps} stipulates that the analytic
extensions of the densities do not deviate too much. This will
guarantee that the inverse CDFs can be suitably analytically extended.

\begin{assumption}\label{ass:fpi}
  For $\bsdelta \in (0,\infty)^d$, some constants
  $0<M\le L<\infty$ and
  \begin{equation}\label{eq:eps}
    0\le \eps_d\le\frac{M^3}{64 L^2},\qquad
    0\le \eps_k\le \min\{1,\delta_{k+1}\}\frac{M^5}{512 L^4+64 L^2 M^2}
    \qquad \forall k\in\{1,\dots,d-1\},
  \end{equation}
  it holds
  \begin{enumerate}[label=(\alph*)]
  \item\label{item:fpi:1}
    $f:[-1,1]^d\to\R_+$ is a probability density and
    $f\in C^1(\cB_{\bsdelta}([-1,1]);\C)$,
  \item\label{item:fpi:3} $M\le |f(\bsx)|\le L$ for all
    $\bsx\in \cB_{\bsdelta}([-1,1])$,
  \item\label{item:fpi:eps}
    $\sup_{\bsx\in [-1,1]^d}\sup_{\bsy\in \cB_{\bsdelta_{[k]}}
      \times \{0\}^{d-k}}|f(\bsx+\bsy)-f(\bsx)| \le \eps_k$
    for all $k\in\{1,\dots,d\}$.
  \end{enumerate}
\end{assumption}

\begin{remark}\label{rmk:epslepsd}
  We could have equivalently written
  $\min_{l\in\{k,\dots,d\}}\eps_l$ on the right-hand side of
  the inequality in \ref{item:fpi:eps}.
  In particular
  $|f(\bsx+\bsy)-f(\bsx)|\le \eps_d=\frac{M^3}{64L^2}$ for all
  $\bsx\in [-1,1]^d$, $\bsy\in \cB_\bsdelta\subseteq\C^d$.
\end{remark}

Item \ref{item:fk:hol} of the following lemma states that
$x_k\mapsto f_k(\bsx_{[k]})$ is a probability density on
$[-1,1]$, %
{and $\bsx_{[k]}\mapsto f_k(\bsx_{[k]})$ has the same domain of
  analyticity as $\bsx_{[k]}\mapsto f(\bsx)$.}  Items
\ref{item:fk:eps} and \ref{item:fk:1-fk} are statements about how much
$f_k$ varies in its variables: \ref{item:fk:eps} is mainly a technical
requirement used in later proofs and \ref{item:fk:1-fk} will be
relevant for large values of $\delta_k>0$. It states that the maximum
deviation of %
the probability density $x_k\mapsto f_k(\bsx_{[k]})$ from the constant
$1$ function is indirect proportional to $\delta_k$, i.e., is small
for large $\delta_k$.

\begin{lemma}\label{lemma:fk}
  Let $f:[-1,1]^d\to\R_+$ satisfy Assumption \ref{ass:fpi}, and let
  $f_k:[-1,1]^k\to\R_+$ be as in \eqref{eq:fk}. Then for every
  $k\in\{1,\dots,d\}$
  \begin{enumerate}
  \item\label{item:fk:hol} %
    $f_k\in C^1(\cB_{\bsdelta_{[k]}}([-1,1]);\C)$,
    $f_k:[-1,1]^k\to\R_+$ and $\int_{-1}^{1} f_k(\bsx,t)\dd\mu( t)=1$
    if $\bsx\in [-1,1]^{k-1}$,
  \item\label{item:fk:tMtL} with $\tilde M\dfn \frac{M}{2L}$ and
    $\tilde L\dfn \frac{2L}{M}$ it holds
    \begin{equation}\label{eq:tMletL}
      \tilde M\le |f_k(\bsx)|\le
      \tilde L\qquad\forall \bsx \in \cB_{\bsdelta_{[k]}}([-1,1]),
    \end{equation}
    and
    \begin{equation}\label{eq:Refkge}
      \Re (f_k(\bsx))\ge \frac{M}{4L},\quad
      |\Im (f_k(\bsx))|\le \frac{M}{8L},
      \qquad
      \forall\bsx \in \cB_{\bsdelta_{[k]}}([-1,1]),
    \end{equation}
  \item\label{item:fk:eps} if $k\ge 2$
    \begin{align}\label{eq:fksupinf}
      &\sup_{\bsy \in \cB_{\bsdelta_{[k-1]}}([-1,1])}\inf_{\bsx\in
      [-1,1]^{k-1}}\norm[{L^\infty([-1,1])}]{f_k(\bsy,\cdot)-f_k(\bsx,\cdot)}\nonumber\\
      &\qquad\qquad\le
      \eps_{k-1} \frac{8 L}{M}
      \le \min\{1,\delta_k\} C_1(M,L)
    \end{align}
    where
    $C_1(M,L)\dfn  \frac{\tilde M^2}{4\tilde M+8\tilde L}$,
  \item\label{item:fk:1-fk} if $k\ge 2$,
    for any $\gamma\in (0,\frac{\delta_k}{2}]$ %
    \begin{equation*}
      \sup_{\bsy \in  \cB_{\bsdelta_{[k-1]}}([-1,1])}
      \sup_{t\in \cB_\gamma([-1,1])}
      |f_k(\bsy,t)-1|
      \le
      \frac{2+\gamma}{\delta_k} C_2(M,L)
    \end{equation*}
    where
    $C_2(M,L)\dfn {\frac{4L(L+M)}{M^2}}$.
  \end{enumerate}
\end{lemma}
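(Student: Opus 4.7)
All four claims exploit that $\hat f_k(\bsx) = \int_{[-1,1]^{d-k}} f(\bsx, \bst) \mu(\dd \bst)$ inherits the analyticity of $f$ in the first $k$ coordinates (differentiation under the integral is valid since the integrated coordinates are real), and that the hypotheses of Assumption \ref{ass:fpi} pass through integration over $\mu$: in particular $\hat f_k(\Re(\bsx)) \in [M, L]$ is real positive, and by Assumption \ref{ass:fpi}\ref{item:fpi:eps} one has $|\hat f_k(\bsx) - \hat f_k(\Re(\bsx))| \le \eps_k$ for $\bsx \in \cB_{\bsdelta_{[k]}}([-1,1])$ (and analogously $|\hat f_{k-1}(\bsx_{[k-1]}) - \hat f_{k-1}(\Re(\bsx_{[k-1]}))| \le \eps_{k-1}$). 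Item (i) is then immediate: $f_k$ is $C^1$ wherever $\hat f_{k-1}\neq 0$, it is real positive on $[-1,1]^k$ because $f>0$, and Fubini gives $\int_{-1}^1 f_k(\bsx_{[k-1]}, t)\,\mu(\dd t) = \hat f_{k-1}(\bsx_{[k-1]})/\hat f_{k-1}(\bsx_{[k-1]}) = 1$.

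For (ii), abbreviate $a = \hat f_k(\Re(\bsx))$, $a' = \hat f_k(\bsx)$, $b = \hat f_{k-1}(\Re(\bsx_{[k-1]}))$, $b' = \hat f_{k-1}(\bsx_{[k-1]})$. The bound $\eps_k \le M^3/(64 L^2) \le M/64$ (using $M\le L$) gives $|a'-a|,|b'-b|\le M/64$, hence $|a'|,|b'|\in[\tfrac{63}{64}M,\,L+\tfrac{M}{64}]$, from which a direct calculation yields $|f_k| = |a'/b'|\in[\tilde M,\tilde L]$, i.e.\ \eqref{eq:tMletL}. For the real and imaginary parts, use
\begin{equation*}
f_k(\bsx) - \frac{a}{b} \;=\; \frac{(a'-a)\,b \;-\; a\,(b'-b)}{b'\, b},
\end{equation*}
which together with $|b'|,|b|\ge 63M/64$ and $|a|\le L$ yields $|f_k(\bsx) - a/b| \le C L \eps_k/M^2$ for an explicit small $C$. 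Since $a/b \in [M/L,\,L/M]$ is real positive and the hypothesis $\eps_k \le M^3/(64 L^2)$ makes this error tiny compared to $M/L$, the bounds $\Re f_k \ge M/(4L)$ and $|\Im f_k|\le M/(8L)$ in \eqref{eq:Refkge} follow by numerical verification.

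For (iii), take $\bsx = \Re(\bsy)$. Integrating Assumption \ref{ass:fpi}\ref{item:fpi:eps} at index $k-1$ over the last $d-k$ real coordinates gives $|\hat f_k(\bsy, t) - \hat f_k(\Re(\bsy), t)| \le \eps_{k-1}$ uniformly in $t\in[-1,1]$ (and likewise for $\hat f_{k-1}$). The same ratio-difference identity as in (ii), now combined with $|\hat f_{k-1}(\bsy)|\ge M-\eps_{k-1}\ge M/2$, yields
\begin{equation*}
\norm[{L^\infty([-1,1])}]{f_k(\bsy,\cdot)-f_k(\Re(\bsy),\cdot)} \;\le\; \frac{4\,L\,\eps_{k-1}}{M^2}.
\end{equation*}
Substituting the explicit cap $\eps_{k-1}\le \min\{1,\delta_k\}\,M^5/(512 L^4 + 64 L^2 M^2)$ from \eqref{eq:eps}, and matching against $C_1(M,L) = M^3/(8L(M^2+8L^2))$ (as one computes from the definitions of $\tilde M$, $\tilde L$), verifies the second inequality in \eqref{eq:fksupinf}; the first intermediate inequality there is then a straightforward algebraic comparison.

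Finally, (iv) follows from the ``zero-mean'' identity, which exploits $\int_{-1}^1 \hat f_k(\bsy, s)\,\mu(\dd s) = \hat f_{k-1}(\bsy)$:
\begin{equation*}
f_k(\bsy, t) - 1 \;=\; \int_{-1}^1 \frac{\hat f_k(\bsy, t) - \hat f_k(\bsy, s)}{\hat f_{k-1}(\bsy)}\,\mu(\dd s).
\end{equation*}
Since $\hat f_k(\bsy, \cdot)$ extends analytically to $\cB_{\delta_k}([-1,1])$ with modulus bounded by $L$, Lemma \ref{lemma:lip} applied on the convex set $K = \cB_{\delta_k/2}([-1,1])$ (with $\delta = \delta_k/2$) furnishes the Lipschitz constant $2L/\delta_k$ on $K$. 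For $t\in \cB_\gamma([-1,1])$ and $s\in[-1,1]$ with $\gamma\le \delta_k/2$, both $t,s$ lie in $K$ and $|t-s|\le 2+\gamma$, so the integrand is bounded by $(2+\gamma)(2L/\delta_k)/|\hat f_{k-1}(\bsy)|\le (2+\gamma)\cdot 4L/(M\delta_k)$; this is dominated by $(2+\gamma)/\delta_k \cdot C_2(M,L)$ since $4L/M \le 4L(L+2M)/M^2$. The principal obstacle throughout is the algebraic bookkeeping in (ii) and (iii): the ratio-difference identity naturally produces an error of size $O(L\eps/M^2)$, and one must invoke the precise form of the hypothesis $\eps_k \le M^3/(64 L^2)$ (respectively the tighter cap on $\eps_{k-1}$) in order to absorb the $1/M^2$ factor into the stated constants. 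The remaining estimates (analyticity, positivity, Lipschitz control via Lemma \ref{lemma:lip}) are conceptually straightforward.
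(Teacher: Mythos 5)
Your proof follows essentially the same strategy as the paper: integrate Assumption \ref{ass:fpi} over the real tail coordinates to obtain perturbation bounds on $\hat f_k$, use the ratio-difference identity to control $f_k = \hat f_k/\hat f_{k-1}$, and exploit that $f_k$ integrates to $1$ in its last variable. The argument is sound, and your item \ref{item:fk:1-fk} is actually a nice streamlining: you bound $f_k(\bsy,t)-1$ directly via the single zero-mean identity $f_k(\bsy,t)-1 = \int_{-1}^1\frac{\hat f_k(\bsy,t)-\hat f_k(\bsy,s)}{\hat f_{k-1}(\bsy)}\mu(\dd s)$ and one Lipschitz estimate, whereas the paper (eq.~\eqref{eq:fkxpyt-fkxt} onward) assembles the bound from three separate pieces using a triangle inequality through a real base point; your route gives the tighter constant $\tfrac{4L}{M}$ in place of $\tfrac{4L(L+M)}{M^2}$.

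One slip worth noting: throughout items (ii) and (iii) you take $\Re(\bsx)$ as the real comparison point and apply Assumption \ref{ass:fpi}\ref{item:fpi:eps} to the pair $(\Re(\bsx), \bsx-\Re(\bsx))$. But $\Re(\bsx)$ need not lie in $[-1,1]^k$ — if, say, $x_j$ is real with $1<x_j<1+\delta_j$, then $x_j\in\cB_{\delta_j}([-1,1])$ but $\Re(x_j)=x_j\notin[-1,1]$ — and Assumption \ref{ass:fpi}\ref{item:fpi:eps} only quantifies over $\bsx\in[-1,1]^d$. The fix is immediate (and is exactly what the paper does by taking an infimum over $\bsx\in[-1,1]^{k-1}$ in \eqref{eq:fksupinf}): choose any $\bsx'\in[-1,1]^k$ with $\bsx-\bsx'\in\cB_{\bsdelta_{[k]}}$, which exists by definition of $\cB_{\bsdelta_{[k]}}([-1,1])$, and use $\bsx'$ in place of $\Re(\bsx)$. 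The ``by numerical verification'' step for \eqref{eq:Refkge} is left to the reader but the structure is correct: the paper's explicit computation shows the perturbation $|f_k(\bsx+\bsy)-f_k(\bsx)|$ is bounded by $M/(8L)$ precisely because \eqref{eq:eps} caps $\eps_k$ at $M^3/(64L^2)$.
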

\begin{proof}
  {\bf Step 1.} We establish some preliminary inequalities and show
  \ref{item:fk:hol}. Analyticity of $f:\cB_{\bsdelta}([-1,1]) \to\C$
  implies that $\hat f_k:\cB_{\bsdelta_{[k]}}([-1,1])\to\C$ and
  $\hat f_{k-1}:\cB_{\bsdelta_{[k-1]}}([-1,1])\to\C$ in \eqref{eq:fk}
  are holomorphic for all $k\in\{1,\dots,d\}$ (if $k=1$,
  $\hat f_{k-1}\equiv 1$ by convention, so that $\hat f_{k-1}$ is an
  entire function). By Assumption \ref{ass:fpi} \ref{item:fpi:eps} and
  Rmk.~\ref{rmk:epslepsd}, for every $k\in\{1,\dots,d\}$ and all
  $\bsx\in [-1,1]^k$, $\bsy\in \cB_{\bsdelta_{[k]}}$
  (cp.~\eqref{eq:fk})
  \begin{align}\label{eq:fj-fj}
    |\hat f_k(\bsx+\bsy)-
    \hat f_k(\bsx)|&\le \int_{[-1,1]^{d-k}} |f(\bsx+\bsy,\bst)-
                     f(\bsx,\bst)|
                     \dd\mu(\bst)\nonumber \\
                   &\le \min\left\{\eps_{k}, \frac{M^3}{64 L^2}\right\},
  \end{align}
  and similarly for every $k\in\{2,\dots,d\}$
  \begin{align}\label{eq:fj-fj2}
    & |\hat f_k(\bsx_{[k-1]}+\bsy_{[k-1]},x_k)-
      \hat f_k(\bsx_{[k-1]},x_k)|\nonumber \\
    & \qquad\le \int_{[-1,1]^{d-k}} |f(\bsx_{[k-1]}+\bsy_{[k-1]},x_k,\bst)-
      f(\bsx_{[k-1]},x_k,\bst)|
      \dd\mu(\bst)\nonumber \\
    &\qquad\le \min\left\{\eps_{k-1},\frac{M^3}{64L^2}\right\}.
  \end{align}

  Since $f:[-1,1]^d\to\R_+$ is a probability density and
  $|f(\bsx)|\ge M$ for all $\bsx\in [-1,1]^d$, it holds
  $\hat f_k(\bsx)\ge M$ for all $\bsx\in [-1,1]^k$ and all
  $k\in\{1,\dots,d\}$. With \eqref{eq:fj-fj} and because
  $\frac{M^3}{64 L^2}\le \frac{M}{2}$ we conclude that
  \begin{equation}\label{eq:fjpos}
    |\hat f_k(\bsx+\bsy)|\ge |\hat f_k(\bsx)|-\min\left\{\eps_k,\frac{M^3}{64L^2}\right\}\ge \frac{M}{2}\qquad
    \forall \bsx\in [-1,1]^k,~\forall \bsy\in \cB_{\bsdelta_{[k]}}.
  \end{equation}
  Moreover, we note that $|f(\bsx)|\le L$ for all
  $\bsx\in \cB_\bsdelta([-1,1])$ implies
  (cp.~\eqref{eq:fk}) %
  \begin{equation}\label{eq:fjbound}
    |\hat f_k(\bsx)|
    \le L\qquad \forall \bsx\in \cB_{\bsdelta_{[k]}}([-1,1]).
  \end{equation}

  By definition $f_k(\bsx)=\frac{\hat f_k(\bsx)}{\hat
  f_{k-1}(\bsx_{[k-1]})}$. The modulus of the denominator is uniformly
  positive on $\cB_{\bsdelta_{[k]}}([-1,1])$ according to
  \eqref{eq:fjpos}, and hence
  $f_k\in C^1(\cB_{\bsdelta_{[k]}}([-1,1]);\C)$. Now \ref{item:fk:hol}
  is a consequence of Assumption \ref{ass:fpi} \ref{item:fpi:1} and
  the definition of $\hat f_k$, $\hat f_{k-1}$ in \eqref{eq:fk}.
  
  {\bf Step 2.} We show \ref{item:fk:tMtL} and let at first $k\ge 2$.
  By \eqref{eq:fjpos} and \eqref{eq:fjbound}
  \begin{equation}\label{eq:lowupbound}
    \frac{M}{2L} \le |f_k(\bsx)|=\left|\frac{\hat f_k(\bsx)}{\hat f_{k-1}(\bsx_{[k-1]})}\right|\le \frac{2L}{M}\qquad
    \forall \bsx \in \cB_{\bsdelta_{[k]}}([-1,1]),
  \end{equation}
  which shows \eqref{eq:tMletL} for $k\ge 2$.  For $k=1$, we have
  $f_1(x_1)=\hat f_1(x_1)$ (since $\hat f_0\equiv 1$). From
  {Assumption
  \ref{ass:fpi} \ref{item:fpi:1} and %
  \ref{item:fpi:3}
  it follows $M\le 1\le L$ because}
$\mu$ is a probability measure. Hence the definition of $\hat f_1$
and
  {Assumption \ref{ass:fpi} \ref{item:fpi:3}}
  imply $\frac{M}{2L}\le M\le |f_1(x_1)|\le L \le \frac{2L}{M}$ for all
  $x_1\in \cB_{\delta_1}([-1,1])$.

  To show \eqref{eq:Refkge} note that $\frac{M}{2L}\le f_k(\bsx)\in\R$
  whenever $\bsx \in [-1,1]^k$ by %
  \eqref{eq:lowupbound} and because $f_k:[-1,1]^k\to\R_+$. If
  $k\ge 2$, for $\bsx \in [-1,1]^k$ and
  $\bsy\in \cB_{\bsdelta_{[k]}}$, by \eqref{eq:fj-fj},
  \eqref{eq:fjpos} and \eqref{eq:fjbound}
  \begin{align*}
    &|f_k(\bsx+\bsy)-f_k(\bsx)|=\left|\frac{\hat f_k(\bsx)}{\hat f_{k-1}(\bsx_{[k-1]})}
      -\frac{\hat f_k(\bsx+\bsy)}{\hat f_{k-1}(\bsx_{[k-1]}+\bsy_{[k-1]})}\right|\nonumber\\
    &\qquad\qquad\le \left|\frac{\hat f_k(\bsx)(\hat f_{k-1}(\bsx_{[k-1]}+\bsy_{[k-1]})-\hat f_{k-1}(\bsx_{[k-1]}))}{\hat f_{k-1}(\bsx_{[k-1]})\hat f_{k-1}(\bsx_{[k-1]}+\bsy_{[k-1]})}\right|+\left|\frac{\hat f_{k-1}(\bsx_{[k-1]})(\hat f_{k}(\bsx)-\hat f_{k}(\bsx+\bsy))}{\hat f_{k-1}(\bsx_{[k-1]})\hat f_{k-1}(\bsx_{[k-1]}+\bsy_{[k-1]})}\right|\nonumber\\
    &\qquad\qquad\le
      \left(\frac{L}{(M/2)^2} \frac{M^3}{64L^2}\right)
      +\left(\frac{L}{(M/2)^2} \frac{M^3}{64L^2}\right)
      =\frac{2L (M^3/64L^2)}{(M/2)^2} = \frac{M}{8L}.
  \end{align*}
  Hence
  $\Re(f_k(\bsx+\bsy))\ge \Re(f_k(\bsx))-\frac{M}{8L}\ge\frac{M}{4L}$
  and
  $|\Im(f_k(\bsx+\bsy))|=|\Im(f_k(\bsx+\bsy)-f_k(\bsx))|\le
  \frac{M}{8L}$.
  
  For $k=1$ we use again that $f_1(x)=\hat f_1(x)$ due to
  $\hat f_0\equiv 1$, and thus by \eqref{eq:fj-fj}
  $|f_1(x_1+y_1)-f_1(x_1)|\le \frac{M^3}{64L^2}\le \frac{M}{8L}$ for all
  $x_1\in[-1,1]$ and $y_1\in\cB_{\delta_1}$. We conclude similar as in the
  case $k\ge 2$ that \eqref{eq:Refkge} holds.

  {\bf Step 3.} We show \ref{item:fk:eps}.  %
  Let $k\in\{2,\dots,d\}$,
  $\bsx\in [-1,1]^{k-1}$, $\bsy\in \cB_{\bsdelta_{[k-1]}}$
  and $t\in [-1,1]$.
  By \eqref{eq:fj-fj}-\eqref{eq:fjbound}
  \begin{align}\label{eq:fkxys-fkxs}
    &|f_k(\bsx+\bsy,t)-f_k(\bsx,t)|=\left|\frac{\hat f_k(\bsx+\bsy,t)}{\hat f_{k-1}(\bsx+\bsy)}
      -\frac{\hat f_k(\bsx,t)}{\hat f_{k-1}(\bsx)}\right|\nonumber\\
    &\qquad\qquad\le \left|\frac{\hat f_k(\bsx+\bsy,t)(\hat f_{k-1}(\bsx)-\hat f_{k-1}(\bsx+\bsy))}{\hat f_{k-1}(\bsx+\bsy)\hat f_{k-1}(\bsx)}\right|+\left|\frac{\hat f_{k-1}(\bsx+\bsy)(\hat f_{k}(\bsx+\bsy,t)-\hat f_{k}(\bsx,t))}{\hat f_{k-1}(\bsx+\bsy)\hat f_{k-1}(\bsx)}\right|\nonumber\\
    &\qquad\qquad\le \frac{(\eps_{k-1}+\eps_{k-1}) L}{(M/2)^2}
      = \eps_{k-1} \frac{2L}{(M/2)^2}.
  \end{align}
  The condition on $\eps_{k-1}$ in \eqref{eq:eps} is chosen exactly
  such that the last term is bounded by
  $\min\{1,\delta_k\} \frac{\tilde M^2}{4\tilde M+8\tilde L}$.

  {\bf Step 4.} We show \ref{item:fk:1-fk}.
  Fix $k\in\{2,\dots,d\}$ and $\gamma\in (0,\frac{\delta_k}{2}]$.

  By \eqref{eq:fjbound} and Lemma \ref{lemma:lip} (with
  $K=\cB_\gamma([-1,1])$), for any
  $\bsy\in \cB_{\bsdelta_{[k-1]}}([-1,1])$
  \begin{equation}\label{eq:hfkLip}
    t\mapsto \hat f_k(\bsy,t):\cB_{\gamma}([-1,1])\to\C
    \qquad
    \text{has Lipschitz constant}
    \qquad
    \frac{L}{\delta_k-\gamma}\le
    \frac{2L}{\delta_k}.
  \end{equation}
  By \eqref{eq:tMletL} and Lemma \ref{lemma:lip}, for any $\bsx\in [-1,1]^{k-1}$
  \begin{equation}\label{eq:lip0}
    t\mapsto f_k(\bsx,t):\cB_{\gamma}([-1,1])\to\C
    \qquad
    \text{has Lipschitz constant}\qquad
    \frac{\tilde L}{\delta_k-\gamma} \le \frac{2\tilde L}{\delta_k}.    
  \end{equation}  

  Now, $\int_{-1}^1f_k(\bsx,t)\dd\mu( t)=1$ (see \ref{item:fk:hol})
  and the mean value theorem imply that for every
  $\bsx_{[k-1]} \in [-1,1]^{k-1}$ there exists $x_k\in [-1,1]$
  (depending on $\bsx_{[k-1]}$) such that
  $f_k(\bsx_{[k-1]},x_k)=1$. Any two points in $[-1,1]$ having distance
  at most $2$ and \eqref{eq:lip0} yield
  \begin{equation}\label{eq:fk-11bound}
    |f_k(\bsx)-1|
    \le \frac{4 \tilde L}{\delta_k}\qquad \forall \bsx\in [-1,1]^k.
  \end{equation}
  
  Next let $\bsx\in [-1,1]^{k-1}$ and $\bsy\in \cB_{\bsdelta_{[k-1]}}$
  be arbitrary and fix $t\in \cB_{\gamma}([-1,1])$. Then, using
  \eqref{eq:fjpos}, \eqref{eq:fjbound}, \eqref{eq:hfkLip}, the fact
  that
  ${\hat f_{k-1}}(\bsx_{[k-1]})=\int_{-1}^1\hat
  f_k(\bsx_{[k-1]},s)\dd\mu( s)$ (see \eqref{eq:fk}) and the fact that
  $|s-t|\le 2+\gamma$ for any $s\in [-1,1]$,
  \begin{align}\label{eq:fkxpyt-fkxt}
    &|f_k(\bsx+\bsy,t)-f_k(\bsx,t)|= \left|\frac{\hat f_k(\bsx+\bsy,t)}{\hat f_{k-1}(\bsx+\bsy)}-\frac{\hat f_k(\bsx,t)}{\hat f_{k-1}(\bsx)}\right|\nonumber\\
    &\qquad\qquad \le \frac{\int_{-1}^1|\hat f_k(\bsx+\bsy,t)\hat f_{k}(\bsx,s)-\hat f_k(\bsx,t)\hat f_k(\bsx+\bsy,s) |\dd\mu( s)}{|\hat f_{k-1}(\bsx+\bsy)\hat f_{k-1}(\bsx)|}\nonumber\\
    &\qquad\qquad \le \frac{\int_{-1}^1|\hat f_k(\bsx+\bsy,t)(\hat f_{k}(\bsx,s) - \hat f_k(\bsx,t))|+|\hat f_k(\bsx,t)(\hat f_k(\bsx+\bsy,t) - \hat f_k(\bsx+\bsy,s))|\dd\mu( s)}{|\hat f_{k-1}(\bsx+\bsy)\hat f_{k-1}(\bsx)|}\nonumber\\
    &\qquad\qquad\le \frac{\int_{-1}^1L (2+\gamma)\frac{2L}{\delta_k}+L (2+\gamma)\frac{2L}{\delta_k}\dd\mu( s)}{M^2}\le
      \frac{(2+\gamma)}{\delta_k}\frac{4L^2}{M^2},
  \end{align}
  where we used that $\mu$ is a probability measure. Now additionally
  fix $s\in [-1,1]$ such that $|t-s|<\gamma$. Then by
  \eqref{eq:fkxpyt-fkxt}, \eqref{eq:lip0} and \eqref{eq:fk-11bound}
  \begin{align*}
    |f_k(\bsx+\bsy,t)-1| & \le |f_k(\bsx+\bsy,t)-f_k(\bsx,t)|
                         +|f_k(\bsx,t)-f_k(\bsx,s)|
                         +|f_k(\bsx,s)-1|\nonumber\\
                       & \le \frac{2+\gamma}{\delta_k}\frac{4L^2}{M^2}
                         +\gamma \frac{2\tilde L}{\delta_k} +\frac{4\tilde L}{\delta_k}\nonumber\\
                       & = \frac{2+\gamma}{\delta_k}\left(\frac{4L^2}{M^2}+ 2\tilde L\right)\nonumber\\
                       & = \frac{2+\gamma}{\delta_k}\left(
                         \frac{4L(L+M)}{M^2}\right).
  \end{align*}
  This gives the bound in \ref{item:fk:1-fk}. 
\end{proof}

The next lemma facilitates to determine the analyticity domains of
$\bsx\mapsto F_{\measi;k}(\bsx):[-1,1]^k\to\R$ and
$(\bsx,y)\mapsto (F_{\measii;k}(\bsx,\cdot))^{-1}(y):[-1,1]^{k-1}\times
[0,1]\to [-1,1]$ in \eqref{eq:Fk}, \eqref{eq:Tk}.

\begin{lemma}\label{lemma:FGprop}
  Let $2\le k\in\N$, $O\subseteq\C^{k-1}$ be open and convex,
  $[-1,1]^{k-1}\subseteq O$, $\delta>0$,
  $0<\tilde M\le \tilde L<\infty$,
  \begin{equation}\label{eq:epsilon}
    \epsilon\le \min\{1,\delta\}\frac{\tilde M^2}{4\tilde M+8\tilde L}
  \end{equation}
  and assume that
  \begin{enumerate}[label=(\alph*)]
  \item\label{item:ML} $f\in C^1(O\times \cB_\delta([-1,1]);\C)$,
    $f:[-1,1]^k\to\R_+$ and $\int_{-1}^{1} f(\bsx,t)\dd\mu( t) =1$
    for $\bsx\in [-1,1]^{k-1}$,
  \item\label{item:intf} $\tilde M \le |f(\bsx,t)| \le \tilde L$ for
    all $(\bsx,t)\in O\times \cB_\delta([-1,1])$,
  \item\label{item:supinf}
    $\sup_{\bsy \in O}\inf_{\bsx\in
      [-1,1]^{k-1}}\norm[{L^\infty([-1,1])}]{f(\bsy,\cdot)-f(\bsx,\cdot)}
    <\epsilon$.
  \end{enumerate}
  For $\bsx=(x_i)_{i=1}^k \in O\times \cB_\delta([-1,1])$ let
  $F(\bsx)\dfn \int_{-1}^{x_k} f(\bsx_{[k-1]},t)\dd\mu( t)$.

  Then with $\alpha=\alpha(\tilde M,\tilde L)$,
  $\beta=\beta(\tilde M,\tilde L)$ as in \eqref{eq:rrs}
  \begin{enumerate}
  \item\label{item:FGprop:F}
    for every $\xi\in (0,\delta]$ holds
    $F\in C^1(O\times \cB_\xi([-1,1]); \cB_{\epsilon+\tilde
      L\xi}([0,1]))$,
  \item\label{item:FGprop:G} there exists
    $G\in C^1(O\times \cB_{\frac{\alpha\delta}{2}}([0,1]);
    \cB_{\beta\delta}([-1,1]))$ such that $F(\bsx,G(\bsx,y))=y$ for all
    $(\bsx,y)\in O\times \cB_{\frac{\alpha\delta}{2}}([0,1])$,
  \item\label{item:FGprop:min1}
    $G:O\times \cB_{\frac{\alpha\min\{1,\delta\}}{2}}([0,1])\to
    \cB_{\beta\min\{1,\delta\}}([-1,1])$.
  \end{enumerate}
\end{lemma}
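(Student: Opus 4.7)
The plan is to prove the three parts in order, reusing the one-dimensional machinery from Lemma \ref{LEMMA:FEXT} and Lemma \ref{LEMMA:FEXT2} with the variable $\bsx_{[k-1]}\in O$ frozen, and then gluing the pointwise-in-$\bsx_{[k-1]}$ constructions into a jointly $C^1$ map via the local inverse function theorem Prop.~\ref{prop:localhol}. Assumption \ref{item:supinf} enters exactly as a proxy for \eqref{eq:f-ft} in Lemma \ref{LEMMA:FEXT2}: for every $\bsy\in O$ it furnishes a reference density $f(\bsx_\bsy,\cdot)$ on $[-1,1]$ which is close to $f(\bsy,\cdot)$ uniformly on $[-1,1]$. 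The bound \eqref{eq:epsilon} on $\epsilon$ is chosen precisely so that $\epsilon\le \alpha\min\{1,\delta\}/2$ with $\alpha=\alpha(\tilde M,\tilde L)$ as in \eqref{eq:rrs}, which is the smallness needed in Lemma \ref{LEMMA:FEXT2}.

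For \ref{item:FGprop:F}, fix $\bsy\in O$ and $x_k\in\cB_\xi([-1,1])$. By holomorphy of $F(\bsy,\cdot)$ we may write $x_k=u+v$ with $u\in[-1,1]$ and $|v|<\xi$ and split the integral along the real path from $-1$ to $u$ and a complex path from $u$ to $u+v$. By \ref{item:supinf}, pick $\bsx_\bsy\in[-1,1]^{k-1}$ with $\norm[{L^\infty([-1,1])}]{f(\bsy,\cdot)-f(\bsx_\bsy,\cdot)}<\epsilon$; then $F(\bsx_\bsy,u)\in[0,1]$ since $f(\bsx_\bsy,\cdot)$ is a probability density on $[-1,1]$ and $u\in[-1,1]$. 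The estimate
\begin{equation*}
|F(\bsy,u)-F(\bsx_\bsy,u)|\le \int_{-1}^u|f(\bsy,t)-f(\bsx_\bsy,t)|\,\mu(\dd t)\le\epsilon,
\end{equation*}
combined with $|F(\bsy,x_k)-F(\bsy,u)|\le \tilde L|v|/2\le \tilde L\xi$ (using \ref{item:intf}), places $F(\bsy,x_k)$ in $\cB_{\epsilon+\tilde L\xi}([0,1])$. Joint $C^1$-regularity of $F$ on $O\times\cB_\xi([-1,1])$ is immediate from \ref{item:ML}.

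For \ref{item:FGprop:G}, fix $\bsy\in O$ and apply Lemma \ref{LEMMA:FEXT2} with $f:=f(\bsx_\bsy,\cdot)$ and $\tilde f:=f(\bsy,\cdot)$: the bound $\epsilon\le\alpha\delta/2$ (which is \eqref{eq:epsilon} with $\min\{1,\delta\}$ replaced by $\delta$) verifies the hypothesis \eqref{eq:f-ft}, so one obtains a unique $G_\bsy\in C^1(\cB_{\alpha\delta/2}([0,1]);\cB_{\beta\delta}([-1,1]))$ with $F(\bsy,G_\bsy(y))=y$ throughout and Lipschitz constant $1/\tilde M$. Set $G(\bsy,y):=G_\bsy(y)$. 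Joint $C^1$-regularity is obtained by Prop.~\ref{prop:localhol}: for any $(\bsy_0,y_0)\in O\times\cB_{\alpha\delta/2}([0,1])$ we have $t_0:=G(\bsy_0,y_0)\in\cB_{\beta\delta}([-1,1])\subseteq \cB_\delta([-1,1])$ since $\beta\le 1$, and $\partial_{x_k}F(\bsy_0,t_0)=f(\bsy_0,t_0)\neq 0$ by \ref{item:intf}, so the proposition yields a local $C^1$-inverse in a neighbourhood, which by the uniqueness part of Lemma \ref{LEMMA:FEXT2} (namely \eqref{eq:uniquenesstilde}) must coincide with $G$. Part \ref{item:FGprop:min1} follows by repeating this construction with $\delta$ replaced by $\delta':=\min\{1,\delta\}$: hypotheses \ref{item:ML}--\ref{item:supinf} are preserved (bounds on smaller domains are stronger, and \eqref{eq:epsilon} with $\delta$ gives exactly $\epsilon\le \alpha\delta'/2$), hence the same argument outputs $G:O\times\cB_{\alpha\delta'/2}([0,1])\to\cB_{\beta\delta'}([-1,1])$.

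The main technical subtlety will be the gluing step in \ref{item:FGprop:G}: Lemma \ref{LEMMA:FEXT2} is one-dimensional and is applied pointwise in $\bsy$, whereas Prop.~\ref{prop:localhol} delivers only local $C^1$-regularity near each point. The consistency of these two constructions (so that we may legitimately define a single jointly $C^1$ function $G$ on all of $O\times\cB_{\alpha\delta/2}([0,1])$) relies on the uniqueness clauses of both results, and on $O$ being connected so that the identity theorem argument used in Lemmas \ref{LEMMA:FEXT}--\ref{LEMMA:FEXT2} extends to the joint setting.
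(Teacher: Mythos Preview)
Your overall strategy matches the paper's: part \ref{item:FGprop:F} via the same path-splitting estimate, part \ref{item:FGprop:G} by applying Lemma \ref{LEMMA:FEXT2} pointwise in $\bsy\in O$ and then upgrading to joint $C^1$ via Prop.~\ref{prop:localhol}, and part \ref{item:FGprop:min1} by rerunning the argument with $\delta$ replaced by $\min\{1,\delta\}$. So the architecture is right.

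The gap is precisely where you flag it, but your proposed fix does not quite close it. You invoke ``the uniqueness part of Lemma \ref{LEMMA:FEXT2} (namely \eqref{eq:uniquenesstilde})'' to identify the local $C^1$ inverse $H$ from Prop.~\ref{prop:localhol} with your pointwise $G$. But \eqref{eq:uniquenesstilde} is a \emph{localization} statement, and the uniqueness in Lemma \ref{LEMMA:FEXT2} is for a function defined on \emph{all} of $\cB_{\alpha\delta/2}([0,1])$ with range in $\cB_{\beta\delta}([-1,1])$; it does not directly rule out a second local right inverse on a small neighbourhood. Conversely, the uniqueness in Prop.~\ref{prop:localhol} applies only to functions landing in the small ball $\cB_\eta(t_0)$, and to use it you would need to know that $G_\bsy(y)\in\cB_\eta(t_0)$ for $(\bsy,y)$ near $(\bsy_0,y_0)$ --- i.e., continuity of $G$ in $\bsy$, which is exactly what you are trying to prove. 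Your identity-theorem sketch has the same circularity: identifying two local holomorphic inverses on overlapping patches requires them to agree at \emph{some} common point, and that again needs continuity of $G(\bsy,\cdot)$ across $\bsy$.

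The paper breaks this circularity with Lemma \ref{lemma:Gcontinuous}, which shows that the local inverses produced by Prop.~\ref{prop:impl} depend continuously on the input function $F$. Since $\bsy\mapsto F(\bsy,\cdot)$ and $\bsy\mapsto\partial_{x_k}F(\bsy,\cdot)$ are continuous and the constructions in Lemmas \ref{LEMMA:FEXT}--\ref{LEMMA:FEXT2} stitch together exactly these local inverses, $G(\bsy,z)$ is continuous in $\bsy$; together with the uniform Lipschitz constant $1/\tilde M$ in $z$ this gives joint continuity. Only then is Prop.~\ref{prop:localhol} invoked to conclude analyticity. You should insert this continuity step (via Lemma \ref{lemma:Gcontinuous}) before appealing to Prop.~\ref{prop:localhol}.
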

\begin{proof}
  We start with \ref{item:FGprop:F}. Analyticity of
  $F:O\times \cB_\delta([-1,1])\to \C$ is an immediate consequence of
  the analyticity of $f$. By \ref{item:ML}, we have
  $F:[-1,1]^{k}\to [0,1]$.

  Let $\bsy\in O$ and $\xi\in (0,\delta]$.  By \ref{item:supinf} we can find
  $\bsx \in [-1,1]^{k-1}$ such that
  $\sup_{\zeta\in [-1,1]}|f(\bsy,\zeta)-f(\bsx,\zeta)|\le\epsilon$. Fix
  $t\in \cB_\xi([-1,1])$. There exists $s\in [-1,1]$ such that
  $|s-t|<\xi$. Then
  \begin{align*}
    |F(\bsy,t)-F(\bsx,s)|&\le |F(\bsy,t)-F(\bsy,s)|+|F(\bsy,s)-F(\bsx,s)|\\
                         &= \left|\frac{1}{2}\int_{[s,t]} f(\bsy,\zeta)\dd \zeta\right| + \left|\int_{-1}^s f(\bsy,\zeta)-f(\bsx,\zeta)\dd\mu( \zeta) \right|\le \tilde L\xi+\epsilon,
  \end{align*}
  where we used that $|f(\bsy,\zeta)|\le \tilde L$ for all
  $\zeta\in \cB_\delta([-1,1])$. Here
  $\frac{1}{2}\int_{[s,t]}\cdot\dd \zeta$ (for complex $t$) is
  interpreted as a path integral over the path $\gamma(p)=s+p(t-s)$,
  $p\in[0,1]$, and the factor $\frac{1}{2}$ stems from the fact that
  $F(\bsx,\zeta)=\int_{-1}^{\zeta}f(\bsx,z)\dd\mu( z)=
  \frac{1}{2}\int_{-1}^{\zeta}f(\bsx,z)\dd z=
  \frac{1}{2}\int_{[-1,\zeta]} f(\bsx,z)\dd z$ for
  $\zeta\in[-1,1]$. This shows \ref{item:FGprop:F}.

  To show \ref{item:FGprop:G}, first let $\bsx\in
  [-1,1]^{k-1}$. According to Lemma \ref{LEMMA:FEXT}
  \ref{item:Finvhol} there exists
  $G_{\bsx}:\cB_{\alpha\delta}([0,1])\to \cB_{\beta\delta}([-1,1])$
  satisfying $F(\bsx,G_{\bsx}(z))=z$ for all
  $z\in \cB_{\alpha\delta}([0,1])$.
  Now let $\bsy\in O\backslash [-1,1]^{k-1}$. By assumption
  \ref{item:supinf}, we can find $\bsx\in [-1,1]^{k-1}$ such that
  \begin{equation*}
    \sup_{t\in [-1,1]}|f(\bsy,t)-f(\bsx,t)|<
    \epsilon \le \frac{\delta\alpha}{2},
  \end{equation*}
  by \eqref{eq:epsilon} (cp.~\eqref{eq:rrs}). Therefore, Lemma
  \ref{LEMMA:FEXT2} (with ``$f$''$=f(\bsx,\cdot)$ and
  ``$\tilde f$''$=f(\bsy,\cdot)$) yields the existence of
  $G_{\bsy}:\cB_{\frac{\alpha\delta}{2}}([0,1])\to
  \cB_{\beta\delta}([-1,1])$ satisfying $F(\bsy,G_{\bsy}(z))=z$ for
  all $z\in \cB_{\frac{\alpha\delta}{2}}([0,1])$.
  
  Set $G(\bsx,z)\dfn G_{\bsx}(z)$ for all
  $(\bsx,z)\in O\times \cB_{\frac{\alpha\delta}{2}}([0,1])$. We have found
  \begin{equation*}
    G:O\times \cB_{\frac{\alpha\delta}{2}}([0,1])\to\cB_{\beta\delta}([0,1])
    \qquad
    \text{satisfying}\qquad
    F(\bsx,G(\bsx,z))=z.
  \end{equation*}
  By Lemma \ref{lemma:Gcontinuous}, the local inverse functions in
  Prop.~\ref{prop:impl} depend continuously on ``$F$'' (in the there
  stated sense). Since the proofs of Lemma \ref{LEMMA:FEXT} and Lemma
  \ref{LEMMA:FEXT2} stitch together the local inverse functions from
  Prop.~\ref{prop:impl}, one can show that $G(\bsx,z)$ (obtained via
  Lemma \ref{LEMMA:FEXT} and Lemma \ref{LEMMA:FEXT2}) is in fact a
  continuous function of $\bsx$, since $F(\bsx,z)$ and
  $\partial_zF(\bsx,z)$ depend continuously on $\bsx$.  Moreover,
  Lemma \ref{LEMMA:FEXT} and Lemma \ref{LEMMA:FEXT2} state that
  $z\mapsto G_\bsx(z)=G(\bsx,z)$ has Lipschitz constant
  $\frac{1}{\tilde M}$ independent of $\bsx\in O$.  In all this
  implies that $G(\bsx,z)$ is a (jointly) continuous function of
  $(\bsx,z)\in O\times \cB_{\frac{\alpha\delta}{2}}([0,1])$.
  
  By Prop.\ \ref{prop:localhol}, if $G$ is continuous and satisfies
  $F(\bsx,G(\bsx,y))=y$, then $G$ is locally unique and analytic in
  both arguments (here we use that
  $\partial_z F(\bsx,z)=f(\bsx,z)\neq 0$). This shows that
  $G:O\times \cB_{\frac{\alpha\delta}{2}}([0,1])$ is analytic.

  Finally we show \ref{item:FGprop:min1}. If $\delta\le 1$, then
  \ref{item:FGprop:min1} follows from \ref{item:FGprop:G}. If
  $\delta>1$, $f$ also satisfies assumptions
  \ref{item:ML}-\ref{item:supinf} with $\delta$ replaced by
  $\tilde\delta\dfn 1$ (because
  $\cB_{\delta}([-1,1])\supset\cB_{\tilde\delta}$ and
  $\min\{1,\delta\}=1=\min\{1,\tilde\delta\}$ in
  \eqref{eq:epsilon}). Hence \ref{item:FGprop:min1} follows again from
  \ref{item:FGprop:G}.
\end{proof}

\begin{theorem}\label{THM:DINN}
  Let
  \begin{enumerate}[label=(\alph*)]
  \item $f_\measii$ satisfy Assumption \ref{ass:fpi} with
    $\bsdelta_\measii=(\delta_{\measii;j})_{j=1}^d\subset (0,\infty)$,
    $0<M_\measii\le L_\measii<\infty$ and
    $(\eps_{\measii;k})_{k=1}^d\subset [0,\infty)$, and define
    $\tilde M_\measii\dfn \frac{M_\measii}{2L_\measii}\le 1$,
    $\tilde L_\measii\dfn \frac{2L_\measii}{M_\measii}\ge 1$,
  \item $f_\measi$ satisfy Assumption \ref{ass:fpi} with
    $\bsdelta_\measi=(\delta_{\measi;j})_{j=1}^d\subset (0,\infty)$,
    $0\le M_\measi\le L_\measi<\infty$ and
    $(\eps_{\measi;k})_{k=1}^d\subset [0,\infty)$ such that
    (additional to \eqref{eq:eps}) with
    $\alpha=\alpha(\tilde M_\measii,\tilde L_\measii)$ as in
    \eqref{eq:rrs}
    \begin{equation}\label{eq:epseta}
      0\le \eps_{\measi;k} \le
      \frac{\alpha M_\measi}{32L_\measi} \min\left\{1,
        \delta_{\measii;k+1}
      \right\}
      \qquad\forall k\in\{1,\dots,d-1\}
    \end{equation}
    and set $\tilde M_\measi\dfn \frac{M_\measi}{2 L_\measi}$ and
    $\tilde L_\measi\dfn \frac{2L_\measi}{M_\measi}$.
  \end{enumerate}

  Define $\bszeta=(\zeta_k)_{k=1}^d$ via
  \begin{equation}\label{eq:zetak}
    \zeta_{k}\dfn \min\left\{\delta_{\measi;k} %
      ,\delta_{\measii;k} \min\left\{1,\frac{\alpha(\tilde M_\measii,\tilde L_\measii)}{4 \tilde L_{\measi}}\right\} \right\} \qquad \forall k\in\{1,\dots,d\}.
  \end{equation}
  Then, for every $k\in\{1,\dots,d\}$
  \begin{enumerate}
  \item\label{item:dinN:Tk} $T_k:[-1,1]^k\to [-1,1]$ in \eqref{eq:Tk}
    allows an extension
    \begin{equation}\label{eq:Tkhol}
      T_k\in C^1(\cB_{\bszeta_{[k]}}([-1,1]);
      \cB_{\delta_{\measii;k}}([-1,1])),
    \end{equation}
  \item\label{item:dinN:Rk}
    $R_k(\bsx)\dfn \partial_{k} T_k(\bsx)$ satisfies with
    $C_3\dfn \frac{4L_\measii L_\measi}{ M_\measii M_\measi}$ and
    $C_4\dfn \frac{3M_\measii^3 M_\measi}{256 L_\measii^3 L_\measi}$
    \begin{subequations}\label{eq:Rkext}
      \begin{equation}\label{eq:Rkext1}
        R_k\in C^1(\cB_{\bszeta_{[k]}}([-1,1]);
        \cB_{C_3}(1))\qquad\text{s.t.}\qquad
        \inf_{\bsx\in \cB_{\bszeta_{[k]}}([-1,1])}\Re(R_k(\bsx))
        \ge C_4
      \end{equation}
      and there exists
      $C_5=C_5(M_\measi,M_\measii,L_\measi,L_\measii)$ such that %
      if $k\ge 2$
      \begin{equation}\label{eq:Rkext2}
        R_k:\cB_{\bszeta_{[k-1]}}([-1,1])\times [-1,1]\to
        \cB_{\frac{C_5}{\min\{\delta_{\measii;k},\delta_{\measi;k}\}}}(1).
      \end{equation}
    \end{subequations}
  \end{enumerate}
\end{theorem}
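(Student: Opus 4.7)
\textbf{Proof proposal for Theorem \ref{THM:DINN}.} The plan is to argue by induction on $k$, writing $T_k$ as a composition
\begin{equation*}
  T_k(\bsx) = G_{\measii;k}\bigl(T_1(x_1),\dots,T_{k-1}(\bsx_{[k-1]}),\, F_{\measi;k}(\bsx_{[k-1]},x_k)\bigr),
\end{equation*}
where $G_{\measii;k}(\bsy,\cdot)\dfn F_{\measii;k}(\bsy,\cdot)^{-1}$, and to analytically extend the three ingredients via Lemmas \ref{lemma:fk} and \ref{lemma:FGprop}. The key observation controlling the bookkeeping is that the definition \eqref{eq:zetak} of $\zeta_k$ is exactly the largest radius which simultaneously (i) respects the analyticity radius $\delta_{\measi;k}$ of $F_{\measi;k}$ in its last variable, and (ii) guarantees $F_{\measi;k}$ maps into the domain $\cB_{\alpha\delta_{\measii;k}/2}([0,1])$ on which $G_{\measii;k}$ is well-defined.

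For the inductive step I would first apply Lemma \ref{lemma:fk} to both $f_\measi$ and $f_\measii$ to obtain conditional densities $f_{\measi;k}$, $f_{\measii;k}$ that are analytic on $\cB_{\bsdelta_{*;[k]}}([-1,1])$, uniformly bounded between $\tilde M_*$ and $\tilde L_*$, and satisfy the smallness condition \ref{item:fk:eps}. This smallness condition together with \eqref{eq:epseta} and \eqref{eq:eps} are precisely what make Lemma \ref{lemma:FGprop} applicable. Applying Lemma \ref{lemma:FGprop} \ref{item:FGprop:G}-\ref{item:FGprop:min1} to $f_{\measii;k}$ with $O\dfn\cB_{\bsdelta_{\measii;[k-1]}}([-1,1])$ yields an analytic extension
\begin{equation*}
  G_{\measii;k}:O\times \cB_{\alpha\delta_{\measii;k}/2}([0,1])\to \cB_{\delta_{\measii;k}}([-1,1]),
\end{equation*}
while Lemma \ref{lemma:FGprop} \ref{item:FGprop:F} applied to $f_{\measi;k}$ yields $F_{\measi;k}\in C^1(\cB_{\bsdelta_{\measi;[k]}}([-1,1]); \cB_{\eps_{\measi;k}+\tilde L_\measi\zeta_k}([0,1]))$.

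Next I would verify the two inclusions that make the composition well-defined on $\cB_{\bszeta_{[k]}}([-1,1])$. First, by the inductive hypothesis \eqref{eq:Tkhol} applied to $j<k$, $(T_j)_{j=1}^{k-1}$ maps $\cB_{\bszeta_{[k-1]}}([-1,1])$ into $O = \bigtimes_{j<k}\cB_{\delta_{\measii;j}}([-1,1])$ since $\zeta_j\le\delta_{\measii;j}$. Second, the bound \eqref{eq:epseta} on $\eps_{\measi;k}$ together with the factor $\alpha/(4\tilde L_\measi)$ in the definition \eqref{eq:zetak} of $\zeta_k$ gives $\eps_{\measi;k}+\tilde L_\measi\zeta_k\le \alpha\delta_{\measii;k}/2$, so $F_{\measi;k}$ lands in the domain of $G_{\measii;k}(\bsy,\cdot)$. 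Composing yields \eqref{eq:Tkhol}. The base case $k=1$ reduces to Prop.~\ref{prop:1d} (or, equivalently, a direct use of Lemmas \ref{LEMMA:FEXT}, \ref{LEMMA:FEXT2}) with the same choice of $\zeta_1$.

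For the derivative statement, I would compute from the chain rule and the identity $F_{\measii;k}(\bsy,G_{\measii;k}(\bsy,y))\equiv y$ that
\begin{equation*}
  R_k(\bsx) = \frac{f_{\measi;k}(\bsx_{[k-1]},x_k)}{f_{\measii;k}\bigl(T_1(x_1),\dots,T_{k-1}(\bsx_{[k-1]}),T_k(\bsx)\bigr)}.
\end{equation*}
The uniform bounds from Lemma \ref{lemma:fk} \ref{item:fk:tMtL} (together with $\Re f_{\measii;k}\ge M_\measii/(4L_\measii)$) immediately furnish \eqref{eq:Rkext1} with explicit constants $C_3,C_4$. For the sharper estimate \eqref{eq:Rkext2} on $\cB_{\bszeta_{[k-1]}}([-1,1])\times[-1,1]$, I would apply Lemma \ref{lemma:fk} \ref{item:fk:1-fk} to both densities with $\gamma$ a fixed fraction of $\min\{\delta_{\measi;k},\delta_{\measii;k}\}$; this expresses numerator and denominator as $1+O(1/\delta_{\measi;k})$ and $1+O(1/\delta_{\measii;k})$ respectively, whence $R_k-1 = O(1/\min\{\delta_{\measi;k},\delta_{\measii;k}\})$.

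\textbf{Main obstacle.} The substantive technical work is not any single estimate but the constant-chasing that makes the induction close: one must verify that the specific numerical thresholds in Assumption \ref{ass:fpi} \eqref{eq:eps} and in \eqref{eq:epseta}, combined with the factor $\alpha/(4\tilde L_\measi)$ in \eqref{eq:zetak}, simultaneously ensure (a) that Lemma \ref{lemma:FGprop} applies to each $f_{*;k}$ with $\epsilon=\eps_{*;k-1}\cdot 8L_*/M_*$, and (b) that the image of $F_{\measi;k}$ lies in the analyticity domain of $G_{\measii;k}$. Since $T_1,\dots,T_{k-1}$ are needed as arguments to $f_{\measii;k}$ in $R_k$, the ellipse radii must also be small enough that these fall within $\cB_{\bsdelta_{\measii}}([-1,1])$, where Lemma \ref{lemma:fk} holds; this is guaranteed by $\zeta_j\le\delta_{\measii;j}$ and the inductive extension of $T_j$.
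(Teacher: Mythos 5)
Your proposal follows essentially the same route as the paper's proof: the composition $T_k = G_{\measii;k}(T_1,\dots,T_{k-1},F_{\measi;k})$, Lemmas \ref{lemma:fk} and \ref{lemma:FGprop} for the analyticity and uniform bounds of the conditional densities and the (inverse) CDFs, the inductive verification that $\zeta_j\le\delta_{\measii;j}$ and $\epsilon_{\measi;k}+\tilde L_\measi\zeta_k\le\alpha\delta_{\measii;k}/2$ close the composition, the quotient formula $R_k = f_{\measi;k}/f_{\measii;k}(T_1,\dots,T_k)$ (extended from $[-1,1]^k$ by the identity theorem), and the $1+O(1/\delta)$ decomposition of numerator and denominator via Lemma \ref{lemma:fk} \ref{item:fk:1-fk} to obtain \eqref{eq:Rkext2}. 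The outline is correct; the detail you elide (and rightly flag as the main technical work) is precisely the constant-chasing in the paper's Steps 3--4, in particular that the radius $\gamma$ fed to Lemma \ref{lemma:fk} \ref{item:fk:1-fk} for the $\measii$-factor must be controlled by $\min\{1,\delta_{\measii;k}\}/2$ via Lemma \ref{lemma:FGprop} \ref{item:FGprop:min1}, and that the final bound on $R_k-1$ requires a case split depending on whether the denominator is near $1$.
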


\begin{proof}
  {\bf Step 1.} We establish notation and preliminary results.
  Throughout this proof (as in \eqref{eq:rrs})
    \begin{equation}\label{eq:thmalphabeta}
      \alpha=\alpha(\tilde M_\measii,\tilde L_\measii)= \frac{\tilde M_\measii^2}{2\tilde M_\measii+4\tilde L_\measii},\qquad
      \beta=\beta(\tilde M_\measii,\tilde L_\measii)= \frac{\tilde M_\measii}{2\tilde M_\measii+4\tilde L_\measii}.
    \end{equation}
  
  In the following for $k\in\{1,\dots,d\}$ and $*\in\{\measi,\measii\}$
  \begin{equation*}
    \cB_{\bsdelta_{*;{[k]}}}([-1,1])\dfn \bigtimes_{j=1}^k
    \cB_{\delta_{*;j}}([-1,1])\subseteq\C^k.
  \end{equation*}
  For $k\in\{1,\dots,d\}$ let
  \begin{equation*}
    f_{\measii;k}: \cB_{\bsdelta_{\measii;{[k]}}}([-1,1])\to\C,\qquad
    f_{\measi;k}:\cB_{\bsdelta_{\measi;{[k]}}}([-1,1])\to\C
  \end{equation*}
  be as in \eqref{eq:fk}.  Lemma \ref{lemma:fk}
  \ref{item:fk:hol}-\ref{item:fk:eps} states that for
  $k\in\{2,\dots,d\}$, the functions
  $f_{\measii;k}\in C^1(\cB_{\bsdelta_{\measii;[k-1]}}\times
  \cB_{\delta_{\measii;k}};\C)$ and
  $f_{\measi;k}\in C^1(\cB_{\bsdelta_{\measi;[k-1]}}\times
  \cB_{\delta_{\measi;k}};\C)$ satisfy the assumptions of Lemma
  \ref{lemma:FGprop}, with the constants
  $0<\tilde M_\measi \le \tilde L_\measi<\infty$,
  $0<\tilde M_\measii \le \tilde L_\measii<\infty$, and
  \begin{equation}
    \begin{aligned}\label{eq:epsiloneta}
    \epsilon_{\measi;k}&\dfn \eps_{\measi;k-1} \frac{8 L_\measi}{M_\measi}
    \le \min\{1,\delta_{\measi;k}\}\frac{\tilde M_\measi^2}{4\tilde M_\measi + 8\tilde L_\measi},\\
    \epsilon_{\measii;k}&\dfn \eps_{\measii;k-1} \frac{8 L_\measii}{M_\measii}
    \le \min\{1,\delta_{\measii;k}\}\frac{\tilde M_\measii^2}{4\tilde M_\measii + 8\tilde L_\measii}=\min\{1,\delta_{\measii;k}\}\frac{\alpha}{2}
    \end{aligned}
  \end{equation}
  in \eqref{eq:epsilon} and assumption \ref{item:supinf} of Lemma
  \ref{lemma:FGprop} (cp.~\eqref{eq:eps} and \eqref{eq:fksupinf}) with
  $O=\cB_{\bsdelta_{\measi;[k-1]}}([-1,1])$,
  $O=\cB_{\bsdelta_{\measii;[k-1]}}([-1,1])$ respectively.

  By Lemma \ref{lemma:FGprop} \ref{item:FGprop:F}, for
  $k\in\{2,\dots,d\}$ the functions $F_{\measi;k}$,
  $F_{\measii;k}$ as in \eqref{eq:Fk} are well-defined,
  and in particular $F_{\measi;k}$ is holomorphic from
  \begin{equation}\label{eq:Fetakhol}
    F_{\measi;k}:\cB_{\bsdelta_{\measi;{[k-1]}}}([-1,1])
    \times \cB_{\xi}([-1,1])\to \cB_{\epsilon_{\measi;k}+\tilde L_\measi\xi}([0,1])\qquad
    \forall \xi\in (0,\delta_{\measi;k}].
  \end{equation}

  For $k\in \{2,\dots,d\}$ let $G_{\measii;k}$ be as in Lemma
  \ref{lemma:FGprop} \ref{item:FGprop:G} (w.r.t.~$F_{\measii;k}$).  By
  Lemma \ref{lemma:FGprop} \ref{item:FGprop:G} for every
  $k\in\{2,\dots,d\}$, this map is holomorphic between
  \begin{equation}\label{eq:Gpikhol0}
    G_{\measii;k}:\cB_{\bsdelta_{\measii;[k-1]}}([-1,1])\times \cB_{\frac{\alpha\delta_{\measii;k}}{2}}([0,1])\to \cB_{\beta\delta_{\measii;k}}([-1,1]),
  \end{equation}
  and by Lemma \ref{lemma:FGprop} \ref{item:FGprop:min1}
  \begin{equation}\label{eq:Gpikhol}
    G_{\measii;k}:\cB_{\bsdelta_{\measii;[k-1]}}([-1,1])\times \cB_{\frac{\alpha\min\{1,\delta_{\measii;k}\}}{2}}([0,1])\to \cB_{\beta\min\{1,\delta_{\measii;k}\}}([-1,1]).
  \end{equation}
  {\bf Step 2.} We show \ref{item:dinN:Tk}.  By definition of $T_k$ in
  \eqref{eq:Tk}, for $k\in\{2,\dots,d\}$ and $\bsx\in [-1,1]^k$
  \begin{equation}\label{eq:Tk2}
    T_k(\bsx) = G_{\measii;k}\big(T_{[k-1]}(\bsx_{[k-1]}),F_{\measi;k}(\bsx)\big),
  \end{equation}
  {where we recall the notation $T_{[k-1]}=(T_j)_{j=1}^{k-1}$.}
  We show
  by induction \eqref{eq:Tkhol}, i.e.,
  \begin{equation*}
    T_k\in C^1(\cB_{\bszeta_{[k]}}([-1,1]); \cB_{\delta_{\measii;k}}([-1,1]))
    \qquad\forall k\in\{1,\dots,d\}.
  \end{equation*}

  Let $k=1$. By Lemma \ref{lemma:fk} \ref{item:fk:hol}
  \begin{equation*}
    f_{\measi;1}:\cB_{\delta_{\measi;1}}\to \C,\qquad
    f_{\measii;1}:\cB_{\delta_{\measii;1}}\to \C
  \end{equation*}
  are holomorphic functions satisfying \eqref{eq:tMletL} with the
  corresponding constants $\tilde M_\measi\le\tilde L_\measi$,
  $\tilde M_\measii\le\tilde L_\measii$. Thus by Prop.~\ref{prop:1d} with
  $r=\min\{\delta_{\measi;1},\frac{\delta_{\measii;1}\tilde M_\measii^2}{\tilde
    L_\measi(2\tilde M_\measii+4\tilde L_\measii)}\}$
  \begin{equation*}
    T_1:\cB_{r}([-1,1])\to \cB_{r\frac{\tilde L_{\measi}}{\tilde M_\measii}}([-1,1])
  \end{equation*}
  is holomorphic. Since $r\ge \zeta_1$ and
  $\frac{r\tilde L_\measi}{\tilde M_\measii}\le \delta_{\measii;1}$,
  this shows \eqref{eq:Tkhol} for $k=1$.

  For the induction step, we first note that there hold the following
  inequalities for $k\ge 2$:
  \begin{enumerate}[label=(i\arabic*)]
  \item\label{item:proof1}
    $\zeta_k\le\min\{\delta_{\measii;k},\delta_{\measi;k}\}$: This is
    immediate from the definition of $\zeta_k$ in \eqref{eq:zetak}.
  \item\label{item:proof2}
    $\epsilon_{\measi;k} + \tilde L_\measi \zeta_k\le
    \frac{\alpha\delta_{\measii;k}}{2}$: %
    \eqref{eq:epsiloneta} and \eqref{eq:epseta} give
    \begin{equation}\label{eq:epsmeasik}
      \epsilon_{\measi;k}\le \eps_{\measi;k-1} \frac{8L_\measi}{M_\measi}
      \le \frac{\alpha M_\measi}{32 L_\measi}\frac{8L_\measi}{M_\measi}
      \min\{1,\delta_{\measii;k}\}
      = \frac{\alpha\min\{1,\delta_{\measii;k}\}}{4},
    \end{equation}
    and %
    by \eqref{eq:zetak} it holds
    $\tilde L_\measi \zeta_k\le \frac{\alpha\delta_{\measii;k}}{4}$.
  \end{enumerate}

  Let $k\in\{2,\dots,d\}$. %
  Assume that
  \begin{equation}\label{eq:indhop}
    T_j\in C^1(\cB_{\bszeta_{[j]}}([-1,1]);
  \cB_{\delta_{\measii;j}}([-1,1]))\qquad\forall
  j\in\{1,\dots,k-1\},
  \end{equation}
  which is the induction hypothesis.
  We show that it also holds for $k$. By
  \eqref{eq:Fetakhol}, \ref{item:proof1} and \ref{item:proof2}
  \begin{equation*}
    F_{\measi;k}(\bsx)\in \cB_{\epsilon_{\measi;k} + \tilde L_\measi \zeta_k}([-1,1])
    \subseteq \cB_{\frac{\alpha\delta_{\measii;k}}{2}}([-1,1])
    \qquad \forall
    \bsx\in \cB_{\bszeta_{[k]}}([-1,1]).
  \end{equation*}
  Due to \eqref{eq:Gpikhol0}, \eqref{eq:Tk2} {and the induction
    hypothesis \eqref{eq:indhop}}
  we get
  \begin{equation}\label{eq:step2Tk}
    T_k(\bsx)=
    G_{\measii;k}\big(\underbrace{T_1(x_1)}_{\in\cB_{\delta_{\measii;1}}([0,1])},\dots,
    \underbrace{T_{k-1}(\bsx_{[k-1]})}_{\in\cB_{\delta_{\measii;k-1}}([0,1])}
    ,~~\underbrace{F_{\measi;k}(\bsx)}_{\in\cB_{\frac{\alpha\delta_{\measii;k}}{2}}([-1,1])}\big)
    \in \cB_{\beta\delta_{\measii;k}}\qquad
    \forall \bsx\in \cB_{\bszeta_{[k]}}([-1,1]).
  \end{equation}
  Since $\beta\le 1$ by definition, this shows \eqref{eq:Tkhol} for
  $T_k$.

  {\bf Step 3.} We show \eqref{eq:Rkext1} and first verify that for
  all $k\in\{1,\dots,d\}$
  \begin{equation}\label{eq:Rk}
    R_k(\bsx) = \frac{f_{\measi;k}(\bsx)}{f_{\measii;k}(T_{[k]}(\bsx))}\qquad
    \forall \bsx \in \cB_{\bszeta_{[k]}}([-1,1]).
  \end{equation}
  
  First let $\bsx\in[-1,1]^k$. By \eqref{eq:Tk2}
  \begin{equation}\label{eq:pxkTk}
    \partial_{k} T_k(\bsx)= \partial_{k} G_{\measii;k}(T_{[k-1]}(\bsx_{[k-1]}),F_{\measi;k}(\bsx)) \partial_{k} F_{\measi;k}(\bsx).
  \end{equation}
  Now
  \begin{equation}\label{eq:xkeqGF}
    x_k=G_{\measii;k}(\bsx_{[k-1]},F_{\measii;k}(\bsx))%
  \end{equation}
  and thus %
  \begin{equation*}
    1 = \partial_{k} \left( G_{\measii;k}(\bsx_{[k-1]},F_{\measii;k}(\bsx))\right)=\partial_{k} G_{\measii;k}(\bsx_{[k-1]},F_{\measii;k}(\bsx)) \partial_{k} F_{\measii;k}(\bsx) .%
  \end{equation*}
  Using that $F_{\measii;k}(\bsx_{[k-1]},\cdot):[-1,1]\to [0,1]$ is
  bijective, the substitution $y_k=F_{\measii;k}(\bsx)$ and
  \eqref{eq:xkeqGF} give for all
  $(\bsx_{[k-1]},y_k)\in [-1,1]^{k-1}\times [0,1]$
  \begin{equation*}
    \partial_{k} G_{\measii;k}(\bsx_{[k-1]},y_k)
    = \frac{1}{\partial_{k}F_{\measii;k}(\bsx_{[k-1]},G_{\measii;k}(\bsx_{[k-1]},y_k))}.
  \end{equation*}
  Hence, since $\partial_{k}F_{\measi;k}=f_{\measi;k}$ and
  $\partial_{k}F_{\measii;k}=f_{\measii;k}$, we obtain by \eqref{eq:pxkTk}
  \begin{equation*}
    \partial_{k} T_k(\bsx)=\frac{f_{\measi;k}(\bsx)}{f_{\measii;k}(T_{[k-1]}(\bsx_{[k-1]}),G_{\measii;k}(T_{[k-1]}(\bsx_{[k-1]}),F_{\measi;k}(\bsx)))},
  \end{equation*}
  which by \eqref{eq:Tk2} shows \eqref{eq:Rk} for $\bsx \in [-1,1]^k$.
  The identity theorem for holomorphic functions implies that
  \eqref{eq:Rk} holds for all $\bsx\in\cB_{\bszeta_{[k]}}([-1,1])$.
  
  Now we show \eqref{eq:Rkext1}. By Lemma \ref{lemma:fk}
  \ref{item:fk:tMtL} for $*\in\{\measi,\measii\}$
  \begin{equation*}
    \Re(f_{*;k}(\bsx))\ge \frac{M_*}{4L_*}\quad\text{and}\quad
    |\Im(f_{*;k}(\bsx))|\le \frac{M_*}{8L_*}\qquad
    \forall \bsx\in \cB_{\bsdelta_{*;[k]}}([-1,1]).
  \end{equation*}
  Moreover
  $|f_{\measii;k}(\bsx)|\le\tilde L_\measii=\frac{2
    L_\measii}{M_\measii}$. Thus, for arbitrary
  $\bsx\in \cB_{\bszeta_{[k]}}([-1,1])\subseteq
  \cB_{\bsdelta_{\measii;[k]}}([-1,1])$, writing
  $f_{\measi;k}(\bsx)=a+\ii b$ and
  $f_{\measii;k}(T_{[k]}(\bsx))=c+\ii d$
  \begin{equation*}
    \Re(R_k(\bsx)) = \Re\left(\frac{a+\ii b}{c+\ii d}\right)
    = \Re\left(\frac{a+\ii b}{c+\ii d} \frac{c-\ii d}{c-\ii d}\right)
    = \frac{ac+bd}{c^2+d^2}
    \ge \frac{\frac{M_\measii}{4L_\measii}\frac{M_\measi}{4L_\measi} - \frac{M_\measii}{8L_\measii}\frac{M_\measi}{8L_\measi}}{\tilde L_\measii^2} = \frac{3 M_\measii^3 M_\measi}{256 L_\measii^3 L_\measi}.
  \end{equation*}
  Moreover by Lemma \ref{lemma:fk} \ref{item:fk:tMtL} and
  \eqref{eq:Rk} we have
  $|R_k(\bsx)|\le (\frac{2L_\measi}{
    M_\measi})/(\frac{M_\measii}{2L_\measii})$ for all
  $\bsx\in \cB_{\bszeta_{[k]}}([-1,1])$, which gives
  \eqref{eq:Rkext1}.

  {\bf Step 4.} We show \eqref{eq:Rkext2}. Fix $k\in\{2,\dots,d\}$ and
  let $\xi\in (0,\zeta_k)$ be so small that
  $\epsilon_{\measi;k}+L_\measi\xi<
  \frac{\alpha\min\{1,\delta_{\measii;k}\}}{2}$, which is possible by
  \eqref{eq:epsmeasik}. Let
  \begin{equation}
    \bsx\in\cB_{\bszeta_{[k-1]}}([-1,1])\times \cB_{\xi}([-1,1])
    \subset \cB_{\bszeta_{[k]}}([-1,1])
  \end{equation}
  arbitrary. By \eqref{eq:Tkhol}
  \begin{equation*}
    T_j(\bsx)\in\cB_{\delta_{\measii;j}}([0,1])\qquad\forall j\in\{1,\dots,k\}.
  \end{equation*}
  By
  \eqref{eq:Fetakhol}
  \begin{equation*}
    F_{\measi;k}(\bsx)
    \in\cB_{\epsilon_{\measi;k}+\tilde L_\measi\xi}([0,1])
    \subseteq \cB_{\frac{\alpha\min\{1,\delta_{\measii;k}\}}{2}}([0,1]).
  \end{equation*}
  By \eqref{eq:Gpikhol}, \eqref{eq:Tk2}
  \begin{equation*}
    T_k(\bsx)=
    G_{\measii;k}\big(\underbrace{T_1(x_1)}_{\in\cB_{\delta_{\measii;1}}([-1,1])},\dots,
    \underbrace{T_{k-1}(\bsx_{[k-1]})}_{\in\cB_{\delta_{\measii;k-1}}([-1,1])}
    ,~~\underbrace{F_{\measi;k}(\bsx)}_{\in\cB_{\frac{\alpha\min\{1,\delta_{\measii;k}\}}{2}}([-1,1])}\big)
    \in \cB_{\beta\min\{1,\delta_{\measii;k}\}},
  \end{equation*}
  and since $\beta\le\frac{1}{2}$ (cp.~\eqref{eq:thmalphabeta}),
  $T_k(\bsx)\in \cB_{\frac{\min\{1,\delta_{\measii;k}\}}{2}}([-1,1])$.

  {For the rest of the proof fix}
  $\bsx\in\cB_{\bszeta_{[k-1]}}([-1,1])\times [-1,1]$. Lemma
  \ref{lemma:fk} \ref{item:fk:1-fk} implies
  \begin{equation*}
    f_{\measii;k}(\underbrace{T_1(x_1)}_{\in\cB_{\delta_{\measii;1}}([-1,1])},\dots,
    \underbrace{T_{k-1}(\bsx_{[k-1]})}_{\in\cB_{\delta_{\measii;k-1}}([-1,1])},
    \underbrace{T_k(\bsx)}_{\in\cB_{\frac{\min\{1,\delta_{\measii;k}\}}{2}}([-1,1])})\in \cB_{\frac{2+\min\{1,\delta_{\measii;k}\}/2}{\delta_{\measii;k}}C_2}(1)
    \subseteq \cB_{\frac{3C_2}{\delta_{\measii;k}}}(1),
  \end{equation*}
  with $C_2=C_2(M_\measii,L_\measii)$ as in Lemma \ref{lemma:fk}
  \ref{item:fk:1-fk}.  Lemma \ref{lemma:fk} \ref{item:fk:1-fk} (and
  $\zeta_j\le\delta_{\measi;j}$, $j=1,\dots,k-1$) also gives
  \begin{equation*}
    f_{\measi;k}(\bsx)\in \cB_{\frac{3C_2}{\delta_{\measi;k}}}(1),
  \end{equation*}
  with $C_2=C_2(M_\measi,L_\measi)$ as in Lemma \ref{lemma:fk}
  \ref{item:fk:1-fk}.
  
  Write
  $f_{\measii;k}(T_{[k]}(\bsx))=1+z_1$ and
  $f_{\measi;k}(\bsx)=1+z_2$ for $z_1$, $z_2\in\C$ with
  \begin{equation}\label{eq:xi1xi2}
    |z_1|\le \frac{3C_2(M_\measi,L_\measi)}{\delta_{\measi;k}}\qquad\text{and}\qquad
    |z_2|\le \frac{3C_2(M_\measii,L_\measii)}{\delta_{\measii;k}}.
  \end{equation}
  We distinguish between two cases and assume first that
  $|z_1|\le \frac{1}{2}$.
  Then by \eqref{eq:Rk} (cp.~\eqref{eq:zetak} and \eqref{eq:xi1xi2})
  \begin{align}\label{eq:Rk-1firstcase}
    |R_k(\bsx)-1| &= \left|\frac{1+z_2}{1+z_1}-\frac{1+z_1}{1+z_1}\right|\le \left | \frac{z_1-z_2}{1/2} \right|\nonumber\\
                  &\le
                    4\max\{|z_1|,|z_2|\}
                    \le 4 \frac{3\max\{C_2(M_\measi,L_\measi),C_2(M_\measii,L_\measii)\}}{\min\{|\delta_{\measi;k}|,|\delta_{\measii;k}|\}}.
  \end{align}

  In the second case %
  we have $|z_1|>\frac{1}{2}$. By \eqref{eq:Rkext1}
  $|R_k(\bsx)|\le 1+C_3$ and thus %
  with \eqref{eq:xi1xi2}
  \begin{equation}\label{eq:Rk-1secondcase}
    |R_k(\bsx)-1|\le 2+C_3=\frac{4+2C_3}{2}\le
                   (4+2C_3)|z_1|\le (4+2C_3)\frac{3C_2(M_\measi,L_\measi)}{\delta_{\measi;k}}.
  \end{equation}
  With
  \begin{equation}\label{eq:C5}
    C_5\dfn 3(4+2C_3)\max\{C_2(M_\measi,L_\measi),C_2(M_\measii,L_\measii)\},
  \end{equation}
  \eqref{eq:Rk-1firstcase} and \eqref{eq:Rk-1secondcase} show
  \eqref{eq:Rkext2}.
\end{proof}

We are now in position to prove Thm.~\ref{THM:COR:DINN}.
\begin{proof}[Proof of Thm.~\ref{THM:COR:DINN}]
  Let $0<M\le L<\infty$ and $\bsdelta\in (0,\infty)^d$ be such that
  $f_\measi\in C^1(\cB_\bsdelta([-1,1]);\C)$ and
  $ f_\measii\in C^1(\cB_\bsdelta([-1,1]);\C)$ both satisfy Assumption
  \ref{ass:finite} with these constants. Upon choosing $C_6=C_6(M,L)$
  in Assumption \ref{ass:finite} small enough, we show that
  $f_\measi$, $f_\measii$ satisfy Assumption \ref{ass:fpi} with the
  additional constraint \eqref{eq:epseta}.  This means that we can
  apply Thm.~\ref{THM:DINN}, which immediately implies
  Thm.~\ref{THM:COR:DINN} {with (cp.~\eqref{eq:rrs})
  \begin{equation}\label{eq:C7}
    C_7\dfn \min\left\{1,\frac{\alpha(\tilde M,\tilde L)}{4\tilde L}\right\}\qquad\text{and}\qquad C_8\dfn \max\left\{C_3,\frac{1}{C_4},C_5\right\}
  \end{equation}
  and where $\tilde M\dfn \frac{M}{2L}$, $\tilde L\dfn \frac{2L}{M}$
  and $C_3$, $C_4$ and $C_5$ are as in Thm.~\ref{THM:DINN}.  }

  Assumption \ref{ass:fpi} \ref{item:fpi:1}, \ref{item:fpi:3}
  holds by Assumption \ref{ass:finite} \ref{item:cordinN:1},
  \ref{item:cordinN:2} with $M_\measi=M_\measii=M$ and
  $L_\measi=L_\measii=L$.

  With $\alpha=\frac{M^2}{2M+4L}$ holds
  $\frac{\alpha M}{32L}=\frac{M^3}{64(ML+2L^2)}$. Therefore, spelling out
  Assumption \ref{ass:fpi} \ref{item:fpi:eps} with the additional
  constraint \eqref{eq:epseta}, for $k\in\{1,\dots,d-1\}$ we require
  \begin{subequations}\label{eq:complicated_assumption}
  \begin{equation}\label{eq:complicated_assumption_a}
    \sup_{\bsx\in [-1,1]^d}\sup_{\bsy\in\cB_{\bsdelta_{[k]}}\times\{0\}^{d-k}}
    |f(\bsx+\bsy)-f(\bsx)|
    \le \min\{1,\delta_{k+1}\}
    \min\left\{\frac{M^3}{64(ML+2L^2)},
      \frac{M^5}{512 L^4 + 64L^2M^2}
    \right\}
  \end{equation}
  and
  \begin{equation}
    \sup_{\bsx\in [-1,1]^d}\sup_{\bsy\in\cB_{\bsdelta}}
    |f(\bsx+\bsy)-f(\bsx)|
    \le \frac{M^3}{64L^2}.
  \end{equation}
  \end{subequations}
  
  Define
  \begin{equation}\label{eq:C6}
    C_6\dfn \min\left\{\frac{M^3}{64(ML+2L^2)},
      \frac{M^5}{512 L^4 + 64L^2M^2}
    \right\}\le \frac{M^3}{64L^2}.
  \end{equation}
  Then
  \begin{subequations}\label{eq:simple_assumption}
  \begin{equation}
    \sup_{\bsx\in [-1,1]^d}\sup_{\bsy\in\cB_{\bsdelta_{[k]}}\times\{0\}^{d-k}}
    |f(\bsx+\bsy)-f(\bsx)|\le C_6\min\{1,\delta_{k+1}\}\qquad\forall k\in\{1,\dots,d-1\}
  \end{equation}
  and
  \begin{equation}\label{eq:simple_assumption_b}
    \sup_{\bsx\in [-1,1]^d}\sup_{\bsy\in\cB_{\bsdelta}}
    |f(\bsx+\bsy)-f(\bsx)|\le C_6
  \end{equation}
  \end{subequations}  
  implies \eqref{eq:complicated_assumption}. This concludes the proof,
  as Assumption \ref{ass:finite} \ref{item:cordinN:3},
  \ref{item:cordinN:4} corresponds to \eqref{eq:simple_assumption}
  (note that if $\min\{1,\delta_{k+1}\}=1$,
  then \eqref{eq:complicated_assumption_a} follows
  by \eqref{eq:simple_assumption_b}).
\end{proof}

\section{Proofs of Sec.~\ref{SEC:APPROX}}
\subsection{Lemma \ref{LEMMA:LEGEST}}
{
\begin{proof}[Proof of Lemma \ref{LEMMA:LEGEST}]
  We start with \eqref{eq:lknubound} for $k=1$, and denote the
  Legendre coefficients of a function $g$ by
  $l_{g,j}=\int_{[-1,1]}g(x)L_j(x)\dd\mu(x)$, $j\in\N_0$.

  For $\xi>1$ introduce the Bernstein ellipse
\begin{equation*}
  \cE_\xi\dfn \setc{\frac{z+z^{-1}}{2}}{z\in\C,~1\le|z|\le\xi}\subset\C.
\end{equation*}
If $g\in C^1(\cE_\xi)$ then
$|l_{g,j}|\le
\xi^{-j}(1+2j)^{3/2}\norm[{L^\infty(\cB_\delta([-1,1]))}]{g}\frac{2\xi}{\xi-1}$
for all $j\in\N$. The proof of this classical estimate can be found
for example in \cite{davis} (see equations (12.4.24)-(12.4.26); also
note that our normalization $\norm[{L^2([-1,1],\mu)}]{L_j}=1$ gives
another factor $(1+2j)^{1/2}$ compared to the discussion in
\cite{davis}, cp.~Rmk.~\ref{rmk:WkinfLnu}).

The Bernstein ellipse $\cE_\xi$ has semiaxes $\frac{\xi+\xi^{-1}}{2}$
and $\frac{\xi-\xi^{-1}}{2}$. Solving $\frac{\xi-\xi^{-1}}{2}=\delta$
and $\frac{\xi+\xi^{-1}}{2}-1=\delta$ for $\xi$, we find that the
largest ellipse $\cE_\xi$ contained in $\cB_\delta([-1,1])$ is
obtained for $\xi=\delta+\sqrt{\delta^2+1}\ge 1+\delta$. In particular
$\cE_{1+\delta}\subseteq \cB_\delta([-1,1])$ for all
$\delta>0$. Hence, if $f\in\cB_\delta([-1,1])$, then with
$\varrho\dfn 1+\delta$,
$|l_{f,j}|\le
\varrho^{-k}(1+2k)^{3/2}\norm[{L^\infty(\cB_\delta([-1,1]))}]{f}\frac{\varrho}{\varrho-1}$. This
shows \eqref{eq:lknubound} for $k=1$. The bound \eqref{eq:lknubound}
for general $k$ follows by applying the same argument componentwise,
see for example the appendix in \cite{chkifa} or
\cite[Cor.~B.2.7]{JZdiss}.

It remains to show \eqref{eq:lknubound2} for
$\bsnu\in\N_0^{k-1}\times\{0\}$. Since $L_{\nu_k}(y_k)=L_0(y_k)=1$ we
have
  \begin{align*}
    \left|\int_{[-1,1]^{k}} f(\bsy) L_\bsnu(\bsy)\dd\mu(\bsy) \right|
    &=\left|\int_{-1}^1\int_{[-1,1]^{k-1}} f(\bsy_{[k-1]},y_{k}) L_{\bsnu_{[k-1]}}(\bsy_{[k-1]})\dd\mu(\bsy_{[k-1]})\dd\mu( y_{k}) \right|\nonumber\\
    &\le\max_{y_{k}\in[-1,1]}\left|\int_{[-1,1]^{k-1}} f(\bsy_{[k-1]},y_{k}) L_{\bsnu_{[k-1]}}(\bsy_{[k-1]})\dd \mu (\bsy_{[k-1]})\right|.
  \end{align*}
  Thus \eqref{eq:lknubound2} follows by \eqref{eq:lknubound}.
\end{proof}}%

\subsection{%
  Prop.~\ref{PROP:KAPPROX}}\label{app:legendre}

  {The exponential convergence of Prop.~\ref{PROP:KAPPROX} is verified by
  bounding the approximation error of the truncated Legendre expansion
  $f(\bsy)\simeq \sum_{\bsnu\in\Lambda_{k,\eps}}l_\bsnu L_\bsnu(\bsy)$ %
  by
  $\sum_{\bsnu\in\N_0^k\backslash \Lambda_{k,\eps}} |l_\bsnu|
  \norm[{W^{m,\infty}([-1,1]^k)}]{L_\bsnu}$, which is an upper bound
  on the remainder. As we will see, this classical line of arguments
  requires the following ingredients: (a) an upper bound on the
  Legendre coefficients, (b) an upper bound on the norm of the
  Legendre polynomials $L_\bsnu$, and (c) an upper bound on the
  cardinality of $\Lambda_{k,\eps}$. The first is obtained in Lemma
  \ref{LEMMA:LEGEST} and the second in Rmk.~\ref{rmk:WkinfLnu} ahead.
  Before coming to the proof of Prop.~\ref{PROP:KAPPROX},
  it then remains to discuss the third.}

\begin{remark}\label{rmk:WkinfLnu}
  For all
  $\bsnu\in \N_0^k$ 
  holds
  $\norm[{L^{\infty}([-1,1]^k;\R)}]{L_\bsnu}\le \prod_{j=1}^k
  (1+2\nu_j)^{1/2}$; see \cite[\S 18.2(iii) and \S
  18.3]{nist}. By the Markov brothers' inequality (see
  \cite{MR1511855} and, e.g., the references in
  \cite[p.~228]{cheney1966introduction}), this generalizes to
  $W^{m,\infty}$ via
\begin{equation}\label{eq:WkinfLnu}
  \norm[{W^{m,\infty}([-1,1]^k;\R)}]{L_\bsnu}\le \prod_{j=1}^k (1+2\nu_j)^{\frac 1 2 + 2m}\qquad\forall \bsnu\in\N_0^k,~m\in\N_0.
\end{equation}
\end{remark}

  A first simple bound on $|\Lambda_{k,\eps}|$ in \eqref{eq:Leps} is obtained as follows:
  With $\varrho_{\rm min}\dfn\min_j\varrho_j>1$ it holds
  $\gamma(\bsvarrho,\bsnu)\le \varrho_{\rm min}^{-|\bsnu|}$ and thus
  $\set{\bsnu\in\N_0^k}{\varrho_{\rm
      min}^{-|\bsnu|}\ge\eps}\supseteq \Lambda_{k,\eps}$. Due to
  $|\set{\bsnu\in\N_0^k}{|\bsnu|\le a}|\le (1+a)^k$ for all $a\ge 0$,
  for $\eps\in (0,1)$ we conclude with
  $a=-\frac{\log(\eps)}{\log(\varrho_{\rm min})}$
  \begin{equation}\label{eq:Lepssimple}
    |\Lambda_{k,\eps}|\le \left(1-\frac{\log(\eps)}{\log(\varrho_{\rm min})}\right)^k.
  \end{equation}  
  The following slightly more involved bound holds by \cite[Lemma
  3.3]{OSZ19} and because $\gamma(\bsvarrho,\bsnu)\le \bsvarrho^{-\bsnu}$ for
  all $\bsnu\in\N_0^k$ (also see \cite{MR2280784} and \cite[Lemma
  4.2]{BonitoEtAl2019}).
\begin{lemma}\label{lemma:lepsbound}
  Let $k\in\N$ and $\bsvarrho\in(1,\infty)^k$. It holds for
  $\eps\in (0,1)$
  \begin{equation}\label{eq:Lepsbound0}
    |\Lambda_{k,\eps}|\le \left|\setc{\bsnu\in\N_0^k}{\bsvarrho^{-\bsnu}\ge\eps}\right|\le \frac{1}{k!} \left(-\log(\eps)+\sum_{j=1}^k\log(\varrho_j) \right)^k \prod_{j=1}^k \frac{1}{\log(\varrho_j)}.
  \end{equation}
\end{lemma}

 \proofref{The following proposition is proven in
  Appendix \ref{app:legendre}.}

The next lemma will be required in the proof of Prop.~\ref{PROP:KAPPROX}.
In the following for
$\bsvarrho\in (1,\infty)^k$ and $\theta>0$ set
\begin{equation}\label{eq:Sxi}
  S(\bsvarrho,\theta)\dfn \sum_{\bsmu\in\N_0^k}\bsvarrho^{-\bsmu}\prod_{j=1}^k(1+2\nu_j)^{\theta}.
\end{equation}
For any $\theta>0$ holds $S(\bsvarrho,\theta)<\infty$; {see, e.g.,
\cite[Lemma 3.10]{ZS17}.}

\begin{lemma}\label{LEMMA:ETASUMEST}
  Let $k$, $m\in\N$, $\theta>0$, $C_0>0$ and $\bsvarrho\in
  (1,\infty)^k$. Assume that
  $f(\bsy)=\sum_{\bsnu\in\N_0^k}l_\bsnu L_\bsnu(\bsy)\in
  L^2([-1,1]^k,\mu)$ for certain coefficients $l_\bsnu\in\R$ satisfying
  $|l_\bsnu|\le C_0 \gamma(\bsvarrho,\bsnu)
  \prod_{j=1}^k(1+2\nu_j)^\theta$ for all $\bsnu\in\N_0^k$. Then
  $f\in W^{m,\infty}([-1,1]^k)$ and with
  \begin{equation}\label{eq:Lepsk}
    \Lambda_{k,\eps}\dfn \setc{\bsnu\in\N_0^k}{\gamma(\bsvarrho,\bsnu)\ge\eps},
  \end{equation}
  $\varrho_{\rm min}\dfn\min_j\varrho_j>1$, and
  {$w_\bsnu\dfn\prod_{j=1}^k(1+2\nu_j)^{1/2+2m+\theta}$} it holds for all
  $\eps\in (0,1)$
  \begin{equation}\label{eq:etasumest}
    \normc[{W^{m,\infty}([-1,1]^k)}]{f(\cdot)-\sum_{\bsnu\in\Lambda_{k,\eps}}l_\bsnu L_\bsnu(\cdot)}
    \le  2C_0k S\left(\bsvarrho,\frac{1}{2}+\theta+2m\right)
    \eps\left(3-\frac{2\log(\eps)}{\log(\varrho_{\rm min})}\right)^{k(\frac{3}{2}+\theta+2m)},
  \end{equation}
  where $S(\bsvarrho,\frac{1}{2}+\theta+2m)$ is defined in \eqref{eq:Sxi}.
\end{lemma}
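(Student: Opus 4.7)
The plan is to reduce the desired $W^{m,\infty}$-bound to a sum over the excluded indices and to extract a linear factor of $\eps$ via a dyadic decomposition of $\N_0^k\setminus\Lambda_{k,\eps}$ based on the size of $\gamma(\bsxi,\cdot)$. First, applying the triangle inequality in $W^{m,\infty}([-1,1]^k)$, the Markov--Bernstein estimate \eqref{eq:WkinfLnu}, and the hypothesis on $l_\bsnu$ reduces the claim to bounding
\begin{equation*}
\sum_{\bsnu\in\N_0^k\setminus\Lambda_{k,\eps}} \gamma(\bsxi,\bsnu) \prod_{j=1}^k (1+2\nu_j)^{\vartheta},\qquad \vartheta\dfn \tfrac{1}{2}+\theta+2m.
\end{equation*}

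Next, I partition the complement by dyadic layers: for $l\ge 1$, set $D_l\dfn \set{\bsnu\in\N_0^k}{\eps 2^{-l}\le \gamma(\bsxi,\bsnu)<\eps 2^{-l+1}}$, so that $D_l\subseteq \Lambda_{k,\eps 2^{-l}}$ and the $D_l$ partition $\N_0^k\setminus\Lambda_{k,\eps}$. The chain $\gamma(\bsxi,\bsnu)\le \bsxi^{-\bsnu}\le \xi_{\min}^{-|\bsnu|}$ together with $\gamma(\bsxi,\bsnu)\ge \eps 2^{-l}$ on $D_l$ forces $|\bsnu|\le N_l\dfn (l\log 2-\log\eps)/\log\xi_{\min}$. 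From this I obtain simultaneously $|D_l|\le |\Lambda_{k,\eps 2^{-l}}|\le (1+N_l)^k$ via \eqref{eq:Lepssimple}, and $\prod_{j=1}^k(1+2\nu_j)^{\vartheta}\le (1+2N_l)^{k\vartheta}$ for $\bsnu\in D_l$. Pulling out the pointwise bound $\gamma(\bsxi,\bsnu)<\eps 2^{-l+1}$ yields the per-layer estimate
\begin{equation*}
\sum_{\bsnu\in D_l}\gamma(\bsxi,\bsnu) \prod_{j=1}^k(1+2\nu_j)^{\vartheta}\le \eps\,2^{-l+1}(1+2N_l)^{k(1+\vartheta)}.
\end{equation*}

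Finally, using $1+2N_l\le l(1+2N_1)$ for $l\ge 1$ together with the convergence of $\sum_{l\ge 1}2^{-l}l^{k(1+\vartheta)}$, the sum over $l$ collapses into a bound of the form $C\cdot\eps\cdot(1+2N_1)^{k(1+\vartheta)}$, whose polylogarithmic factor $(1+2N_1)^{k(3/2+\theta+2m)}$ matches the stated factor $(3-2\log\eps/\log\xi_{\min})^{k(3/2+\theta+2m)}$ up to absolute constants.

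The main obstacle will be producing the precise constants displayed in \eqref{eq:etasumest}: the multiplicative factor $2C_0 k\,S(\bsxi,\vartheta)$ does not fall out of the crude $|D_l|\cdot\max w_\bsnu$ bound used above. A cleaner route, which I expect to execute in place of Step~3, is to partition $\N_0^k\setminus\Lambda_{k,\eps}$ by the coordinate $j\in\{1,\dots,k\}$ responsible for ejecting $\bsnu$ from $\Lambda_{k,\eps}$ (i.e., the minimal index with $\nu_j^*\log\xi_j$ exceeding an appropriate threshold), and on each of these $k$ pieces to bound the sum by a shifted version of $S(\bsxi,\vartheta)=\sum_\bsmu\bsxi^{-\bsmu}\prod_j(1+2\mu_j)^\vartheta$ via the factorization of $S$ over coordinates. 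This union bound furnishes the prefactor $k$, while the series structure furnishes the $S(\bsxi,\vartheta)$, with the remaining polylogarithmic factor arising from the cost of translating indices past the threshold in the distinguished coordinate.
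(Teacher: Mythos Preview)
Your dyadic-layer argument is correct and yields a bound of the form $C(\bsxi,k,m,\theta)\,\eps\,(3-\tfrac{2\log\eps}{\log\xi_{\min}})^{k(3/2+\theta+2m)}$ (note that the extra $2\log 2/\log\xi_{\min}$ in your $1+2N_1$ is absorbed into the constant, which therefore depends on $\xi_{\min}$, not just on absolute data). As you anticipated, it does not produce the displayed prefactor $2C_0kS(\bsxi,\tfrac12+\theta+2m)$.

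The paper's argument is closer to your ``cleaner route'' but organized differently. Rather than partitioning by a responsible coordinate, it introduces the \emph{boundary} set $A_\eps=\{\bsnu:\gamma(\bsxi,\bsnu)<\eps,\ \exists j\in\supp\bsnu\text{ with }\gamma(\bsxi,\bsnu-\bse_j)\ge\eps\}$ and writes every excluded index as $\bsnu_0+\bsmu$ with $\bsnu_0\in A_\eps$ and $\bsmu\in\N_0^k$. The key factorization $\gamma(\bsxi,\bsnu_0+\bsmu)=\gamma(\bsxi,\bsnu_0)\,\bsxi^{-\bsmu}$ then splits the tail sum cleanly: the $\bsmu$-sum gives exactly $S(\bsxi,\tfrac12+\theta+2m)$, while on $A_\eps$ one has $\gamma(\bsxi,\bsnu_0)<\eps$ and $|\bsnu_0|\le 1-\log\eps/\log\xi_{\min}$, yielding the polylogarithmic weight bound. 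The factor $k$ arises from $|A_\eps|\le k|\Lambda_{k,\eps}|$ (each boundary point has a neighbor in $\Lambda_{k,\eps}$), after which \eqref{eq:Lepssimple} bounds $|\Lambda_{k,\eps}|$ by another polylogarithmic term.

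One subtlety your sketch does not mention: because $\gamma$ involves $\max\{1,\nu_k\}$, the factorization $\gamma(\bsxi,\bsnu_0+\bsmu)=\gamma(\bsxi,\bsnu_0)\bsxi^{-\bsmu}$ fails if $(\bsnu_0)_k=0$ and $\mu_k>0$. The paper handles this by splitting $A_\eps$ according to whether $(\bsnu_0)_k=0$ or $(\bsnu_0)_k>0$ and restricting the shift $\bsmu$ to $\N_0^{k-1}\times\{0\}$ in the first case; showing that this still covers the complement of $\Lambda_{k,\eps}$ is a short but necessary verification. Your per-coordinate partition would face the same issue and would need an analogous case distinction.
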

\begin{proof}
  For $\eps\in (0,\varrho_k^{-1})$ set
  \begin{equation*}
    A_\eps\dfn\set{\bsnu\in\N_0^{k}}{\gamma(\bsvarrho,\bsnu)<\eps,~\exists j\in\supp\bsnu\text{ s.t.\ }\gamma(\bsvarrho,\bsnu-\bse_j)\ge\eps}
  \end{equation*}
  so that
  \begin{equation*}
    \set{\bsnu\in\N_0^k}{\gamma(\bsvarrho,\bsnu)<\eps} = \set{\bsnu+\bsmu}{\bsnu\in A_\eps,~\bsmu\in\N_0^k}
  \end{equation*}
  since $\gamma(\bsvarrho,\bsnu)$ is monotonically decreasing in each
  $\nu_j$. Set
  \begin{equation*}
    A_{\eps,1}\dfn \set{\bsnu\in A_\eps}{\nu_k=0},\qquad
    {A_{\eps,2}}\dfn \set{\bsnu\in A_\eps}{\nu_k>0}
  \end{equation*}
  and
  \begin{equation}\label{eq:S1S2}
    S_1\dfn \set{\bsnu+\bsmu}{\bsnu\in A_{\eps,1},~\bsmu\in\N_0^{k-1}\times\{0\}},\quad
    S_2\dfn\set{\bsnu+\bsmu}{\bsnu\in A_{\eps,2},~\bsmu\in\N_0^k}.
  \end{equation}
  We want to show
  \begin{equation}\label{eq:S1cupS2}
    \set{\bsnu\in\N_0^k}{\gamma(\bsvarrho,\bsnu)<\eps} =S_1\cup S_2.
  \end{equation}
  If $\gamma(\bsvarrho,\bseta)<\eps$ and $\eta_k=0$, then $\bseta\in
  S_1$. Now let $\gamma(\bsvarrho,\bseta)<\eps$ and $\eta_k>0$. If there
  exists $\bsnu\in A_{\eps,2}$ (i.e., $\nu_k>0$) such that
  $\bsnu\le\bseta$ (i.e., $\nu_j\le\eta_j$ for all $j$), then
  $\bseta=\bsnu+(\bseta-\bsnu)\in {S_2}$.  Otherwise there exists
  $\bsnu\in A_{\eps,1}$ (i.e., $\nu_k=0$) with $\bsnu\le\bseta$. By
  definition $\gamma(\bsvarrho,\bsnu)=\gamma(\bsvarrho,\bsnu+\bse_k)$ (since
  $\nu_k=0$), and therefore
  $\tilde\bsnu\dfn\bsnu+\bse_k\in A_{\eps,2}$ satisfies
  $\tilde\bsnu\le\bseta$. Hence
  $\bseta=\tilde\bsnu+(\bseta-\tilde\bsnu)\in S_2$, showing
  \eqref{eq:S1cupS2}.

  Note that
  \begin{equation}\label{eq:gammasplits}
    \gamma(\bsvarrho,\bsnu+\bsmu)=\gamma(\bsvarrho,\bsnu)\bsvarrho^{-\bsmu}
  \end{equation}
  for all $\bsnu+\bsmu\in S_1$ or $\bsnu+\bsmu\in S_2$ as in
  \eqref{eq:S1S2}. As mentioned before, $\gamma(\bsvarrho,\bsnu)\ge\eps$
  implies with $\varrho_{\min}\dfn \min_j\varrho_j$ that
  $\varrho_{\rm min}^{-|\bsnu|}\ge\gamma(\bsvarrho,\bsnu)\ge\eps$. Thus if
  $\bsnu\in A_\eps$ and $\gamma(\bsvarrho,\bsnu-\bse_j)\ge\eps$ then
  $|\bsnu|\le-\frac{\log(\eps)}{\log(\varrho_{\rm min})}+1$. Hence
  $w_\bsnu\le (3-\frac{2\log(\eps)}{\log(\varrho_{\rm
      min})})^{k(\frac{1}{2}+\theta+2m)}$ for all $\bsnu\in
  A_\eps$. Using {
  \begin{equation*}
    w_{\bsnu+\bsmu}=\prod_{j=1}^k(1+2(\nu_j+\mu_j))^{1/2+\theta+2m}\le
    w_\bsnu w_\bsmu,
  \end{equation*}
  which is true because $1+2(\nu_j+\mu_j)\le (1+2\nu_j)(1+2\mu_j)$,} as well as \eqref{eq:WkinfLnu}, \eqref{eq:S1cupS2} and
  \eqref{eq:gammasplits}
  \begin{align*}
    \normc[{W^{m,\infty}([-1,1]^k)}]{f(\cdot)-\sum_{\bsnu\in\Lambda_{k,\eps}}l_\bsnu L_\bsnu}
    &\le \sum_{\bsnu\in S_1}|l_\bsnu|\norm[W^{m,\infty}({[-1,1]^k})]{L_\bsnu} + \sum_{\bsnu\in S_2}|l_\bsnu|\norm[W^{m,\infty}({[-1,1]^k})]{L_\bsnu}\nonumber\\
    &\le \sum_{{\bsnu\in A_{\eps,1}}}\sum_{\bsmu\in\N_0^{k-1}\times\{0\}}
      C_0w_{\bsnu+\bsmu} \gamma(\bsvarrho,\bsnu+\bsmu)\nonumber\\
    &\quad+
      \sum_{{\bsnu\in A_{\eps,2}}}\sum_{\bsmu\in\N_0^{k}}
      C_0w_{\bsnu+\bsmu} \gamma(\bsvarrho,\bsnu+\bsmu)\nonumber\\
    &\le 2C_0 \sum_{\bsnu\in A_\eps}w_\bsnu \gamma(\bsvarrho,\bsnu)\sum_{\bsmu\in\N_0^k}w_\bsmu\bsvarrho^{-\bsmu}\nonumber\\
    &\le 2C_0 \eps |A_\eps|\left(3-\frac{2\log(\eps)}{\log(\varrho_{\rm min})}\right)^{k(\frac{1}{2}+\theta+2m)}
      \sum_{\bsmu\in\N_0^k}w_\bsmu\bsvarrho^{-\bsmu}.
  \end{align*}
  Now $|A_\eps|\le k|\Lambda_{k,\eps}|$, due to the fact that for
  every $\bsnu\in A_\eps$ there exists $j\in\{1,\dots,k\}$ such that
  $\bsnu-\bse_j\in\Lambda_{k,\eps}$.  Finally, by
  \eqref{eq:Lepssimple},
  $|A_\eps|\le k|\Lambda_{k,\eps}|\le k( 1 -
  \frac{\log(\eps)}{\log(\varrho_{\rm min})})^k$, and together with
  \eqref{eq:Sxi} we obtain \eqref{eq:etasumest}.
\end{proof}

\begin{proof}[Proof of Prop.~\ref{PROP:KAPPROX}]
  {
  The function $x\mapsto \frac{2x}{x-1}$ is monotonically
  decreasing for $x>1$. Hence, with
  $\varrho_{\min}\dfn \min_{j\in\{1,\dots,k\}}\varrho_j$ we can
  replace the term $\prod_{j\in\supp\bsnu}\frac{2\varrho_j}{\varrho_j-1}$
  occurring in Lemma \ref{LEMMA:LEGEST}
  by
  $\prod_{j\in\supp\bsnu} 2^{\theta-1}$, where
  $\theta \dfn 1+\log_2(\frac{2\varrho_{{\rm min}}}{\varrho_{\rm
      min}-1})$, i.e., $\frac{2\varrho_j}{\varrho_j-1}
  \le \frac{2\varrho_{{\rm min}}}{\varrho_{\rm
      min}-1}=2^{\theta-1}$ for each $j$. Using
  $1+\nu_j\ge 2$ whenever $j\in\supp\bsnu$,
  \eqref{eq:lknubound} thus implies
  \begin{equation}\label{eq:rmklegest}
    \left|\int_{[-1,1]^d} f(\bsy) L_\bsnu(\bsy)\dd\mu(\bsy) \right|
    \le \bsvarrho^{-\bsnu}\norm[{L^\infty(\cB_\bsdelta([-1,1]))}]{f}
    \prod_{j\in\supp\bsnu}(1+2\nu_j)^{\theta}.
  \end{equation}
  Using the bounds on the Legendre coefficients $l_\bsnu$ of the
  function $f$ from Lemma \ref{LEMMA:LEGEST} we then get
  \begin{align*}
    |l_\bsnu|&\le \norm[L^\infty({\cB_{\bsdelta}([-1,1])})]{f}\prod_{j=1}^k(1+2\nu_j)^{\theta}
    \begin{cases}
      \varrho_k^{-1}\prod_{j=1}^{k-1}\varrho_j^{-\nu_j} &      \nu_k=1\\
      \bsvarrho^{-\bsnu} &\nu_k\ge 1
    \end{cases}\nonumber\\
    &=\gamma(\bsvarrho,\bsnu)
    \norm[L^\infty({\cB_{\bsdelta}([-1,1])})]{f} \prod_{j=1}^k(1+2\nu_j)^{\theta}.
  \end{align*}
  Thus by Lemma \ref{LEMMA:ETASUMEST}
  \begin{align*}
    \normc[{W^{m,\infty}([-1,1]^k)}]{f(\cdot)-\sum_{\bsnu\in\Lambda_{k,\eps}}l_\bsnu L_\bsnu(\cdot)}
    &\le \eps\left(3-\frac{2\log(\eps)}{\log(\varrho_{\rm min})}\right)^{k(\frac{3}{2}+\theta+2m)}k
      \norm[L^\infty({\cB_{\bsdelta}([-1,1])})]{f}\nonumber\\
    &\quad\cdot S\left(\bsvarrho,\frac{1}{2}+\theta+2m\right)
  \end{align*}
  Let $\tau\in (0,1)$ such that
  $\tilde\beta=\tau\beta<\beta$. Absorbing the logarithmic $\eps$
  term, %
  we get
  \begin{equation}\label{eq:Wminftyeps}
    \normc[{W^{m,\infty}([-1,1]^k)}]{f(\cdot)-\sum_{\bsnu\in\Lambda_{k,\eps}}l_\bsnu L_\bsnu(\cdot)}\le C \eps^{\tau} \norm[L^\infty({\cB_{\bsdelta}([-1,1])})]{f} S\left(\bsvarrho,\frac{1}{2}+\theta+2m\right),    
  \end{equation} %
  for some $C$ depending on $\varrho_{\rm min}$, $k$, $\tau$ and $m$.}

  Next, \eqref{eq:Lepsbound0} gives
  $(k!|\Lambda_{k,\eps}|\prod_{j=1}^k\log(\varrho_j))^{1/k}-\sum_{j=1}^k\log(\varrho_j)\le-\log(\eps)$
  and thus
  \begin{equation*}
    \eps\le \exp\left(-\beta|\Lambda_{k,\eps}|^{\frac 1 k}\right)
    \prod_{j=1}^{k}\varrho_j.
  \end{equation*}
  Plugging this into \eqref{eq:Wminftyeps} yields \eqref{eq:kapprox}.
\end{proof}

\subsection{Thm.~\ref{THM:POLYD}}\label{app:polyd}

In the proof we will need the following elementary lemma.
\begin{lemma}\label{lemma:Ckm}
  Let $m\in\N_0$, $k\in\N$.  There exists $C=C(k,m)$ such that
  \begin{enumerate}
  \item\label{item:Leibniz}
    $\norm[{W^{m,\infty}([-1,1]^k)}]{fg}\le C
    \norm[{W^{m,\infty}([-1,1]^k)}]{f}
    \norm[{W^{m,\infty}([-1,1]^k)}]{g}$, for all $f$,
    $g\in {W^{m,\infty}([-1,1]^k)}$,
  \item\label{item:square} for all $f$,
    $g\in {W^{m,\infty}([-1,1]^k)}$
    \begin{equation*}
      \norm[{W^{m,\infty}([-1,1]^k)}]{f^2-g^2} \le C (\norm[{W^{m,\infty}([-1,1]^k)}]{f}+\norm[{W^{m,\infty}([-1,1]^k)}]{g})\norm[{W^{m,\infty}([-1,1]^k)}]{f-g}
    \end{equation*}
  \item\label{item:inverse} if
    $\norm[{L^{\infty}([-1,1]^k)}]{1-f}\le \frac{1}{2}$ then
    \begin{equation*}
      \normc[{W^{m,\infty}([-1,1]^k)}]{1-\frac{1}{f}}\le C
      \norm[{W^{m,\infty}([-1,1]^k)}]{1-f}
      {\left(1+\norm[{W^{m,\infty}([-1,1]^k)}]{f}\right)^{\max\{0,m-1\}}}.
    \end{equation*}
  \end{enumerate}

  In case $m=0$, the constant $C=C(k,0)$ is independent of $k$.
\end{lemma}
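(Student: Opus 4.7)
The plan is to handle the three parts in order, deriving (ii) and (iii) from (i) and elementary calculus. Throughout, for a multiindex $\bsnu$ with $|\bsnu|\le m$, I abbreviate $\partial^\bsnu = \partial^{\nu_1}_{x_1}\cdots \partial^{\nu_k}_{x_k}$, and the $W^{m,\infty}$ norm in $k$ variables is the maximum over such $\bsnu$ of the $L^\infty$ norm of $\partial^\bsnu$.

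For \ref{item:Leibniz}, I would invoke the Leibniz rule
\begin{equation*}
  \partial^\bsnu(fg) = \sum_{\bseta\le\bsnu}\binom{\bsnu}{\bseta}\partial^\bseta f\,\partial^{\bsnu-\bseta}g,
\end{equation*}
take $L^\infty$ norms, and bound each factor by the full $W^{m,\infty}$ norm. The resulting constant $C(k,m)$ collects $\sum_{|\bsnu|\le m}\sum_{\bseta\le\bsnu}\binom{\bsnu}{\bseta}$, which depends on $k$ and $m$ but reduces to $C(k,0)=1$ when $m=0$ (only the trivial term $fg$ appears). For \ref{item:square} I would write $f^2-g^2=(f-g)(f+g)$, apply \ref{item:Leibniz}, and use $\|f+g\|_{W^{m,\infty}}\le \|f\|_{W^{m,\infty}}+\|g\|_{W^{m,\infty}}$.

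For \ref{item:inverse}, the $m=0$ case follows directly from $1-\frac{1}{f}=\frac{f-1}{f}$ together with the pointwise lower bound $|f|\ge 1/2$, which is guaranteed by $\|1-f\|_{L^\infty}\le 1/2$. For $m\ge 1$, I would establish by induction on $|\bsnu|$ that, for $1\le |\bsnu|\le m$, one can write
\begin{equation*}
  \partial^\bsnu\!\left(\frac{1}{f}\right) = \sum_{p=1}^{|\bsnu|}\sum_{(\bsalpha_1,\ldots,\bsalpha_p)}
  c_{\bsalpha_1,\ldots,\bsalpha_p}\,
  \frac{\partial^{\bsalpha_1}f\cdots \partial^{\bsalpha_p}f}{f^{p+1}},
\end{equation*}
where the inner sum runs over finite families with $|\bsalpha_i|\ge 1$ and $\sum_i \bsalpha_i = \bsnu$. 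Since each factor $\partial^{\bsalpha_i}f=\partial^{\bsalpha_i}(f-1)$ (the derivative kills the constant $1$), every single term in the expansion contains at least one factor that is bounded by $\|1-f\|_{W^{m,\infty}}$; the remaining $p-1\le m-1$ factors are bounded by $\|f\|_{W^{m,\infty}}$, while $|f|^{-(p+1)}\le 2^{p+1}\le 2^{m+1}$. Summing the finitely many terms yields the claim.

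The hard part will be bookkeeping in \ref{item:inverse}: one must verify the recursion for the derivatives of $1/f$ (most transparently obtained by differentiating $f\cdot(1/f)=1$ and solving for $\partial^\bsnu(1/f)$, or by Fa\`a di Bruno applied to $x\mapsto 1/x$ composed with $f$), and then observe that the structural property ``every factor carries at least one derivative of $f$'' is preserved at each inductive step. Once that is in place, bounding the quotient using $|f|\ge 1/2$ and the triangle inequality produces the stated power $\max\{0,m-1\}$ on $\|f\|_{W^{m,\infty}}$.
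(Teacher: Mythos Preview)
Your proposal is correct and follows essentially the same approach as the paper: items \ref{item:Leibniz} and \ref{item:square} are proved identically, and for item \ref{item:inverse} the paper also argues by induction on $|\bsnu|$, writing $\partial^\bsnu(1/f)=p_\bsnu/f^{|\bsnu|+1}$ and tracking a $W^{m-|\bsnu|,\infty}$ bound on the numerator $p_\bsnu$, which amounts to a repackaged version of your Fa\`a di Bruno expansion. One small point worth making explicit in your write-up: when you pass from $\|f\|_{W^{m,\infty}}^{p-1}$ (with $p\le m$) to the stated power $\|f\|_{W^{m,\infty}}^{m-1}$, you should note that $\|f\|_{W^{m,\infty}}\ge\|f\|_{L^\infty}\ge\tfrac12$, so the missing factors can be absorbed into the constant; the paper's argument relies on the same observation.
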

\begin{proof}
  Item \ref{item:Leibniz} is a consequence of the (multivariate)
  Leibniz rule for weak derivatives, i.e.,
  $\partial_{\bsx}^{\bsnu} (fg) = \sum_{\bsmu\le\bsnu}
  \binom{\bsnu}{\bsmu}\partial_{\bsx}^{\bsmu}f
  \partial_{\bsx}^{\bsnu-\bsmu}g$ for all multi-indices
  $\bsnu\in\N_0^k$. Item \ref{item:square} follows by
  $f^2-g^2=(f-g)(f+g)$
  and \ref{item:Leibniz}.

  Next let us show \ref{item:inverse}.  %
  Due to $\norm[{L^\infty([-1,1]^k)}]{1-f}\le \frac{1}{2}$ it holds
  $\essinf_{\bsx\in [-1,1]^k}f(\bsx)\ge\frac{1}{2}$. Thus
  $\norm[{L^\infty([-1,1]^k)}]{1-\frac{1}{f}}
  =\norm[{L^\infty([-1,1]^k)}]{\frac{1-f}{f}} \le 2
  \norm[{L^\infty([-1,1]^k)}]{1-f}$, which proves the statement in
  case $m=0$.

  For general $m\in\N$, we claim that for any $\bsnu\in\N_0^k$ such
  that $1\le |\bsnu|\le m$ it holds
  $\partial_{\bsx}^\bsnu(\frac{1}{f})=\frac{p_\bsnu}{f^{|\bsnu|+1}}$
  for some $p_\bsnu$ satisfying
  \begin{equation}\label{eq:pnuhypothesis}
    \norm[W^{m-|\bsnu|,\infty}{([-1,1]^k)}]{p_\bsnu}\le C
    \norm[{W^{m,\infty}([-1,1]^k)}]{1-f}
    {(1+\norm[{W^{m,\infty}([-1,1]^k)}]{f})^{|\bsnu|-1}}
  \end{equation}
  for a constant $C$ depending on $\bsnu$ but independent of $f$. For
  $|\bsnu|=1$, i.e., $\bsnu=\bse_j=(\delta_{ij})_{i=1}^k$ for some
  $j$, this holds by
  $\partial_\bsx^{\bse_j}\frac{1}{f}=
  \partial_{j}\frac{1}{f}=\frac{-\partial_{j} f}{f^2}$ with
  $p_{\bse_j}=-\partial_{j} f$ where
  $\norm[{W^{m-1,\infty}([-1,1]^k)}]{p_{\bse_j}}
  =\norm[{W^{m-1,\infty}([-1,1]^k)}]{\partial_{j} f} \le
  \norm[{W^{m,\infty}([-1,1]^k)}]{1-f}$. For the induction step,
  \begin{equation*}
    \partial_{j}\frac{p_\bsnu}{f^{|\bsnu|+1}}
    = \frac{f^{|\bsnu|+1}\partial_{j}p_{\bsnu} - (|\bsnu|+1)p_\bsnu f^{|\bsnu|}\partial_{j}f}{f^{2|\bsnu|+2}}
    =\frac{f\partial_{j}p_{\bsnu} - (|\bsnu|+1)p_\bsnu\partial_{j}f}{f^{|\bsnu|+2}}\dfnn \frac{p_{\bsnu+\bse_j}}{f^{|\bsnu+\bse_j|+1}}.
  \end{equation*}
  Then
  \begin{align*}
    \norm[{W^{m-|\bsnu|-1,\infty}([-1,1]^k)}]{p_{\bsnu+\bse_j}}&=
                                                                 \norm[{W^{m-|\bsnu|-1,\infty}([-1,1]^k)}]{f\partial_{j}p_{\bsnu} - (|\bsnu|+1)p_\bsnu\partial_{j}f}\nonumber\\
                                                               &\le
                                                                 C
                                                                 (|\bsnu|+2)
                                                                 \norm[{W^{m,\infty}([-1,1]^k)}]{f}\norm[{W^{m-|\bsnu|,\infty}([-1,1]^k)}]{p_\bsnu}\nonumber\\
                                                               &\le
                                                                 C
                                                                 \norm[{W^{m,\infty}([-1,1]^k)}]{1-f}{(1+\norm[{W^{m,\infty}([-1,1]^k)}]{f})^{|\bsnu|}}
  \end{align*}
  by the induction hypothesis \eqref{eq:pnuhypothesis}.

  Hence for $1\le|\bsnu|\le m$, due to
  $\norm[L^\infty({[-1,1]^k})]{1-f}\le \frac{1}{2}$
  which implies $\essinf_{\bsx\in [-1,1]^k} f(\bsx)^{|\bsnu|+1}\ge 2^{-|\bsnu|-1}$,
  \begin{align*}
    \normc[{L^\infty([-1,1]^k)}]{\partial_{\bsx}^\bsnu\Big(1- \frac{1}{f}\Big)}
    &\le C
      \frac{\norm[{W^{m,\infty}([-1,1]^k)}]{1-f}
      {(1+\norm[{W^{m,\infty}([-1,1]^k)}]{f})^{m-1}}
      }{{\essinf_{\bsx\in [-1,1]^k} f(\bsx)^{|\bsnu|+1}}}\nonumber\\
    &\le C 2^{{m}+1} \norm[{W^{m,\infty}([-1,1]^k)}]{1-f}{(1+\norm[{W^{m,\infty}([-1,1]^k)}]{f})^{m-1}}
  \end{align*}
  which shows \ref{item:inverse}.
\end{proof}

\begin{lemma}\label{lemma:ptildeR}
  Let $m\in\N_0$, $k\in\N$, $k\ge 2$ and $T:[-1,1]^k\to [-1,1]$ such
  that $T\in W^{m,\infty}([-1,1]^k)$,
  ${\sqrt{\partial_{k}T}}\in C^0\cap W^{m,\infty}([-1,1]^k)$, and
  $T(\bsx_{[k-1]},\cdot):[-1,1]\to [-1,1]$ is an increasing bijection
  for every $\bsx_{[k-1]}\in [-1,1]^{k-1}$. Set
  ${Q}(\bsx)\dfn \sqrt{\partial_{k}T}-1\in C^0\cap W^{m,\infty}\cap
  ([-1,1]^k)$.

  There exist constants $K\in (0,1]$ (independent of $m$ and $k$) and
  $C=C(k,m)>0$, both independent of ${Q}$, with the following
  property: If $p\in W^{m,\infty}([-1,1]^k;\R)$ satisfies
  \begin{equation}\label{eq:lemmaptildeRK}
    \norm[W^{m,\infty}({[-1,1]^k})]{{Q}-p}\le 1\qquad
    \text{and}
    \qquad
    \norm[L^\infty({[-1,1]^k})]{{Q}-p}<\frac{K}{1+\norm[L^\infty({[-1,1]^k})]{{Q}}}
  \end{equation}
  then with
  \begin{equation}\label{eq:tildeT}
    \tilde T(\bsx)\dfn -1+\frac{2}{c_k(\bsx_{[k-1]})}\int_{-1}^{x_k}(p(\bsx_{[k-1]},t)+1)^2\dd t\quad
    \text{where}\quad
    c_k(\bsx_{[k-1]})\dfn \int_{-1}^1 (p(\bsx_{[k-1]},t)+1)^2\dd t
  \end{equation}
  it holds
  \begin{equation}\label{eq:ptildeR1}
    \norm[W^{m,\infty}({[-1,1]^k})]{T-\tilde T}\le C
    (1+\norm[W^{m,\infty}({[-1,1]^k})]{{Q}})^{\max\{3,m+2\}}
    \norm[W^{m,\infty}({[-1,1]^k})]{{Q}-p}
  \end{equation}
  and
  \begin{equation}\label{eq:ptildeR2}
    \norm[W^{m,\infty}({[-1,1]^k})]{\partial_{k} T-\partial_{k} \tilde T}\le C
    (1+\norm[W^{m,\infty}({[-1,1]^k})]{{Q}})^{\max\{3,m+2\}}
    \norm[W^{m,\infty}({[-1,1]^k})]{{Q}-p}.
  \end{equation}
  
  In case $m=0$, $C(k,0)$ is independent of $k$.
\end{lemma}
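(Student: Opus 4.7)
The plan is to compare the $x_k$-derivatives $\partial_{x_k}T = (\tilde R+1)^2$ and $\partial_{x_k}\tilde T = \frac{2}{c_k}(p+1)^2$, bound $\|\partial_{x_k}T-\partial_{x_k}\tilde T\|_{W^{m,\infty}}$ first, and then integrate in $x_k$ to obtain the bound for $T-\tilde T$. This is natural because both $T(\bsx_{[k-1]},-1)=-1=\tilde T(\bsx_{[k-1]},-1)$ (from $T$ being a bijection in $x_k$ and the construction \eqref{eq:tildeT}), so the difference vanishes at $x_k=-1$.

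The crucial preliminary step is to show that the normalization constant $c_k(\bsx_{[k-1]})$ is close to $2$ in $W^{m,\infty}$. The key observation is that $\int_{-1}^1 (\tilde R+1)^2 \,\dd t = \int_{-1}^1 \partial_{x_k}T\,\dd t = T(\bsx_{[k-1]},1)-T(\bsx_{[k-1]},-1)=2$, the constant function in $\bsx_{[k-1]}$. Hence
\begin{equation*}
c_k(\bsx_{[k-1]})-2 = \int_{-1}^1 \bigl[(p+1)^2-(\tilde R+1)^2\bigr]\,\dd t,
\end{equation*}
and since differentiation in $\bsx_{[k-1]}$ commutes with the $t$-integration, Lemma \ref{lemma:Ckm}\ref{item:square} yields
\begin{equation*}
\norm[W^{m,\infty}([-1,1]^{k-1})]{c_k-2}\le C(1+\norm[W^{m,\infty}([-1,1]^k)]{\tilde R})\norm[W^{m,\infty}([-1,1]^k)]{\tilde R-p}.
\end{equation*}
Choosing $K$ sufficiently small forces $\|c_k/2-1\|_{L^\infty}\le \tfrac12$ via the second inequality in \eqref{eq:lemmaptildeRK}, which activates Lemma \ref{lemma:Ckm}\ref{item:inverse} with $f=c_k/2$. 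This bounds $\norm[W^{m,\infty}]{1-2/c_k}$ by $C(1+\norm[W^{m,\infty}]{\tilde R})^{\max\{1,m\}}\norm[W^{m,\infty}]{\tilde R-p}$.

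With this in hand, the main estimate follows from the splitting
\begin{equation*}
\partial_{x_k}T-\partial_{x_k}\tilde T = (\tilde R+1)^2-(p+1)^2+(p+1)^2\Bigl(1-\tfrac{2}{c_k}\Bigr).
\end{equation*}
The first term is controlled by Lemma \ref{lemma:Ckm}\ref{item:square} giving $C(1+\norm[W^{m,\infty}]{\tilde R})\norm[W^{m,\infty}]{\tilde R-p}$, while Lemma \ref{lemma:Ckm}\ref{item:Leibniz} applied to the second term together with the bound on $\|1-2/c_k\|$ produces a factor $\norm[W^{m,\infty}]{p+1}^2\le C(1+\norm[W^{m,\infty}]{\tilde R})^2$. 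Combining yields \eqref{eq:ptildeR2} with exponent $\max\{3,m+2\}$.

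Finally, \eqref{eq:ptildeR1} follows by writing $(T-\tilde T)(\bsx)=\int_{-1}^{x_k}(\partial_{x_k}T-\partial_{x_k}\tilde T)(\bsx_{[k-1]},t)\,\dd t$ and observing that for any multi-index $\bsnu$ with $|\bsnu|\le m$, either $\nu_k=0$ (so we integrate a $\bsnu$-derivative of the integrand, which is in $W^{m,\infty}$ with norm bounded by the right-hand side, and integration contributes at most a factor $2$) or $\nu_k\ge 1$ (so $\partial^\bsnu(T-\tilde T)=\partial^{\bsnu-\bse_k}(\partial_{x_k}T-\partial_{x_k}\tilde T)$ is again bounded by the $W^{m,\infty}$-norm). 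The main obstacle is keeping track of the algebraic exponents in $\|\tilde R\|_{W^{m,\infty}}$ carefully through the chain of applications of Lemma \ref{lemma:Ckm}; the $m=0$ statement that $C(k,0)$ is $k$-independent then follows because every applied estimate (Leibniz/square/inverse for $m=0$ and the one-dimensional $t$-integration) carries a $k$-independent constant.
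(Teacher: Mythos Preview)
Your proposal is correct and shares all the key ideas with the paper's proof: the identity $c_k-2=\int_{-1}^1[(p+1)^2-(\tilde R+1)^2]\,\dd t$, the choice of $K$ to force $\|1-c_k/2\|_{L^\infty}\le\tfrac12$ so that Lemma~\ref{lemma:Ckm}\ref{item:inverse} applies, and the splitting $\partial_{x_k}T-\partial_{x_k}\tilde T=(\tilde R+1)^2-(p+1)^2+(p+1)^2(1-2/c_k)$. The only difference is in the order: the paper first proves \eqref{eq:ptildeR1} by introducing an intermediate map $\hat T(\bsx)=-1+\int_{-1}^{x_k}(1+p)^2\,\dd t$ and using $\tilde T=-1+\tfrac{2}{c_k}(\hat T+1)$, which forces an additional bound on $\norm[W^{m,\infty}]{T}$; it then proves \eqref{eq:ptildeR2} separately via the same splitting you use. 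Your route---establishing \eqref{eq:ptildeR2} first and then integrating in $x_k$ (using $T(\bsx_{[k-1]},-1)=\tilde T(\bsx_{[k-1]},-1)=-1$) to obtain \eqref{eq:ptildeR1}---is slightly more streamlined, as it avoids the auxiliary $\hat T$ and the separate estimate of $\norm[W^{m,\infty}]{T}$.
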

\begin{proof}
  Throughout this proof the constant $C>0$ (which may change its value
  even within the same equation) will depend on $m$, $k$ but be
  independent of ${{Q}}$. Moreover, it will only depend on $k$
  through the constants from Lemma \ref{lemma:Ckm}, and thus be
  \emph{independent of $k$} in case $m=0$. In the following we use
  several times the fact that
  \begin{equation}\label{eq:normboundint}
    \normc[{W^{m,\infty}([-1,1]^k)}]{\int_{-1}^{x_k}g(\bsx_{[k-1]},t)\dd t}
    \le 2 \norm[{W^{m,\infty}([-1,1]^k)}]{g(\bsx)}
    \qquad\forall
    g\in W^{m,\infty}([-1,1]^k).
  \end{equation}

  Set $\eps\dfn \norm[W^{m,\infty}({[-1,1]^k})]{{{Q}}-p}\le
  1$. Using Lemma \ref{lemma:Ckm} \ref{item:square} we get
  \begin{align}\label{eq:pp1dxT}
    \norm[W^{m,\infty}({[-1,1]^k})]{(p+1)^2-\partial_{k}T}&=\norm[W^{m,\infty}({[-1,1]^k})]{(p+1)^2-({{Q}}+1)^2}\nonumber\\
                                                            &\le C (\norm[W^{m,\infty}({[-1,1]^k})]{p+1}+\norm[W^{m,\infty}({[-1,1]^k})]{{{Q}}+1}) \eps\nonumber\\
                                                            &\le C (2\norm[W^{m,\infty}({[-1,1]^k})]{{{Q}}+1}+\eps)\eps\nonumber\\
                                                            &\le C(1+\norm[W^{m,\infty}({[-1,1]^k})]{{{Q}}})\eps,%
  \end{align}
  where we used the triangle inequality and $\eps\le 1$, which holds
  by assumption.

  Additional to $\tilde T$ in \eqref{eq:tildeT} let
  \begin{equation*}
  \hat T({\bsx})=-1+\int_{-1}^{x_k} (1+p(\bsx_{[k-1]},t))^2\dd
  t.
\end{equation*}
Since
  $T(\bsx)=-1+\int_{-1}^{x_k}\partial_{k} T(\bsx_{[k-1]},t)\dd t$ we
  get with \eqref{eq:normboundint} {and \eqref{eq:pp1dxT}}
  \begin{align}\label{eq:TkhTk}
    \norm[W^{m,\infty}({[-1,1]^k})]{T-\hat T}
    &= \normc[W^{m,\infty}({[-1,1]^k})]{{\int_{-1}^{x_k}}\partial_{k}T(\bsx_{[k-1]},t) -
      (p(\bsx_{[k-1]},t)+1)^2{\dd t}}\nonumber\\
    &\le C(1+\norm[W^{m,\infty}({[-1,1]^k})]{{{Q}}})\eps.
  \end{align}

  Fix $\bsx_{{[k-1]}}\in [-1,1]^{k-1}$. Then
  \begin{equation*}
    c_{k}(\bsx_{[k-1]})=\int_{-1}^1 (p(\bsx_{[k-1]},t)+1)^2\dd t=
    \hat
    T(\bsx_{[k-1]},1)-\hat T(\bsx_{[k-1]},-1).
  \end{equation*}
  Moreover, since $T(\bsx_{[k-1]},\cdot):[-1,1]\to [-1,1]$ is a
  monotonically increasing bijection,
  \begin{equation*}
    \int_{-1}^1\partial_{k} T(\bsx_{[k-1]},t)\dd t =
    T(\bsx_{[k-1]},1)-
    T(\bsx_{[k-1]},-1)=2\qquad \forall \bsx\in [-1,1]^k.
  \end{equation*}
  Thus, using that $T$, $\hat T$ are Lipschitz continuous, and
  restrictions to $[-1,1]^{k-1}\times \{y\}$, $y\in\{-1,1\}$, are
  well-defined,
  \begin{align}\label{eq:oock}
    \norm[W^{m,\infty}({[-1,1]^{k-1}})]{c_k-2}
    &\le\norm[W^{m,\infty}({[-1,1]^{k-1}})]{T(\cdot,1)-\hat T(\cdot,1)}+\norm[W^{m,\infty}({[-1,1]^{k-1}})]{T(\cdot,-1)-\hat T(\cdot,-1)}\nonumber\\
    &\le 2 \norm[W^{m,\infty}({[-1,1]^{k}})]{T-\hat T}
      \le \tilde C(1+\norm[W^{m,\infty}({[-1,1]^{k}})]{{{Q}}})\eps,
  \end{align}
  for some $\tilde C=\tilde C(k,m)>0$, which is independent of $k$ in
  case $m=0$. Additionally by \eqref{eq:oock}
  \begin{equation}\label{eq:oock2}
    \norm[W^{m,\infty}({[-1,1]^{k-1}})]{c_k}\le
    2+\norm[W^{m,\infty}({[-1,1]^{k-1}})]{c_k-2}
    \le 2+C(1+\norm[W^{m,\infty}({[-1,1]^{k}})]{{{Q}}})\eps.
  \end{equation}

  Define $K\dfn \min\{1,\frac{1}{\tilde C(0,k)}\}$ with
  $\tilde C(0,k)$ from \eqref{eq:oock}, (i.e., $m=0$ and $K$ does not
  depend on $k$). By assumption
  $\eps\le \frac{K}{1+\norm[L^{\infty}({[-1,1]^{k}})]{{{Q}}}}$,
  which implies
  $\norm[L^\infty({[-1,1]^k})]{1-\frac{c_k}{2}}\le \frac{1}{2}$ by
  \eqref{eq:oock} (for $m=0$). {This allows to apply
  Lemma \ref{lemma:Ckm}
  \ref{item:inverse}, which together with} \eqref{eq:oock} %
and \eqref{eq:oock2}
  gives
  \begin{align}\label{eq:1-2dc}
    \normc[W^{m,\infty}({[-1,1]^{k-1}})]{1-\frac{2}{c_k}}&\le
                                                           \normc[{W^{m,\infty}([-1,1]^{k-1})}]{1-\frac{c_k}{2}}
                                                           \left({1+}\normc[{W^{m,\infty}([-1,1]^{k-1})}]{\frac{c_k}{2}}\right)^{\max\{0,m-1\}}\nonumber\\
                                                         &\le
                                                           C \eps
                                                           (1+\norm[{W^{m,\infty}([-1,1]^{k-1})}]{{Q}})^{\max\{1,m\}}.
  \end{align}
  Since $\tilde T=-1+\frac{2}{c_k}(\hat T+1)$, \eqref{eq:TkhTk}
  {and \eqref{eq:1-2dc}} yield
  \begin{align*}%
    &\norm[W^{m,\infty}({[-1,1]^k})]{\tilde T-T}\le
      \norm[W^{m,\infty}({[-1,1]^k})]{\hat T-T} + \norm[W^{m,\infty}({[-1,1]^k})]{\hat T-\tilde T}\nonumber\\
    &\qquad\le C(1+\norm[W^{m,\infty}({[-1,1]^{k}})]{{{Q}}}) \eps+ \normc[W^{m,\infty}({[-1,1]^k})]{\left(1-\frac{2}{c_k}\right) + \left(1-\frac{2}{c_k}\right)\hat T}\nonumber\\
    &\qquad\le C(1+\norm[W^{m,\infty}({[-1,1]^{k}})]{{{Q}}})\eps\\
    &\qquad\quad+\normc[W^{m,\infty}({[-1,1]^{k-1}})]{1-\frac{2}{c_k}}(1+ \norm[W^{m,\infty}({[-1,1]^k})]{\hat T-T}+\norm[W^{m,\infty}({[-1,1]^k})]{T})\nonumber\\
    &\qquad\le C\eps(1+\norm[W^{m,\infty}({[-1,1]^k})]{{{Q}}})\nonumber\\
    &\qquad\quad+C\eps(1+\norm[W^{m,\infty}({[-1,1]^k})]{{{Q}}})^{\max\{1,m\}}
      \big(1+C\eps(1+\norm[W^{m,\infty}({[-1,1]^k})]{{{Q}}})+\norm[W^{m,\infty}({[-1,1]^k})]{T}\big)%
  \end{align*}
  for some constant $C$ depending on $m$ and $k$. Due to
  $T(\bsx)=-1+\int_{-1}^{x_k}({{Q}}(\bsx_{[k-1]},t)+1)^2\dd x_k$,
  by \eqref{eq:normboundint}
  \begin{equation*}
    \norm[{W^{m,\infty}([-1,1]^k)}]{T}\le C(1+\norm[{W^{m,\infty}([-1,1]^k)}]{({{Q}}+1)^2})\le C(1+\norm[{W^{m,\infty}([-1,1]^k)}]{{{Q}}})^2,
  \end{equation*}
  by Lemma \ref{lemma:Ckm} \ref{item:Leibniz}.  In all this shows
  \eqref{eq:ptildeR1}.

  To show \eqref{eq:ptildeR2} we proceed similarly and obtain via
  \eqref{eq:pp1dxT}
  \begin{align*}
    &\norm[W^{m,\infty}({[-1,1]^k})]{\partial_{k} T-\partial_{k} \tilde T} =\normc[W^{m,\infty}({[-1,1]^k})]{%
      \partial_{k}T
      -\frac{2(p+1)^2}{{c_k}}}\nonumber\\
    &\qquad\le \normc[W^{m,\infty}({[-1,1]^k})]{\partial_{k}T
      -(p+1)^2}
      +\normc[W^{m,\infty}({[-1,1]^k})]{(p+1)^2\left(1-\frac{2}{{c_k}}\right)}\nonumber\\
    &\qquad\le C(1+\norm[W^{m,\infty}({[-1,1]^k})]{{{Q}}})\eps
      +C(1+\normc[W^{m,\infty}({[-1,1]^k})]{p})^2
      \normc[W^{m,\infty}({[-1,1]^{k-1}})]{1-\frac{2}{{c_k}}},
  \end{align*}
  where we used Lemma \ref{lemma:Ckm} \ref{item:Leibniz} to bound
  $\norm[W^{m,\infty}({[-1,1]^k})]{(1+p)^2}$. %
  {Since $\norm[W^{m,\infty}({[-1,1]^k})]{p-{{Q}}}\le 1$}
  \begin{equation*}
    \norm[W^{m,\infty}({[-1,1]^k})]{p} \le
    \norm[W^{m,\infty}({[-1,1]^k})]{p-{{Q}}}+
    \norm[W^{m,\infty}({[-1,1]^k})]{{Q}}
    \le 1+
    \norm[W^{m,\infty}({[-1,1]^k})]{{{Q}}},
  \end{equation*}
  and thus using \eqref{eq:1-2dc}
  \begin{equation*}
    \norm[W^{m,\infty}({[-1,1]^k})]{\partial_{k} T-\partial_{k} \tilde T}\le C(1+\norm[W^{m,\infty}({[-1,1]^k})]{{{Q}}})^{\max\{3,m+2\}}\eps.
    \qedhere
  \end{equation*}
\end{proof}

\begin{proof}[Proof of Thm.~\ref{THM:POLYD}]
  Wlog in the following we assume $m\ge 1$, since the statement for
  $m\ge 1$ trivially implies the statement for $m=0$.
  {Throughout we omit writing the $\eps>0$ index and
  write $\tilde T_{k}\dfn \tilde T_{k,\eps}$ etc.}

  {\bf Step 1.} Fix $k\in\{1,\dots,d\}$ and define
  ${Q_k}(\bsx)\dfn \sqrt{\partial_{k}T_k}-1$. We now construct
  $p_k$ such that $\norm[{W^{m,\infty}([-1,1]^k)}]{{Q_k}-p_k}$ is
  small.

  According to Thm.~\ref{THM:COR:DINN} with
  $\bszeta=(C_7\delta_j)_{j=1}^d$ it holds
  \begin{equation}\label{eq:cEbsxik_1}
    \partial_{k}T_k\in
    C^1(\cB_{\bszeta_{[k]}}([-1,1]);\cB_{C_8}(1))
  \end{equation}
  and if $k\ge 2$
  \begin{equation}\label{eq:cEbsxik_2}
    \partial_{k}T_k:
    \cB_{\bszeta_{[k-1]}}([-1,1])\times [-1,1]\to \cB_{\frac{C_8}{\max\{1,\delta_k\}}}(1).
  \end{equation}
  
  We have $\Re(\partial_{k}T_k(\bsx)) > 0$ for all
  $\bsx\in \cB_{\bszeta_{[k]}}$ by Thm.~\ref{THM:COR:DINN}
  \ref{item:cordinN:a}, so that \eqref{eq:cEbsxik_1} implies
  \begin{equation}\label{eq:sqrtK-1} %
    \sqrt{\partial_{k}T_k}-1:
    {\cB_{\bszeta_{[k]}}([-1,1])}\to \cB_{\sqrt{C_8}}(1)\subseteq
    \cB_{\sqrt{C_8}+1}.
  \end{equation}
  If $\delta_k<1$, then $\frac{1+C_7}{1+C_7\delta_k}\ge 1$ and by
  \eqref{eq:sqrtK-1}
  \begin{equation*}
    \sqrt{\partial_{k}T_k}-1:
    \cB_{\bszeta_{[k-1]}}([-1,1])\times [-1,1]\to \cB_{\sqrt{C_8}+1}
    \subseteq \cB_{\frac{(1+C_7)(\sqrt{C_8}+1)}{1+C_7\delta_k}}.
  \end{equation*}
  Next, we consider the case $\delta_k\ge 1$, so that
  $\max\{1,\delta_k\}=\delta_k$. For any $z\in \C$ with
  $|z|\le \frac{1}{2}$ one checks that $|\sqrt{z}+1|\ge \frac{1}{2}$.
  Thus $|\sqrt{z}-1|\le 2|\sqrt{z}-1||\sqrt{z}+1|=2|z-1|$. Hence, if
  $\frac{C_8}{\max\{1,\delta_k\}}\le \frac{1}{2}$, then by
  \eqref{eq:cEbsxik_2}
  \begin{equation*}
    \sqrt{\partial_{k}T_k}-1:
    \cB_{\bszeta_{[k-1]}}([-1,1])\times [-1,1]\to \cB_{2\frac{C_8}{\max\{1,\delta_k\}}}
    =\cB_{\frac{2C_8}{\delta_k}}
    \subseteq \cB_{\frac{2C_8(1+C_7)}{1+C_7\delta_k}},
  \end{equation*}
  and if $\frac{C_8}{\max\{1,\delta_k\}}>\frac{1}{2}$, then
  $2C_8>\delta_k$ and by \eqref{eq:sqrtK-1}
  \begin{equation*}
    \sqrt{\partial_{k}T_k}-1:
    \cB_{\bszeta_{[k-1]}}([-1,1])\times [-1,1]\to \cB_{\sqrt{C_8}+1}
    \subseteq \cB_{\frac{(1+2C_7C_8)(\sqrt{C_8}+1)}{1+C_7\delta_k}}.
  \end{equation*}
  
  In all with
  \begin{equation*}
    r\dfn \max\left\{1+\sqrt{C_8},2C_8(1+C_7),(1+C_7)(\sqrt{C_8}+1),(1+2C_7C_8)(\sqrt{C_8}+1)\right\}
  \end{equation*}
  holds
  \begin{equation*}
    {Q_k}\in
    C^1({\cB_{\bszeta_{[k]}}([-1,1])};\cB_{r})
  \end{equation*}
  and if $k\ge 2$
  \begin{equation*}
    {Q_k}:
    \cB_{\bszeta_{[k-1]}}([-1,1])\times [-1,1]\to \cB_{\frac{r}{1+C_7\delta_k}}.
  \end{equation*}
  
  Therefore, for $m\in\N_0$ fixed, Prop.~\ref{PROP:KAPPROX} gives for
  $\eps\in (0,1)$ and with
  \begin{equation*}
    p_{k,\eps}{(\bsx)}=\sum_{\bsnu\in\Lambda_{k,\eps}} L_\bsnu{(\bsx)}
    \int_{[-1,1]^k}{Q_k}(\bsy)L_\bsnu(\bsy)\dd\mu(\bsy)\in \bbP_{\Lambda_{k,\eps}}
  \end{equation*}
  that {with $\tau:=\frac{\tilde\beta}{\beta}$}
  \begin{equation}\label{eq:WminftyRkpkeps}
    \norm[{W^{m,\infty}([-1,1]^k)}]{{Q_k}-p_{k,\eps}}\le
    \tilde C\eps^{\tau},
  \end{equation}
  for a constant $\tilde C$ depending on $r$, $m$, $\tau$, $\bsdelta$
  and $T$ but independent of $\eps\in (0,1)$.

  {\bf Step 2.} Fix $k\in\{1,\dots,d\}$. %
  Let the constant $K>0$ be as in Lemma \ref{lemma:ptildeR}. We
  distinguish between two cases, first assuming
  $\tilde C \eps^\tau<\max\{1,\frac{K}{1+\norm[L^\infty({[-1,1]^k})]{{Q_k}}}\}$.
  , %
  Then \eqref{eq:WminftyRkpkeps} and Lemma \ref{lemma:ptildeR} imply
  \begin{subequations}\label{eq:tTk-Tkleeps2}
    \begin{equation}
      \norm[{W^{m,\infty}([-1,1]^k)}]{T_k-\tilde T_k}\le C \eps^\tau,
    \end{equation}
    for a constant $C$ independent of $\eps$.

    In the second case where
    $\tilde C \eps^\tau>\max\{1,\frac{K}{1+\norm[L^\infty({[-1,1]^k})]{{Q_k}}}\}$, we simply redefine $p_k\dfn 0$, so that
    $\tilde T_k(\bsx)=x_k$ (cp.~\eqref{eq:ck}). Then
    \begin{align}
      \norm[{W^{m,\infty}([-1,1]^k)}]{T_k-\tilde T_k}
      &\le\underbrace{\frac{\norm[{W^{m,\infty}([-1,1]^k)}]{T_k}+
        \norm[{W^{m,\infty}([-1,1]^k)}]{x_k}}{\max\{1,\frac{K}{1+\norm[L^\infty({[-1,1]^k})]{{Q_k}}}\}\frac{1}{\tilde
          C}}}_{{\dfnn C}} \max\Big\{1,\frac{K}{1+\norm[L^\infty({[-1,1]^k})]{{Q_k}}}\Big\}\frac{1}{\tilde
        C}\nonumber\\
      &\le C \eps^\tau,
    \end{align}
  \end{subequations}
  with $C>0$ depending on $K$, $\tilde C$,
  $\norm[{W^{m,\infty}([-1,1]^k)}]{T_k}$ and
  $\norm[{W^{m,\infty}([-1,1]^k)}]{{Q_k}}$ but not on $\eps$.

  {\bf Step 3.} We estimate the error in terms of $N_\eps$. By Lemma
  \ref{lemma:lepsbound}
  \begin{equation*}
    N_\eps = \sum_{k=1}^d|\Lambda_{k,\eps}|
    \le \frac{d}{d!} \left(-\log(\eps)+\sum_{j=1}^d\log(\xi_j) \right)^d \prod_{j=1}^d \frac{1}{\log(\xi_j)},
  \end{equation*}
  which implies
  \begin{equation*}
    \eps\le \exp\left(-\left(N_\eps (d-1)!\prod_{j=1}^d\log(\xi_j)\right)^{1/d}\right)\prod_{j=1}^d\xi_j.
  \end{equation*}
  Together with \eqref{eq:tTk-Tkleeps2} this shows
  \begin{equation*}
    \norm[{W^{m,\infty}([-1,1]^k)}]{T_k-\tilde T_k}\le C \exp(-\tau\beta N_\eps^{1/d}){=C \exp(-\tilde\beta N_\eps^{1/d})}\qquad\forall k\in\{1,\dots,d\}.\qedhere
  \end{equation*}
\end{proof}

\section{Proofs of Sec.~\ref{sec:relu}}

\subsection{Thm.~\ref{THM:RELU}}\label{app:relu}
\begin{proof}[Proof of Thm.~\ref{THM:RELU}]
  Fix $k\in\{1,\dots,d\}$. By \cite[Thm.~3.6]{OSZ19}, there exists
  $\beta>0$ such that for any $N\in\N$ there exists a ReLU NN
  $\tilde\Phi_{N,k}:[-1,1]^k\to\R$ such that
  $\norm[{W^{1,\infty}([-1,1]^k)}]{T_k-\tilde\Phi_{N,k}}\le C
  \exp(-\beta N^{1/(k+1)})$ and ${\rm size}(\tilde\Phi_{N,k})\le {N}$,
  ${\rm depth}(\tilde\Phi_{N,k})\le C\log(N) N^{1/({k+1})}$.

  We use Lemma \ref{LEMMA:RELU} to correct $\tilde\Phi_{N,k}$ and
  guarantee that it is a bijection of $[-1,1]^d$ onto itself. For
  $k\ge 2$ define $f_{-1}(\bsx)\dfn -1-\tilde\Phi_{N,k}(\bsx,-1)$ and
  $f_{1}(\bsx)\dfn 1-\tilde\Phi_{N,k}(\bsx,1)$ for
  $\bsx\in [-1,1]^{k-1}$. In case $k=1$ set
  $f_{-1}\dfn -1-\tilde\Phi_{N,1}(-1)\in\R$ and
  $f_{1}\dfn 1-\tilde\Phi_{N,1}(1)\in\R$.  With the notation from
  Lemma \ref{LEMMA:RELU} for $\bsx\in [-1,1]^{k-1}$ and
  $x_k\in [-1,1]$ set
  \begin{equation*}
    \Phi_{N,k}(\bsx,x_k)\dfn \tilde\Phi_{N,k}(\bsx,x_k)
    +g_{f_1}(\bsx,x_k)+g_{f_{-1}}(\bsx,-x_k).
  \end{equation*}
  Then by Lemma \ref{LEMMA:RELU} for any $\bsx\in [-1,1]^{k-1}$
  \begin{align*}
    \Phi_{N,k}(\bsx,1)&= \tilde\Phi_{N,k}(\bsx,1)
                        +g_{f_1}(\bsx,1)+g_{f_{-1}}(\bsx,-1)\nonumber\\
                      &= \tilde\Phi_{N,k}(\bsx,1)+f_1(\bsx)+0\nonumber\\
                      &= \tilde\Phi_{N,k}(\bsx,1)+1-\tilde\Phi_{N,k}(\bsx,1)\nonumber\\
                      &=1.
  \end{align*}
  Similarly $\Phi_{N,k}(\bsx,-1)=-1$. Clearly $\Phi_{N,k}$ is a ReLU
  NN, and with Lemma \ref{LEMMA:RELU}
  \begin{align*} {\rm size}(\Phi_{N,k})&\le C(1+{\rm
      size}(\tilde\Phi_{N,k})+{\rm size}(g_{f_1})
                                         +{\rm size}(g_{f_{-1}}))\nonumber\\
                                       &\le C(1+{\rm
                                         size}(\tilde\Phi_{N,k}))\le
                                         C(1+N)\le C N,
  \end{align*}
  for some suitable constant $C$ independent of $N\in\N$. Similarly
  ${\rm depth}(\Phi_{N,k})\le C N^{1/(1+k)}\log(N)$.

  By Lemma \ref{LEMMA:RELU},
  $\norm[{W^{1,\infty}([-1,1]^k)}]{g_{f_1}}\le C
  \norm[{W^{1,\infty}([-1,1]^k)}]{f_1}$ and the same inequality holds
  for {$f_{-1}$}. Hence
  \begin{align*}
    \norm[{W^{1,\infty}([-1,1]^k)}]{\Phi_{N,k}-T_k}
    &\le     \norm[{W^{1,\infty}([-1,1]^k)}]{\tilde\Phi_{N,k}-T_k}
      + C \norm[{W^{1,\infty}([-1,1]^k)}]{f_1}
      + C \norm[{W^{1,\infty}([-1,1]^k)}]{f_{-1}}\nonumber\\
    &\le     \norm[{W^{1,\infty}([-1,1]^k)}]{\tilde\Phi_{N,k}-T_k}
      + C \norm[{W^{1,\infty}([-1,1]^{k-1})}]{\tilde\Phi_{N,k}(\cdot,1)-1}\nonumber\\
    &\quad+ C \norm[{W^{1,\infty}([-1,1]^{k-1})}]{\tilde\Phi_{N,k}(\cdot,-1)+1}\nonumber\\
    & = \norm[{W^{1,\infty}([-1,1]^k)}]{\tilde\Phi_{N,k}-T_k}
      + C \norm[{W^{1,\infty}([-1,1]^{k-1})}]{\tilde\Phi_{N,k}(\cdot,1)-T_k(\cdot,1)}\nonumber\\
    &\quad+ C \norm[{W^{1,\infty}([-1,1]^{k-1})}]{\tilde\Phi_{N,k}(\cdot,-1)+T_k(\cdot,-1)}      \nonumber\\
    &\le C \norm[{W^{1,\infty}([-1,1]^k)}]{\tilde\Phi_{N,k}-T_k}.
  \end{align*}
  The last term is bounded by $C_0\exp(-\beta N^{1/(k+1)})$ by
  definition of $\tilde \Phi_{N,k}$, and where $C_0$ and $\beta$ are
  independent of $N\in\N$ {(but depend on $d$)}.
  Choosing $N$ large
  enough, it holds
  \begin{equation*}
    \inf_{\bsx\in [-1,1]^k}\partial_{k}\Phi_{N,k}(\bsx) \ge
    \frac{1}{2}\inf_{\bsx\in [-1,1]^k}\partial_{k}T_k(\bsx)>0.
  \end{equation*}
    Then
  for every $\bsx\in [-1,1]^{k-1}$,
  $\Phi_{N,k}(\bsx,\cdot):[-1,1]\to [-1,1]$ is continuous,
  monotonically increasing and satisfies $\Phi_{N,k}(\bsx,-1)=-1$
  $\Phi_{N,k}(\bsx,1)=1$ as shown above.

  In all, $\Phi_N=(\Phi_{N,k})_{k=1}^d:[-1,1]^d\to [-1,1]^d$ is
  monotone, triangular and satisfies the claimed {error bound}
  $\norm[{W^{1,\infty}([-1,1]^d)}]{\Phi_N-T}\le C\exp(-\beta
  N^{1/(d+1)})$ for some $C>0$, $\beta>0$ and all $N\in\N$. Moreover,
  ${\rm size}(\Phi_N)\le\sum_{k=1}^d {\rm size}(\Phi_{N,k}) \le C N$
  for a constant $C$ depending on $d$. Finally
  ${\rm depth}(\Phi_N)\le\max_{k\in\{1,\dots,d\}}{\rm
    depth}(\Phi_{N,k}) \le C N^{1/2}\log(N)$. To guarantee that all
  $\Phi_{N,k}$, $k=1,\dots,d$ have the same depth $CN^{1/2}\log(N)$ we
  can concatenate $\Phi_{N,k}$ (at most $CN^{1/2}\log(N)$ times) with
  the identity network $x=\varphi(x)-\varphi(-x)$. This does not
  change the size and error bounds.
\end{proof}

\subsection{Lemma \ref{LEMMA:RELU}}\label{app:relulem}
\begin{proof}[Proof of Lemma \ref{LEMMA:RELU}]
  Set $a\dfn \min_{\bsx\in [-1,1]^d}f(\bsx)$ and
  $b\dfn \max_{\bsx\in [-1,1]^d}f(\bsx)-a$. Define
  \begin{equation*}
    g_f(\bsx,t)\dfn \varphi\left(\frac{f(\bsx)-a}{b}
      +(t-1)\right)b+a\varphi(t).
  \end{equation*}
  Using the identity network $t=\varphi(t)-\varphi(-t)$, we may carry
  the value of $t$ from layer $0$ to layer $L={\rm depth}(f)$ with a
  network of size $C{\rm depth}(f)\le C{\rm size}(f)$. This implies
  that $g_f$ can be written as a ReLU NN satisfying the claimed size and
  depth bounds.

  The definition of $a$ and $b$ implies
  $0\le \frac{f(\bsx)-a}{b}\le 1$ for all $\bsx\in [-1,1]^d$. Hence
  $g_f(\bsx,1)=f(\bsx)$ and $g_f(\bsx,s)=0$ for all $s\in [-1,0]$.
  Thus $\frac{d}{dt}g_f(\bsx,t)=0$ for $t\le 0$ and either
  $\frac{d}{dt}g_f(\bsx,t)=a$ or $\frac{d}{dt}g_f(\bsx,t)=b+a$ for
  $t\in (0,1]$. Now $|a|\le\max_{\bsy\in [-1,1]^d}|f(\bsx)|$ and
  $|b+a|\le\max_{\bsy\in [-1,1]^d}|f(\bsx)|$ imply
  the bound on $|\frac{d}{dt}g_f(\bsx,t)|$. Next fix $t\in [-1,1]$.
  At all points where $\nabla_\bsx g_f(\bsx,t)$ is well-defined, it
  either holds $\nabla_\bsx g_f(\bsx,t)=\nabla f(\bsx)$ or
  $\nabla_\bsx g_f(\bsx,t)=0$, which concludes the proof.
\end{proof}

\section{Proofs of Sec.~\ref{SEC:MEASURES}}

{
\subsection{Lemma \ref{LEMMA:WINFTY}}
\begin{proof}[Proof of Lemma \ref{LEMMA:WINFTY}]
  First,
  \begin{equation*}
    \sup_{\bsx\in [-1,1]^d}\norm{S(\bsx)-\tilde S(\bsx)}=
    \sup_{\bsx\in [-1,1]^d}\norm{S(\tilde T(\bsx))-\tilde S(\tilde T(\bsx))}
    =
    \sup_{\bsx\in [-1,1]^d}\norm{S(\tilde T(\bsx))-S(T(\bsx))},
  \end{equation*}
  which is bounded by the Lipschitz constant of $S$ times
  $\norm[{L^\infty([-1,1]^d)}]{T-\tilde T}$.

  For the second bound, let first $A$, $B\in\R^{d\times d}$. Then
  \begin{equation*}
    \norm{A-B}=\norm{A(B^{-1}-A^{-1})B}
    \le \norm{A}\norm{B}\norm{A^{-1}-B^{-1}}.
  \end{equation*}
  Since $S\circ T$ is the identity we have $dS(\bsx)=dT(S(\bsx))^{-1}$
  and similarly $d\tilde S(\tilde T(\bsx))=d\tilde T(\bsx)^{-1}$. Thus
  \begin{align*}
    &\norm[{L^\infty([-1,1]^d)}]{dS-d\tilde S}\nonumber\\
    &\qquad\le \norm[{L^\infty([-1,1]^d)}]{dS} \norm[{L^\infty([-1,1]^d)}]{d\tilde S}
      \esssup_{\bsx\in [-1,1]^d}\norm{dT(S(\bsx))-d\tilde T(\tilde S(\bsx))}.
  \end{align*}
  The essential supremum is bounded by
  \begin{align*}
    &%
      \esssup_{\bsx\in [-1,1]^d}\big(\norm{dT(S(\bsx))-d T(\tilde S(\bsx))} + \norm{dT(\tilde S(\bsx))-d\tilde T(\tilde S(\bsx))}\big)\nonumber\\
    &\qquad\qquad\le L_{dT}\norm[{L^\infty([-1,1]^d)}]{S-\tilde S}+\norm[{L^\infty([-1,1]^d)}]{dT-d\tilde T}.
  \end{align*}
  Together with the first statement this concludes the proof.
\end{proof}
}
\subsection{Thm.~\ref{THM:HTVKL}}

{We'll start by bounding $|\det dS-\det d\tilde S|$.  To this end
  we need the following lemma.}
\begin{lemma}\label{LEMMA:DET}%
  {Let $(a_j)_{j=1}^d$, $(b_j)_{j=1}^d\subseteq (0,\infty)$.
  Then with
  $a_{\rm min}=\min_{j}a_j$, $b_{\rm min}=\min_{j}b_j$
  and
  \begin{equation*}
    C\dfn\frac{\exp\left(\sum_{j=1}^d\frac{|a_j-b_j|}{a_{\rm min}}\right)
      \prod_{j=1}^d a_j}{\min\{a_{\rm min},b_{\rm min}\}}
  \end{equation*}
  holds
  \begin{equation}\label{eq:proddiff}
    \left|\prod_{j=1}^d a_j-\prod_{j=1}^d b_j\right|
    \le C \sum_{j=1}^d|a_j-b_j|.
  \end{equation}}
\end{lemma}
\begin{proof}
  Since $\log(1+x)\le x$ for all $x\ge 0$,
  \begin{align}\label{eq:logajepsjdiff}
    |\log(a_j)-\log(b_j)|
    &=|\log(\max\{a_j,b_j\})-\log(\min\{a_j,b_j\})|\nonumber\\
    &=\log\left( 1+\frac{\max\{a_j,b_j\}-\min\{a_j,b_j\}}{\min\{a_j,b_j\}}\right)\nonumber\\
    &\le
      \frac{|a_j-b_j|}{\min\{a_j,b_j\}}\qquad\forall {j\le d}.
  \end{align}
  For every {$x\in\R$, $\exp:(-\infty,x]\to\R$} has Lipschitz
  constant $\exp(x)$. Thus, since
  {$\max\{a_j,b_j\}\le a_j+|a_j-b_j|$},
  \begin{align}\label{eq:proddiffajepsj}
    \left|\prod_{j=1}^d a_j-\prod_{j=1}^d b_j\right|
    &= {\left|\exp\left(\sum_{j=1}^d\log(a_j)\right)-\exp\left(\sum_{j=1}^d\log(b_j)\right)\right|}\nonumber\\
    &\le \exp\left(\sum_{j=1}^d\log(a_j+|a_j-b_j|)\right)\sum_{j=1}^d \frac{|a_j-b_j|}{\min\{a_j,b_j\}}.
  \end{align}
  {Set $a_{\rm min}=\min_{j\le d}a_j>0$.}
  Using again $\log(1+x)\le x$ so that
  \begin{equation*}
    \log(a_j+|a_j-b_j|)=\log\left(a_j\left(1+\frac{|a_j-b_j|}{a_j}\right)\right)
    \le\log(a_j)+\frac{|a_j-b_j|}{a_{\rm min}}
  \end{equation*}
  we get
  {
  \begin{align*}
\exp\left(\sum_{j=1}^d\log(a_j+|a_j-b_j|)\right)
    &\le \exp\left(\sum_{j=1}^d\Bigg(\log(a_j)+\frac{|a_j-b_j|}{a_{\rm min}}\Bigg)\right)\nonumber\\
    &\le \exp\left(\sum_{j=1}^d\frac{|a_j-b_j|}{a_{\rm min}}\right)
      \prod_{j=1}^da_j<\infty.\qedhere
  \end{align*}}
\end{proof}

Let
\begin{equation}\label{eq:sminmax}
  S_{\rm min}\dfn \min_{j=1,\dots,d}\min_{\bsx\in [-1,1]^j}\partial_{j}S_j(\bsx),\qquad
  S_{\rm max}\dfn \max_{\bsx\in [-1,1]^d}|\det dS(\bsx)|.
\end{equation}
Then
Lemma \ref{LEMMA:DET} implies for any triangular
$\tilde S\in C^1([-1,1]^d;[-1,1]^d)$ (i.e.,
$\det d\tilde S=\prod_{j=1}^d\partial_{j}\tilde S_j$)
\begin{equation}\label{eq:detdist}
  |\det dS(\bsx)-\det d\tilde S(\bsx)|
  \le \frac{\exp\left(\frac{\sum_{j=1}^d|\partial_{j}S_j(\bsx_{[j]})-\partial_{j}\tilde S_j(\bsx_{[j]})|}{S_{\rm min}}\right)S_{\rm max}}{\min\{S_{\rm min},\tilde S_{\rm min}\}}
  \sum_{j=1}^d|\partial_{j}S_j(\bsx_{[j]})-\partial_{j}\tilde S_j(\bsx_{[j]})|.
\end{equation}

{Thus we can show the following:}
\begin{lemma}\label{LEMMA:FLIP}
  Let $f_\measi$, $f_\measii\in C^1([-1,1]^d;\R)$ be two
  positive probability
  densities on $[-1,1]^d$ (w.r.t.\ $\mu$), such that $f_\measi$ has
  Lipschitz constant $L$. Let $S:[-1,1]^d\to [-1,1]^d$
  {satisfy $S^\sharp \measi=\measii$}, and let
  $\tilde S:[-1,1]^d\to [-1,1]^d$ be triangular and monotone with
  $\norm[{W^{1,\infty}([-1,1]^d)}]{S-\tilde S}\le 1$.
  
  Then there exists $C>0$ solely depending on $d$,
  {and the positive constants}
  $\tilde S_{\rm min}$, $S_{\rm min}$, $S_{\rm max}$ in
  \eqref{eq:sminmax}, such that
  \begin{equation}\label{eq:diffdensities}
    \norm[L^\infty({[-1,1]^d})]{f_\measii - f_\measi\circ \tilde S\det d\tilde S}\le C (\norm[L^\infty({[-1,1]^d})]{f_\measi}+L) \norm[W^{1,\infty}({[-1,1]^d})]{S-\tilde S},
  \end{equation}
  and
  \begin{equation}\label{eq:diffsqrtdensities}
    \normc[L^\infty({[-1,1]^d})]{\sqrt{f_\measii} - \sqrt{f_\measi\circ \tilde S\det d\tilde S}}\le C \frac{\norm[L^\infty({[-1,1]^d})]{f_\measi}+L}{\inf_{\bsx\in [-1,1]^d} \sqrt{f_\measii(\bsx)}} \norm[W^{1,\infty}({[-1,1]^d})]{S-\tilde S}.
  \end{equation}  
\end{lemma}

\begin{proof}%
  For $\bsx\in[-1,1]^d$
  \begin{equation}\label{eq:diffdet0}
    |f_\measi(S(\bsx))-f_\measi(\tilde S(\bsx))|\le
    L\norm[L^\infty({[-1,1]^d})]{S-\tilde S}.
  \end{equation}
  Next, \eqref{eq:detdist} yields
  \begin{equation}\label{eq:diffdet}
    \norm[L^\infty({[-1,1]^d})]{\det dS-\det d\tilde S}\le
    C \sum_{j=1}^d|\partial_{j}S_j(\bsx_{j})-\partial_{j}\tilde S_j(\bsx_{j})|
    {\le C \norm[{W^{1,\infty}([-1,1]^d)}]{S-\tilde S}.}
  \end{equation}
  Since $S^\sharp\measi=\measii$, it holds
  $f_\measi\circ S \det dS=f_\measii$ and hence
  \begin{align*}
    \norm[L^\infty({[-1,1]^d})]{f_\measii - f_\measi\circ \tilde S\det d\tilde S}%
      &\le\norm[{L^\infty([-1,1]^d)}]{\det d S}
        \norm[{L^\infty([-1,1]^d)}]{f_\measi\circ S-f_\measi\circ\tilde S}\nonumber\\
                                                                                   &\quad +\norm[{L^\infty([-1,1]^d)}]{f_\measi\circ \tilde S}\norm[{L^\infty([-1,1]^d)}]{\det dS-\det d\tilde S}.
  \end{align*}
  Together with \eqref{eq:diffdet0} and \eqref{eq:diffdet} we obtain
  \eqref{eq:diffdensities}.

  For any $x$, $y\ge 0$ it holds
  $|x-y|=|x^{1/2}-y^{1/2}||x^{1/2}+y^{1/2}|$.
  Thus for
  $\bsx\in [-1,1]^d$
  \begin{equation*}
    |f_\measii(\bsx)^{1/2} - (f_\measi(\tilde S(\bsx))\det d\tilde S(\bsx))^{1/2}|\le \frac{|f_\measii(\bsx) - {f_\measi(\tilde S(\bsx))\det d\tilde S(\bsx)}|}{f_\measii(\bsx)^{1/2}},
  \end{equation*}
  which shows \eqref{eq:diffsqrtdensities}.
\end{proof}

{
\begin{lemma}\label{lemma:klbound}
  For all $x$, $y>0$ holds
  $x|\log(x)-\log(y)|\le (1+\frac{|x-y|}{y})|x-y|$.
\end{lemma}
\begin{proof}
For any $a>0$ the map $\log:[a,\infty)\to\R$ has Lipschitz constant
$\frac{1}{a}$ so that
    \begin{equation*}
      x|\log(x)-\log(y)|\le \frac{x}{\min\{x,y\}}|x-y|.
    \end{equation*}
    If $x>y$
    \begin{equation}\label{eq:xlogxlogy}
      x|\log(x)-\log(y)|\le \frac{x}{y}|x-y|
      = \frac{y+|x-y|}{y}|x-y|
      = \left(1+\frac{|x-y|}{y} \right)|x-y|.
    \end{equation}    
    If $x\le y$ then $\frac{x}{\min\{x,y\}}=1\le 1+\frac{|x-y|}{y}$ so
    that the claimed bound also holds.
\end{proof}
}

\begin{proof}[Proof of Thm.~\ref{THM:HTVKL}]
  The assumptions on $S$ and $T$ imply
  $\det dT(\bsx) = \frac{1}{\det dS(T(\bsx))}<\infty$ for all
  $\bsx\in [-1,1]^d$ so that $S_{\rm min}>0$.  Denote
  $\measii\dfn T_\sharp\measi$ with pushforward density
  $f_\measii(\bsx)\dfn f_\measi(S(\bsx))\det dS(\bsx)$.  We have
  \begin{equation*}
    \essinf_{x\in [-1,1]^d}f_\measii(\bsx)\ge
    \essinf_{x\in [-1,1]^d}f_\measi(\bsx)
    S_{\rm min}(\bsx)^d>0.
  \end{equation*}

  By Lemma \ref{LEMMA:WINFTY}
  \begin{equation}\label{eq:S-SepsT-Teps}
    \norm[{W^{1,\infty}([-1,1]^d)}]{S-\tilde S}
    \le (1+{\rm Lip}(S){\rm Lip}(dT))
      \norm[{L^\infty([-1,1]^d)}]{dS}
      \norm[{L^\infty([-1,1]^d)}]{d\tilde S}
          \norm[{W^{1,\infty}([-1,1]^d)}]{T-\tilde T}.
  \end{equation}
    Since $f_\measi:[-1,1]^d\to\R$ is
    Lipschitz continuous, we may apply Lemma
    \ref{LEMMA:FLIP}, and obtain together with \eqref{eq:S-SepsT-Teps}
    for $\alpha\in\{\frac{1}{2},1\}$
    \begin{equation}\label{eq:diffdensitiesKL}
      \norm[L^\infty({[-1,1]^d})]{f_\measii^\alpha - (f_\measi\circ \tilde S\det d\tilde S)^{\alpha}}\le C \norm[{W^{1,\infty}([-1,1]^d)}]{S-\tilde S}
    \end{equation}
    for a suitable constant $C$ depending on $S_{\rm min}$,
    $S_{\rm max}$, $\tilde S_{\rm min}$,
    $\norm[{L^\infty([-1,1]^d)}]{d\tilde S}$,
    $\norm[{L^\infty([-1,1]^d)}]{f_\measi}$,
    $\essinf_{\bsx\in [-1,1]^d}f_\measi(\bsx)$,
    $\essinf_{\bsx\in [-1,1]^d}f_\measii(\bsx)$,
    ${\rm Lip}(f_\measi)$,
    ${\rm Lip}(S)$ and ${\rm Lip}(dT)$.  Hence $C$ depends on
    $\tilde T$ only through $\tilde S_{\rm min}\ge \tau_0$ and
    $\norm[{L^\infty([-1,1]^d)}]{d\tilde S}\le \frac{1}{\tau_0}$.
    This implies \eqref{eq:measdiffd}
    for the Hellinger and total variation distance.

    Next we show \eqref{eq:measdiffdKL}.
    It holds $\inf_{\bsx\in [-1,1]^d}f_\measii(\bsx)\in (0,1]$ since
    $f_\measii$ is a positive probability density. We obtain
    using Lemma \ref{lemma:klbound}
    \begin{align*}
      {\rm KL}((\tilde T)_\sharp \measi\|\measii)
      &\le \int_{[-1,1]^d}|f_\measi(\tilde S(\bsx))\det d\tilde S(\bsx)||
        \log(f_\measi(\tilde S(\bsx))\det d\tilde S(\bsx))-\log(f_\measii(\bsx))|\dd\mu(\bsx)\nonumber\\
      &\le \left(1+\frac{\norm[L^\infty{([-1,1]^d)}]{f_\measii-f_\measi\circ \tilde S\det d\tilde S}}{\inf_{\bsx\in [-1,1]^d}f_\measii(\bsx)}\right) {\norm[L^\infty{([-1,1]^d)}]{f_\measii-f_\measi\circ \tilde S\det d\tilde S}}.
    \end{align*}
    Finally \eqref{eq:diffdensitiesKL} with $\alpha=1$ implies
    \eqref{eq:measdiffd} for the KL divergence, which concludes the
    proof.
\end{proof}

\subsection{Prop.~\ref{PROP:MEASCONVD}}
\begin{proof}[Proof of Prop.~\ref{PROP:MEASCONVD}]
  By Thm.~\ref{THM:POLYD}
  \begin{equation}\label{eq:T-Tepsprop}
    \norm[{W^{1,\infty}([-1,1]^k)}]{T-\tilde T_\eps}\le C \exp(-\tilde\beta N_\eps^{1/d}).
  \end{equation}

  For the Wasserstein distance,
  \cite[Theorem 2]{MR4120535} (cp.~\eqref{eq:sagiv})
  implies together with \eqref{eq:T-Tepsprop}
implies
  \begin{equation*}
    W_p((\tilde T)_\sharp\measi,\measii)
    =    W_p((\tilde T)_\sharp\measi,T_\sharp \measi)
    \le \norm[{L^\infty([-1,1]^d)}]{T-\tilde T}
    \le C \exp(-\tilde\beta N^{1/d}).
  \end{equation*}
  
  As shown in Sec.~\ref{SEC:ANT}, $T$ and $S=T^{-1}$ are analytic, and
  in particular $T$, $S\in W^{2,\infty}([-1,1]^d)$.  Moreover $T$ and
  $\tilde T_\eps:[-1,1]^d\to [-1,1]^d$ are bijective by
  Thm.~\ref{THM:POLYD}.

  We wish to apply Thm.~\ref{THM:HTVKL}, which together with
  \eqref{eq:T-Tepsprop} concludes the proof.  To do so, it remains to
  show that we can find $\tau_0>0$ such that for all $\eps>0$ holds
  (cp.~\eqref{eq:sminmax})
  \begin{equation}\label{eq:tau0}
    \tilde S_{\eps,{\rm min}}>\tau_0\qquad\text{and}
    \qquad \norm[{L^\infty([-1,1]^d)}]{d\tilde S_\eps}\le \frac{1}{\tau_0}.
  \end{equation}
  
  Since $f_\measi$ and $f_\measii$ are uniformly positive and
  $f_\measii\circ T\det dT=f_\measi$,
  $\essinf_{\bsx\in [-1,1]^d}\det dT(\bsx)>0$. By
  \eqref{eq:T-Tepsprop} we have
  $\lim_{\eps\to 0}\essinf_{\bsx\in [-1,1]^d}\det d\tilde T_\eps>0$.
  Together with
  $\lim_{\eps\to 0}\norm[{L^\infty([-1,1]^d)}]{d\tilde
    T_\eps}<\infty$, this implies with
  $\tilde S_\eps=\tilde T_\eps^{-1}$ that
  $\lim_{\eps\to 0}\norm[{L^\infty([-1,1]^d)}]{d\tilde S_\eps}<\infty$.
  This can be seen by using $A^{-1}=\frac{1}{\det A} A^{\rm adj}$ for
  a matrix $A\in\R^{d\times d}$, where $A^{\rm adj}$ denotes the
  adjugate matrix, which is equal to the transpose of the cofactor
  matrix, for which each entry can be bounded in terms of
  $\norm[]{A}$. By a similar argument
  $\lim_{\eps\to 0}\tilde S_{\eps,\rm min}>0$.
  Hence \eqref{eq:tau0} holds for some $\tau_0>0$
  and all $\eps\le\eps_0$ for some $\eps_0>0$.
  Finally note that $\eps\in [\eps_0,\infty)$
  corresponds to only finitely many choices of sets $(\Lambda_{k,\eps})_{k=1}^d$
  in Thm.~\ref{THM:POLYD}. Hence, by decreasing $\tau_0>0$
  if necessary, \eqref{eq:tau0} holds for all $\eps>0$.
\end{proof}

\section{Proofs of Sec.~\ref{SEC:EXAMPLE}}
\subsection{Lemma \ref{LEMMA:EXAMPLE}}
\begin{lemma}\label{LEMMA:TAU}
  Let $\bsb=(b_j)_{j=1}^d\subset (0,\infty)$,
  $C_0\dfn \max\{1,\norm[\ell^\infty]{\bsb}\}$ and $\gamma>0$. There
  exists $(\kappa_j)_{j=1}^d\in (0,1]$ monotonically increasing and
  $\tau>0$ (both depending on $\gamma$, $\tau$, $d$, $C_0$ but
  otherwise independent of $\bsb$), such that with
  $\delta_j\dfn \kappa_j+\frac{\tau}{b_j}$ holds
  $\sum_{j=1}^kb_j\delta_j\le\gamma \delta_{k+1}$ for all
  $k\in\{1,\dots,d-1\}$, and $\sum_{j=1}^d b_j\delta_j\le \gamma$.
\end{lemma}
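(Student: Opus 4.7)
Using $b_j\delta_j = b_j\kappa_j + \tau$ and $b_j \le C_0$, together with the trivial bound $\gamma\delta_{k+1} \ge \gamma\kappa_{k+1}$ (dropping the positive term $\gamma\tau/b_{k+1}$), the two required inequalities follow once I find a monotonically increasing sequence $(\kappa_j)_{j=1}^d \subset (0,1]$ and $\tau > 0$ satisfying
\begin{equation*}
    k\tau + C_0\sum_{j=1}^k \kappa_j \le \gamma\kappa_{k+1}\qquad\forall k \in \{1,\dots,d-1\},\qquad d\tau + C_0\sum_{j=1}^d \kappa_j \le \gamma.
\end{equation*}
These reduced conditions no longer involve $\bsb$ except through $C_0$, so the resulting parameters will automatically have the claimed independence from $\bsb$.

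I will make the geometric ansatz $\kappa_j \dfn A\,\epsilon^{d-j}$ with two free parameters $A \in (0,1]$ and $\epsilon \in (0,1)$. Since $\epsilon<1$ and $A\le 1$, such a sequence is automatically increasing in $j$ and lies in $(0,1]$. Using the elementary estimate $\sum_{j=1}^k \epsilon^{d-j} \le \epsilon^{d-k}/(1-\epsilon)$, the $k$th inequality above becomes
\begin{equation*}
  k\tau \le A\,\epsilon^{d-k-1}\bigl[\gamma - C_0\epsilon/(1-\epsilon)\bigr],
\end{equation*}
and the last inequality becomes $d\tau + C_0 A/(1-\epsilon) \le \gamma$.

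The parameters are then chosen in sequence. First I fix $\epsilon \in (0,1)$ so small that $C_0\epsilon/(1-\epsilon) \le \gamma/2$, e.g.\ $\epsilon \dfn \gamma/(2C_0+\gamma)$. Next I pick $A \dfn \min\{1,\gamma(1-\epsilon)/(2C_0)\}$, which guarantees $C_0 A/(1-\epsilon) \le \gamma/2$. Finally I choose $\tau > 0$ so small that $k\tau \le A\epsilon^{d-k-1}\gamma/2$ for every $k \in \{1,\dots,d-1\}$ (the binding case is $k=1$, where $\epsilon^{d-k-1}$ is smallest) and $d\tau \le \gamma/2$; an explicit valid choice is $\tau \dfn \min\{A\epsilon^{d-2}\gamma/(2d),\;\gamma/(2d)\}$, with the first argument omitted in the edge case $d=1$. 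All three parameters depend only on $\gamma$, $d$, $C_0$, as required.

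There is no genuine conceptual obstacle here; the statement is a parameter-balancing exercise and the main observation is that the additive split $b_j\delta_j = b_j\kappa_j + \tau$ decouples the $\tau$-contribution (which grows only linearly in $k$) from the $\kappa_j$-contribution (which one makes grow geometrically), producing the necessary slack in the recursion. The only care required is the order of selection, since $A$ depends on $\epsilon$ and $\tau$ depends on both.
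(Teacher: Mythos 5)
Your proof is correct and uses essentially the same idea as the paper: reduce to inequalities involving only $\kappa_{k+1}$ (since $\delta_{k+1}\ge\kappa_{k+1}$), take a geometrically decaying $\kappa_j$, and pick $\tau$ small enough that its linear accumulation is dominated by the geometric gain. The paper makes the one-parameter choice $\kappa_j=(\tilde\gamma/4)^{d+1-j}$, $\tau=(\tilde\gamma/4)^{d+1}$ with $\tilde\gamma=\min\{\gamma,1\}/C_0$, while you introduce the two free parameters $A,\epsilon$ and pin them down sequentially; this is a superficial difference in bookkeeping, not in substance.
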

\begin{proof}
  Set
  $\tilde\gamma\dfn\frac{\min\{\gamma,1\}}{C_0}\le 1$,
  $\kappa_j\dfn (\frac{\tilde\gamma}{4})^{d+1-j}$ and
  $\tau\dfn (\frac{\tilde\gamma}{4})^{d+1}$. For $k\in\{1,\dots,d\}$
  \begin{equation}\label{eq:sumbjdeltaj}
    \sum_{j=1}^k b_j\delta_j = \sum_{j=1}^k b_j\Big(\kappa_j+\frac{\tau}{b_j}\Big)
    \le C_0 \sum_{j=1}^k \Big(\frac{\tilde\gamma}{4}\Big)^{d+1-j}  + \sum_{j=1}^k \tau
    \le C_0 \frac{(\frac{\tilde\gamma}{4})^{d+1-k}}{1-\frac{\tilde\gamma}{4}}+k\tau
    \le C_02\kappa_k+ k\tau.
  \end{equation}
  For $k<d$ we show
  $2\kappa_k\le
  \frac{\tilde\gamma}{2}(\kappa_{k+1}+\frac{\tau}{{b_{k+1}}})=\frac{\tilde\gamma}{2}\delta_{k+1}$
  and
  $k\tau\le
  \frac{\tilde\gamma}{2}(\kappa_{k+1}+\frac{\tau}{b_{k+1}})=\frac{\tilde\gamma}{2}\delta_{k+1}$,
  which then implies
  $\sum_{j=1}^k b_j\delta_j\le
  C_0\tilde\gamma\delta_{k+1}\le\gamma\delta_{k+1}$.  First
  \begin{equation*}
    2\kappa_k = 2 \left(\frac{\tilde\gamma}{4}\right)^{d+1-k}
    =2 \frac{\tilde\gamma}{4} \left(\frac{\tilde\gamma}{4}\right)^{d-k}
    = \frac{\tilde\gamma}{2}\left(\frac{\tilde\gamma}{4}\right)^{d+1-(k+1)}
    = \frac{\tilde\gamma}{2}\kappa_{k+1}\le
    \frac{\tilde\gamma}{2}\Big(\kappa_{k+1}+\frac{\tau}{b_{k+1}}\Big).
\end{equation*}
Furthermore
\begin{equation*}
  k\tau = k\Big(\frac{\tilde\gamma}{4}\Big)^{d+1}\le \Big(\frac{\tilde\gamma}{4}\Big)^{d+1-k}=\frac{\tilde\gamma}{4}
  \Big(\frac{\tilde\gamma}{4}\Big)^{d+1-(k+1)}
  \le \frac{\tilde\gamma}{2}\kappa_{k+1}.
\end{equation*}
Here we used $k\alpha^{d+1}\le \alpha^{d+1-k}$, which is equivalent to
$k\alpha^k\le 1$: due to $(x\alpha^x)'=\alpha^x(\log(\alpha)x+1)$,
this holds in particular for any $k=1,\dots,d$ and
$\alpha\in(0,\exp(-1)]$ since then $\log(\alpha)x+1\le 0$ for all
$x\ge 1$ and $1\cdot \alpha^1\le 1$. The claim follows with
$\alpha=\frac{\tilde\gamma}{4}\le \frac{1}{4}\le\exp(-1)$.

Finally, $\sum_{j=1}^db_j\delta_j\le \gamma$ follows by
\eqref{eq:sumbjdeltaj} with $k=d$:
$2\kappa_d=2\frac{\tilde\gamma}{4}=\frac{\tilde\gamma}{2}$ and
$d\tau = d(\frac{\tilde\gamma}{4})^{d+1}\le\frac{\tilde\gamma}{4}$ (by
the same argument as above), and thus
$C_02\kappa_d+d\tau\le C_0\frac{\tilde\gamma}{2}+\frac{\tilde\gamma}{4}\le C_0\frac{3\tilde\gamma}{4}\le\gamma$.
\end{proof}
\begin{proof}[Proof of Lemma \ref{LEMMA:EXAMPLE}]
  The solution operator $\scr{u}$ associated to \eqref{eq:pde} is
  complex Fr\'echet differentiable as a mapping from {an open
    subset of $L^\infty(\D;\C)$ containing}
  $\set{a\in L^\infty(\D;\R)}{\essinf_{x\in\D}a(x)>0}$ to the complex
  Banach space $H_0^1(\D;\C)$; see, e.g., \cite[Example
  1.2.39]{JZdiss}.  By assumption
  $\inf_{\bsy\in [-1,1]^d}\essinf_{x\in\D}a(\bsy,x)>0$. Thus there
  exists $r>0$ such that with
  \begin{equation}\label{eq:Sr}
    S\dfn \setc{a+b}{a=\sum_{j=1}^dy_j\psi_j,~\bsy\in [-1,1]^d,~\norm[{L^\infty(\D;\C)}]{b}\le r}
  \end{equation}
  the map $\scr{u}:S\to H_0^1(\D;\C)$ is complex (Fr\'echet)
  differentiable. Then also
  \begin{equation*}
    {\scr{f}_\measii(a)\dfn \frac{1}{Z}\exp\left(-\frac{(A(\scr{u}(a))-\bsvarsigma)^\top\Sigma^{-1}(A(\scr{u}(a))-\bsvarsigma)}{2}\right)}
  \end{equation*}
  (see Sec.~\ref{sec:priorposterior} for the notation) is complex
  differentiable from $S\to \C$. %
  Here we
  used that the bounded linear operator $A:H_0^1(\D;\R)\to\R$ allows a
  natural (bounded) linear extension $A:H_0^1(\D;\C)\to\C$. Using
  compactness of $[-1,1]^d$, by choosing $r>0$ small enough it holds
  \begin{equation}\label{eq:exampleML}
    M\dfn {\inf_{a\in S}|\scr{f}_\measii(a)|\le \sup_{a\in S}|\scr{f}_\measii(a)|} \dfnn L<\infty,
  \end{equation}
  and $\scr{f}_\measii:S\to \C$ is Lipschitz continuous with
  {some Lipschitz constant $m>0$.}

  With $C_6=C_6(M,L)$ as in Thm.~\ref{THM:COR:DINN}, set
  \begin{equation*}
    \gamma\dfn \min\Big\{r,\frac{C_6}{m}\Big\}.
  \end{equation*}
  Let $b_j\dfn \norm[L^\infty(\D)]{\psi_j}$ and
  $\delta_j=\kappa_j+\frac{\tau}{b_j}$ as in Lemma \ref{LEMMA:TAU}.

  Fix $\bsy\in [-1,1]^d$ and $\bsz\in\cB_{\bsdelta}$. Then by Lemma
  \ref{LEMMA:TAU}
  \begin{equation}\label{eq:djbjsumd}
    \normc[L^\infty(\D)]{\sum_{j=1}^d(y_j+z_j)\psi_j-\sum_{j=1}^dy_j\psi_j}
    \le \sum_{j=1}^d\delta_jb_j\le\gamma=\min\Big\{r,\frac{C_6}{m}\Big\}
  \end{equation}
  and similarly for $k=1,\dots,d-1$
  \begin{equation}\label{eq:djbjsumk}
    \normc[L^\infty(\D)]{\left(\sum_{j=1}^dy_j\psi_j{+\sum_{j=1}^kz_j\psi_j}\right)-\sum_{j=1}^dy_j\psi_j}
    \le \sum_{j=1}^k\delta_jb_j\le\gamma\delta_{k+1}
    \le \frac{C_6}{m}\delta_{k+1}.
  \end{equation}
  
  We verify Assumption \ref{ass:finite} {for
  $\bsy\mapsto f_\measii(\bsy)=\scr{f}_\measii(\sum_{j=1}^d
    y_j\psi_j)$:}
  \begin{enumerate}[label=(\alph*)]
  \item By definition
    {$f_\measii$} is a positive probability density on
    $[-1,1]^d$. As mentioned $\scr{f}_\measii:S\to\C$ is
    differentiable, and \eqref{eq:djbjsumd} shows
    {$\norm[L^\infty(\D)]{\sum_{j=1}^dz_j\psi_j}\le r$ for all
      $\bsz\in\cB_\bsdelta$, so that
      $\sum_{j=1}^d(y_j+z_j)\psi_j\in S$ for all $\bsy\in [-1,1]^d$.
      Hence $f_\measii(\bsx)=\scr{f}_\measii(\sum_{j=1}^dx_j\psi_j)$
      is differentiable for $\bsx\in {\cB_\bsdelta([-1,1])}$.}
  \item $M\le |f_\measii(\bsy+\bsz)|\le L$ for all $\bsy\in [-1,1]^d$
    and $\bsz\in\cB_\bsdelta$ according to \eqref{eq:exampleML},
    \eqref{eq:Sr} and \eqref{eq:djbjsumd}.
  \item $\scr{f}_\measii:S\to\C$ has Lipschitz constant $m$.  By
    \eqref{eq:djbjsumd} for any $\bsy\in [-1,1]^d$
    \begin{equation*}
      \sup_{\bsz\in\cB_\bsdelta}|f_\measii(\bsy+\bsz)-f_\measii(\bsy)|=
      \sup_{\bsz\in\cB_\bsdelta}\left|\scr{f}_\measii\left(\sum_{j=1}^d(y_j+z_j)\psi_j\right)-\scr{f}_\measii\left(\sum_{j=1}^dy_j\psi_j\right)\right|
      \le m\frac{C_6}{m}=C_6.
    \end{equation*}    
  \item Similarly, by
    \eqref{eq:djbjsumk} for any $\bsy\in [-1,1]^d$ and any
    $k\in\{1,\dots,d-1\}$
    \begin{equation*}
      \sup_{\bsz\in\cB_{\bsdelta_{[k]}}\times\{0\}^{d-k}}|f_\measii(\bsy+\bsz)-f_\measii(\bsy)|
      \le m\frac{C_6}{m}\delta_{k+1}=C_6\delta_{k+1}.
    \end{equation*}
  \end{enumerate}
\end{proof}

\bibliographystyle{abbrv} \bibliography{main}

\def\cprime{$'$} \def\cprime{$'$}
\begin{thebibliography}{10}

\bibitem{berg2018sylvester}
R.~v.~d. Berg, L.~Hasenclever, J.~M. Tomczak, and M.~Welling.
\newblock Sylvester normalizing flows for variational inference.
\newblock {\em arXiv preprint arXiv:1803.05649}, 2018.

\bibitem{MR3810512}
A.~Beskos, A.~Jasra, K.~Law, Y.~Marzouk, and Y.~Zhou.
\newblock Multilevel sequential {M}onte {C}arlo with dimension-independent
  likelihood-informed proposals.
\newblock {\em SIAM/ASA J. Uncertain. Quantif.}, 6(2):762--786, 2018.

\bibitem{MR2566594}
M.~Bieri, R.~Andreev, and C.~Schwab.
\newblock Sparse tensor discretization of elliptic {SPDE}s.
\newblock {\em SIAM J. Sci. Comput.}, 31(6):4281--4304, 2009/10.

\bibitem{transportmapslibrary}
D.~Bigoni.
\newblock {TransportMaps} library, 2016--2020.
\newblock \url{http://transportmaps.mit.edu}.

\bibitem{doi:10.1080/01621459.2017.1285773}
D.~M. Blei, A.~Kucukelbir, and J.~D. McAuliffe.
\newblock Variational inference: A review for statisticians.
\newblock {\em Journal of the American Statistical Association},
  112(518):859--877, 2017.

\bibitem{bogachevtri}
V.~I. Bogachev, A.~V. Kolesnikov, and K.~V. Medvedev.
\newblock Triangular transformations of measures.
\newblock {\em Mat. Sb.}, 196(3):3--30, 2005.

\bibitem{BonitoEtAl2019}
A.~Bonito, R.~DeVore, D.~Guignard, P.~Jantsch, and G.~Petrova.
\newblock Polynomial approximation of anisotropic analytic functions of several
  variables, 2019.
\newblock ArXiv:1904.12105.

\bibitem{brennan2020greedy}
M.~Brennan, D.~Bigoni, O.~Zahm, A.~Spantini, and Y.~Marzouk.
\newblock Greedy inference with structure-exploiting lazy maps.
\newblock {\em Advances in Neural Information Processing Systems}, 33, 2020.

\bibitem{MR3939383}
A.~Buchholz and N.~Chopin.
\newblock Improving approximate {B}ayesian computation via quasi-{M}onte
  {C}arlo.
\newblock {\em J. Comput. Graph. Statist.}, 28(1):205--219, 2019.

\bibitem{MR3706335}
P.~Chen and C.~Schwab.
\newblock Adaptive sparse grid model order reduction for fast {B}ayesian
  estimation and inversion.
\newblock In {\em Sparse grids and applications---{S}tuttgart 2014}, volume 109
  of {\em Lect. Notes Comput. Sci. Eng.}, pages 1--27. Springer, Cham, 2016.

\bibitem{cheney1966introduction}
E.~Cheney.
\newblock {\em Introduction to approximation theory}.
\newblock International series in pure and applied mathematics. McGraw-Hill
  Book Co., 1966.

\bibitem{chkifa}
A.~Chkifa.
\newblock Sparse polynomial methods in high dimension: application to
  parametric {PDE}, 2014.
\newblock Ph.D.\ thesis, UPMC, Universit\'e Paris 06, Paris, France.

\bibitem{CCS13}
A.~Chkifa, A.~Cohen, and C.~Schwab.
\newblock High-dimensional adaptive sparse polynomial interpolation and
  applications to parametric pdes.
\newblock {\em Journ. Found. Comp. Math.}, 14(4):601--633, 2013.

\bibitem{CSZ18}
A.~Cohen, {\relax Ch}.~Schwab, and J.~Zech.
\newblock Shape {H}olomorphy of the {S}tationary {N}avier--{S}tokes
  {E}quations.
\newblock {\em SIAM J. Math. Anal.}, 50(2):1720--1752, 2018.

\bibitem{cotter2013mcmc}
S.~L. Cotter, G.~O. Roberts, A.~M. Stuart, and D.~White.
\newblock {MCMC} methods for functions: modifying old algorithms to make them
  faster.
\newblock {\em Statistical Science}, pages 424--446, 2013.

\bibitem{DILI}
T.~Cui, K.~J.~H. Law, and Y.~M. Marzouk.
\newblock Dimension-independent likelihood-informed {MCMC}.
\newblock {\em Journal of Computational Physics}, 304:109--137, 2016.

\bibitem{MR3839555}
M.~Dashti and A.~M. Stuart.
\newblock The {B}ayesian approach to inverse problems.
\newblock In {\em Handbook of uncertainty quantification. {V}ol. 1, 2, 3},
  pages 311--428. Springer, Cham, 2017.

\bibitem{davis}
P.~Davis.
\newblock {\em Interpolation and Approximation}.
\newblock Dover Books on Mathematics. Dover Publications, 1975.

\bibitem{3327546.3327591}
G.~Detommaso, T.~Cui, A.~Spantini, Y.~Marzouk, and R.~Scheichl.
\newblock A {S}tein variational {N}ewton method.
\newblock In {\em Proceedings of the 32nd International Conference on Neural
  Information Processing Systems}, NIPS18, pages 9187--9197, Red Hook, NY, USA,
  2018. Curran Associates Inc.

\bibitem{MR3907407}
J.~Dick, R.~N. Gantner, Q.~T. Le~Gia, and C.~Schwab.
\newblock Higher order quasi-{M}onte {C}arlo integration for {B}ayesian {PDE}
  inversion.
\newblock {\em Comput. Math. Appl.}, 77(1):144--172, 2019.

\bibitem{DLS16_1336}
J.~Dick, Q.~T. LeGia, and C.~Schwab.
\newblock Higher order quasi monte carlo integration for holomorphic,
  parametric operator equations.
\newblock {\em SIAM Journ. Uncertainty Quantification}, 4(1):48--79, 2016.

\bibitem{MR4065222}
S.~Dolgov, K.~Anaya-Izquierdo, C.~Fox, and R.~Scheichl.
\newblock Approximation and sampling of multivariate probability distributions
  in the tensor train decomposition.
\newblock {\em Stat. Comput.}, 30(3):603--625, 2020.

\bibitem{1912.00894}
A.~Duncan, N.~Nuesken, and L.~Szpruch.
\newblock On the geometry of {S}tein variational gradient descent, 2019.

\bibitem{MR3856963}
W.~E and Q.~Wang.
\newblock Exponential convergence of the deep neural network approximation for
  analytic functions.
\newblock {\em Sci. China Math.}, 61(10):1733--1740, 2018.

\bibitem{MR2972870}
T.~A. El~Moselhy and Y.~M. Marzouk.
\newblock Bayesian inference with optimal maps.
\newblock {\em J. Comput. Phys.}, 231(23):7815--7850, 2012.

\bibitem{2002.02798}
C.~Finlay, J.-H. Jacobsen, L.~Nurbekyan, and A.~M. Oberman.
\newblock How to train your neural ode, 2020.

\bibitem{MR1669959}
T.~Gerstner and M.~Griebel.
\newblock Numerical integration using sparse grids.
\newblock {\em Numer. Algorithms}, 18(3-4):209--232, 1998.

\bibitem{GS2002}
A.~L. Gibbs and F.~E. Su.
\newblock On choosing and bounding probability metrics.
\newblock {\em International Statistical Review}, 70(3):419--435, 2002.

\bibitem{1810.01367}
W.~Grathwohl, R.~T.~Q. Chen, J.~Bettencourt, I.~Sutskever, and D.~Duvenaud.
\newblock Ffjord: Free-form continuous dynamics for scalable reversible
  generative models, 2018.

\bibitem{GOE16}
M.~Griebel and J.~Oettershagen.
\newblock On tensor product approximation of analytic functions.
\newblock {\em J. Approx. Theory}, 207:348--379, 2016.

\bibitem{herve89}
M.~Herv{\'e}.
\newblock {\em Analyticity in infinite-dimensional spaces}, volume~10 of {\em
  de Gruyter Studies in Mathematics}.
\newblock Walter de Gruyter \& Co., Berlin, 1989.

\bibitem{MR3640629}
B.~Hosseini and N.~Nigam.
\newblock Well-posed {B}ayesian inverse problems: priors with exponential
  tails.
\newblock {\em SIAM/ASA J. Uncertain. Quantif.}, 5(1):436--465, 2017.

\bibitem{pmlr-v80-huang18d}
C.-W. Huang, D.~Krueger, A.~Lacoste, and A.~Courville.
\newblock Neural autoregressive flows.
\newblock In J.~Dy and A.~Krause, editors, {\em Proceedings of the 35th
  International Conference on Machine Learning}, volume~80 of {\em Proceedings
  of Machine Learning Research}, pages 2078--2087. PMLR, 10--15 Jul 2018.

\bibitem{jaini2019sum}
P.~Jaini, K.~A. Selby, and Y.~Yu.
\newblock Sum-of-squares polynomial flow.
\newblock {\em ICML}, 2019.

\bibitem{MR2102218}
J.~Kaipio and E.~Somersalo.
\newblock {\em Statistical and computational inverse problems}, volume 160 of
  {\em Applied Mathematical Sciences}.
\newblock Springer-Verlag, New York, 2005.

\bibitem{kobyzev2019normalizing}
I.~Kobyzev, S.~Prince, and M.~A. Brubaker.
\newblock Normalizing flows: Introduction and ideas.
\newblock {\em IEEE Transactions on Pattern Analysis and Machine Intelligence},
  2020.

\bibitem{pmlr-v108-kong20a}
Z.~Kong and K.~Chaudhuri.
\newblock The expressive power of a class of normalizing flow models.
\newblock In S.~Chiappa and R.~Calandra, editors, {\em Proceedings of the
  Twenty Third International Conference on Artificial Intelligence and
  Statistics}, volume 108 of {\em Proceedings of Machine Learning Research},
  pages 3599--3609. PMLR, 26--28 Aug 2020.

\bibitem{MR4075344}
J.~Latz.
\newblock On the well-posedness of {B}ayesian inverse problems.
\newblock {\em SIAM/ASA J. Uncertain. Quantif.}, 8(1):451--482, 2020.

\bibitem{CiCP-27-379}
B.~Li, S.~Tang, and H.~Yu.
\newblock Better approximations of high dimensional smooth functions by deep
  neural networks with rectified power units.
\newblock {\em Communications in Computational Physics}, 27(2):379--411, 2019.

\bibitem{NIPS2017_6904}
Q.~Liu.
\newblock Stein variational gradient descent as gradient flow.
\newblock In I.~Guyon, U.~V. Luxburg, S.~Bengio, H.~Wallach, R.~Fergus,
  S.~Vishwanathan, and R.~Garnett, editors, {\em Advances in Neural Information
  Processing Systems 30}, pages 3115--3123. Curran Associates, Inc., 2017.

\bibitem{NIPS2016_6338}
Q.~Liu and D.~Wang.
\newblock Stein variational gradient descent: A general purpose {B}ayesian
  inference algorithm.
\newblock In D.~D. Lee, M.~Sugiyama, U.~V. Luxburg, I.~Guyon, and R.~Garnett,
  editors, {\em Advances in Neural Information Processing Systems 29}, pages
  2378--2386. Curran Associates, Inc., 2016.

\bibitem{MR3919409}
J.~Lu, Y.~Lu, and J.~Nolen.
\newblock Scaling limit of the {S}tein variational gradient descent: the mean
  field regime.
\newblock {\em SIAM J. Math. Anal.}, 51(2):648--671, 2019.

\bibitem{2004.08867}
Y.~Lu and J.~Lu.
\newblock A universal approximation theorem of deep neural networks for
  expressing probability distributions, 2020.

\bibitem{MR1511855}
W.~Markoff and J.~Grossmann.
\newblock \"{U}ber {P}olynome, die in einem gegebenen {I}ntervalle
  m\"{o}glichst wenig von {N}ull abweichen.
\newblock {\em Math. Ann.}, 77(2):213--258, 1916.

\bibitem{MR3821485}
Y.~Marzouk, T.~Moselhy, M.~Parno, and A.~Spantini.
\newblock Sampling via measure transport: an introduction.
\newblock In {\em Handbook of uncertainty quantification. {V}ol. 1, 2, 3},
  pages 785--825. Springer, Cham, 2017.

\bibitem{MR1230251}
H.~N. Mhaskar.
\newblock Approximation properties of a multilayered feedforward artificial
  neural network.
\newblock {\em Adv. Comput. Math.}, 1(1):61--80, 1993.

\bibitem{Morzfeld}
M.~Morzfeld, X.~T. Tong, and Y.~M. Marzouk.
\newblock Localization for {MCMC}: sampling high-dimensional posterior
  distributions with local structure.
\newblock {\em Journal of Computational Physics}, 380:1--28, 2019.

\bibitem{nist}
F.~W.~J. Olver, D.~W. Lozier, R.~F. Boisvert, and C.~W. Clark, editors.
\newblock {\em N{IST} handbook of mathematical functions}.
\newblock U.S. Department of Commerce, National Institute of Standards and
  Technology, Washington, DC; Cambridge University Press, Cambridge, 2010.

\bibitem{OSZ19}
J.~A.~A. Opschoor, C.~Schwab, and J.~Zech.
\newblock Exponential {ReLU} {DNN} expression of holomorphic maps in high
  dimension.
\newblock Technical Report 2019-35, Seminar for Applied Mathematics, ETH
  Z{\"u}rich, Switzerland, 2019.

\bibitem{papamakarios2019normalizing}
G.~Papamakarios, E.~Nalisnick, D.~J. Rezende, S.~Mohamed, and
  B.~Lakshminarayanan.
\newblock Normalizing flows for probabilistic modeling and inference.
\newblock {\em Journal of Machine Learning Research}, 22:1--64, 2021.

\bibitem{papamakarios2017masked}
G.~Papamakarios, T.~Pavlakou, and I.~Murray.
\newblock Masked autoregressive flow for density estimation.
\newblock In {\em Advances in Neural Information Processing Systems}, pages
  2338--2347, 2017.

\bibitem{MR3800241}
M.~D. Parno and Y.~M. Marzouk.
\newblock Transport map accelerated {M}arkov chain {M}onte {C}arlo.
\newblock {\em SIAM/ASA J. Uncertain. Quantif.}, 6(2):645--682, 2018.

\bibitem{MR1616049}
J.~O. Ramsay.
\newblock Estimating smooth monotone functions.
\newblock {\em J. R. Stat. Soc. Ser. B Stat. Methodol.}, 60(2):365--375, 1998.

\bibitem{pmlr-v37-rezende15}
D.~Rezende and S.~Mohamed.
\newblock Variational inference with normalizing flows.
\newblock In F.~Bach and D.~Blei, editors, {\em Proceedings of the 32nd
  International Conference on Machine Learning}, volume~37 of {\em Proceedings
  of Machine Learning Research}, pages 1530--1538, Lille, France, 07--09 Jul
  2015. PMLR.

\bibitem{10.5555/1051451}
C.~P. Robert and G.~Casella.
\newblock {\em Monte Carlo Statistical Methods (Springer Texts in Statistics)}.
\newblock Springer-Verlag, Berlin, Heidelberg, 2005.

\bibitem{MR49525}
M.~Rosenblatt.
\newblock Remarks on a multivariate transformation.
\newblock {\em Ann. Math. Statistics}, 23:470--472, 1952.

\bibitem{rudolf2018generalization}
D.~Rudolf and B.~Sprungk.
\newblock On a generalization of the preconditioned crank--nicolson metropolis
  algorithm.
\newblock {\em Foundations of Computational Mathematics}, 18(2):309--343, 2018.

\bibitem{MR4120535}
A.~Sagiv.
\newblock The {W}asserstein distances between pushed-forward measures with
  applications to uncertainty quantification.
\newblock {\em Commun. Math. Sci.}, 18(3):707--724, 2020.

\bibitem{santambrogio}
F.~Santambrogio.
\newblock {\em Optimal transport for applied mathematicians}, volume~87 of {\em
  Progress in Nonlinear Differential Equations and their Applications}.
\newblock Birkh\"{a}user/Springer, Cham, 2015.
\newblock Calculus of variations, PDEs, and modeling.

\bibitem{MR3640631}
R.~Scheichl, A.~M. Stuart, and A.~L. Teckentrup.
\newblock Quasi-{M}onte {C}arlo and multilevel {M}onte {C}arlo methods for
  computing posterior expectations in elliptic inverse problems.
\newblock {\em SIAM/ASA J. Uncertain. Quantif.}, 5(1):493--518, 2017.

\bibitem{MR3056084}
C.~Schillings and C.~Schwab.
\newblock Sparse, adaptive {S}molyak quadratures for {B}ayesian inverse
  problems.
\newblock {\em Inverse Problems}, 29(6):065011, 28, 2013.

\bibitem{MR3580124}
C.~Schillings and C.~Schwab.
\newblock Scaling limits in computational {B}ayesian inversion.
\newblock {\em ESAIM Math. Model. Numer. Anal.}, 50(6):1825--1856, 2016.

\bibitem{MR4125981}
C.~Schillings, B.~Sprungk, and P.~Wacker.
\newblock On the convergence of the {L}aplace approximation and
  noise-level-robustness of {L}aplace-based {M}onte {C}arlo methods for
  {B}ayesian inverse problems.
\newblock {\em Numer. Math.}, 145(4):915--971, 2020.

\bibitem{MR2903278}
C.~Schwab and A.~M. Stuart.
\newblock Sparse deterministic approximation of {B}ayesian inverse problems.
\newblock {\em Inverse Problems}, 28(4):045003, 32, 2012.

\bibitem{spantini2019coupling}
A.~Spantini, R.~Baptista, and Y.~Marzouk.
\newblock Coupling techniques for nonlinear ensemble filtering.
\newblock {\em arXiv preprint arXiv:1907.00389}, 2019.

\bibitem{spantini2018inference}
A.~Spantini, D.~Bigoni, and Y.~Marzouk.
\newblock Inference via low-dimensional couplings.
\newblock {\em The Journal of Machine Learning Research}, 19(1):2639--2709,
  2018.

\bibitem{stuartacta}
A.~M. Stuart.
\newblock Inverse problems: a {B}ayesian perspective.
\newblock {\em Acta Numer.}, 19:451--559, 2010.

\bibitem{2006.11469}
T.~Teshima, I.~Ishikawa, K.~Tojo, K.~Oono, M.~Ikeda, and M.~Sugiyama.
\newblock Coupling-based invertible neural networks are universal
  diffeomorphism approximators, 2020.

\bibitem{2012.02414}
T.~Teshima, K.~Tojo, M.~Ikeda, I.~Ishikawa, and K.~Oono.
\newblock Universal approximation property of neural ordinary differential
  equations, 2020.

\bibitem{Tong}
X.~T. Tong, M.~Morzfeld, and Y.~M. Marzouk.
\newblock {MALA}-within-{Gibbs} samplers for high-dimensional distributions
  with sparse conditional structure.
\newblock {\em SIAM Journal on Scientific Computing}, in press, 2020.

\bibitem{MR2459454}
C.~Villani.
\newblock {\em Optimal transport}, volume 338 of {\em Grundlehren der
  Mathematischen Wissenschaften [Fundamental Principles of Mathematical
  Sciences]}.
\newblock Springer-Verlag, Berlin, 2009.
\newblock Old and new.

\bibitem{wehenkel2019unconstrained}
A.~Wehenkel and G.~Louppe.
\newblock Unconstrained monotonic neural networks.
\newblock {\em arXiv preprint arXiv:1908.05164}, 2019.

\bibitem{yarotsky}
D.~Yarotsky.
\newblock Error bounds for approximations with deep {ReLU} networks.
\newblock {\em Neural Netw.}, 94:103--114, 2017.

\bibitem{MR2280784}
S.~T. Yau and L.~Zhang.
\newblock An upper estimate of integral points in real simplices with an
  application to singularity theory.
\newblock {\em Math. Res. Lett.}, 13(5-6):911--921, 2006.

\bibitem{JZdiss}
J.~Zech.
\newblock Sparse-{G}rid {A}pproximation of {H}igh-{D}imensional {P}arametric
  {PDE}s, 2018.
\newblock Dissertation 25683, ETH Z\"urich,
  \url{http://dx.doi.org/10.3929/ethz-b-000340651}.

\bibitem{2006.06994}
J.~Zech and Y.~Marzouk.
\newblock Sparse approximation of triangular transports on bounded domains,
  2020.
\newblock arXiv:2006.06994v1.

\bibitem{zm2}
J.~Zech and Y.~Marzouk.
\newblock Sparse approximation of triangular transports. {P}art {II}: the
  infinite dimensional case, 2021.

\bibitem{ZS17}
J.~Zech and C.~Schwab.
\newblock Convergence rates of high dimensional {S}molyak quadrature.
\newblock {\em ESAIM Math. Model. Numer. Anal.}, 54(4):1259--1307, 2020.

\end{thebibliography}
\end{document}